\topskip \setlength{\parindent}{0pt} \setlength{\parskip}{5pt plus
\newtheorem{theorem}{Theorem}
\newtheorem{lemma}[theorem]{Lemma}
\newtheorem{proposition}[theorem]{Proposition}
\newtheorem{corollary}[theorem]{Corollary}
\def\al{\alpha}
\def\be{\beta}
\def\ga{\gamma}
\def\gf{generating function\xspace}
\def\v{\vert}
\def\F{F}
\begin{document}
\title{Enumeration of permutations avoiding a triple of 4-letter patterns is all done}
\author[D. Callan]{David Callan}
\address{Department of Statistics, University of Wisconsin, Madison, WI 53706}
\email{callan@stat.wisc.edu}
\author[T.~Mansour]{Toufik Mansour}
\address{Department of Mathematics, University of Haifa, 31905 Haifa, Israel}
\email{tmansour@univ.haifa.ac.il}
\author[M.~Shattuck]{Mark Shattuck}
\address{Department of Mathematics, University of Tennessee, Knoxville, TN 37996}
\email{shattuck@math.utk.edu}

\begin{abstract}
\bigskip
This paper completes a project to enumerate permutations avoiding a triple $T$ of 4-letter patterns, in the sense of classical pattern avoidance, for every $T$.
There are 317 symmetry classes of such triples $T$ and previous papers have enumerated avoiders for all but 14 of them. One of these 14 is conjectured not to have an algebraic \gf. Here, we find the \gf for each of the remaining 13, and it is algebraic in each case.
\medskip

\noindent{\bf Keywords}: pattern avoidance, Wilf-equivalence, generating function, INSENC algorithm
\end{abstract}
\maketitle

\section{Introduction}
This paper is the last in a series whose goal is to enumerate the permutations avoiding the patterns in $T$  for each of the $\binom{24}{3}$ triples $T$ of 4-letter patterns. There are 317 symmetry classes of these triples and 242 Wilf classes; hence, 242 distinct counting sequences. A Wilf class is said to be small or large depending on whether it contains one or more symmetry classes.  The symmetry classes in large Wilf classes were enumerated in \cite{CMS3patI,CMS3patII} (combined in \cite{HYL}).
The small Wilf classes that can be enumerated by the insertion encoding algorithm (INSENC) \cite{V} are listed in Table \ref{longinsenc} in the appendix.
The small Wilf classes not amenable to INSENC are listed in Table \ref{long4} below along with a reference to either a published paper or to a result in the present paper.
The numbering in both tables follows that of Table 2 in \cite{HYL}, where representative triples for all 317 symmetry classes are listed in lex order of counting sequence.
The \gf for Case 237, the only one not enumerated, is conjectured not to be differentially algebraic (see \cite{AA20160600} and \cite[Seq. A257562]{Sl}).

Our work extends earlier results concerning the enumeration of permutations avoiding one or two 4-letter patterns.  Permutations avoiding a single 4-letter pattern have been well studied (see, e.g., \cite{St0,St,W}), and there are 56 symmetry classes of pairs of 4-letter patterns, all but 8 of which have been enumerated.  Le \cite{L} established that these 56 symmetry classes form 38 distinct Wilf classes, and of these 38, 12 can be enumerated with regular insertion encodings (see \cite{V}).  Some of these generating functions were computed by hand by Kremer and Shiu \cite{KS}.  In \cite{SiS}, Simion and Schmidt enumerated permutations avoiding any subset of $S_3$, in particular, any subset of $S_3$ of size three.  Our results here then address the analogous problem on $S_4$ and complete the enumeration in the case of three patterns.

The organization of this paper is as follows.  In the next section, we recall some previous terminology and notation.  In the third section, we provide proofs of generating function formulas corresponding to the final thirteen symmetry classes for three patterns of length four.  In the appendix, the generating functions for all small Wilf classes that can be done using INSENC are listed.

{\footnotesize\begin{longtable}[c]{|l|l|l||l|l|l|}
\caption{Small Wilf classes not amenable to INSENC.\label{long4}}\\ \hline
\multicolumn{6}{| c |}{Start of Table}\\ \hline
No. &$T$ & Reference & No. &$T$ & Reference\\ \hline
\endfirsthead  \hline
\multicolumn{6}{|c|}{Continuation of Table \ref{long4}}\\ \hline
No. &$T$ & Reference & No. &$T$ & Reference \\ \hline
\endhead \hline
\endfoot \hline
\multicolumn{6}{| c |}{End of Table}\\ \hline\hline
\endlastfoot
15&$\{2134,3412,1243\}$&\cite{AA20160601}&29&$\{1324,2143,3421\}$&\cite{AA20160607}\\\hline
30&$\{4231,2143,1324\}$&\cite{AA20160607}&49&$\{2341,1324,4123\}$&\cite{AA20170501}\\\hline
69&$\{3412,1324,1234\}$&\cite{AA20170501}&72&$\{3412,1324,1243\}$&\cite{AA20170501}\\\hline
74&$\{3412,1243,1234\}$&Theorem~\ref{th74a}&75&$\{4231,1324,1243\}$&\cite{AA20170501}\\\hline
76&$\{3412,1324,4123\}$&\cite{AA20170501}&77&$\{3412,3124,1243\}$&\cite{AA1342}\\\hline
80&$\{4312,1324,4123\}$&\cite{AA20170501}&84&$\{4231,1324,4123\}$&\cite{AA20170501}\\\hline
86&$\{3412,4132,1324\}$&\cite{AA20170501}&88&$\{1324,3412,3421\}$&\cite{AA20170501}\\\hline
90&$\{1243,2431,3412\}$&\cite{AA1342}&93&$\{1324,2413,3421\}$&\cite{AA20170501}\\\hline
99&$\{4231,3142,1324\}$&\cite{AA20170501}&103&$\{2314,1342,4123\}$&\cite{AA1342}\\\hline
106&$\{1342,2143,3412\}$&\cite{AA1342}&109&$\{2143,3412,3421\}$&Theorem~\ref{th109a}\\\hline
118&$\{3412,1423,1234\}$&\cite{AA1342}&121&$\{3412,2341,1243\}$&Theorem~\ref{th121a}\\\hline
125&$\{2341,4123,1243\}$&Theorem~\ref{th125a}&130&$\{3412,3124,1342\}$&\cite{AA1342}\\\hline
131&$\{2134,1342,4123\}$&\cite{AA1342}&132&$\{1324,2341,2413\}$&\cite{AA20170501}\\\hline
133&$\{2143,2314,1342\}$&\cite{AA1342}&134&$\{2134,4123,1243\}$&\cite{AA20160601}\\\hline
149&$\{3412,4123,1234\}$&Theorem~\ref{th149a}&150&$\{4312,4132,1324\}$&\cite{AA20170501}\\\hline
151&$\{4312,1324,1423\}$&\cite{AA20170501}&153&$\{4231,1324,1423\}$&\cite{AA20170501}\\\hline
156&$\{1324,2341,2431\}$&\cite{AA20170501}&158&$\{1324,1342,3412\}$&\cite{AA20170501}\\\hline
159&$\{3412,1423,1243\}$&\cite{AA1342}&162&$\{3412,1342,4123\}$&\cite{AA1342}\\\hline
163&$\{3412,2314,1342\}$&\cite{AA1342}&164&$\{2341,4123,1423\}$&\cite{AA1342}\\\hline
165&$\{4312,3124,1423\}$&\cite{AA1342}&172&$\{2143,4132,1324\}$&\cite{AA20160607}\\\hline
175&$\{2413,1342,4123\}$&\cite{AA1342}&176&$\{1342,2431,3412\}$&\cite{AA1342}\\\hline
178&$\{2314,2431,1342\}$&\cite{AA1342}&180&$\{2431,4132,1324\}$&\cite{AA20170501}\\\hline
182&$\{3412,2314,2431\}$&\cite{AA1342}&184&$\{1324,2431,3241\}$&\cite{AA20170501}\\\hline
185&$\{2341,4123,1234\}$&Theorem~\ref{th185a}&187&$\{1324,2314,2431\}$&\cite{AA20170501}\\\hline
188&$\{2143,3214,1432\}$&Theorem~\ref{th188a}&190&$\{3142,2314,1423\}$&\cite{AA1342}\\\hline
192&$\{1243,1342,2431\}$&\cite{AA1342}&193&$\{1324,2431,3142\}$&\cite{AA20170501}\\\hline
194&$\{3124,4123,1243\}$&\cite{AA1342}&195&$\{1324,4123,1243\}$&\cite{AA20170501}\\\hline
197&$\{4312,3142,1423\}$&\cite{AA1342}&198&$\{1342,4123,1234\}$&\cite{AA1342}\\\hline
199&$\{1342,4123,1243\}$&\cite{AA1342}&204&$\{3124,1342,1243\}$&\cite{AA1342}\\\hline
207&$\{2134,1423,1243\}$&\cite{AA20160601}&208&$\{3124,1342,1234\}$&\cite{AA1342}\\\hline
209&$\{3142,1432,1243\}$&Theorem
\ref{th209a}&210&$\{4132,1324,1243\}$&\cite{AA20170501}\\\hline
211&$\{1324,4123,1234\}$&\cite{AA20170501}&212&$\{1324,2413,2431\}$&\cite{AA20170501}\\\hline
213&$\{2431,1324,1342\}$&\cite{AA20170501}&214&$\{1342,2341,3412\}$&\cite{AA1342}\\\hline
216&$\{2143,3412,3142\}$&Theorem~\ref{th216a}&217&$\{4132,1342,1243\}$&\cite{AA1342}\\\hline
219&$\{1342,2413,3412\}$&\cite{AA1342}&220&$\{2431,2314,3142\}$&\cite{AA1342}\\\hline
222&$\{3421,3412,1342\}$&\cite{AA1342}&223&$\{1243,1342,2413\}$&\cite{AA1342}\\\hline
224&$\{4132,1342,1423\}$&\cite{AA1342}&225&$\{2413,3142,1243\}$&Theorem
\ref{th225a}\\\hline
226&$\{2143,2413,1342\}$&\cite{AA1342}&227&$\{2143,1432,1324\}$&\cite{AA20160607}\\\hline
228&$\{2341,2413,3412\}$&Theorem~\ref{th228a}&230&$\{2341,1243,1234\}$&Theorem
\ref{th230a}\\\hline
231&$\{1324,4123,1423\}$&\cite{AA20170501}&232&$\{1234,1342,2341\}$&\cite{AA1342}\\\hline
237&$\{1432,1324,1243\}$&\cite{AA20160600}&240&$\{2341,3412,3421\}$&Theorem~\ref{th240a}\\\hline
241&$\{1324,1243,1234\}$&\cite{AA20170501}&242&$\{2341,2431,3241\}$&\cite{AA1342}\\\hline
\end{longtable}}

\section{Preliminaries}

For a pattern set $T$ under consideration, $F_T(x)$ denotes the generating function $\sum_{n\ge 0}\v S_n(T)\v x^n$ for $T$-avoiders and $G_m(x)$ the generating function for $T$-avoiders $\pi=i_1\pi^{(1)}i_2\pi^{(2)}\cdots i_m\pi^{(m)}\in S_n(T)$ with $m$ left-right maxima
$i_1,i_2,\dots,i_m=n$; thus $F_T(x)=\sum_{m\ge 0}G_m(x)$.
Note that $G_1(x)=xF_T(x)$ if no pattern in $T$ starts with 4, with $G_0(x)=1$.   For several of the triples $T$, our efforts are directed toward finding an expression for $G_m(x)$, usually distinguishing the case $m=2$, and sometimes also $m=3$, from larger values of $m$.  In cases such as these where we make use of the method of generating functions, we examine the structure of an avoider by splitting the class of avoiders into subclasses according to a judicious choice of parameters.  This choice is made so that each member of a subclass can be decomposed into independent parts.  The generating function (g.f) for a subclass (a summand in the full g.f.) is then the product of the g.f.'s for each of the individual parts, and we speak of the ``contribution'' of the various parts to the g.f. for the subclass.   By contrast, in cases 74, 125 and 185, prior to computing the g.f., we  determine an explicit formula that directly enumerates the class of avoiders in question.  To do so, we consider appropriate combinatorial parameters related to the class, which involve here the position of and/or the actual letters contained within the leftmost ascent.  The g.f. then follows from the explicit formula and a calculation, one that often involves multiple sums and is computer-assisted.   In the last three cases, we make use of (and modify in one case) the method of generating forests \cite{W} to determine a system of functional equations satisfied by the related g.f.'s, which we then solve by the \emph{kernel method} (see, e.g., \cite{HM}).

Throughout, $C(x)=\frac{1-\sqrt{1-4x}}{2x}$ denotes the g.f. for
the Catalan numbers $C_n:=\frac{1}{n+1}\binom{2n}{n}=\binom{2n}{n}-\binom{2n}{n-1}$.
As is well known \cite{K,wikipermpatt}, $C(x)$ is the g.f. for
$(|S_n(\pi)|)_{n\ge 0}$ where $\pi$ is any one of the six 3-letter
patterns. The identity $C(x)=\frac{1}{1-xC(x)}$ or, equivalently,
$xC(x)^2=C(x)-1$, is used to simplify some of the results.  Occasionally, we need the g.f. for avoiders of a 3-letter and a 4-letter pattern; see \cite{wikipermpatt2} for a comprehensive list.

\section{Proofs}

\subsection{Case 74: $\{1234,1243,3412\}$}

Recall  within a permutation $\pi=\pi_1\pi_2\cdots \pi_n$ that an index $i$ for which $\pi_i<\pi_{i+1}$ is called an \emph{ascent}.  The letters $\pi_i$ and $\pi_{i+1}$ are referred to as the \emph{ascent bottom} and \emph{ascent top}, respectively.  The \emph{leftmost ascent} is the smallest $j$ such that $\pi_j<\pi_{j+1}$ and the \emph{leftmost ascent bottom} and \emph{top} are the corresponding letters $\pi_j$ and $\pi_{j+1}$, respectively.  For example, if $\pi=87436512$, then $\pi$ has ascents at indices 4 and 7 and the leftmost ascent top and bottom are 6 and 3, respectively.  Similar terminology applies when discussing the \emph{rightmost ascent}.

Suppose that a permutation $\pi=\pi_1\cdots\pi_n$ has its leftmost ascent at index $i$.  Then we will refer to the prefix $\pi_1\cdots\pi_i$ as the \emph{initial descent sequence} (IDS).  For example, if $\pi=64325178$, then the leftmost ascent occurs at index $4$ and the IDS is $6432$.  Let $T=\{1234,1243,3412\}$. To enumerate members of $S_n(T)$, we will classify them according to the value of the leftmost ascent bottom and the nature of the IDS.  We first enumerate the following restricted class of $T$-avoiders.

\begin{lemma}\label{ids1l}
The number of $T$-avoiding permutations of length $n$ whose first letter is $\leq$  $n-2$, whose leftmost ascent top is $n$, and whose IDS  does not comprise a set of consecutive integers is given by
$$b_n=\sum_{a=1}^{n-4}\sum_{b=a+2}^{n-2}\sum_{m=2}^{b-a}\binom{b-a-1}{m-2}\left[(b-m+1)(n-b-1)+\binom{n-b-2}{2}\right], \qquad n \geq 5.$$
Moreover,
$$b_n=\frac{1}{12}(n-2)(3\cdot2^n-(n-1)(n^2-3n+12)).$$
\end{lemma}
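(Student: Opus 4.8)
The plan is to prove the two displayed formulas separately: first derive the triple-sum expression for $b_n$ by a direct combinatorial decomposition of the class of avoiders being counted, and then collapse that triple sum to the closed form by a routine (if lengthy) summation.

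For the triple sum, I would parametrize a qualifying permutation $\pi\in S_n(T)$ by three quantities: the leftmost ascent bottom $a=\pi_i$, the largest letter $b$ that appears in the IDS, and the length $m$ of the IDS. Since the leftmost ascent top is forced to be $n$ (hypothesis) and the IDS $\pi_1\cdots\pi_i$ is a strictly decreasing sequence of $m$ letters ending in $a$ with maximum $b$, while $\pi_{i+1}=n$, we have $m=i$, the IDS is a decreasing arrangement of $m$ letters chosen from $\{a,\dots,b\}$ that includes both $a$ and $b$, and the ``not a set of consecutive integers'' condition together with $a\le n-2$ forces the ranges $a\le n-4$, $b\ge a+2$, $2\le m\le b-a$. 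The number of ways to choose the interior $m-2$ letters of the IDS from the $b-a-1$ letters strictly between $a$ and $b$ is $\binom{b-a-1}{m-2}$, which accounts for the binomial factor and the three summation ranges. It then remains to show that, once the IDS and the forced top $n$ are fixed, the number of legal completions $\pi_{i+2}\cdots\pi_n$ is exactly the bracketed quantity $(b-m+1)(n-b-1)+\binom{n-b-2}{2}$. The letters still to be placed are $\{1,\dots,n-1\}$ minus the $m$ IDS-letters, i.e.\ all of $\{b+1,\dots,n-1\}$ together with the $b-m+1$ ``gap'' letters in $\{1,\dots,b\}\setminus(\text{IDS})$. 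Avoiding $1234$ and $1243$ (both of which begin with the small ascent $a<n$) severely limits how many letters larger than $a$ can occur after position $i+1$, and avoiding $3412$ controls the interaction of the big letters $\{b+1,\dots,n-1\}$ with what precedes; a short case analysis on how the large letters and the gap letters interleave in the tail should yield precisely the two summands, the product term $(b-m+1)(n-b-1)$ corresponding to one placement pattern and $\binom{n-b-2}{2}$ to another. Verifying this bracket is the main obstacle: it is the one genuinely pattern-theoretic step, and care is needed to see that the three forbidden patterns neither over- nor under-constrain the tail.

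Granting the triple sum, the closed form follows by elementary manipulation. First sum over $m$: $\sum_{m=2}^{b-a}\binom{b-a-1}{m-2}=2^{b-a-1}$ and $\sum_{m=2}^{b-a}(b-m+1)\binom{b-a-1}{m-2}$ can be evaluated by splitting $b-m+1=(b-a+1)-(m-2)-2\cdot 0$ type bookkeeping, more cleanly by writing $b-m+1=(b-a-1)-(m-2)+(a)$ and using $\sum_j j\binom{N}{j}=N2^{N-1}$; this gives a polynomial-times-$2^{b-a-1}$ expression. Next sum over $b$ from $a+2$ to $n-2$: the $2^{b-a-1}$ pieces telescope into terms of the form $2^{n-a-1}$ times a low-degree polynomial in $n,a$ (via $\sum 2^{b}=2^{b+1}-\text{const}$ and $\sum b2^b$, $\sum b^2 2^b$), while the $\binom{n-b-2}{2}$ piece sums by the hockey-stick identity to a $\binom{\cdot}{4}$-type polynomial in $n-a$. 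Finally sum over $a$ from $1$ to $n-4$: the surviving exponential contributions combine to a single $2^n$ term with a cubic coefficient, and the polynomial contributions sum to a degree-five polynomial in $n$; matching against the claimed $\frac1{12}(n-2)\bigl(3\cdot 2^n-(n-1)(n^2-3n+12)\bigr)$ is then a finite check, easily confirmed symbolically and verifiable by hand at, say, $n=5,6,7,8,9$ since both sides are (polynomial)$+$(polynomial)$\cdot 2^n$ of bounded degree. This half is purely computational and I would present it as ``a direct, computer-assisted calculation'' as the paper's preamble anticipates, exhibiting only the key intermediate identities rather than every step.
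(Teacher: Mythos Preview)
Your parametrization by $(a,b,m)$, the summation ranges, the binomial $\binom{b-a-1}{m-2}$ counting the interior of the IDS, and the plan for collapsing the triple sum by summing successively over $m$, $b$, $a$ all match the paper exactly. The genuine gap is the bracket count $(b-m+1)(n-b-1)+\binom{n-b-2}{2}$, which you correctly flag as the crux but do not actually establish; ``a short case analysis \dots should yield precisely the two summands'' is not an argument, and the structure is less transparent than your sketch suggests.

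Here is what the paper does to fill that gap. First, the $b-m$ ``gap'' letters in $[b-1]$ lying in $\pi'$ must form a decreasing block, since $b,n$ followed by an increasing pair from $[b-1]$ is a $3412$. Second---and this is the structural step missing from your outline---the subsequence of $\pi$ on $[b+1,n]$ must avoid $\{123,132,3412\}$ (the first two because the letter $a$ precedes all of it and would complete a $1234$ or $1243$); since this subsequence begins with its maximum $n$, it has exactly one of three shapes: (i) $n,n{-}1,\dots,b{+}1$; (ii) $n,\dots,s{+}1,s{-}1,\dots,b{+}1,s$; (iii) $n,\dots,s{+}1,s{-}1,\dots,t,s,t{-}1,\dots,b{+}1$. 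Third, the hypothesis that the IDS is \emph{not} an interval forces the largest gap letter $x$ to satisfy $x>a$, and then avoidance of $1243$ (witnessed as $a\,x\,(b{+}2)\,(b{+}1)$) and of $3412$ pins down where the decreasing gap block can sit: in shapes (i)--(ii) there are exactly $b-m+1$ admissible positions for the block and $n-b-1$ choices of shape, producing the product term, while in shape (iii) both $3412$ and $1243$ force all gap letters to the right of $b+1$, leaving only the pair $(s,t)$ free and giving $\binom{n-b-2}{2}$. Without this three-way classification you have neither the count nor any explanation of why the ``IDS not consecutive'' hypothesis is what makes the $(b-m+1)$ factor come out as it does.
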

\begin{proof}
Let $\mathcal{B}_n$ denote the set of $T$-avoiding permutations in question.  Suppose $\pi \in \mathcal{B}_n$ has first letter $b \leq n-2$, leftmost ascent at index $m$, and leftmost ascent bottom $a$.  Note that the IDS  of $\pi$ not comprising a set of consecutive integers implies $b \geq a+2$ and $2 \leq m \leq b-a$.  Then $\pi$ may be written as $\pi=ba_1\cdots a_{m-2} a n\pi'$ (*), where $a_1>\cdots >a_{m-2}$ belong to $[a+1,b-1]$.  Note that the subsequence of $\pi$ comprising $[b+1,n]$ avoids $\{123,132,3412\}$ and thus must either (i) decrease, (ii) have the form $n,n-1,\ldots,s+1,s-1,\ldots,b+1,s$ for some $b+2 \leq s \leq n-1$, or (iii) have the form $n,n-1,\ldots,s+1,s-1,\ldots,t,s,t-1,\ldots,b+1$ for some $b+3\leq s \leq n-1$ and $b+2 \leq t \leq s-1$.
Observe further that the $b-m$ letters in $[b-1]$ lying within $\pi'$ must  decrease in order to avoid $3412$.  Let $x$ denote the largest (and hence leftmost) letter in $[b-1]$ occurring in $\pi'$.  Note that $x>a$ by the assumption on the IDS  of $\pi$. If (i) holds, then the element $x$ must occur between $b+2$ and $b+1$ or directly following $b+1$ in order to avoid an occurrence of 1243 of the form $ax(b+2)(b+1)$.  Since the letters in $[b-1]$ within $\pi'$ must decrease, it follows that there are $b-m+1$ ways in which to position the elements of $[b-m]$ in $\pi'$, which uniquely determines $\pi'$.  If (ii) holds, then there are once again $b-m+1$ ways in which to position the elements of $[b-1]$ in $\pi'$ for each $s$.  Thus, combining cases (i) and (ii), there are $(b-m+1)(n-b-1)$ possibilities for $\pi'$ for each $b$ and $m$.

On the other hand, if (iii) holds, then $x$ (and hence all elements of $[b-1]$ in $\pi'$) must follow $b+1$, for otherwise there would be a 3412 of the form $(s-1)sx(b+1)$ if $x$ came between $s$ and $b+1$ or a 1243 of the form $axs(b+1)$ if $x$ occurred prior to $s$.  Thus, there are
$$\sum_{s=b+3}^{n-1}\sum_{t=b+2}^{s-1}1=\sum_{s=b+3}^{n-1}(s-b-2)=\binom{n-b-2}{2}$$
possibilities for $\pi'$ satisfying (iii).  In all cases, there are $\binom{b-a-1}{m-2}$ choices for the letters $a_1>\cdots>a_{m-2}$.  Note that for each choice of the $a_i$, the number of possibilities for $\pi'$ is the same.  Furthermore, any permutation $\pi$ of the form (*) above, where $\pi'$ satisfies (i), (ii) or (iii), is seen to avoid $T$.  Considering all possible $a$, $b$ and $m$ then yields $b_n$ members of $\mathcal{B}_n$ altogether.

Thus,
$$b_n=\sum_{a=1}^{n-4}\sum_{b=a+2}^{n-2}\sum_{m=2}^{b-a}\binom{b-a-1}{m-2}\left[(b-m+1)(n-b-1)+\binom{n-b-2}{2}\right], \qquad n \geq 5.$$
Note that
\begin{align*}
\sum_{m=2}^{b-a}\binom{b-a-1}{m-2}&\left[(b-m+1)(n-b-1)+\binom{n-b-2}{2}\right]\\
&=2^{b-a-2}(n^2+(a-b-6)n+5b+7-a-ab)\\
&-\frac{1}{2}(n^2-(2b+5-2a)n+(b+2)(b+3)-2a(b+1)),
\end{align*}
which leads to
\begin{align*}
\sum_{b=a+2}^{n-2}&\sum_{m=2}^{b-a}\binom{b-a-1}{m-2}\left[(b-m+1)(n-b-1)+\binom{n-b-2}{2}\right]\\
&=2^{n-a-2}(n-4+a)-\frac{1}{6}(n^3-6n^2-(3a^2-9a-11)n+2a^3-3a^2-5a-18).
\end{align*}
Hence,
$$b_n=\frac{1}{12}(n-2)(3\cdot2^n-(n-1)(n^2-3n+12)),$$
which completes the proof.

\end{proof}

We now consider a class of $T$-avoiding permutations where the IDS  consists of consecutive integers.

\begin{lemma}\label{ids2l}
The number of $T$-avoiding permutations of length $n$ whose first letter is $\leq$  $n-2$, whose leftmost ascent top is $n$, and whose IDS  comprises a set of consecutive integers is given by
$$(n-5)2^{n-1}+\binom{n+1}{2}+3, \qquad n \geq 3.$$
\end{lemma}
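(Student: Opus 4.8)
The plan is to argue much as in the proof of Lemma~\ref{ids1l}, classifying the permutations in question according to their first letter $b$ and the index $m$ of their leftmost ascent. Since the IDS is a set of consecutive integers with first letter $b$, such a permutation has the form
$$\pi=b,\,b-1,\,\ldots,\,a,\,n,\,\pi',$$
where $a:=b-m+1$ is the leftmost ascent bottom and $\pi'$ is a permutation of $[a-1]\cup[b+1,n-1]$; the conditions to be satisfied are simply $1\le a\le b\le n-2$. The goal is to count, for each admissible pair $(a,b)$, the permutations $\pi'$ for which $\pi$ avoids $T$, and then to sum over $(a,b)$.

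First I would pin down the structure of $\pi'$. Exactly as in Lemma~\ref{ids1l}---using here that $a<b+1$, so that a $123$ or a $132$ among the letters of $[b+1,n]$, prefixed by the letter $a$, would create a $1234$ or a $1243$ in $\pi$, while a $3412$ among those letters is already one in $\pi$---the subsequence of $\pi$ on $[b+1,n]$ avoids $\{123,132,3412\}$. Hence the letters of $[b+1,n-1]$ occurring in $\pi'$, prefixed by $n$, form one of the three shapes (i), (ii), (iii) of that proof. In addition, the letters of $[a-1]$ within $\pi'$ must occur in decreasing order, since two of them in increasing order, preceded by any letter of the prefix and then by $n$, would make a $3412$.

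Next I would, for each of the three shapes, determine which interleavings of the decreasing run $L:=a-1,a-2,\ldots,1$ into $\pi'$ keep $\pi$ a $T$-avoider; for fixed $(a,b)$ the number of admissible $\pi'$ is the total of these over all instances of the three shapes. The crux is a short case analysis on the type of ascent playing the role of the ``$12$'' in a putative $1234$ or $1243$, or the ``$34$'' in a putative $3412$. It shows that for shapes (i) and (ii) every interleaving is admissible, so together they contribute $(n-b-1)\binom{n-b+a-2}{a-1}$ (shape (i) once, shape (ii) once for each of the $n-b-2$ values $s\in[b+2,n-1]$); while for shape (iii), written $n-1,\ldots,s+1,s-1,\ldots,t,s,t-1,\ldots,b+1$ with $b+3\le s\le n-1$ and $b+2\le t\le s-1$, the only obstruction is that the lone internal ascent $(t,s)$, followed by a letter $\ell$ of $L$ and then by a letter $h$ of the trailing run $t-1,\ldots,b+1$ occurring after $\ell$, forms a $3412$ (as $\ell<h<t<s$). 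Consequently, an interleaving of $L$ into shape (iii) is admissible exactly when no letter of $L$ lies between $s$ and a later letter of the trailing run: the largest $k$ letters of $L$ may be inserted in any of $\binom{(n-1-t)+k}{k}$ ways among the $n-1-t$ high letters preceding $s$, and the remaining $a-1-k$ are then forced to the end, giving $\sum_{k=0}^{a-1}\binom{n-1-t+k}{k}=\binom{n+a-1-t}{a-1}$ admissible interleavings. The number of permutations in question is therefore
$$\sum_{b=1}^{n-2}\sum_{a=1}^{b}\left[(n-b-1)\binom{n-b+a-2}{a-1}+\sum_{s=b+3}^{n-1}\sum_{t=b+2}^{s-1}\binom{n+a-1-t}{a-1}\right],$$
and I would finish by evaluating this sum---collapsing the sum over $a$ by the hockey-stick identity and then summing a couple of elementary binomial series, a computer-assisted but routine calculation---to obtain $(n-5)2^{n-1}+\binom{n+1}{2}+3$.

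I expect the main difficulty to be the converse half of the case analysis for shape (iii). Since inserting a small letter into the high block creates new ascents, one must verify that none of these---and no interaction between an inserted small letter and the ``head'' $n-1,\ldots,s+1,s-1,\ldots,t$ of shape (iii), nor anything arising in shapes (i) or (ii)---can be completed to a forbidden $1234$, $1243$, or $3412$; checking that the sole genuine danger is the $(t,s)$-ascent configuration described above is elementary but requires care.
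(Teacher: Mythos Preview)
Your proposal is correct and follows essentially the same approach as the paper: both decompose according to the leftmost ascent bottom and the length of the IDS, invoke the trichotomy (i)/(ii)/(iii) on the arrangement of $[b+1,n]$ from Lemma~\ref{ids1l}, and count interleavings of the decreasing run on $[a-1]$ into the admissible slots, arriving at the same binomial expressions (your $\binom{n+a-1-t}{a-1}$ for shape~(iii) is exactly the paper's $\binom{n-t+a-1}{a-1}$). The only cosmetic differences are that the paper parametrizes by $(a,m)$ rather than $(a,b)$ and groups cases (ii)+(iii) together (via an auxiliary $f(a,m)$) whereas you group (i)+(ii); your identification of the $(t,s,\ell,h)$ obstruction in shape~(iii) is in fact more explicit than the paper's ``one cannot place letters in $[a-1]$ directly after $s$ or members of $[a+m+1,t-1]$''.
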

\begin{proof}
Suppose $\pi=\pi_1\cdots\pi_n$ is of the form under consideration where $n \geq 3$ and that $\pi$ has leftmost ascent at index $m$ with $\pi_m=a$.  Then the first $m$ letters of $\pi$ are $a+m-1,a+m-2,\ldots,a$, where $1 \leq a \leq n-2$ and $1 \leq m \leq n-a-1$.  Elements of $[a-1]$ must decrease within $\pi$ in order to avoid 3412.  On the other hand, elements of $[a+m,n]$ satisfy conditions (i), (ii) or (iii) as described in the proof of the previous lemma (where $b$ is taken there to be $a+m-1$).  If (i) holds, then it is seen that members of $[a-1]$ may be placed (in decreasing order) following any of the elements of $[a+m,n]$ within $\pi$ without introducing an occurrence of $T$.  Thus, there are in this case $\binom{n-m-1}{a-1}$ possible $\pi$ given $a$ and $m$.  This implies that there are a total of
$$\sum_{a=1}^{n-2}\sum_{m=1}^{n-a-1}\binom{n-m-1}{a-1}=\sum_{a=1}^{n-2}\left(\binom{n-1}{a}-1\right)=2^{n-1}-n$$
permutations in this case.  If (ii) holds, then letters in $[a-1]$ again may be inserted following any members of $[a+m,n]$, whereas if (iii) holds, then one cannot place letters in $[a-1]$ directly after $s$ or members of $[a+m+1,t-1]$ in order to avoid 3412.  Thus in (ii), there as $n-a-m+1$ possible places to insert letters in $[a-1]$, while in (iii), there are $n-t+1$ such places.

For each $a$ and $m$, considering all $s$ and $t$ yields
\begin{align*}
&\sum_{t=a+m}^{n-2}\sum_{s=t+1}^{n-1}\binom{n-t+a-1}{a-1}=\sum_{t=a+m}^{n-1}\binom{n-t+a-1}{a-1}(n-t-1)\\
&=\sum_{t=a+m}^{n}\binom{n-t+a-1}{a-1}(n-t-1)+1\\
&=\sum_{t=a+m}^n\binom{n-t+a-1}{a-1}(n-t+a)-(a+1)\binom{n-m}{a}+1\\
&= a\binom{n-m+1}{a+1}-(a+1)\binom{n-m}{a}+1=f(a,m)
\end{align*}
possible permutations for (ii) and (iii) combined.  Summing over all possible $a$ and $m$ then gives
\begin{align*}
&\sum_{a=1}^{n-2}\sum_{m=1}^{n-a-1}f(a,m)=\sum_{a=1}^{n-2}\sum_{m=1}^{n-a}f(a,m)=\sum_{a=1}^{n-2}\left(a\binom{n+1}{a+2}-(a+1)\binom{n}{a+1}+n-a\right)\\
&=\sum_{a=0}^{n-1}\left((a+2)\binom{n+1}{a+2}-(a+1)\binom{n}{a+1}-2\binom{n+1}{a+2}+n-a\right)\\
&=(n+1)(2^n-1)-n2^{n-1}-2(2^{n+1}-n-2)+\binom{n+1}{2}\\
&=(n-6)2^{n-1}+n+3+\binom{n+1}{2}.
\end{align*}
Combining this with the prior case (i) completes the proof.
\end{proof}

\begin{lemma}\label{ids3l}
The number of $T$-avoiding permutations of length $n$ whose leftmost ascent top is $n-1$ is given by
$$d_n=(n+8)2^{n-3}+1-\binom{n+2}{2}-\frac{(n-1)(n-2)(2n-3)}{6}, \qquad n \geq 3.$$
\end{lemma}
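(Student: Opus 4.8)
The permutations counted here have first letter $\le n-2$ (the convention of Lemmas~\ref{ids1l} and \ref{ids2l}), so the letter $n$ lies strictly to the right of the ascent top $n-1$; the plan is to classify them by the leftmost ascent bottom $a$, the index $m$ of the leftmost ascent, the first letter $b=\pi_1$, and the position of $n$, exactly as in Lemma~\ref{ids1l}.  Write $\pi=b\,a_1\cdots a_{m-2}\,a\,(n-1)\,\pi'$ with $a_1>\cdots>a_{m-2}$ in $[a+1,b-1]$.  As there, the subsequence of $\pi$ on the letters of $[b+1,n]$ must avoid $\{123,132,3412\}$: a $123$ (resp.\ $132$) among those letters, preceded by $a<b+1$, would produce a $1234$ (resp.\ $1243$).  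Since this subsequence begins with the second-largest letter $n-1$, it must equal $(n-1)(n-2)\cdots(b+1)$ with $n$ inserted in one of the $n-b-1$ slots after the first; the slot chosen for $n$ (just after $n-1$, in the interior, or at the end) is the analogue here of the trichotomy (i)/(ii)/(iii).

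For fixed $a,b,m$ and placement of $n$, I would count the admissible $\pi'$.  The letters of $[b-1]$ appearing in $\pi'$ must decrease there, since two of them in increasing order, together with $b$ and $n-1$ (which occur earlier and satisfy $b<n-1$), would form a $3412$; thus $\pi'$ is determined by the set of slots of the large block after which that decreasing run is placed.  Avoidance of $1243$ and $3412$ — the same analysis as in Lemmas~\ref{ids1l} and \ref{ids2l}, now with the extra letter $n$ to track — restricts the admissible slots, and, as there, one splits according to whether the IDS comprises a set of consecutive integers.  Each admissible configuration arises for $\binom{b-a-1}{m-2}$ choices of the $a_i$, and the number of admissible $\pi'$ is independent of that choice.

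Summing these contributions over $a$, $b$, $m$ and the position of $n$ produces a nested binomial sum, which a routine computer-assisted simplification of the kind in Lemmas~\ref{ids1l} and \ref{ids2l} — after a direct check of $n=3,4$ — collapses to the asserted closed form.  I expect the main obstacle to be the $\pi'$-count: pinning down, for each placement of $n$ inside $(n-1)(n-2)\cdots(b+1)$, exactly which insertion slots for the letters of $[b-1]$ keep $\pi$ $T$-avoiding, and then carrying the several nested sums through to the closed form without arithmetic slips.  (Should one instead read the lemma with no bound on $\pi_1$, the extra permutations are those with $\pi_1=n$; since $n$ is never part of a copy of a pattern in $T$, these correspond bijectively to the $\sigma\in S_{n-1}(T)$ whose leftmost ascent top equals $\max\sigma$, counted by Lemmas~\ref{ids1l} and \ref{ids2l} at $n-1$ plus the easy subcase $\sigma_1=n-2$.)
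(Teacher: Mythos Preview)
Your plan is correct and matches the paper's proof almost exactly: the paper also restricts to $\pi_1=b\le n-2$, shows the letters of $[b+1,n-1]$ must be decreasing so that $n$ occupies one of $n-b-1$ slots after $n-1$, and then splits according to whether the IDS is an interval of consecutive integers before carrying out the slot-counting for the letters of $[b-1]$ and summing. The only difference is that the paper actually performs the two case analyses and simplifies the resulting sums (obtaining $n\cdot 2^{n-3}-n+1-\tfrac{(n-1)(n-2)(2n-3)}{6}$ in the non-consecutive case and $2^n-\binom{n+1}{2}-1$ in the consecutive case), whereas you leave this as the acknowledged ``main obstacle''; there is no missing idea.
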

\begin{proof}
Let $\pi$ be a member of $S_n(T)$ under consideration have first letter $b \leq n-2$ and leftmost ascent bottom $a$.  Then the elements of $[b+1,n-1]$ must decrease within $\pi$.  For if not and $b+1\leq x<y \leq n-2$ with $x$ to the left of $y$, then there is a 1234 if $n$ is to the right of $y$, a 1243 if $n$ occurs between $x$ and $y$, and a 3412 if $n$ occurs between $n-1$ and $x$.  Let $S$ denote the elements of $[n]$ comprising the IDS of $\pi$.  First suppose $S$ does not consist of consecutive integers and let $x$ denote the largest element of $[a+1,b-1]$ not belonging to $S$.  We consider further cases based off of the position of the letter $n$ within $\pi$.  First assume  $n$ lies to the right of $b+1$.  Then $x$ must occur after $n$ or between $b+1$ and $n$ in order to avoid 1234.  Let the leftmost ascent of $\pi$ occur at index $m$. Since the elements of $[b-1]$ to the right of $n-1$ within $\pi$ must decrease, we see that there are $b-m+1$ ways in which to arrange the elements of $[b]-S$. As there are $\binom{b-a-1}{m-2}$ ways in which to select the middle $m-2$ elements of $S$, considering all $a$, $b$ and $m$ gives
$$\sum_{a=1}^{n-4}\sum_{b=2}^{n-2}\sum_{m=2}^{b-a}\binom{b-a-1}{m-2}(b-m+1)$$
possible permutations in this case.  On the other hand, if $n$ lies to the left of $b+1$, then all letters in $[b-1]-S$ must occur after $b+1$, for otherwise there would be a 3412 as seen with $(n-1)nx(b+1)$ or a 1243 in the form $axn(b+1)$.  There are thus $n-b-2$ possible positions for $n$ relative to the letters in $[b+1,n-2]$, which yields
$$\sum_{a=1}^{n-4}\sum_{b=2}^{n-2}\sum_{m=2}^{b-a}\binom{b-a-1}{m-2}(n-b-2)$$
additional permutations.

Combining the two preceding cases implies that the number of permutations under consideration for which $S$ does not consist of consecutive integers is given by
\begin{align*}
&\sum_{a=1}^{n-4}\sum_{b=2}^{n-2}\sum_{m=2}^{b-a}\binom{b-a-1}{m-2}(n-m-1)=\sum_{m=2}^{n-3}(n-m-1)\sum_{b=m+1}^{n-2}\sum_{a=1}^{b-m}\binom{b-a-1}{m-2}\\
&=\sum_{m=2}^{n-3}(n-m-1)\sum_{b=m+1}^{n-2}\left(\binom{b-1}{m-1}-1\right)=\sum_{m=2}^{n-3}(n-m-1)\left(\binom{n-2}{m}-(n-m-1)\right)\\
&=2^{n-2}-n+\sum_{m=2}^{n-3}(n-2)\binom{n-3}{n-m-3}-\sum_{m=2}^{n-3}(n-m-1)^2=n2^{n-3}-n+1-\sum_{m=1}^{n-2}m^2\\
&=n2^{n-3}-n+1-\frac{(n-1)(n-2)(2n-3)}{6}, \qquad n \geq 4.
\end{align*}

Now assume $S=\{a,a+1,\ldots,a+m-1\}$ for some $1\leq a \leq n-2$ and $1 \leq m \leq n-a-1$.  Then the subsequence of $\pi$ comprising the letters in $[a+m,n]$ must have the form $n-1,n-2,\ldots,t,n,t-1,\ldots,a+m$ for some $a+m \leq t \leq n-1$.  Letters in $[a-1]$ may occur only after $a+m$ or members of $[t,n-1]$ if $t>a+m$, for otherwise there would be an occurrence of 3412.  If $t=a+m$, letters in $[a-1]$ may occur after any member of $[a+m,n]$.  In either case, there are
$\binom{n-t+a-1}{a-1}$ ways in which to arrange the members of $[a-1]$, which must decrease.  One can verify that permutations $\pi$ obtained in this manner are $T$-avoiding.  Considering all $a$, $m$ and $t$ gives
\begin{align*}
&\sum_{a=1}^{n-2}\sum_{m=1}^{n-a-1}\sum_{t=a+m}^{n-1}\binom{n-t+a-1}{a-1}=\sum_{a=1}^{n-2}\sum_{m=1}^{n-a-1}\left(\binom{n-m}{a}-1\right)=\sum_{a=1}^{n-2}\left(\binom{n}{a+1}-(n-a)\right)\\
&=2^n-\binom{n+1}{2}-1
\end{align*}
possibilities.  Combining this case with the previous yields $d_n$ permutations in all.
\end{proof}

Let $a_n=|S_n(T)|$.  We may express $a_n$ in terms of the $b_n$ and $d_n$ sequences as follows, where we take $b_n=0$ if $n \leq 4$.

\begin{lemma}\label{ids4l}
If $n\geq 2$, then
\begin{equation}\label{ids3le1}
a_n=a_{n-1}+(2n-9)2^{n-2}+3+\binom{n+1}{2}+b_n+\sum_{m=0}^{n-3}d_{n-m},
\end{equation}
with $a_1=1$, where $b_n$ and $d_n$ are as defined above.
\end{lemma}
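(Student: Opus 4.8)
The plan is to derive the recurrence \eqref{ids3le1} by partitioning $S_n(T)$ according to two features of a permutation $\pi=\pi_1\cdots\pi_n$: whether $\pi$ is decreasing (equivalently, has no ascent at all), and, when it does have a leftmost ascent at index $i$, the identity of the leftmost ascent top $\pi_{i+1}$ and the size of the first letter $\pi_1$ relative to $n$. The decreasing permutation contributes $1$. If $\pi$ has first letter $\pi_1=n$, then removing the leading $n$ gives an arbitrary $T$-avoider of length $n-1$ (since no pattern in $T$ begins with its largest letter in first position, prepending a new maximum to any $T$-avoider yields a $T$-avoider, and conversely), contributing $a_{n-1}$; one must also remember to put back the decreasing permutation among the length-$(n-1)$ avoiders or otherwise reconcile the ``no ascent'' bookkeeping so that nothing is double-counted. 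The remaining permutations have $\pi_1\le n-1$ and a genuine leftmost ascent, and these split by the value of the leftmost ascent top.

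Next I would handle the three ``ascent top'' regimes. If the leftmost ascent top is $n$, then either $\pi_1\le n-2$, which is exactly the class counted by Lemma~\ref{ids1l} (IDS not consecutive) together with Lemma~\ref{ids2l} (IDS consecutive), contributing $b_n+\bigl((n-5)2^{n-1}+\binom{n+1}{2}+3\bigr)$; or $\pi_1=n-1$, a small explicitly describable family that one counts directly (the IDS is forced to be $n-1,\dots$ and the rest of the permutation is tightly constrained by $T$-avoidance, giving a closed form linear in $2^{n}$ plus low-degree polynomial terms). If the leftmost ascent top is $n-1$, Lemma~\ref{ids3l} gives $d_n$ for the sub-case with $\pi_1\le n-2$, and again the $\pi_1=n-1$ sub-case is a separate small count. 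Finally, if the leftmost ascent top is some value $k\le n-2$, then the letters $k+1,k+2,\dots,n$ must all appear before the ascent (else one gets a $1234$ or $1243$ with the ascent), so $\pi$ begins with a decreasing run containing $\{k+1,\dots,n\}$ followed eventually by the ascent into $k$; peeling off the top block $n,n-1,\dots$ in an appropriate way, one sees this class is in bijection with $T$-avoiders of length $n-m$ whose leftmost ascent top is the new maximum $n-m$ minus nothing, i.e.\ it reduces to the ``leftmost ascent top is the largest letter'' counts at smaller length — this is the source of the sum $\sum_{m=0}^{n-3}d_{n-m}$, where the shift index $m$ records how many of the largest letters sit in the initial descent block before the ascent. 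I would make this reduction precise by showing the map that deletes those top letters and relabels is a bijection onto the relevant smaller class, and track exactly which $d$-values (or Lemma~\ref{ids3l}-type values, with the convention $b_n=0$ for $n\le4$) arise.

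Having computed each piece, I would add them and simplify: the $\pi_1=n-1$ pieces from the ascent-top-$n$ and ascent-top-$(n-1)$ cases, together with the $\pi_1\le n-2$ ascent-top-$(n-1)$ contribution $d_n$ that I will instead fold into the sum $\sum_{m=0}^{n-3}d_{n-m}$ (taking the $m=0$ term to be $d_n$), should collapse to the stated $(2n-9)2^{n-2}+3+\binom{n+1}{2}$. The initial condition $a_1=1$ is immediate. The main obstacle I anticipate is the ascent-top-$k$ reduction for general $k\le n-2$: establishing cleanly that the block of largest letters preceding the leftmost ascent behaves like a transparent prefix (so that deletion is a bijection onto a shorter $T$-avoider with its leftmost ascent top equal to its maximum) requires carefully checking that no occurrence of $1234$, $1243$, or $3412$ is created or destroyed, and getting the index range of the resulting sum (and the $n\le 4$ edge conventions) exactly right. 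The polynomial-plus-$2^n$ bookkeeping at the end is routine but must be done attentively so the scattered low-order terms cancel to the advertised form.
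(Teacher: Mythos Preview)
Your overall decomposition is the paper's own: split by whether $\pi_1=n$, $\pi_1=n-1$, or $\pi_1\le n-2$, and then by the value of the leftmost ascent top. But your account of the crucial ``ascent top $k\le n-2$'' reduction is wrong in two linked ways, and if you carried it out as written you would not land on $\sum_{m} d_{n-m}$.

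First, you miss the reason this case is tractable: if $\pi_1\le n-2$ then both $n$ and $n-1$ are still unused when the leftmost ascent occurs, and since any ascent top must be the largest or second-largest unused letter (else one finds a $1234$ or $1243$), the leftmost ascent top is forced to be $n$ or $n-1$. Hence the case ``ascent top $k\le n-2$'' occurs \emph{only} when $\pi_1=n-1$. (Your ``$\pi_1=n-1$, ascent top $n-1$'' subcase is vacuous for the same reason: $n-1$ is already used.) Second, with $\pi_1=n-1$ and ascent top $k=n-m-1$, the letters forced into the initial descent block are $n-1,n-2,\dots,n-m$; the letter $n$ is \emph{not} among them. Deleting these $m$ letters and standardizing sends $n\mapsto n-m$ while $k=n-m-1$ is fixed, so the resulting avoider of length $n-m$ has leftmost ascent top $(n-m)-1$, i.e.\ the \emph{second}-largest letter. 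That is precisely the class counted by $d_{n-m}$, not the ``ascent top equals new maximum'' class you describe. With this corrected target (and the observation that the deleted prefix is extraneous for all three patterns, including $3412$ because nothing larger than $n-m$ lies to the right of the ascent top), the bijection is immediate and yields $\sum_{m=1}^{n-3} d_{n-m}$; the $m=0$ term $d_n$ is the separate Lemma~\ref{ids3l} contribution from $\pi_1\le n-2$. Finally, the $\pi_1=n-1$, ascent-top-$n$ subcase is exactly the family $(n-1)\rho' n\rho''$ with $\rho',\rho''$ decreasing, contributing $2^{n-2}$, and combining $2^{n-2}+(n-5)2^{n-1}=(2n-9)2^{n-2}$ gives the stated recurrence. (There is no need to single out the decreasing permutation; it is already inside the $\pi_1=n$ count $a_{n-1}$.)
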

\begin{proof}
We enumerate the remaining cases of $S_n(T)$.  Let $\pi \in S_n(T)$ and not of a form described in Lemmas \ref{ids1l}--\ref{ids3l}, where $n \geq 3$.
If $\pi$ starts with $n$, then there are clearly $a_{n-1}$ possibilities.  Since members of $S_n(T)$ starting with a letter $\leq$ $n-2$ must have leftmost ascent top either equal to $n-1$ or $n$, the only remaining case is if $\pi$ starts with $n-1$, which we now assume.  If the leftmost ascent top of $\pi$ is $n$, then $\pi=(n-1)\rho'n\rho''$, where $\rho'$ and $\rho''$ are possibly empty and decreasing.  This gives $2^{n-2}$ possible $\pi$.  So assume $\pi$ has leftmost ascent top $n-m-1$ for some $1 \leq m \leq n-3$.  Since any ascent top of $\pi$ is either the largest or second largest letter yet to appear (otherwise 1234 or 1243 would be present), it follows that $\pi$ must start $n-1,n-2,\ldots,n-m$.  These letters are seen to be extraneous concerning the avoidance of $T$ and thus may be deleted (note that they impose no restrictions when considering 3412 since $n-m-1$ is the first ascent top).  This leaves a permutation of the form enumerated by $d_{n-m}$.  Considering all possible $m$ then yields $\sum_{m=1}^{n-3}d_{n-m}$ possibilities.  Combining the additional cases discussed here with those from Lemmas \ref{ids1l}--\ref{ids3l} completes the proof.
\end{proof}

By Lemmas \ref{ids1l}, \ref{ids3l} and \ref{ids4l}, we have
\begin{align*}
a_n&=a_{n-1}+(2n-9)2^{n-2}+3+\binom{n+1}{2}\\
&+\frac{1}{12}(n-2)(3\cdot2^n-(n-1)(n^2-3n+12))\\
&+\sum_{m=0}^{n-3}\left((n-m+8)2^{n-m-3}+1-\binom{n-m+2}{2}-\frac{(n-m-1)(n-m-2)(2n-m-3)}{6}\right),
\end{align*}
which is equivalent to
\begin{align*}
a_n&=a_{n-1}+(n-1)2^n-\frac{1}{6}(n-1)(n^3-3n^2+14n-6), \qquad n \geq 2,\\
\end{align*}
with $a_1=a_0=1$. Hence, we can state the following result.

\begin{theorem}\label{th74a}
Let $T=\{1234,1243,3412\}$. Then
$$F_T(x)=\frac{1-9x+35x^2-75x^3+98x^4-78x^5+36x^6-12x^7}{(1-x)^6(1-2x)^2}.$$
\end{theorem}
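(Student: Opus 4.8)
The statement is a formal consequence of the linear recurrence for $a_n := |S_n(T)|$ derived just above it, namely
\[
a_n = a_{n-1} + (n-1)2^n - \frac{1}{6}(n-1)(n^3-3n^2+14n-6),\qquad n\ge 2,
\]
together with $a_0 = a_1 = 1$. All the substantive combinatorics sits in Lemmas \ref{ids1l}, \ref{ids3l}, \ref{ids4l}, which reduce the count to this recurrence; what remains is a generating-function computation. So the plan is: multiply the recurrence by $x^n$, sum over $n\ge 2$, and solve the resulting functional equation for $F_T(x)=\sum_{n\ge0}a_nx^n$.

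First, the term $\sum_{n\ge2}a_{n-1}x^n$ contributes $x\bigl(F_T(x)-1\bigr)$, using $a_0=1$. Next I would evaluate the two forcing sums. For the exponential part, differentiating $\sum_{n\ge0}2^nx^n=\frac{1}{1-2x}$ gives $\sum_{n\ge0}(n-1)2^nx^n=\frac{4x-1}{(1-2x)^2}$, and removing the $n=0$ and $n=1$ terms leaves $\sum_{n\ge2}(n-1)2^nx^n=\frac{4x^2}{(1-2x)^2}$. For the polynomial part, I would expand the quartic in the binomial basis,
\[
(n-1)(n^3-3n^2+14n-6)=6-6\binom{n}{1}+24\binom{n}{2}+12\binom{n}{3}+24\binom{n}{4},
\]
and apply $\sum_{n\ge0}\binom{n}{k}x^n=\frac{x^k}{(1-x)^{k+1}}$; after subtracting the $n=0$ contribution ($=6$) and the $n=1$ contribution ($=0$), this gives a closed form with denominator $(1-x)^5$. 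Assembling the pieces,
\[
(1-x)F_T(x)=1+\frac{4x^2}{(1-2x)^2}-\frac{1}{6}\left(\frac{6}{1-x}-\frac{6x}{(1-x)^2}+\frac{24x^2}{(1-x)^3}+\frac{12x^3}{(1-x)^4}+\frac{24x^4}{(1-x)^5}-6\right).
\]

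Finally, dividing by $1-x$ and bringing everything over the common denominator $(1-x)^6(1-2x)^2$, the numerator collapses to $1-9x+35x^2-75x^3+98x^4-78x^5+36x^6-12x^7$, which is exactly the claim. If one prefers to sidestep the hand simplification, it suffices to verify that the proposed rational function $R(x)$ satisfies $[x^0]R=[x^1]R=1$ and the identity $(1-x)R(x)=1+H(x)$, where $H$ denotes the right-hand side of the last display minus $1$ --- a polynomial identity after clearing denominators, readily checked by computer algebra --- whence $R$ obeys the same recurrence and initial conditions as $F_T$ and so equals it. I do not anticipate any conceptual obstacle here; the only hazard is arithmetic accuracy, since the degree-$7$ numerator is unforgiving of sign slips, and a quick check of the first few coefficients against $a_0,\dots,a_4=1,1,2,6,21$ guards against that.
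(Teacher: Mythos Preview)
Your proposal is correct and follows exactly the route the paper takes: the paper derives the recurrence $a_n=a_{n-1}+(n-1)2^n-\tfrac{1}{6}(n-1)(n^3-3n^2+14n-6)$ from Lemmas~\ref{ids1l}, \ref{ids3l}, \ref{ids4l} and then simply writes ``Hence, we can state the following result,'' omitting the generating-function calculation entirely. You have filled in that routine step with the standard $\binom{n}{k}$-basis expansion and summation, and your intermediate expressions (in particular the binomial decomposition $6-6\binom{n}{1}+24\binom{n}{2}+12\binom{n}{3}+24\binom{n}{4}$ and the identity $\sum_{n\ge 2}(n-1)2^nx^n=4x^2/(1-2x)^2$) are correct.
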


\subsection{Case 121: $\{1243,2341,3412\}$}
We need the following lemmas.
\begin{lemma}\label{lem121a1}
For $d\geq1$, define $N_d(x)$ to be the generating function for permutations $\pi=(n-d-1)\pi'n\pi''\in S_n(T)$ such that $\pi''$ contains the subsequence $(n-1)(n-2)\cdots(n-d)$,
and set $N(x)=\sum_{d\geq1}\frac{N_d(x)}{(1-x)^d}$. Then
$$N(x)=\frac{x^3(1-6x+15x^2-21x^3+15x^4-3x^5)}{(1-x)^4(1-2x)^2(1-3x+1+x^2)}.$$
\end{lemma}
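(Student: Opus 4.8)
The plan is to fix $d\ge1$, determine the precise shape of a permutation $\pi=(n-d-1)\pi' n\pi''\in S_n(T)$ whose block $\pi''$ contains the decreasing run $(n-1)(n-2)\cdots(n-d)$, read off a closed form for $N_d(x)$, and then evaluate and simplify the weighted sum $\sum_{d\ge1}N_d(x)/(1-x)^d$.

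First I would fix the letters. Since $n-d-1$ leads and each of $n-d,\dots,n-1$ must appear in $\pi''$, the block $\pi'$ uses only values from $[1,n-d-2]$; write $A$ for its value set and $B=[1,n-d-2]\setminus A$ for the small values occurring in $\pi''$. Next I would extract the structure forced by $T$-avoidance, and I expect several quick reductions: an ascent inside $\pi'$ together with $n$ and $n-1$ is a $1243$, so $\pi'$ is decreasing; the leading letter $n-d-1$ followed by $n$ and then any two ascending $B$-letters in $\pi''$ is a $3412$, so $B$ appears in decreasing order in $\pi''$; and $n$ is ``inert'' — it can never be the top of a forbidden pattern, since that would require two increasing letters among its predecessors, impossible once $\pi'$ is known to be decreasing (and $n-d-1$ exceeds every letter of $\pi'$). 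Consequently $\pi''$ is just a shuffle of the decreasing word on $B$ with the decreasing run $(n-1)\cdots(n-d)$, and a short check confirms that every such shuffle keeps $\pi''$ itself $T$-avoiding. The only coupling left between $A$, $B$ and the placement of the run comes from $1243$'s of the shape $a\,b\,h\,h'$ with $a\in A$, $b\in B$, $a<b$, and $h>h'$ two run-letters lying to the right of $b$ in $\pi''$; ruling these out forces, in the subcase where the largest $B$-letter precedes at least two run-letters, that $B$ be an initial segment $[1,\beta]$ of the small values (so $A$ is the complementary block), while in the complementary subcase (at most one run-letter after the first $B$-letter) $A$ may be arbitrary. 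Splitting $\pi''$ along these two subcases and translating each admissible shape into a g.f.\ — a shuffle slot for $B$-letters contributing a factor $1/(1-x)$, each forced decreasing stretch a geometric factor, and the choice of $A$ another — should give $N_d(x)$ in closed form, with the degenerate cases ($\pi'$ empty, $B$ empty, $d=1,2$) checked separately.

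With $N_d(x)$ in hand — essentially $x^{d+2}$ times a fixed rational function times a bounded number of powers of $1/(1-x)$, plus a few low-$d$ corrections — the weighted sum $\sum_{d\ge1}N_d(x)/(1-x)^d$ becomes a finite combination of geometric series in $x$, $x/(1-x)$ and $x/(1-x)^2$; each sums to a rational function, and collecting everything over the common denominator yields the asserted expression for $N(x)$. The answer being rational, no Catalan identity is needed, and this last step is routine, computer-checkable algebra.

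The step I expect to be the main obstacle is the structural analysis of $\pi''$: tracking all three forbidden patterns simultaneously as the $B$-letters interleave with the decreasing run $(n-1)(n-2)\cdots(n-d)$ under the shadow of the leading letter $n-d-1$, and producing a clean, case-free description of exactly which interleavings — and which value sets $A$ — survive. Once that dichotomy is pinned down together with the small-$d$ and degenerate boundary cases, both the formula for $N_d(x)$ and the final summation are mechanical.
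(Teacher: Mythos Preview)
Your structural analysis has a genuine gap: you account for $1243$'s of the shape $a\,b\,h\,h'$ (with $a\in A$, $b\in B$, $a<b$, and $h>h'$ two run-letters) but you overlook $2341$'s of the shape $a\,b\,h\,b'$ with $a\in A$, $b,b'\in B$, $h$ a run-letter, and $b'<a<b$. These are not ruled out by your argument, and they do occur in your ``subcase~2'' (at most one run-letter after the first $B$-letter), so the claim ``$A$ may be arbitrary'' there is false. A concrete counterexample with $d=1$, $n=6$: take $A=\{2\}$, $B=\{1,3\}$, $\pi''=3\,5\,1$, so $\pi=4\,2\,6\,3\,5\,1$; the first $B$-letter $3$ has exactly one run-letter after it, yet $2\,3\,5\,1$ is a $2341$.

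This is exactly the constraint the paper's proof uses. For $d=1$ it writes $\pi''=\alpha(n-1)\beta$ and, when $\alpha\ne\emptyset$, locates $\max(\alpha)$ in the gap $(j_{s-1},j_s)$ determined by $\pi'=j_e\cdots j_1$; then avoidance of \emph{both} $1243$ and $2341$ forces $j_{s-1}<\beta<j_s$, which sharply restricts $\beta$ (and hence couples $A$ to the tail of $\pi''$). For $d\ge2$ the paper does not try for a direct closed form for $N_d$ but instead derives the recursion $N_d(x)=xN_{d-1}(x)+x^{d+3}/(1-x)^{d+2}$ by peeling off the letter $n-d$, and then sums. Your plan of producing a closed form for each $N_d$ directly is workable, but only after you incorporate the $2341$ constraint; without it your subcase~2 overcounts and the summation will not land on the stated $N(x)$.
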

\begin{proof}
Refine $N_d(x)$ to $N_{d,e}$ counting avoiders where $\pi'$ has $e$ letters.
Since $\pi$ avoids $1243$ and $d\geq1$, we see that $\pi'$ is decreasing.

Let $\pi=(n-d-1)\pi'n\pi''\in S_n(T)$ such that $\pi''$ contains the subsequence $(n-1)(n-2)\cdots(n-d)$ and $\pi'$ contains $e$ letters, say $\pi'=j_ej_{e-1}\cdots j_1$. To find $N_1(x)$, write $\pi''$ as $\alpha (n-1)\beta$. If $\alpha=\emptyset$, then $\beta$ is decreasing and the contribution is
$\frac{x^{e+3}}{(1-x)^{e+1}}$. Otherwise, the maximal letter of $\alpha$ is between $j_{s-1}$ and $j_{s}$ for some $s \in[e+1]$ where $j_0=0$ and $j_{e+1}=n-2$. Since $\pi$ avoids $1243$ and $2341$, $j_{s-1}<\beta< j_s$. The letters in $\pi''$ smaller than $n-2$ are decreasing. Thus, by considering whether $\beta$ is empty or not, we obtain a contribution of $\frac{x^{e+4}}{(1-x)^s}+\frac{x^{e+5}}{(1-x)^2}$ for $2\le s \le e+1$, and of $\frac{x^{e+4}}{(1-x)^2}$ for $s=1$.
Hence,
$$N_{1,e}(x)=\frac{x^{e+3}}{(1-x)^{e+1}}+\sum_{s=2}^{e+1}\left(\frac{x^{e+4}}{(1-x)^s}+\frac{x^{e+5}}{(1-x)^2}\right)+\frac{x^{e+4}}{(1-x)^2}\,,$$
which leads to
\begin{align}\label{eq121a1b}
N_1(x)=\sum_{e\geq0}N_{1,e}(x)=\frac{x^3(1-3x+4x^2-3x^3)}{(1-x)^4(1-2x)}\,.
\end{align}

For $d\geq2$, by a decomposition similar to the case $d=1$, we see that $$N_{d,e}(x)=xN_{d-1,e}(x)+\frac{x^{d+e+3}}{(1-x)^{d+1}}\,.$$
Summing over $e\geq0$, we obtain for $d\geq2$,
$$N_d(x)=xN_{d-1}(x)+\frac{x^{d+3}}{(1-x)^{d+2}}.$$
Multiplying by $\frac{1}{(1-x)^d}$ and summing over $d\geq2$, we have
$$N(x)-\frac{N_1(x)}{1-x}=\frac{x}{1-x}N(x)+\sum_{d\geq2}\frac{x^{d+3}}{(1-x)^{2d+2}}\,.$$
Now use \eqref{eq121a1b} and solve for $N(x)$ to complete the proof.
\end{proof}

\begin{lemma}\label{lem121a2}
Define $M(x)$ to be the generating function for permutations $\pi=i\pi'n\pi''\in
S_n(T)$ with $2$ left-right maxima such that $i \le n-2$. Then
$$M(x)=\frac{x^3(1-6x+16x^2-25x^3+20x^4-5x^5)}{(1-x)^4(1-2x)^2(1-3x+x^2)}\,.$$
\end{lemma}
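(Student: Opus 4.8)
The plan is to take an arbitrary $\pi=i\pi'n\pi''\in S_n(T)$ with exactly two left-right maxima and $i\le n-2$, and to set $d=n-1-i\ge1$. Then the $d$ letters of $[i+1,n-1]$ all occur in $\pi''$, while $\pi'$ consists only of letters of $[1,i-1]$, and — exactly as in the proof of Lemma~\ref{lem121a1}, since $\pi$ avoids $1243$ and $d\ge1$ — an ascent in $\pi'$ together with $n$ and $n-1$ would create a $1243$, so $\pi'$ is decreasing and the only freedom in $\pi'$ is the choice of which letters of $[1,i-1]$ it uses. I would then split the avoiders according to the arrangement of the large letters $n-1>n-2>\cdots>n-d$ inside $\pi''$ and the way the remaining small letters of $\pi''$ interleave them.

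The first step is to nail down the admissible structure of $\pi''$. Reading $i$ ahead of a $132$-pattern formed by large letters produces a $1243$, so the large letters occur in $\pi''$ as a $132$-avoiding subword; once two large letters occur there in increasing order, no smaller letter may lie to their right (else a $2341$ appears with $i$); and the leftover small letters of $\pi''$ are then confined to a short list of admissible ``slots'' by the $2341$- and $3412$-conditions. This reduces $\pi$, in the principal case, to an avoider of the kind enumerated by $N_d(x)$ in Lemma~\ref{lem121a1} — large letters decreasing, small letters in their slots — together with $d$ extra independent choices each amounting to a (possibly empty) decreasing run of small letters that may be placed at a prescribed spot without introducing a copy of $T$. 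A decomposition of $\pi''$ paralleling the one in the proof of Lemma~\ref{lem121a1} (locate $n-1$ and the largest small letter of $\pi''$, then recurse down through $n-2,\dots,n-d$) then shows that these $\pi$ contribute $N_d(x)/(1-x)^d$, the factor $1/(1-x)^d$ coming from the $d$ extra runs; summing over $d\ge1$ gives the term $N(x)$ of Lemma~\ref{lem121a1}.

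The remaining avoiders are the exceptional ones, in which $\pi''$ genuinely contains one of the rigid trailing shapes — the analogues of cases (ii) and (iii) in the proof of Lemma~\ref{ids1l} — that do not arise from the principal reduction. I would enumerate these directly; each such $\pi$ decomposes into a bounded number of independent pieces (a few decreasing runs and one piece with generating function $\frac1{1-3x+x^2}$), and multiplying their contributions gives $\dfrac{x^5}{(1-x)^2(1-2x)(1-3x+x^2)}$ for this case. Adding the two cases,
$$M(x)=N(x)+\frac{x^5}{(1-x)^2(1-2x)(1-3x+x^2)},$$
and then inserting the closed form of $N(x)$ from Lemma~\ref{lem121a1}, using $(1-x)^2(1-2x)=1-4x+5x^2-2x^3$, and clearing the common denominator $(1-x)^4(1-2x)^2(1-3x+x^2)$, yields the claimed expression for $M(x)$.

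The hard part will be the structural case analysis of $\pi''$: correctly listing all admissible interleavings of the large and small letters, checking that the principal ones are exactly those captured by the $N_d(x)/(1-x)^d$ bundles (nothing double-counted, nothing omitted, and no copy of $T$ ever created by the insertions), and isolating precisely the exceptional trailing shapes that account for the residual $x^5/\bigl((1-x)^2(1-2x)(1-3x+x^2)\bigr)$. The ensuing generating-function algebra is routine but lengthy and is best checked with computer assistance.
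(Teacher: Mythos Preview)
Your final identity
\[
M(x)=N(x)+\frac{x^5}{(1-x)^2(1-2x)(1-3x+x^2)}
\]
is exactly what the paper obtains, and the subsequent algebra is fine. The problem is the combinatorial justification you give for the two summands.

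With your parametrization $d=n-1-i$, the set of avoiders whose large letters $n-1,\dots,n-d$ appear in decreasing order in $\pi''$ is, by the very definition in Lemma~\ref{lem121a1}, counted by $N_d(x)$ itself --- there is no extra factor. So your ``principal case'' should contribute $\sum_{d\ge1}N_d(x)$, not $\sum_{d\ge1}N_d(x)/(1-x)^d=N(x)$. The sentence ``together with $d$ extra independent choices each amounting to a (possibly empty) decreasing run of small letters'' has no combinatorial content here: those small letters are already accounted for inside $N_d(x)$. Likewise, your ``exceptional'' case is never actually identified; you assert a generating function for it rather than derive one, and in fact it would have to absorb the difference $N(x)-\sum_d N_d(x)$ as well, which your description does not do.

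The paper's argument uses a different meaning of $d$. It lets $M_d(x)$ count avoiders with $i\le n-d-1$ in which $\pi''$ contains the decreasing subsequence $(n-1)\cdots(n-d)$, so $M_1=M$. Looking at how $n-d$ sits relative to the other large letters to its right yields the one-step recursion
\[
(1-x)M_d(x)=N_d(x)+M_{d+1}(x)+\frac{x^{d+4}}{(1-x)^{d+2}(1-2x)},
\]
and iterating (equivalently, dividing by $(1-x)^d$ and summing over $d\ge1$) telescopes to
\[
M(x)=M_1(x)=\sum_{d\ge1}\frac{N_d(x)}{(1-x)^d}+\sum_{d\ge1}\frac{x^{d+4}}{(1-x)^{2d+2}(1-2x)}
=N(x)+\frac{x^5}{(1-x)^2(1-2x)(1-3x+x^2)}.
\]
So the weights $1/(1-x)^d$ do not come from $d$ extra runs in a single permutation; they come from $d$ applications of the recursion, each introducing a factor $1/(1-x)$. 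If you want to keep your fixed-$i$ viewpoint, you would need a genuinely different decomposition of the ``large letters not all decreasing'' case; as written, that part of your argument is a gap.
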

\begin{proof}
Let $M_d(x)$ denote the generating function for permutations $\pi=i\pi'n\pi''\in
S_n(T)$ with $2$ left-right maxima such that $\pi''$ contains the subsequence
$(n-1)\cdots(n-d)$ and $i \le n-d-1$. Clearly, $M_1(x)=M(x)$.  Since $\pi$ avoids $1243$,  there is no letter greater than $i$ between $n$ and
$n-d+1$. Thus, all letters greater than $i$ in $\pi''$ must occur to the right of $n-d+1$. We denote the subsequence comprising these letters by $\ga$.

To find $M_d(x)$, write $\ga$ as $\alpha (n-d)\beta$. In cases (i)
$\alpha=\beta=\emptyset$, (ii) $\alpha\neq\emptyset$ and $\beta=\emptyset$, (iii)
$\alpha=\emptyset$ and $\beta\neq\emptyset$, we have the contributions (i) $N_d(x)$,
(ii) $xM_d(x)$, (iii) $M_{d+1}(x)$, respectively.  Thus, we may assume that
$\alpha,\beta\neq\emptyset$. Note first that $i+1$ must occur in $\beta$, for otherwise $i(i+1)$ is a 12 within a 1243, and by similar reasoning $\beta<\alpha$.  Thus, $\alpha$ and $\beta$ form decreasing subsequences since $\pi$ avoids $2341$ and $3412$, which implies $\alpha\beta$ is decreasing. Furthermore, no letter in $[i-1]$ can occur to the right of $n-d$, for otherwise there would be an occurrence of $2341$ of the form $i(n-d-1)(n-d)i'$ for some $i'\in [i-1]$.  From the preceding observations, we see that $\pi$ can be expressed as follows:
\begin{align*}
&\pi=i(i-1)\cdots i'
n\gamma^{(1)}(n-1)\gamma^{(2)}\cdots\gamma^{(d-1)}(n-d+1)\gamma^{(d)}(n-d-1)\cdots\gamma^{(e-1)}(n-e)\\
&\qquad\qquad\qquad\qquad\qquad\qquad\qquad\qquad\gamma^{(e)}(n-d)(n-e-1)\cdots(i+1),
\end{align*}
where $e \geq d+1$, $i\geq i'>\gamma^{(1)}\cdots\gamma^{(e)}$ and
$\gamma^{(1)}\cdots\gamma^{(e)}$ is decreasing (for otherwise, 1243 would be present). Thus, we have a contribution of
$\sum_{e\geq d+1}\frac{x^{e+3}}{(1-x)^{e+2}}=\frac{x^{d+4}}{(1-x)^{d+2}(1-2x)}$.
Hence
$$M_d(x)=N_d(x)+xM_d(x)+M_{d+1}(x)+\frac{x^{d+4}}{(1-x)^{d+2}(1-2x)},$$
which implies
$$M_d(x)=\frac{1}{1-x}N_d(x)+\frac{1}{1-x}M_{d+1}(x)+\frac{x^{d+4}}{(1-x)^{d+3}(1-2x)}.$$
Multiplying by $\frac{1}{(1-x)^{d-1}}$ and summing over $d\geq1$, we obtain
$$\sum_{d\geq1}\frac{M_d(x)}{(1-x)^{d-1}}=N(x)+\sum_{d\geq1}\frac{M_{d+1}(x)}{(1-x)^d}+\sum_{d\geq1}\frac{x^{d+4}}{(1-x)^{2d+2}(1-2x)}\,,$$
which is equivalent to
$$M_1(x)=N(x)+\sum_{d\geq1}\frac{x^{d+4}}{(1-x)^{2d+2}(1-2x)}\,,$$
and the result follows from Lemma \ref{lem121a1}.
\end{proof}

\begin{lemma}\label{lem121a3}
$$G_2(x)=x^2F_T(x)+\frac{x^3(2-14x+42x^2-70x^3+64x^4-27x^5+4x^6)}{(1-x)^4(1-2x)^2(1-3x+x^2)}.$$
\end{lemma}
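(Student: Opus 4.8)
The plan is to decompose a $T$-avoider $\pi = i_1\pi^{(1)}i_2\pi^{(2)} \in S_n(T)$ with exactly two left-right maxima according to the value of $i_1$. Since $i_2 = n$, there are two cases: either $i_1 = n-1$, or $i_1 \le n-2$. The second case is exactly what Lemma~\ref{lem121a2} counts, contributing $M(x)$ to $G_2(x)$. So the real work is the case $i_1 = n-1$, and I would show its contribution equals $x^2 F_T(x) + (\text{a correction term})$, then add $M(x)$ and simplify.

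For the case $i_1 = n-1$, write $\pi = (n-1)\pi' n\pi''$. The letters of $\pi'$ lie in $[n-2]$ and, since $\pi$ avoids $2341$ and $3412$ with $(n-1)$ and $n$ available as the ``$34$'' part, I expect $\pi'$ to be forced to be decreasing (any ascent in $\pi'$ together with $n-1, n$ creates a forbidden pattern — need to check $1243$ as well, but an ascent $ab$ in $\pi'$ with $a<b$ gives $ab(n-1)n$ wait that's $1234$ which is not in $T$; rather it is the interaction with $n$ and a later small letter, or $2341$ via $b(n-1)n$-plus-later; this is the point to be careful). Granting $\pi'$ decreasing with $e$ letters, the structure of $\pi''$ must be analyzed: $\pi''$ contains all of $[n-2]\setminus(\text{letters of }\pi')$ together with possibly some of $n-1, n-2, \dots$. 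The key sub-case split is whether $\pi''$ contains a decreasing subsequence $(n-1)(n-2)\cdots(n-d)$ for some maximal $d \ge 1$, versus the ``generic'' case that feeds back into $F_T(x)$.

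The mechanism producing the $x^2 F_T(x)$ term should be this: when $\pi'$ is empty (or more precisely when the ``top block'' $(n-1)\cdots(n-d)$ sits in a standard way), deleting the two letters $n-1$ and $n$ leaves an arbitrary $T$-avoider of length $n-2$, giving $x^2 F_T(x)$; the remaining configurations — those where $\pi'$ is nonempty, or where the large letters in $\pi''$ are arranged in one of the exceptional patterns (i), (ii), (iii) from Lemma~\ref{ids1l}'s proof / the $\gamma = \alpha(n-d)\beta$ analysis of Lemma~\ref{lem121a2} — contribute the explicit rational correction term. I would organize this by reusing the functions $N_d(x)$ and $N(x)$ of Lemma~\ref{lem121a1}: the permutations $\pi = (n-1)\pi'n\pi''$ with $\pi''$ containing $(n-1)\cdots(n-d)$ and $\pi'$ decreasing with $e$ letters are close relatives of the permutations counted by $N_{d,e}$, differing by a shift, so summing $N_d(x)/(1-x)^d$-type series gives $N(x)$ again, and then one adds the contributions where no such large decreasing run exists in $\pi''$, which is where the recursion into $F_T(x)$ enters.

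The main obstacle I anticipate is the careful case analysis of the structure of $\pi''$ in the $i_1 = n-1$ case: correctly enumerating how the letters of $[n-2]$ not in $\pi'$ interleave with $n-1, n-2, \dots$ in $\pi''$ while avoiding all three of $1243$, $2341$, $3412$ simultaneously, and in particular pinning down exactly which configurations reduce (after deleting $n-1$ and $n$, or after deleting a top decreasing run) to an unrestricted shorter $T$-avoider versus which ones are ``rigid'' and contribute only the explicit rational piece. Once that dictionary is set up, the remainder is a bookkeeping computation: assemble the pieces as $G_2(x) = M(x) + x^2F_T(x) + (\text{extra})$, substitute the closed form for $M(x)$ from Lemma~\ref{lem121a2}, combine the rational terms over the common denominator $(1-x)^4(1-2x)^2(1-3x+x^2)$, and verify the numerator simplifies to $x^3(2-14x+42x^2-70x^3+64x^4-27x^5+4x^6)$.
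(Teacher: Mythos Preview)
Your high-level decomposition $G_2(x)=M(x)+H(x)$, with $M(x)$ covering $i_1\le n-2$ via Lemma~\ref{lem121a2} and $H(x)$ covering $i_1=n-1$, matches the paper exactly. But your analysis of the $i_1=n-1$ case goes off track in two related places.

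First, the $x^2F_T(x)$ term does not arise from $\pi'=\emptyset$. It arises from the sub-case $\pi''=\emptyset$: when $\pi=(n-1)\pi'n$ one checks directly that $\pi$ avoids $T$ if and only if $\pi'$ does, so $\pi'$ ranges over \emph{all} $T$-avoiders of length $n-2$, contributing $x^2F_T(x)$. (If instead $\pi'=\emptyset$ and $\pi''\neq\emptyset$, then $(n-1)n$ forces $\pi''$ to be decreasing to avoid $3412$, so that case contributes only $\frac{x^3}{1-x}$, not a multiple of $F_T(x)$.) The paper then sets $H'(x)$ to be the generating function for $\pi=(n-1)\pi'n\pi''$ with $\pi''\neq\emptyset$, and the identity to prove is $G_2(x)=M(x)+x^2F_T(x)+H'(x)$ with $H'(x)=\dfrac{x^3(1-2x+2x^2)}{(1-x)^3(1-2x)}$.

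Second, your working assumption that $\pi'$ is forced to be decreasing when $\pi''\neq\emptyset$ is false. For $n=5$ the permutation $41352$ lies in $S_5(T)$ with $\pi'=13$ (an ascent) and $\pi''=2$; the point is that an ascent $a<b$ in $\pi'$ is compatible with a letter $c\in\pi''$ precisely when $a<c<b$, so neither $abnc$ (would-be $2341$) nor $abnc$ (would-be $1243$) is forced. The correct structural fact is that $\pi''$ is decreasing (since $(n-1)n$ followed by any ascent is $3412$), and then the letters of $\pi'$ below $\min(\pi'')$ and the letters of $\pi'$ above $\max(\pi'')$ each form decreasing subsequences. Working this out (in the spirit of Lemmas~\ref{lem121a1} and \ref{lem121a2}) gives the stated $H'(x)$; assuming $\pi'$ decreasing would instead yield $\dfrac{x^3}{(1-x)(1-2x)}$, which undercounts already at $n=5$ ($7$ versus the correct $8$).
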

\begin{proof}
Clearly, $G_2(x)=M(x)+H(x)$, where $H(x)$ is the generating function for permutations $\pi=(n-1)\pi'n\pi''\in S_n(T)$. Since $(n-1)\pi'n$ avoids $T$ if and only if $\pi'$ avoids $T$, $G_2(x)=M(x)+x^2F_T(x)+H'(x)$, where $H'(x)$ is the generating function for permutations
$\pi=(n-1)\pi'n\pi''\in S_n(T)$ where $\pi''$ is nonempty. By similar arguments as in the proofs of Lemmas \ref{lem121a1} and \ref{lem121a2}, one can show that $H'(x)=\frac{x^3(1-2x+2x^2)}{(1-x)^3(1-2x)}$ and the result follows using Lemma \ref{lem121a2}.
\end{proof}

\begin{theorem}\label{th121a}
Let $T=\{1243,2341,3412\}$. Then
$$F_T(x)=\frac{1}{1-3x+x^2}+\frac{1}{1-x}+\frac{1-x+x^2}{(1-x)^4}-\frac{2(1-x)(1-2x-x^2)}{(1-2x)^3}.$$
\end{theorem}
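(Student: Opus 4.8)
The plan is to compute $F_T(x)=\sum_{m\ge0}G_m(x)$ according to the number $m$ of left-right maxima. We already know $G_0(x)=1$, and since no pattern in $T$ begins with $4$ we have $G_1(x)=xF_T(x)$, while Lemma~\ref{lem121a3} gives $G_2(x)=x^2F_T(x)+R_2(x)$ with $R_2(x)$ the explicit rational function displayed there. Consequently
$$
(1-x-x^2)F_T(x)=1+R_2(x)+\sum_{m\ge3}G_m(x),
$$
and the entire problem reduces to expressing $\sum_{m\ge3}G_m(x)$ as a rational function (should some $G_m$ with $m\ge3$ still carry an $F_T$ term, it is simply absorbed into the left-hand side); the theorem then follows by solving this linear equation and simplifying.

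First I would describe the structure of a $T$-avoider $\pi=i_1\pi^{(1)}i_2\pi^{(2)}\cdots i_m\pi^{(m)}$ with $m\ge3$ left-right maxima $i_1<\cdots<i_m=n$. The forbidden patterns become very rigid once $m\ge3$: a letter $a<i_{m-2}$ occurring in $\pi^{(m)}$ would make $i_{m-2}\,i_{m-1}\,n\,a$ a $2341$, so the part after $n$ uses only letters exceeding $i_{m-2}$; and, exactly as in the proofs of Lemmas~\ref{lem121a1} and~\ref{lem121a2}, each block $\pi^{(j)}$ is decreasing apart from a bounded number of ``exceptional'' letters, since an ascent there together with a suitable later (or earlier) letter would create a $1243$ or a $3412$. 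Using these constraints I would partition the class counted by $G_m$ according to how the top values $n,n-1,\dots$ and the few exceptional letters are interleaved among the blocks, obtaining a recursion for $G_m$ in terms of $G_{m-1}$ together with explicitly enumerable boundary configurations, the analogue of the $N_d\to N_{d-1}$ and $M_d\to M_{d+1}$ recursions already established, whose correction terms are single and double geometric series in $\tfrac{x}{1-x}$ and $\tfrac1{1-2x}$. Summing this recursion over $m\ge3$ (with $m=3$, and perhaps also $m=4$, handled as base cases) then yields a rational function whose denominator acquires the factor $(1-2x)^3$, in addition to the powers of $1-x$ and the factor $1-3x+x^2$ already present in $R_2$.

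Once $\sum_{m\ge3}G_m(x)$ is in hand, the remainder is mechanical: substitute into the displayed equation, divide by $1-x-x^2$ (which must cancel against the numerator, a convenient internal check since the claimed $F_T$ has no $1-x-x^2$ in its denominator), and perform a partial-fraction decomposition over $1-3x+x^2$, $(1-x)^4$ and $(1-2x)^3$ to arrive at the four-term form in the statement; this concluding computation is routine and computer-verifiable, and the resulting series can be cross-checked against $|S_n(T)|=1,1,2,6,21,74,\dots$ for small $n$. The main obstacle is the middle step: for each $m\ge3$, correctly cataloguing every admissible interleaving of the large values and the exceptional small letters across $\pi^{(2)},\dots,\pi^{(m)}$ is a sizeable, multi-case bookkeeping task (more involved than the case analysis in Lemma~\ref{lem121a1}), and pinning down the $m$-dependent boundary generating functions exactly before summing is where essentially all of the effort lies.
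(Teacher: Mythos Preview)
Your proposal is not a proof but a plan, and you explicitly concede that the ``middle step'' is left undone. More importantly, the plan you sketch is far more elaborate than what the situation requires, and you have overlooked the structural observation that makes the case $m\ge3$ almost trivial.

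For $m\ge3$, the blocks $\pi^{(4)},\pi^{(5)},\ldots,\pi^{(m)}$ are all empty. Indeed, suppose $a\in\pi^{(s)}$ with $s\ge4$. If $a<i_2$ then $i_2i_3i_sa$ is a $2341$; if $a>i_2$ (necessarily $a<i_s$) then $i_1i_2i_sa$ is a $1243$. The same reasoning shows $i_1<\pi^{(3)}<i_2$. So for $m\ge3$ a $T$-avoider has the form $i_1\pi^{(1)}i_2\pi^{(2)}i_3\pi^{(3)}i_4\cdots i_m$ with the $m-3$ rightmost maxima standing alone. From here one splits on whether $\pi^{(3)}$ is empty: if so, deleting $i_m$ gives a bijection with avoiders counted by $G_{m-1}$ (iterating, the contribution is $x^{m-2}G_2(x)$); if not, the avoidance of $1243$ and $3412$ forces the letters below $i_1$ in $\pi^{(1)}\pi^{(2)}$ to decrease and the letters in $(i_1,i_2)$ within $\pi^{(2)}\pi^{(3)}$ to decrease, giving the contribution $\dfrac{x^{m+1}}{(1-x)^2(1-2x)}$. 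Hence
\[
G_m(x)=x^{m-2}G_2(x)+\frac{x^{m+1}}{(1-x)^2(1-2x)}\qquad(m\ge3),
\]
and summing this over $m\ge3$ is a one-line geometric series. There is no need for an $N_d$- or $M_d$-style recursion at this stage, nor for handling $m=3$ and $m=4$ separately as base cases.

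In short: the gap is that you never establish the crucial emptiness of $\pi^{(s)}$ for $s\ge4$, and consequently you anticipate a ``sizeable, multi-case bookkeeping task'' that does not exist. Once that emptiness is noted, the paper's argument finishes in a few lines.
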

\begin{proof}
To find $G_m(x)$ with $m\geq3$, let $\pi=i_1\pi^{(1)}\cdots i_m\pi^{(m)}\in S_n(T)$ with $m\ge 3$ left-right maxima. Since $\pi$ avoids $T$, we see that $\pi^{(s)}=\emptyset$ for all $s\geq4$, with $i_1<\pi^{(3)}<i_2$. If $\pi^{(3)}=\emptyset$, then we have a contribution of $x^{m-2}G_2(x)$. Otherwise, the letters in $\pi^{(1)}\pi^{(2)}$ smaller than $i_1$ are decreasing and
the letters of $\pi^{(2)}\pi^{(3)}$ between $i_1$ and $i_2$ are decreasing. Thus, we have a contribution of $\frac{x^{m+1}}{(1-x)^2(1-2x)}$. Hence,
$$G_m(x)=x^{m-2}G_2(x)+\frac{x^{m+1}}{(1-x)^2(1-2x)}\,.$$
Summing over $m\geq3$, we obtain
$$F_T(x)-1-xF_T(x)-G_2(x)=\frac{x}{1-x}G_2(x)+\frac{x^4}{(1-x)^3(1-2x)},$$
and the result follows by substituting for $G_2(x)$ and solving for $F_T(x)$.
\end{proof}

\subsection{Case 125: $\{1243,2341,4123\}$} We treat this case in 4 subsections.

\subsubsection{Case I}

Let $T=\{1243,2341,4123\}$ and $u_n$ denote the number of $T$-avoiding permutations of length $n$ starting with $n-1$ and having leftmost ascent of the form $a,n$, where $2 \leq a \leq n-1$.
If $1 \leq i \leq n-1$, then let $C_{n,i}'$ denote the number of $123$-avoiding permutations of length $n$ whose leftmost ascent occurs at index $i$, with $C_{n,n}'=1$.  The numbers $u_n$ may be expressed explicitly in terms of $C_{n,i}'$ as follows.

\begin{lemma}\label{unlem}
We have
\begin{equation}\label{unleme1}
u_n=C_{n-2}+\sum_{a=2}^{n-2}\sum_{t=0}^{n-2-a}\binom{n-2-a}{t}u'(n-t,a), \qquad n \geq 3,
\end{equation}
where
$$u'(m,a)=\sum_{i=1}^{a-1}\binom{i+m-2-a}{i}C_{a-1,i}', \qquad 2 \leq a \leq m-2.$$
\end{lemma}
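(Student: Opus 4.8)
The plan is to classify the permutations counted by $u_n$ according to the value $a$ of their leftmost ascent bottom, which ranges over $2\le a\le n-1$. Write such a $\pi=\pi_1\cdots\pi_n$ with $\pi_1=n-1$ and leftmost ascent at index $j$, so $\pi_1>\cdots>\pi_j=a$ and $\pi_{j+1}=n$. When $a=n-1$ we must have $j=1$ and $\pi=(n-1)\,n\,\sigma$; a quick inspection shows the two leading letters cannot complete an occurrence of $1243$, $2341$ or $4123$ (for $1243$ and $2341$ the pattern's largest letter would need two smaller letters to its left, impossible in positions $\le 2$; for $4123$ with largest letter $n-1$ or $n$ one is left needing a $123$ inside $\sigma$), so $\pi$ avoids $T$ iff $\sigma$ avoids $\{123,1243,2341\}$, i.e. iff $\sigma$ avoids $123$, contributing the term $C_{n-2}$. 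For $2\le a\le n-2$ we have $j\ge 2$ and $\pi=(n-1)\,\rho\,a\,n\,\sigma$, where $(n-1)\rho a$ is the decreasing initial run, with letter set $S$ satisfying $\{a,n-1\}\subseteq S\subseteq\{a,a+1,\dots,n-1\}$; thus $\sigma$ is a permutation of $\{1,\dots,a-1\}$ (its $a-1$ ``small'' letters) together with the $n-2-a-t$ letters of $\{a+1,\dots,n-2\}$ not in $S$ (its ``large'' letters, all exceeding $a$), where $t:=|S\cap\{a+1,\dots,n-2\}|$.

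The structural heart of the argument is, for $2\le a\le n-2$, the chain of equivalences: $\pi$ avoids $T$ $\iff$ the word $\rho\,a\,\sigma$ avoids $123$ $\iff$ $\sigma$ avoids $123$ and its large letters occur in decreasing order $\iff$ the subword $\tau$ of $\sigma$ on $\{1,\dots,a-1\}$ avoids $123$, the large letters decrease, and they all precede the ascent top of the leftmost ascent of $\tau$. The first equivalence is the crux: in one direction, a $123$ in $\rho\,a\,\sigma$ (which is exactly the subword of $\pi_2\cdots\pi_n$ on the letters below $n-1$) together with the leading $n-1$ is a $4123$ in $\pi$; in the other direction one runs a short case analysis on where each letter of a putative occurrence of $1243$, $2341$ or $4123$ in $\pi$ can sit (in the initial run, equal to $n$, or in $\sigma$), using that $\rho$ is decreasing with minimum $a$ — every configuration is forced to contain either a $123$ of $\sigma$ or an increasing pair of large letters of $\sigma$, both ruled out by ``$\rho\,a\,\sigma$ avoids $123$''. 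The second equivalence is immediate (with $\rho$ decreasing and $\min\rho a=a$, a $123$ of $\rho\,a\,\sigma$ using a run letter is precisely $a$ followed by two increasing large letters of $\sigma$). The third holds because every large letter exceeds every small one, so the only $123$ of $\sigma$ that mixes the two kinds of letters is (small, small, large) with the smalls increasing, and this (given that $\tau$ avoids $123$) is precluded exactly when no large letter follows the ascent top of the leftmost ascent of $\tau$.

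The count is then routine. Fix $a$ with $2\le a\le n-2$; there are $\binom{n-2-a}{t}$ choices of which $t$ letters of $\{a+1,\dots,n-2\}$ lie in the initial run. By order isomorphism the number of choices of $\sigma$ for which $\pi$ avoids $T$ depends only on $a-1$ and the number $n-2-a-t$ of large letters: if $\tau$ is a $123$-avoider of length $a-1$ with leftmost ascent at index $i$ (using the convention $i=a-1$ when $\tau$ is decreasing, in which case there is no constraint), then the large letters form a decreasing run to be interleaved among the first $i$ letters of $\tau$, giving $\binom{i+(n-2-a-t)}{i}$ valid $\sigma$. Summing over $\tau$ gives $\sum_{i=1}^{a-1}\binom{i+n-2-a-t}{i}C_{a-1,i}'=u'(n-t,a)$, since $m-2-a=n-2-a-t$ when $m=n-t$. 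Hence the number of $\pi$ with leftmost ascent bottom $a$ equals $\sum_{t=0}^{n-2-a}\binom{n-2-a}{t}u'(n-t,a)$, and adding the $a=n-1$ contribution $C_{n-2}$ and summing over $a$ produces \eqref{unleme1}.

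The step I expect to be the main obstacle is the equivalence ``$\pi$ avoids $T$ $\iff$ $\rho\,a\,\sigma$ avoids $123$'': several positional configurations of an occurrence of $1243$, $2341$ or $4123$ must each be excluded, and one must invoke the forced monotonicity of the large letters of $\sigma$ (and the $123$-avoidance of $\sigma$) at the right point in each configuration. Everything downstream — the small/large split of $\sigma$, the interleaving enumeration, and matching it to $u'(n-t,a)$ — is elementary.
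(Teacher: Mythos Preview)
Your proposal is correct and follows essentially the same approach as the paper. The one cosmetic difference is that you package the structural fact as a single equivalence ``$\pi$ avoids $T$ iff $\rho\,a\,\sigma$ avoids $123$'', whereas the paper splits this into two steps: first arguing that the letters of $\rho$ are extraneous (so that one may delete them and reduce to permutations of the form $(m-1)\,a\,m\,\rho'$ counted by $u'(m,a)$), and then characterizing those $T$-avoiders by the condition that $\rho'$ be $123$-avoiding with its large letters decreasing. Your single equivalence is a tidy way to say both at once, and the downstream enumeration (insertion of the decreasing large letters among the first $i$ positions of $\tau$) is identical to the paper's.
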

\begin{proof}
Clearly, there are $C_{n-2}$ permutations avoiding $T$ that start $n-1,n$.  So let $\pi \in S_n(T)$ be a permutation enumerated by $u_n$ having leftmost ascent bottom $a \leq n-2$.  Then $\pi$ is either of the form $\pi=(n-1)a_1a_2\cdots a_t a n\pi'$, where $a_1>a_2>\cdots>a_t>a$ for some $t \geq 1$ or $\pi=(n-1)an\pi'$.  One may verify that the letters $a_1,\ldots,a_t$ are extraneous concerning the avoidance of the patterns in $T$ (note that these letters have no bearing on $2341$, since letters in $[a+1,n-2]$ occurring to the right of $a$ in $\pi$ must occur as a decreasing subsequence to avoid $4123$).  Let $u'(m,a)$ denote the number of $T$-avoiding permutations of length $m$ starting $(m-1)am$.  Considering all possible $a$ and $t$ thus implies that there are $\sum_{a=2}^{n-2}\sum_{t=0}^{n-2-a}\binom{n-2-a}{t}u'(n-t,a)$ permutations $\pi$ of the stated form above.

To complete the proof of \eqref{unleme1}, we need to establish the formula above for $u'(m,a)$.  To do so, let $\rho \in S_m(T)$ be of the form enumerated by $u'(m,a)$.  Then $\rho=(m-1)am\rho'$ for some permutation $\rho'$ of $[a-1]\cup[a+1,m-2]$. Note that $\rho'$ is $123$-avoiding with all letters in $[a+1,m-2]$ descending since $\rho$ avoids $4123$.  Thus, $\rho'$ may be obtained by inserting letters from $[a+1,m-2]$ within a $123$-avoiding permutation $\lambda$ of $[a-1]$.  Suppose $\lambda=\lambda_1\lambda_2\cdots \lambda_{a-1}$, with the first ascent of $\lambda$ occurring at index $i$ (where $i=a-1$ if $\lambda=(a-1)\cdots 21$).  Since $\rho'$ avoids $123$, no letter from $[a+1,m-2]$ may be inserted beyond the $(i+1)$-st letter of $\lambda$.  Also, letters are to be inserted in decreasing order, with letters preceding $\lambda$ allowed.  Since there are $i+1$ positions in which to insert the letters from $[a+1,m-2]$, it follows that there are $\binom{i+m-2-a}{i}$ possible $\rho$ for each $\lambda$ whose first ascent occurs at index $i$.  One may verify that permutations $\rho$ so obtained avoid $T$.  Considering all $1 \leq i \leq a-1$ yields the desired formula for $u'(m,a)$ and completes the proof.
\end{proof}

Let $C_{n,i}$ denote the number of $123$-avoiding permutations of length $n$ having first letter $i$.  Let $q(x,y)=\sum_{n\geq 1}\sum_{i=1}^nC_{n,i}x^ny^i$ and $\ell(x,y)=\sum_{n\geq 1}\sum_{i=1}^nC_{n,i}'x^ny^i$.  Then the latter may be expressed in terms of the former as follows.

\begin{lemma}\label{refingf}
We have
\begin{equation}\label{refingfe1}
\ell(x,y)=\frac{xy}{(1-xy)^2}q\left(\frac{x}{1-xy},1-xy\right)+\frac{xy}{1-xy}.
\end{equation}
\end{lemma}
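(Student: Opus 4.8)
The plan is to derive an explicit formula for $C_{n,i}'$ in terms of the numbers $C_{n,i}$ and then to obtain \eqref{refingfe1} from it by a short generating function computation. Fix $1\le i\le n-1$ and let $\pi=\pi_1\cdots\pi_n$ be a $123$-avoider with leftmost ascent at index $i$, so $\pi_1>\pi_2>\cdots>\pi_i<\pi_{i+1}$. Since $\pi_1,\dots,\pi_{i-1}$ all exceed $\pi_i$, every letter smaller than $\pi_i$ lies to the right of position $i$; hence, writing $j:=\pi_i$, the standardization of the suffix $\pi_i\pi_{i+1}\cdots\pi_n$ is again a $123$-avoider, now of length $n-i+1$, whose leftmost ascent is at index $1$ and whose first letter is $j$. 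Conversely, the decreasing prefix $\pi_1\cdots\pi_{i-1}$ may be formed from \emph{any} $(i-1)$-element subset of $\{j+1,\dots,n\}$; this is the one point that needs checking, and it holds because any ascending pair inside $\pi_{i+1}\cdots\pi_n$ has smaller entry below $\pi_i=j$ (otherwise that pair together with $\pi_i$ would be a $123$ in the suffix), hence below every prefix letter, so no new $123$ can arise. This yields a bijection between the $123$-avoiders of length $n$ with leftmost ascent at index $i$ whose standardized suffix begins with $j$ and the pairs consisting of an $(i-1)$-subset of $\{j+1,\dots,n\}$ and a $123$-avoider of length $n-i+1$ with leftmost ascent at index $1$ and first letter $j$.

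Next, if $\rho$ is a $123$-avoider of length $m\ge2$ with leftmost ascent at index $1$, then $\rho_1<\rho_2$ forces $\rho_2=m$ (any larger letter further right would complete a $123$ with $\rho_1<\rho_2$); deleting $m$ and reinserting it in position $2$ are mutually inverse, so there are exactly $C_{m-1,j}$ such $\rho$ with first letter $j$. Combined with the previous paragraph (take $m=n-i+1$, noting that $\{j+1,\dots,n\}$ has $n-j$ elements), this gives
$$C_{n,i}'=\sum_{j\ge1}\binom{n-j}{i-1}C_{n-i,j}\qquad(1\le i\le n-1),$$
while $C_{n,n}'=1$ by convention; the binomial coefficient is legitimate because $C_{n-i,j}\ne0$ forces $j\le n-i$.

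Finally, the terms with $i=n$ contribute $\sum_{n\ge1}x^ny^n=\tfrac{xy}{1-xy}$, which is the last summand in \eqref{refingfe1}. For the rest I substitute the displayed formula, set $k=n-i\ge1$ and $p=i-1\ge0$ so that $x^ny^i=x^k(xy)(xy)^p$ and $\binom{n-j}{i-1}=\binom{(k+1-j)+p}{p}$, and sum over $p$ via $\sum_{p\ge0}\binom{r+p}{p}z^p=(1-z)^{-r-1}$:
\begin{align*}
\sum_{n\ge1}\sum_{i=1}^{n-1}C_{n,i}'x^ny^i
&=xy\sum_{k\ge1}\sum_{j=1}^{k}C_{k,j}\,x^k\sum_{p\ge0}\binom{(k+1-j)+p}{p}(xy)^p\\
&=\frac{xy}{(1-xy)^2}\sum_{k\ge1}\sum_{j=1}^{k}C_{k,j}\Bigl(\frac{x}{1-xy}\Bigr)^{k}(1-xy)^{j}
=\frac{xy}{(1-xy)^2}\,q\!\left(\frac{x}{1-xy},\,1-xy\right).
\end{align*}
Adding the two contributions yields \eqref{refingfe1}. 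The only real obstacle is the subset claim in the first paragraph; the reduction and the summation are then routine.
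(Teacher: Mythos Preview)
Your proof is correct and follows essentially the same route as the paper: both derive the identity $C_{n,i}'=\sum_{j}\binom{n-j}{i-1}C_{n-i,j}$ by stripping off the decreasing prefix and the leftmost ascent top, and then perform the identical generating-function summation. The only cosmetic difference is that the paper deletes the prefix and the ascent top in a single step, whereas you do it in two (first the prefix, then the forced maximum at position~$2$); the resulting formula and computation are the same.
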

\begin{proof}
We refine $C_{a,i}'$ according to the leftmost ascent bottom.  That is, let $C_{a,i,j}'$ denote the number of $123$-avoiding permutations $\pi$ of length $a$ with leftmost ascent at index $i$ and leftmost ascent bottom $j$.  Note that $C_{a,i,j}'$ can only be non-zero when $1 \leq j \leq a-i$.  Furthermore, any letters of $\pi$ coming prior to $j$ are extraneous concerning avoidance of the patterns in $T$ and thus may be deleted, as can the leftmost ascent top.  This leaves a permutation of the form enumerated by $C_{a-i,j}$.  Upon selecting the first $i-1$ letters of $\pi$, which must belong to $[j+1,a]$, yields the formula
$$C_{a,i}'=\sum_{j=1}^{a-i}C_{a,i,j}'=\sum_{j=1}^{a-i}\binom{a-j}{i-1}C_{a-i,j}, \qquad 1 \leq i \leq a-1,$$
with $C_{a,a}'=1$.  Thus we have
\begin{align*}
\ell(x,y)&=\sum_{a\geq 1}C_{a,a}'(xy)^a+\sum_{a\geq 2}\sum_{i=1}^{a-1}\left(\sum_{j=1}^{a-i}\binom{a-j}{i-1}C_{a-i,j}\right)x^ay^i\\
&=\frac{xy}{1-xy}+\sum_{a\geq 2}\sum_{i=1}^{a-1}\left(\sum_{j=1}^i\binom{a-j}{i-j+1}C_{i,j}\right)x^ay^{a-i}\\
&=\frac{xy}{1-xy}+\sum_{i\geq 1}\sum_{j=1}^iC_{i,j}\sum_{a \geq i+1}\binom{a-j}{i-j+1}x^ay^{a-i}\\
&=\frac{xy}{1-xy}+\frac{xy}{(1-xy)^2}\sum_{i\geq 1}\sum_{j=1}^iC_{i,j}\left(\frac{x}{1-xy}\right)^i(1-xy)^j,
\end{align*}
which implies \eqref{refingfe1}.
\end{proof}

Note that from \cite{FM}, we have
\begin{align}
q(x,y)=\frac{xy(1-yC(xy))}{1-x-y}.\label{eqtqq}
\end{align}
We now calculate the generating function $u(x)=\sum_{n\geq 3}u_nx^n$, which will be needed later.

\begin{lemma}\label{ungf}
We have
\begin{equation}\label{ungfe1}
u(x)=x^2(C(x)-1)+(1-x)r\left(\frac{x}{1-x},1-x\right),
\end{equation}
where $$r(x,y)=\frac{x^4y^2}{(1-x-xy)^2}q\left(\frac{xy(1-x)}{1-x-xy},\frac{1-x-xy}{1-x}\right)+\frac{x^4y^2}{(1-x)(1-x-xy)}.$$
\end{lemma}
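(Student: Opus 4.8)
The plan is to convert the explicit formula for $u_n$ from Lemma~\ref{unlem} into generating-function form, using Lemma~\ref{refingf} as the one substantive input. Write $u(x)=\sum_{n\ge3}u_nx^n$ and substitute \eqref{unleme1}. The term $\sum_{n\ge3}C_{n-2}x^n$ is immediate: reindexing $k=n-2$ gives $x^2\sum_{k\ge1}C_kx^k=x^2(C(x)-1)$, which is the first summand in the claimed identity. So everything reduces to showing that the remaining piece, $\sum_{n\ge3}x^n\sum_{a=2}^{n-2}\sum_{t=0}^{n-2-a}\binom{n-2-a}{t}u'(n-t,a)$, equals $(1-x)\,r\bigl(\tfrac{x}{1-x},1-x\bigr)$ with $r$ as in the statement.

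Introduce the bookkeeping series $R(x,y):=\sum_{m\ge4}\sum_{a=2}^{m-2}u'(m,a)x^my^a$. In the triple sum I would put $m=n-t$ and interchange summations: the constraint $0\le t\le n-2-a$ becomes exactly $m\ge a+2$ (with $t\ge0$ free), so the inner sum over $t$ is $\sum_{t\ge0}\binom{m-2-a+t}{t}x^t=(1-x)^{-(m-1-a)}$ by the standard identity $\sum_{t\ge0}\binom{\nu+t}{t}x^t=(1-x)^{-\nu-1}$. This collapses the triple sum to $\sum_{m\ge4}\sum_{a=2}^{m-2}u'(m,a)\frac{x^m}{(1-x)^{m-1-a}}=(1-x)\sum_{m,a}u'(m,a)\bigl(\tfrac{x}{1-x}\bigr)^m(1-x)^a=(1-x)\,R\bigl(\tfrac{x}{1-x},1-x\bigr)$. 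Next I would evaluate $R(x,y)$ in closed form: substitute $u'(m,a)=\sum_{i=1}^{a-1}\binom{i+m-2-a}{i}C'_{a-1,i}$, interchange sums again, and use the same identity in the form $\sum_{m\ge a+2}\binom{i+m-2-a}{i}x^m=x^{a+2}(1-x)^{-(i+1)}$. Collecting factors and reindexing $b=a-1$ yields $R(x,y)=\frac{x^3y}{1-x}\,\ell\bigl(xy,\tfrac{1}{1-x}\bigr)$, where $\ell$ is the refined $123$-avoider generating function of Lemma~\ref{refingf}. Finally, feed $X=xy$, $Y=\tfrac{1}{1-x}$ (so that $1-XY=\tfrac{1-x-xy}{1-x}$) into \eqref{refingfe1}, simplify the rational prefactors, and check that the result is precisely the stated formula for $r(x,y)$; hence $R=r$, and combining with the first summand gives the lemma.

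I expect no genuine obstacle here: the whole argument is a chain of index shifts plus two applications of the negative-binomial expansion, followed by routine simplification of nested substitutions into \eqref{refingfe1}. The only place demanding care is the bookkeeping in Step~2 — verifying that the substitution $m=n-t$ together with $0\le t\le n-2-a$ reproduces exactly the domain $\{a\ge2,\ m\ge a+2\}$, and keeping the exponent shifts in the two binomial sums correct. Note in particular that the explicit rational form \eqref{eqtqq} of $q$ is not needed at this stage, since the target expression for $r$ is itself written through $q$.
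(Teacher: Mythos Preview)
Your proposal is correct and follows essentially the same approach as the paper: both define the bivariate series $R(x,y)=\sum u'(m,a)x^my^a$, reduce it via the negative-binomial identity and Lemma~\ref{refingf} to the stated $r(x,y)$, and then handle the outer $t$-sum by the substitution $m=n-t$ and a second negative-binomial summation to obtain the factor $(1-x)R(\tfrac{x}{1-x},1-x)$. The only cosmetic difference is that the paper keeps an auxiliary variable $y$ throughout and specializes to $y=1$ at the end, whereas you work directly at $y=1$; the computations are otherwise line-for-line the same.
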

\begin{proof}
First note that
\begin{align*}
&\sum_{n\geq 4}\sum_{a=2}^{n-2}u'(n,a)x^ny^a=\sum_{n\geq 4}\sum_{a=2}^{n-2}\sum_{i=1}^{a-1}C_{a-1,i}'\binom{i+n-2-a}{i}x^ny^a\\
&=\sum_{a\geq 2}\sum_{i=1}^{a-1}C_{a-1,i}'y^a\sum_{n\geq a+2}\binom{i+n-2-a}{i}x^n=\sum_{a\geq2}\sum_{i=1}^{a-1}C_{a-1,i}'\frac{x^{a+2}y^a}{(1-x)^{i+1}}\\
&=\frac{x^3y}{1-x}\sum_{a\geq1}\sum_{i=1}^{a}C_{a,i}'\frac{(xy)^a}{(1-x)^i}=\frac{x^3y}{1-x}\ell\left(xy,\frac{1}{1-x}\right)\\
&=\frac{x^4y^2}{(1-x-xy)^2}q\left(\frac{xy(1-x)}{1-x-xy},\frac{1-x-xy}{1-x}\right)+\frac{x^4y^2}{(1-x)(1-x-xy)},
\end{align*}
where we have used \eqref{refingfe1} in the final equality.  Next observe that
\begin{align*}
&\sum_{n\geq 4}\sum_{a=2}^{n-2}\sum_{t=0}^{n-2-a}\binom{n-2-a}{t}u'(n-t,a)x^ny^a=\sum_{a\geq 2}\sum_{n\geq a+2}\sum_{t=a+2}^n\binom{n-2-a}{n-t}u'(t,a)x^ny^a\\
&=\sum_{a\geq 2}\sum_{t \geq a+2}u'(t,a)y^a\sum_{n\geq t}\binom{n-2-a}{t-2-a}x^n=\sum_{a\geq 2}\sum_{t \geq a+2}u'(t,a)\frac{x^ty^a}{(1-x)^{t-a-1}}\\
&=(1-x)\sum_{t\geq 4}\sum_{a=2}^{t-2}u'(t,a)\left(\frac{x}{1-x}\right)^t((1-x)y)^a=(1-x)r\left(\frac{x}{1-x},(1-x)y\right).
\end{align*}
Multiplying both sides of \eqref{unleme1} by $x^n$, and summing over $n \geq 3$, then gives
\begin{align*}
u(x)&=x^2(C(x)-1)+\sum_{n\geq 4}\left(\sum_{a=2}^{n-2}\sum_{t=0}^{n-2-a}\binom{n-2-a}{t}u'(n-t,a)\right)x^n\\
&=x^2(C(x)-1)+(1-x)r\left(\frac{x}{1-x},(1-x)y\right)\mid_{y=1},
\end{align*}
which implies \eqref{ungfe1}.
\end{proof}

Now let $b_n$ denote the  number of $T$-avoiding permutations of length $n$ whose leftmost ascent is of the form $a,n$ for some $2 \leq a \leq n-1$.  For example, $b_3=1$ (for $231$) and $b_4=5$ (for $2413$, $2431$, $3241$, $3412$ and $3421$).  Then we have the following recurrence formula.
\begin{lemma}\label{b_nlem}
We have
\begin{equation}\label{b_nrec}
b_n=b_{n-1}+u_{n}+2^{n-3}-n+2+\binom{n-2}{4}+\sum_{j=0}^{n-4}\sum_{i=1}^{n-3-j}\binom{n-i-2}{j+1}C_{n-2-j,i},\qquad n \geq 3,
\end{equation}
with $b_2=0$, where $u_n$ is given by \eqref{unleme1} above.
\end{lemma}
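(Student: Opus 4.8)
The plan is to partition the permutations counted by $b_n$ according to their first letter and the coarse shape of the suffix following the letter $n$, thereby reducing everything to the quantities $b_{n-1}$ and $u_n$ together with a few explicitly enumerable families.

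\emph{Set-up and the two easy contributions.} First I would fix $\pi\in S_n(T)$ with leftmost ascent of the form $a,n$, $2\le a\le n-1$, and write $\pi=\alpha\,a\,n\,\pi'$, where $\alpha\,a$ is the initial strictly decreasing run of $\pi$ (so every letter of $\alpha$ exceeds $a$) and $\pi'$ is the remaining word. Two observations are immediate: since $n\pi'$ avoids $4123$, the word $\pi'$ is $123$-avoiding; and since $a\ge2$, the letter $1$ occurs in $\pi'$. If $\pi_1=n-1$, then $\pi$ is exactly one of the permutations counted by $u_n$ in Lemma~\ref{unlem}, which supplies the summand $u_n$. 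Otherwise $\pi_1\le n-2$; as $\alpha$ is decreasing with $\alpha_1=\pi_1$, the whole block $\alpha\,a$ then lies in $[2,n-2]$, so $n-1$ occurs in $\pi'$ and is its largest letter.

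\emph{Reduction to $b_{n-1}$.} Next I would treat the permutations with $\pi_1\le n-2$ for which $\pi'$ begins with $n-1$ --- equivalently, those having $n(n-1)$ as a consecutive factor --- and show they are in bijection with the permutations counted by $b_{n-1}$: delete the letter $n$; the inverse inserts a new maximum immediately before the current maximum. The point to verify is that $T$-avoidance is preserved both ways. Since removing $n$ from such a $\pi$ produces exactly the target permutation, any occurrence of a pattern of $T$ in $\pi$ must use the slot of $n$; but that slot lies right after the decreasing block $\alpha\,a$ and right before $n-1$, which is too cramped for $n$ to serve as the ``$4$'' of $4123$ or as the third entry of $1243$ or $2341$. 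This yields the summand $b_{n-1}$.

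\emph{The remaining permutations.} What is left are the $\pi$ with $\pi_1\le n-2$ and $\pi'=\mu\,(n-1)\,\nu$ with $\mu$ nonempty, and these must account for $2^{n-3}-n+2+\binom{n-2}{4}+\sum_{j=0}^{n-4}\sum_{i=1}^{n-3-j}\binom{n-i-2}{j+1}C_{n-2-j,i}$. Here I would run a case analysis driven by the three forbidden patterns. Using the roles that $a$, the letter $n-1$, and the individual letters of $\alpha$ can play in a forbidden pattern, one finds that once the first letter of $\pi'$ is prescribed to be its $i$-th smallest entry, the $j+1$ letters of the initial block $\alpha\,a$ (with $j=|\alpha|$) must be drawn from an explicit $(n-i-2)$-element set, that when $\alpha\ne\emptyset$ the part of $\pi'$ lying above $\min\alpha$ must avoid both $123$ and $132$, and that in the principal sub-case $\pi'$ may then be an arbitrary $123$-avoider of length $n-2-j$ with that prescribed first letter; summing over $j$ and $i$ gives the double sum $\sum_j\sum_i\binom{n-i-2}{j+1}C_{n-2-j,i}$. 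The degenerate sub-cases are those in which $\pi'$ is essentially rigid --- either a whole block of it is forced to avoid $\{123,132\}$ (so that collapsing the relevant binomial identity produces $2^{n-3}$) or only a bounded number of letters are free to be positioned (producing $\binom{n-2}{4}$) --- with the linear term $-n+2$ soaking up boundary corrections. Finally $b_2=0$ holds trivially since a length-$2$ permutation has no ascent with bottom $\ge 2$, and combining all of the above establishes \eqref{b_nrec}.

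\emph{Main obstacle.} The genuinely delicate part is the last case: dividing the permutations with $\pi_1\le n-2$ and $\pi'$ not beginning with $n-1$ into sub-cases that are at once disjoint, exhaustive, and each a product of independent pieces, and then checking in each sub-case that the reconstructed permutation really avoids all three of $1243$, $2341$, $4123$ --- the recurrent ``one may verify'' step, where slips are easiest. A secondary, purely computational hurdle is folding the several binomial sums that appear into the compact closed form $2^{n-3}-n+2+\binom{n-2}{4}$.
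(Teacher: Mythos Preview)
Your high-level decomposition matches the paper's exactly: split off the permutations with $\pi_1=n-1$ (giving $u_n$), then among those with $\pi_1\le n-2$ split off the ones where $\pi'$ begins with $n-1$ (giving $b_{n-1}$ via the same bijection you describe), and finally handle the rest. Your treatment of the first two contributions is correct and essentially identical to the paper's.

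The gap is in the ``remaining permutations'' case. Your description there is too vague to constitute a proof, and some of the specific claims you make (e.g.\ that ``the part of $\pi'$ lying above $\min\alpha$ must avoid both $123$ and $132$'') do not match what actually happens and would not lead to the stated terms. The paper's clean split, which you are missing, is by the \emph{value} of the first letter $x$ of $\pi'$. If $x\in[a-1]$, then the decreasing prefix $\alpha\,a$ is genuinely extraneous for all three patterns (the key point being that $n,x$ already forces the letters of $[a+1,n-1]$ inside $\pi'$ to decrease, killing any role for the prefix in a $2341$ or $4123$), so $\pi'$ is an arbitrary $123$-avoider of length $n-2-j$ with first letter $i=x$; summing $\binom{n-2-a}{j}$ over $a$ from $i+1$ to $n-2-j$ yields the $\binom{n-i-2}{j+1}$ factor and hence the double sum. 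If instead $a+1\le x\le n-2$, the structure is much more rigid: the block between $n$ and $n-1$ is decreasing, all of $[a-1]$ sits just before $n-1$, and the letters after $n-1$ are decreasing, so one only needs to decide how $[a+1,n-2]$ is partitioned among the prefix $S$, the block before $n-1$, and the tail. A further sub-split on whether $S$ is a union of two intervals $[a+1,s]\cup[t,n-2]$ gives $\binom{n-1}{4}$ in the interval case and $2^{n-3}-(n-2)-\binom{n-2}{3}$ otherwise; Pascal's rule collapses these to $2^{n-3}-n+2+\binom{n-2}{4}$.

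So the missing idea is not a different decomposition but simply the observation that the first letter of $\pi'$ being small versus large is the right dichotomy, and that in the ``large'' case the permutation is almost entirely determined by the set $S$.
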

\begin{proof}
Let $\pi$ be a member of $S_n(T)$ enumerated by $b_n$.  If the first letter of $\pi$ is $n-1$, then there are $u_n$ possibilities by definition, so assume $n\geq 4$ and that the first letter of $\pi$ is less than $n-1$.  Then $\pi=a_1a_2\cdots a_jan\pi'$, where $j \geq 0$ and $2 \leq a<a_j<\cdots< a_1 \leq n-2$.  Let $S=\{a_1,\ldots,a_j\}$, where $S$ is possibly empty.  Note that $\pi'$ is nonempty and let $x$ denote the first letter of $\pi'$.  If $x=n-1$, then it is seen that $x$ may be deleted and there are $b_{n-1}$ possibilities.

Now suppose $x\in [a-1]$.  Note that $\pi'$ is $123$-avoiding due to $4123$.  Furthermore, this is the only restriction on $\pi'$ since $n,x$ implies all letters in $[a+1,n-1]$ in $\pi'$ must decrease, and thus no letter to the left of $n$ can serve as a ``2'' within an occurrence of 2341 or as a ``4'' within a 4123.  Also, $x<a$ implies all elements of $S\cup\{a\}$ are extraneous concerning the avoidance of 1243 as well. Thus, these letters may all be deleted in addition to $n$, leaving a permutation of the form enumerated by $C_{n-2-j,i}$ for some $i \in [a-1]$. Considering all $2 \leq a \leq n-2$, $0 \leq j \leq n-2-a$ and $1 \leq i \leq a-1$ gives
\begin{align*}
&\sum_{a=2}^{n-2}\sum_{j=0}^{n-2-a}\sum_{i=1}^{a-1}\binom{n-2-a}{j}C_{n-2-j,i}=\sum_{j=0}^{n-4}\sum_{i=1}^{n-3-j}C_{n-2-j,i}\sum_{a=i+1}^{n-2-j}\binom{n-2-a}{j}\\
&\quad=\sum_{j=0}^{n-4}\sum_{i=1}^{n-3-j}\binom{n-2-i}{j+1}C_{n-2-j,i}
\end{align*}
possibilities in this case.

Next let $a+1 \leq x \leq n-2$.  Here, we consider two cases on $S$.  Note that $S$ cannot comprise all of $[a+1,n-2]$ due to our assumption on $x$.  First suppose $S=[a+1,s]\cup[t,n-2]$ for some $a \leq s<t-1\leq n-2$, where $S=\emptyset$ is possible.  Then $\pi'$ is of the form
$$\pi'=b_1b_2\cdots b_p(n-1)d_1d_2\cdots d_q,$$
where $p \geq 1$, $q \geq 0$ and $b_1\in[a+1,n-2]$.  Note that no elements of $[a-1]$ can occur to the right of $n-1$, for otherwise there would be an occurrence of $2341$ of the form $ab_1(n-1)z$ for some $z \in [a-1]$.  Since letters occurring between $n$ and $n-1$ within $\pi$ are decreasing due to 4123, the final $a-1$ letters $b_i$ must comprise $[a-1]$; that is, we have $b_{p-a+2}=a-1, b_{p-a+2}=a-2, \ldots, b_p=1$.  Since $a \geq 2$, the $d_i$ letters must also decrease in order to avoid 4123.  Furthermore, all of the $d_i$ letters are less than all of the letters in $[a+1,n-2]$ occurring between $n$ and $n-1$, for otherwise, there would be an occurrence of 1243 (with $a$ and $n-1$ corresponding to the ``1'' and ``4'', respectively).  Thus, the subsequence $b_1b_2\cdots b_{p-a+1}d_1d_2\cdots d_q$ comprises all elements of $[a+1,n-2]$ occurring to the right of $n$ within $\pi$ in decreasing order, where $p \geq a$ and $q$ is possibly zero. One may verify that the corresponding $\pi$ of  the stated form is $T$-avoiding.   Considering all $a$ and $S$ implies that there are
$$\sum_{a=2}^{n-3}\sum_{i=0}^{n-3-a}\sum_{j=0}^{n-3-a-i}(n-2-a-i-j)=\sum_{a=2}^{n-3}\sum_{i=0}^{n-3-a}\binom{n-1-a-i}{2}=\sum_{a=2}^{n-3}\binom{n-a}{3}=\binom{n-1}{4}$$
possibilities in this case.

If $S$ is not of the form $[a+1,s]\cup[t,n+2]$, then there must exist some $v_1<u<v_2$ all belonging to $[a+1,n-2]$ such that $u \in S$ and $v_1,v_2 \notin S$.  If $\pi'=b_1\cdots b_p(n-1)d_1\cdots d_q$ is as before, with $q>0$, then $b_1\geq v_2>u$ and $d_q\leq v_1<u$ implies $ub_1(n-1)d_q$ is a 2341, which is impossible.  Thus $q=0$ and there is only one way in which to arrange the letters within $\pi$ to the right of $n$, i.e., letters in $[n-2]-S$ in decreasing order followed by $a-1,\ldots,1,n-1$.  By subtraction, there are for $n \geq 4$,
\begin{align*}
\sum_{a=2}^{n-3}\left(2^{n-2-a}-1-\sum_{i=0}^{n-3-a}\sum_{j=0}^{n-3-a-i}1\right)&= 2^{n-3}-(n-2)-\sum_{a=2}^{n-3}\binom{n-1-a}{2}\\
 &=2^{n-3}-(n-2)-\binom{n-2}{3}
 \end{align*}
 possible members of $S_n(T)$ in this case.  Combining all of the previous cases gives \eqref{b_nrec}.
\end{proof}

\subsubsection{Case II}

Let $d_n$ denote the number of $T$-avoiding permutations of length $n$ not starting with $n$ whose leftmost ascent is of the form $a,n-1$ for some $2 \leq a \leq n-2$.  Note, for example, $d_5=6$, the enumerated permutations being $\{24135, 24153, 24315, 32415, 34125, 34215\}$.  Then $d_n$ may be expressed in terms of $b_n$ as follows.

\begin{lemma}\label{dnlem}
We have
\begin{equation}\label{dnrec}
d_n=d_{n-1}+b_{n-1}-b_{n-2}+\binom{n-3}{2}+\sum_{a=3}^{n-3}\sum_{\ell=0}^{n-3-a}\sum_{m=1}^{n-2-a-\ell}\binom{n-5-\ell-m}{a-3}m,  \qquad n \geq 5,
\end{equation}
with $d_4=1$, where $b_n$ is given by \eqref{b_nrec} above.
\end{lemma}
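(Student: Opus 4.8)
The plan is to follow the template of Lemma~\ref{b_nlem}, decomposing a $d_n$-avoider according to its leftmost-ascent data and then refining. Write $\pi\in S_n(T)$ counted by $d_n$ as $\pi=a_1a_2\cdots a_j\,a\,(n-1)\,\pi'$, where $a_1>a_2>\cdots>a_j>a$ is the maximal decreasing prefix, $a\in[2,n-2]$ is the leftmost ascent bottom, and $n-1$ is the leftmost ascent top. Since $\pi$ does not begin with $n$ we have $a_1\le n-2$, so $n$ lies inside $\pi'$; write $\pi'=\beta\,n\,\gamma$ and put $S=\{a_1,\dots,a_j\}\subseteq[a+1,n-2]$.

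The first step is to extract the structural constraints that $T$-avoidance imposes on this decomposition, using the ``extraneous letter'' reasoning already used in the proofs of Lemmas~\ref{unlem} and~\ref{b_nlem}. The ones I would establish are: (i) since $a\prec(n-1)\prec n$ is an increasing run, no letter below $a_1$ (below $a$ when $j=0$) can occur in $\gamma$, for otherwise $a_i,(n-1),n,z$ is a $2341$; in particular $[a-1]\subseteq\beta$, so $\beta\neq\emptyset$; (ii) avoidance of $4123$ with $n-1$ (resp.\ $n$) as the ``$4$'' forces $\beta$ and $\gamma$ to avoid $123$, and in fact forces $\gamma$ to be decreasing, using an element of $[a-1]$ inside $\beta$ as the ``$1$''; and (iii) the letters of $[a+1,n-2]$ to the right of $n-1$ are arranged essentially decreasingly, with only a controlled interleaving of $[a-1]$ and of the large letters around $n-1$ and $n$ (here $2341$, $4123$ and $1243$ are each used), while several prefix letters $a_i$ are immaterial to $T$-avoidance and may be deleted. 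Pinning down (i)--(iii) is the technical core.

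With these facts in hand I would split the count by the first letter of $\beta$ together with the position of $n$, in close analogy with the split on the first letter of $\pi'$ in Lemma~\ref{b_nlem}, and I expect the right-hand side of \eqref{dnrec} to assemble from four groups of configurations. One or two ``reduction'' configurations---where a near-maximal letter can be deleted and the standardized image is again of the type enumerated by $d_{n-1}$, together with a boundary subcase whose image instead has leftmost ascent top equal to its maximum (hence lies in the $b_{n-1}$ count)---should contribute $d_{n-1}+b_{n-1}-b_{n-2}$, the $b_{n-1}-b_{n-2}$ recording the precise overlap between those subcases. A degenerate configuration, with $a=2$ so that $\beta$ reduces to $[a-1]=\{1\}$ placed decreasingly and $\gamma,n$ then essentially forced, should contribute the polynomial term $\binom{n-3}{2}$. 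The generic configuration---$a\ge3$ (so that $[a-1]$ has at least two letters), $\ell$ large letters between $n-1$ and $n$, $m$ large letters after $n$, the remaining $a-3$ freely placed letters of $[a-1]$ interleaved among $n-5-\ell-m$ slots in $\binom{n-5-\ell-m}{a-3}$ ways, and $m$ admissible positions for a distinguished letter---should contribute the triple sum $\sum_{a=3}^{n-3}\sum_{\ell=0}^{n-3-a}\sum_{m=1}^{n-2-a-\ell}\binom{n-5-\ell-m}{a-3}m$. Adding the four groups gives \eqref{dnrec}, and inspecting length $4$ gives $d_4=1$ (one may also check $d_5=6$ against the recurrence).

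The main obstacle will be the casework bookkeeping: in each configuration one must verify, in both directions, that the prescribed shapes are exactly the $T$-avoiders of that shape, and that the configurations are pairwise disjoint, the $b_{n-1}-b_{n-2}$ correction being the clearest warning of a subtle overlap between two subcases. Once the configurations are listed, turning each into a closed binomial sum and simplifying those sums into the stated form is routine but lengthy, and, as the authors remark for related cases, is naturally done with computer assistance.
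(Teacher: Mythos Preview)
Your direct decomposition $\pi=a_1\cdots a_j\,a\,(n-1)\,\beta\,n\,\gamma$ is a sensible starting point, and observations (i) and (ii) are correct. But the proposal does not actually locate the terms $d_{n-1}$ and $b_{n-1}-b_{n-2}$, and your reading of the triple sum is off; as written this is a plan rather than a proof, and the plan has a real gap.

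The paper's argument rests on one idea you have not identified: deleting $n$ from any $\pi\in\mathcal D_n$ yields a member of $\mathcal B_{n-1}$ (the leftmost ascent becomes $a,n-1$ with $n-1$ now the maximum), so $d_n$ equals the total number of valid positions at which $n$ can be inserted to the right of $n-1$, summed over $\sigma\in\mathcal B_{n-1}$. With this viewpoint the two ``mystery'' terms become transparent. If $n-2$ directly follows $n-1$ in $\sigma$, then deleting $n-1$ and standardizing sets up a bijection between all such insertion pairs and $\mathcal D_{n-1}$; this is the $d_{n-1}$. If $n-2$ does not directly follow $n-1$ (the subset $\mathcal B_{n-1}'$, of size exactly $b_{n-1}-b_{n-2}$ by the obvious delete-$(n-1)$ bijection with $\mathcal B_{n-2}$), then appending $n$ at the end is always valid, giving the baseline count $b_{n-1}-b_{n-2}$; the remaining work is to count the \emph{additional} insertion positions for such $\sigma$. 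So $b_{n-1}-b_{n-2}$ is not an ``overlap correction'' between two reductions as you suggest, but simply $|\mathcal B_{n-1}'|$.

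Your interpretation of the triple sum is also incorrect. In the paper, $\ell$ is not ``large letters between $n-1$ and $n$'' but the length of the decreasing prefix $S=\{a+1,\dots,a+\ell\}$ of $\sigma\in\mathcal B_{n-1}'$, and $m$ is the length of the terminal run $(a+\ell+m)\cdots(a+\ell+1)$ of $\sigma$; the factor $m$ counts the $m$ non-terminal positions for $n$ inside that run, while $\binom{n-5-\ell-m}{a-3}$ counts the interleavings of $[2,a-2]$ with $[a+\ell+m+1,n-2]$ in the middle block $\rho$. The $\binom{n-3}{2}$ term arises from the analogous count when $a=2$. Without the insert-$n$ viewpoint it is hard to see why these and only these extra positions occur, and your proposed meanings for $\ell$ and $m$ would not produce the stated summand.
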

\begin{proof}
Let $\mathcal{B}_m$ and $\mathcal{D}_m$ denote the subsets of $S_m(T)$ enumerated by $b_m$ and $d_m$ for $m \geq 3$.  To show \eqref{dnrec}, consider forming members of $\mathcal{D}_n$ by inserting $n$ somewhere to the right of $n-1$ within members of $\mathcal{B}_{n-1}$, where $n \geq 5$.  First note that there are $d_{n-1}$ possibilities if the letter $n-2$ is to directly follow $n-1$ within a member of $\mathcal{B}_{n-1}$, upon deleting $n-1$. Let $\mathcal{B}_m'$ denote the subset of $\mathcal{B}_m$ consisting of those members where $m-1$ does not directly follow $m$.  Note that one may always append the letter $n$ to a member of $\mathcal{B}_{n-1}'$ without introducing an occurrence of one of the patterns in $T$, which yields $b_{n-1}-b_{n-2}$ members of $\mathcal{D}_n$.

We now consider the various cases considered in the proof of recurrence \eqref{b_nrec} above in an effort to determine any further members of $\mathcal{D}_n$.  Let $\pi \in \mathcal{B}_{n-1}'$.  If the first letter of $\pi$ is $n-2$, then $n$ must be added as the final letter (in order to avoid $2341$), and hence there are no additional members of $\mathcal{D}_n$ coming from this case (beyond those already accounted for by $b_{n-1}-b_{n-2}$ above).  So assume henceforth that the first letter of $\pi$ is less than $n-2$, and write $\pi=\pi'a(n-1)\pi''$, where $a \geq 2$ is the leftmost ascent bottom.  Let $y$ be the first letter of $\pi''$.  First assume $a+1\leq y \leq n-3$.  Then the element $1$ must (directly) precede $n-2$ within $\pi''$, or $\pi$ would contain $2341$.  Thus, the letter $n$ cannot be inserted between $n-1$ and $y$, for otherwise there is an occurrence of $2341$, and it cannot be inserted anywhere between $y$ and $n-2$ either due to $1243$ occurring in that case.  If $n$ were to be inserted to the right of $n-2$, but not at the very end of $\pi$, then there would be a $2341$ in which the roles of ``2'' and ``3'' are played by the letters $y$ and $n-2$, respectively.  Thus, no additional members of $\mathcal{D}_n$ arise if $y \in [a+1,n-3]$.

Now assume $y \in [a-2]$ and let $z$ denote the last letter of $\pi''$.  Note that $z\neq y$ since $\pi''$ has length at least two.  Then $z$ must belong to $[a-3]\cup\{a-1\}$ since $\pi''$ is $123$-avoiding, which again implies one must add $n$ to the very end of $\pi$ to create a member of $\mathcal{D}_n$ (in order to avoid an occurrence of $2341$ as witnessed by the subsequence $a(n-1)nz$).  So assume henceforth $y=a-1$.  Still, no additional members of $\mathcal{D}_n$ are possible if $z \in [a-2]$, so assume henceforth $z>a$.  We now consider cases on $\pi'$.  First suppose that $\pi'$ does not comprise $[a+1,a+\ell]$ for some $0 \leq \ell \leq n-3-a$.  The there exist $a+1\leq u<v \leq n-3$ with $u \notin \pi'$ and $v \in \pi'$.  Then $u \geq z$ since $z$ is seen to be the smallest element of $[a+1,n-3]$ in $\pi''$.  Thus, adding $n$ anywhere except at the very end of $\pi$ produces an occurrence of $2341$, as witnessed by $v(n-1)nz$.

So assume $\pi'$ comprises $[a+1,a+\ell]$ for some $\ell$.  If $a \geq 3$, then $\pi$ may be written as
$$\pi=(a+\ell)\cdots(a+1)a(n-1)(a-1)\rho1(a+\ell+m)\cdots(a+\ell+2)(a+\ell+1), \qquad n \geq 6,$$
for some $1 \leq m \leq n-2-a-\ell$, where $\rho$ consists of the remaining letters of $[n-1]$.  Since the set of letters to the right of $n-1$ within $\pi$ form a $123$-avoiding permutation, with the last letter greater than the first, the subsequence $\rho$ consists of alternating runs of letters from the disjoint sets $[2,a-2]$ and $[a+\ell+m+1,n-2]$, where the subsequences of $\pi$ comprising the letters from these sets are decreasing (in order to avoid 4123). Thus, selecting the positions of the elements of $[2,a-2]$ uniquely determines $\rho$, which can be done in $\binom{n-5-\ell-m}{a-3}$ ways.  In this case, $n$ may be inserted directly prior to any of the elements of $[a+\ell+1,a+\ell+m]$ and the resulting permutation is seen to always belong to $S_n(T)$.  Considering all possible $a$, $\ell$ and $m$ gives
$$\sum_{a=3}^{n-3}\sum_{\ell=0}^{n-3-a}\sum_{m=1}^{n-2-a-\ell}\binom{n-5-\ell-m}{a-3}m$$
members of $\mathcal{D}_n$ that have not been enumerated already.  If $a=2$, then we have
$$\pi=(\ell+2)\cdots 32(n-1)1(n-2)(n-3)\cdots (\ell+3), \qquad n \geq 5,$$
in which case $n$ may be inserted directly prior to any element of $[\ell+3,n-2]$.  This gives
$\sum_{\ell=0}^{n-5}(n-4-\ell)=\binom{n-3}{2}$ additional members of $\mathcal{D}_n$.  Combining this case with the previous yields \eqref{dnrec}.

\end{proof}

\subsubsection{Case III} Let $e_n$ denote the number of $T$-avoiding permutations of length $n$ not starting with $n$ whose leftmost ascent is of the form $a,b$, where $2\leq a < b \leq n-2$.  Note that $e_4=0$ and $e_5=2$, the enumerated permutations being $23145$ and $42315$. The sequence $e_n$ may be expressed in terms of $b_n$ and $d_n$ as follows.

\begin{lemma}\label{enlem}
We have
\begin{equation}\label{enrec}
e_n=e_{n-1}+d_{n-1}+C_{n-2}-2^{n-3}+\sum_{i=3}^{n-2}(d_{i+1}-b_i), \qquad n \geq 4,
\end{equation}
with $e_3=0$, where $b_n$ and $d_n$ are determined by \eqref{b_nrec} and \eqref{dnrec}.
\end{lemma}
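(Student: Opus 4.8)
The plan is to enumerate the family $\mathcal{E}_n$ counted by $e_n$ in the same style as the proofs of Lemmas \ref{b_nlem} and \ref{dnlem}. Write a member as $\pi=w\,a\,b\,\pi'$, where $a,b$ is the leftmost ascent (so $2\le a<b\le n-2$) and $w=\pi_1\pi_2\cdots$ is the strictly decreasing block preceding $a$. Since $b\le n-2$, the letter $n$ must lie in $\pi'$, whereas $n-1$ is either the first letter of $\pi$ or also lies in $\pi'$; this dichotomy is the outermost split. In each branch I would next record the constraints forced by avoiding $1243$, $2341$, $4123$, exactly as in the earlier lemmas of this case: the letters of $[n-1]$ lying between $n$ and $n-1$ must descend (else $4123$); no letter below $a$ may follow $n$ (else $2341$, via $a,b,n,\cdot$); every letter of $[b+1,n-1]$ must precede $n$ (else $1243$, via $a,b,n,\cdot$); and the letters to the right of $n$, which then lie in $[a+1,b-1]$, form a $123$-avoiding sequence (else $4123$, via $n,\cdot,\cdot,\cdot$). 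These facts cut each sub-case down to a short list of admissible shapes.

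From there I expect the five terms of \eqref{enrec} to appear as follows. Deleting the largest letter $n$ (or, in the branch $\pi_1=n-1$, deleting that first letter and relabeling $n\mapsto n-1$) produces a $T$-avoider of length $n-1$ with leftmost ascent top still $b$; the sub-case $b\le n-3$ lands in $\mathcal{E}_{n-1}$ and yields the $e_{n-1}$ term---here the lift is unique by the fact, used throughout this case, that appending $n$ to any $T$-avoider again avoids $T$---while the sub-case $b=n-2$ lands in $\mathcal{D}_{n-1}$ and yields $d_{n-1}$. The Catalan term $C_{n-2}$ should come from the sub-case in which, after the forced descents are stripped away, the surviving block is an otherwise unrestricted $123$-avoider of an $(n-2)$-element set (compare the appearances of $C_{n-2}$ in Lemmas \ref{unlem} and \ref{b_nlem}). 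The $-2^{n-3}$ should enter by complementation, as an over-counted family of size $2^{n-3}$ indexed by the subsets of a distinguished $(n-3)$-element set of ``removable'' letters (cf.\ the quantity $2^{n-3}-(n-2)-\binom{n-2}{3}$ in the proof of Lemma \ref{b_nlem}), which must be subtracted. Finally, $\sum_{i=3}^{n-2}(d_{i+1}-b_i)$ should arise from iterating a secondary reduction that removes letters one at a time and alternates between a $\mathcal{D}$-type and a $\mathcal{B}$-type configuration, the summand $d_{i+1}-b_i$ recording the configurations that are genuinely new at stage $i$ (mirroring the way $b_{n-1}-b_{n-2}=|\mathcal{B}'_{n-1}|$ entered \eqref{dnrec}).

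The main obstacle will be bookkeeping rather than any single hard idea: making the splits on $\pi_1$ and on the first letter of $\pi'$ simultaneously exhaustive and disjoint, correctly enumerating the admissible shapes in each branch, and---as in every lemma of this case---verifying at the close of each branch that the permutations constructed really do avoid all three patterns (the ``one may verify'' steps), since a single missed occurrence would corrupt the count. A subtler point is to arrange the complementation so that the over-counted families collapse to precisely $-2^{n-3}$ and $\sum_{i=3}^{n-2}(d_{i+1}-b_i)$ rather than to a messier closed form; this is what forces the particular choice of the distinguished removable set and of the secondary reduction. I would conclude by checking the base value $e_3=0$ and that \eqref{enrec} holds for $n\ge4$, disposing of the degenerate ranges (the small values $d_3,d_4$ and the empty sums) directly.
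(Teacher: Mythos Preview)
Your proposal is a plan rather than a proof, and several of its guesses about where the five terms of \eqref{enrec} originate are off target. The decisive split in the paper is not on the first letter but on the \emph{last} letter of $\pi$: whether $\pi_n=n$ or not. When $\pi_n=n$ and $\pi_1\neq n-1$, deleting $n$ is a bijection onto $\mathcal{E}_{n-1}\cup\mathcal{D}_{n-1}$ (depending on whether $b\le n-3$ or $b=n-2$), yielding $e_{n-1}+d_{n-1}$. Your version, which deletes $n$ regardless of its position, is not a bijection: many $\pi\in\mathcal{E}_n$ can sit over the same $\sigma\in\mathcal{E}_{n-1}$, so the count $e_{n-1}$ does not fall out that way.

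The two remaining blocks of terms do not arise from a separate Catalan count and a complementation, nor from an ``iterated reduction that alternates between $\mathcal{D}$-type and $\mathcal{B}$-type configurations.'' The structural fact you are missing is this: whenever $\pi_n=n$ with $\pi_1=n-1$, or whenever $\pi_n\neq n$, the permutation is forced to begin with the full descending run $n-1,n-2,\ldots,b+1$ (otherwise some $p\in[b+1,n-1]$ to the right of $b$ creates a $4123$ in the first sub-case and a $2341$ in the second). These letters are then extraneous and may be deleted \emph{en bloc}. In the first sub-case this leaves a $123$-avoider of length $b$ with leftmost ascent $a,b$ and $a\ge 2$; summing $C_b-C_{b-1}-2^{b-2}$ over $3\le b\le n-2$ telescopes to $C_{n-2}-2^{n-3}$, so those two terms come together, not separately. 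In the second sub-case it leaves a permutation of length $b+1$ in $\mathcal{D}_{b+1}$ that does not end in $b+1$; subtracting those that do end in $b+1$ (which, after deleting that last letter, are exactly $\mathcal{B}_b$) gives $d_{b+1}-b_b$, and summing over $b$ gives the final sum. There is no alternation or one-letter-at-a-time iteration involved.
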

\begin{proof}
Let $\pi$ be a permutation enumerated by $e_n$, where $n \geq 5$.  To show \eqref{enrec}, we consider cases on the first and the last letters of $\pi$.  Let $a$ and $b$ be the letters involved in the leftmost ascent of $\pi$, where $2 \leq a< b \leq n-2$, and let $x$ and $y$ denote the first and last letters of $\pi$.  If $x \neq n-1$ and $y=n$, then deleting $n$ gives $e_{n-1}+d_{n-1}$ possibilities in this case, upon considering whether or not $b$ is less than $n-2$. If $x=n-1$ and $y=n$, then $\pi$ must start $n-1,n-2,\ldots,b+1$.  For if not, and $p \in [b+1,n-1]$ lies to the right of $b$ within $\pi$, then $\pi$ contains a $4123$, as witnessed by the subsequence $(n-1)abp$. Then the elements of $[b+1,n]$ may be deleted from $\pi$, which leaves a $123$-avoiding permutation of length $b$ whose leftmost ascent is $a,b$, where $a \geq 2$.  Upon subtracting the $123$-avoiding permutations of length $b$ that start with $b$ or whose leftmost ascent is $1,b$, one gets $C_b-C_{b-1}-2^{b-2}$ possibilities for each $b$.  Summing over $3 \leq b \leq n-2$ gives $\sum_{b=3}^{n-2}\left(C_b-C_{b-1}-2^{b-2}\right)=C_{n-2}-2^{n-3}$ possible $\pi$ in this case.

Finally, assume $y \neq n$.  Then $\pi$ must start $n-1,n-2,\ldots,b+1$.  To show this, suppose it is not the case and let $q \in [b+1,n-1]$ lie to the right of $b$.  Note that any letter to the right of $n$ within $\pi$ must be less than $b$ so as to avoid $1243$.  In particular, $q$ must lie between $b$ and $n$.  Since $y \neq n$, there exists a letter $r$ to the right of $n$.  But then $bqnr$ is a $2341$, which is impossible.  Furthermore, one may verify, without using any letters to the left of $a$, that any elements of $[a+1,b-1]$ to the right of $b$ within $\pi$ must decrease.  Thus, it is seen that the letters $n-1,n-2,\ldots,b+1$ impose no restriction on the subsequent letters of $\pi$ when considering the pattern $4123$.  The same can be said for the patterns 1243 and 2341, which implies that these letters are extraneous concerning the avoidance of $T$ and thus may be deleted.  Doing so leaves a permutation of length $b+1$, not starting or ending with $b+1$, whose leftmost ascent is of the form $a \leq b$, where $a \geq 2$.  Considering all possible $b$ gives $\sum_{i=3}^{n-2}(d_{i+1}-b_i)$ possibilities in this case, by subtraction, which completes the proof.
\end{proof}

\subsubsection{Case IV} Let $g_n$ denote the number of $T$-avoiding permutations of length $n$ not starting with $n$ whose leftmost ascent is of the form $1,a$ for some $a>1$.
\begin{lemma}\label{gnlem}
We have
\begin{equation}\label{gnrec}
g_n=g_{n-1}+5\cdot2^{n-3}+\binom{n-1}{4}-n+\sum_{m=3}^{n-1}\binom{n-m+1}{2}2^{m-3}, \qquad n \geq 3,
\end{equation}
with $g_2=1$.
\end{lemma}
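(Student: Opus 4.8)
The plan is to follow the pattern of the proofs of Lemmas~\ref{b_nlem}--\ref{enlem}: isolate a removable copy of the largest letter to account for the recursive summand $g_{n-1}$, and then enumerate the remaining configurations directly. First I would record the shape of a member $\pi$ of the set $\mathcal{G}_n$ counted by $g_n$. Write $\pi=D\,1\,a\,\tau$, where $D$ is the (possibly empty) decreasing initial run preceding the leftmost ascent bottom $1$, the letter $a\geq 2$ is the leftmost ascent top, and $\tau$ is the remaining suffix. The three forbidden patterns pin $\tau$ down substantially: avoidance of $1243$ (using $1,a$ as the ``$12$'') forces the letters of $\tau$ exceeding $a$ to occur in increasing order; avoidance of $4123$ (using a letter of $D$ as the ``$4$'' together with $1,a$) forces that increasing run to lie above every letter of $D$; and avoidance of $2341$ restricts how the remaining ``small'' letters of $[2,a-1]\setminus D$ may be interleaved with the increasing run and with $n-1$. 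After this analysis $\tau$ splits into a small number of independent blocks (a decreasing run of small letters, an increasing run of large letters, and the letters $n-1$, $n$ where relevant), so the residual freedom is a shuffle of those blocks.

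Second, I would extract the recursive term. Appending the largest letter to the end of any permutation cannot create an occurrence of $1243$, $2341$, or $4123$, since the appended letter would be forced to play a non-maximal role in each pattern; hence, as in the earlier lemmas, there is a natural correspondence between the members of $\mathcal{G}_n$ in which a terminal copy of $n$ may be deleted so as to leave a member of $\mathcal{G}_{n-1}$ (taking care that the residual permutation retains the first-letter and leftmost-ascent properties) and all of $\mathcal{G}_{n-1}$. This contributes the $g_{n-1}$ summand, and what remains is to enumerate the members of $\mathcal{G}_n$ in which $n$ occupies a non-removable position.

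Third, I would count those remaining members by splitting on the value $a$ of the leftmost ascent top — the cases $a=n$, $a=n-1$, and $a\leq n-2$ behaving differently — and, within each, on the length of $D$, equivalently on the index $m$ for which $\pi$ begins with the decreasing run $m,m-1,\dots$. Each resulting subclass decomposes into independent parts, so its contribution is a product of binomial coefficients summed over a few indices, and the closed form emerges from standard collapses: summing over $k$ the number of ways to position a decreasing run of $k$ small letters among a fixed set of slots yields powers of $2$, which is where $5\cdot 2^{n-3}$ and the factor $2^{m-3}$ in the last sum originate; a triple sum of the form $\sum_a\binom{n-a}{3}$ collapses to $\binom{n-1}{4}$ exactly as in the proof of Lemma~\ref{b_nlem}; summing a linear quantity over run-positions produces the $\binom{n-m+1}{2}$ factor; and the $-n$ is a correction from discarding degenerate configurations already counted in one of the other cases. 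Collecting the contributions yields \eqref{gnrec}, with the base value $g_2=1$ (the permutation $12$) checked directly.

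The main obstacle is the structural case analysis underlying the first and third steps: describing precisely which interleavings of the leftover small letters, the increasing run of large letters, and the letters $n-1$ and $n$ simultaneously avoid all three patterns is delicate because the constraints interact — for instance, a single small letter placed too far to the right creates a $2341$ together with two of the large letters and the ascent top $a$, while the same letter placed differently may instead threaten a $1243$ with $1$ and $a$. Once the admissible configurations are correctly pinned down, the remaining work is a multi-sum simplification — several reindexings and binomial-coefficient identities of the kind used repeatedly in this section, and likely computer-assisted — to bring the total into the stated form.
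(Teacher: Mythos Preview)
Your outline is a plausible route but it diverges from the paper's proof in the key step, and what you have written is closer to a plan than a proof. The paper does \emph{not} obtain the $g_{n-1}$ summand by deleting a terminal $n$. Instead it writes $\pi=a_1\cdots a_s 1\pi'$ with $S=\{a_1,\dots,a_s\}$ the initial descent set, and peels off the case $a_s=2$: deleting the letter $2$ (which sits immediately before $1$) gives a clean bijection with $\mathcal G_{n-1}$, with no boundary worries. The remaining cases are then split by the structure of $S$: the case $S=\emptyset$ contributes $2^{n-1}-n+1$ (here $\pi=1\pi'$ with $\pi'$ avoiding $\{132,2341,4123\}$), and for $a_s\ge 3$ the analysis is organized around $a_1=\max S$, partitioning the non-$S$ letters into ``large'' ($>a_1$) and ``small'' ($\le a_1$). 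The $\binom{n-1}{4}$ term arises from the subcase where exactly one large letter sits between the extreme small letters (forcing $S$ to be an interval), and the sum $\sum_{m}\binom{n-m+1}{2}2^{m-3}$ together with the extra $2^{n-3}-1$ comes from the complementary subcase, split further by the relative order of the two runs of large letters flanking the block of small letters.

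Your proposed recursion via a terminal $n$ is not wrong, but it is messier: if $\pi_1=n-1$, or if $\pi=D\,1\,n$ with $\tau$ empty, then deleting $n$ does not land in $\mathcal G_{n-1}$, so those configurations must be thrown back into the ``remaining'' pile you then have to enumerate directly. More substantively, your parametrization by the ascent top $a$ rather than by $a_1=\max D$ changes which letters count as ``large'': the paper's observation that large letters split into a decreasing run before the small block and an increasing run after it (from $2341$ and $1243$ respectively) hinges on ``large'' meaning $>a_1$, not $>a$. Your constraint that letters of $\tau$ above $a$ increase and exceed $\max D$ is correct, but it collapses the paper's two-run structure into one, and you would then need a separate mechanism to recover the case split that produces $\binom{n-1}{4}$ versus the $\binom{n-m+1}{2}2^{m-3}$ sum. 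As you yourself note, that structural case analysis is the crux, and it is not carried out here.
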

\begin{proof}
Recurrence \eqref{gnrec} is clear if $n=3$, so assume $n \geq 4$.  Let $\pi=a_1a_2\cdots a_s1\pi'$ be of the form enumerated by $g_n$, where $1<a_s<a_{s-1}<\cdots<a_1\leq n-1$, and let $S=\{a_1,a_2,\ldots ,a_s\}$.  We consider several cases on $S$. If $S=\emptyset$, then $\pi=1\pi'$ where $\pi'$ avoids $\{132,2341,4123\}$, and hence there are $2^{n-1}-n+1$ possibilities.  So assume $S \neq \emptyset$.  If $a_s=2$, then deleting 2 gives $g_{n-1}$ possibilities.  Henceforth assume $a_s \geq 3$.  We will refer to letters in $[a_1+1,n]$ as \emph{large} and those belonging to $[2,a_1]-S$ as \emph{small}.  Note that $\pi'$ contains both large and small letters.

Large letters to the left of any small letters within $\pi'$ must decrease (due to 2341), while large letters to the right of any small letters must increase (1243).  The small letters themselves must decrease, for otherwise there would be a 4123 starting with $a_1$, $1$.  It is not possible for there to exist two or more large letters occurring between the greatest and the least of the small letters, for otherwise $\pi$ would contain 1243 or 2341. Assume now that there is exactly one such large letter.  In order for this to occur, the set $S$ must be an interval of the form $[\ell,\ell+s-1]$, where $4 \leq \ell \leq n-1$ and $1 \leq s \leq n-\ell$.  For if not, then the largest small letter $y$ would exceed the smallest member $x$ of $S$, which would introduce an occurrence of $2341$ of the form $xyz2$ for some large letter $z$.  Furthermore, any large letters occurring to the left of the first small letter must be less than any large letters occurring to the right of the last large letter in this case, for otherwise there would be an 4123 where the ``1'' and ``2'' correspond to the largest small and the smallest large letter, respectively.  One may verify that a permutation satisfying all of the requirements above with respect to the large and small letters and to the set $S$ is $T$-avoiding.  Note that given $\ell$ there are $\ell-3$ possible positions in which to place the smallest large letter since it must go between two small letters.  If the leftmost large letter within $\pi'$ is denoted by $n-t$, where $0 \leq t \leq n-\ell-s$, then considering all possible $\ell$, $s$ and $t$ yields
\begin{align*}
\sum_{\ell=4}^{n-1}\sum_{s=1}^{n-\ell}\sum_{t=0}^{n-\ell-s}(\ell-3)&=\sum_{s=1}^{n-4}\sum_{t=0}^{n-4-s}\sum_{\ell=4}^{n-s-t}(\ell-3)=\sum_{s=1}^{n-4}\sum_{t=0}^{n-4-s}\binom{n-2-s-t}{2}\\
&=\sum_{s=1}^{n-4}\binom{n-1-s}{3}=\binom{n-1}{4}
\end{align*}
permutations in this case.

So assume that there is not a large letter between the greatest and the least of the small letters.  We consider further cases on $\pi'$, which we write as $\pi'=\alpha_1\beta\alpha_2$, where $\beta$ comprises the set of small letters.  First assume that one of $\alpha_1,\alpha_2$ is empty or that both are nonempty with $\min(\alpha_2)>\max(\alpha_1)$.  Let $a_1=m$, where $s+2 \leq m \leq n-1$.  For each $S$ with $a_1=m$, there are $n-m+1$ possible $\pi'$ upon choosing the length of (the interval) $\alpha_2$, which can be anywhere from $0$ to $n-m$ (note that once $S$ is specified, $\pi'$ is uniquely determined in this case by $|\alpha_2|$).  Also $a_s \geq 3$ implies that there are $\binom{m-3}{s-1}$ choices for the remaining elements of $S$.  Since all permutations formed in this manner are seen to be $T$-avoiding, we get
$$\sum_{s=1}^{n-3}\sum_{m=s+2}^{n-1}(n-m+1)\binom{m-3}{s-1}=\sum_{m=3}^{n-1}(n-m+1)\sum_{s=1}^{m-2}\binom{m-3}{s-1}=\sum_{m=3}^{n-1}(n-m+1)2^{m-3}$$
possibilities in this case.  Now assume that $\alpha_1$ and $\alpha_2$ are both nonempty, with $\min(\alpha_2)<\max(\alpha_1)$.  Then it must be the case that  $\min(\alpha_2)$ is the only letter in $\alpha_2$ that is less than $\max(\alpha_1)$, for if not, then there would be an occurrence of $4123$.  If $\max(\alpha_1)=n-t$, then $0 \leq t \leq n-m-2$ where $a_1=m$, with $s+2 \leq m \leq n-2$.  The first letter of $\alpha_2$ must then belong to $[m+1,n-t-1]$, and thus there are $n-m-t-1$ choices for it.  Considering all possible $s$, $m$ and $t$ yields
$$\sum_{s=1}^{n-4}\sum_{m=s+2}^{n-2}\sum_{t=0}^{n-m-2}(n-m-t-1)\binom{m-3}{s-1}=\sum_{s=1}^{n-4}\sum_{m=s+2}^{n-2}\binom{m-3}{s-1}\binom{n-m}{2}=\sum_{m=3}^{n-2}\binom{n-m}{2}2^{m-3}$$
additional permutations.  Thus, the last two cases combined give
$$\sum_{m=3}^{n-1}\left(\binom{n-m+1}{2}+1\right)2^{m-3}=2^{n-3}-1+\sum_{m=3}^{n-1}\binom{n-m+1}{2}2^{m-3}$$
possibilities in all.  Putting this together with the prior cases above implies \eqref{gnrec}.
\end{proof}

For any of the sequence above, put zero if $n$ is such that the corresponding set of permutations is empty.  For example, we have $d_n=0$ for $n \leq 3$. Combining cases I through IV above, and including permutations that start with the letter $n$, implies that the number of $T$-avoiding permutations of length $n$ is given by
\begin{equation}\label{anrel}
a_n=b_n+d_n+e_n+g_n+C_{n-1}, \qquad n \geq 2,
\end{equation}
with $a_0=a_1=1$.

We now  find a formula for the generating function $A(x)=\sum_{n\geq0}a_nx^n$. Define
$B(x)=\sum_{n\geq2}b_nx^n$, $D(x)=\sum_{n\geq4}d_nx^n$, $E(x)=\sum_{n\geq3}e_nx^n$ and $G(x)=\sum_{n\geq2}g_nx^n$. First we rewrite the recurrences in the preceding lemmas in terms of generating functions, by multiplying both sides by $x^n$ and summing over all possible $n$. By Lemmas \ref{b_nlem}--\ref{gnlem}, we obtain
\begin{align}
B(x)&=\frac{1}{1-x}\biggl(u(x)+\frac{x^3}{1-2x}-\frac{x^3}{(1-x)^2}+\frac{x^5}{(1-x)^5}-\frac{x^5}{(1-x)^4}\notag\\
&\quad+xq\left(\frac{x}{1-x},1-x\right)-xq(x,1)\biggr),\label{eqt125tb}\\
D(x)&=xB(x)+\frac{x^5}{(1-x)^3(1-2x)},\label{eqt125td}\\
(1-x)E(x)&=xD(x)+x^2C(x)-\left(x^2+x^3+\frac{2x^4}{1-2x}\right)+\frac{x}{1-x}D(x)-\frac{x^2}{1-x}B(x),\label{eqt125te}\\
G(x)&=\frac{x^2(1-5x+13x^2-18x^3+13x^4-6x^5+x^6)}{(1-x)^6(1-2x)},\label{eqt125tg}
\end{align}
respectively. By solving \eqref{eqt125tb}--\eqref{eqt125te}, along with use of \eqref{eqtqq} and Lemma \ref{ungf}, we obtain
\begin{align*}
B(x)&=(1-x)(C(x)-1)+\frac{x(3x^5-8x^4+13x^3-11x^2+5x-1)}{(1-x)^6},\\
D(x)&=x(1-x)(C(x)-1)-\frac{x^2(7x^6-22x^5+37x^4-36x^3+21x^2-7x+1)}{(1-x)^6(1-2x)},\\
E(x)&=\frac{x^2(2-x)}{1-x}(C(x)-1)-\frac{x^3(8x^6-29x^5+54x^4-57x^3+36x^2-13x+2)}{(1-x)^7(1-2x)}.
\end{align*}
By \eqref{anrel}, we have
$$A(x)=1+x+B(x)+D(x)+E(x)+G(x)+x(C(x)-1),$$
which, upon substituting the expressions just found for the generating functions $B(x)$, $D(x)$, $E(x)$ and $G(x)$, implies
the following result.
\begin{theorem}\label{th125a}
Let $T=\{1243,2341,4123\}$. Then
$$F_T(x)=\frac{1-9x+34x^2-70x^3+87x^4-65x^5+26x^6-5x^7}{(1-x)^7(1-2x)}+\frac{1}{1-x}\big(C(x)-1\big).$$
\end{theorem}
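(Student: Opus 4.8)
The plan is to assemble $A(x)=\sum_{n\ge0}a_nx^n=F_T(x)$ directly from the partition of $S_n(T)$ recorded in relation \eqref{anrel}, namely $a_n=b_n+d_n+e_n+g_n+C_{n-1}$ for $n\ge2$ (with $a_0=a_1=1$), by turning each recurrence of Lemmas \ref{b_nlem}--\ref{gnlem} into a functional equation for the corresponding generating function. Writing $B,D,E,G$ for $\sum b_nx^n$, $\sum d_nx^n$, $\sum e_nx^n$, $\sum g_nx^n$ over their stated ranges, I multiply each recurrence by $x^n$ and sum. The routine pieces are polynomial-times-geometric and binomial sums: the terms $2^{n-3}$, $\binom{n-2}{4}$, $\binom{n-1}{4}$, $\binom{n-m+1}{2}2^{m-3}$, $\frac{(n-1)(n-2)(2n-3)}{6}$ and the iterated sums in \eqref{dnrec}, \eqref{gnrec} all have elementary rational generating functions; this yields equation \eqref{eqt125tg} for $G(x)$ outright and most of \eqref{eqt125tb}--\eqref{eqt125te}. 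The one genuinely new ingredient is the double sum $\sum_j\sum_i\binom{n-i-2}{j+1}C_{n-2-j,i}$ in \eqref{b_nrec}: interchanging the order of summation and absorbing the binomial factor into a geometric series identifies it, after the index bookkeeping, with the bivariate generating function $q(x,y)=\sum_{n,i}C_{n,i}x^ny^i$ for $123$-avoiders refined by first letter, evaluated at $\bigl(\tfrac{x}{1-x},\,1-x\bigr)$ minus $q(x,1)$ (note $q(x,1)=C(x)-1$), which produces the $xq(x/(1-x),1-x)-xq(x,1)$ terms of \eqref{eqt125tb}. The function $u(x)$ occurring in \eqref{b_nrec} is already in closed form (Lemma \ref{ungf}) in terms of $q$, so combining that with the explicit formula \eqref{eqtqq} for $q$ expresses $u(x)$ as an explicit rational function plus a multiple of $C(x)$.

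With the system \eqref{eqt125tb}--\eqref{eqt125tg} in hand, the remaining work is purely algebraic: solve \eqref{eqt125tb} for $B(x)$, substituting the closed forms for $u(x)$, $q(x/(1-x),1-x)$ and $q(x,1)$ obtained from Lemma \ref{ungf} and \eqref{eqtqq}; then read $D(x)$ from \eqref{eqt125td}; then $E(x)$ from \eqref{eqt125te}. As displayed just before the theorem, each of $B(x)$, $D(x)$, $E(x)$ emerges in the form ``rational function of $x$'' plus ``rational function of $x$'' times $(C(x)-1)$. Finally substitute into $A(x)=1+x+B(x)+D(x)+E(x)+G(x)+x(C(x)-1)$, collect the rational part and the coefficient of $C(x)-1$, and simplify, using the identity $xC(x)^2=C(x)-1$ to keep everything in normal form; the result is the claimed $F_T(x)=\frac{1-9x+34x^2-70x^3+87x^4-65x^5+26x^6-5x^7}{(1-x)^7(1-2x)}+\frac{1}{1-x}\big(C(x)-1\big)$.

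The main obstacle is not conceptual but the volume of exact algebra: translating \eqref{b_nrec}--\eqref{gnrec} faithfully (the index shifts in the nested sums are the most error-prone part), obtaining $u(x)$ in closed form by composing the nested substitutions of Lemma \ref{ungf} with \eqref{eqtqq} and simplifying, and then checking that the large rational-plus-algebraic combination for $A(x)$ really collapses to the compact stated form. These simplifications are best done with computer algebra, and one should sanity-check the final generating function against the first several terms (e.g. $a_0=a_1=1$, together with the small values $b_3=1$, $b_4=5$, $d_5=6$, $e_4=0$, $e_5=2$, $g_2=1$ recorded in the lemmas) to catch any slip in the bookkeeping.
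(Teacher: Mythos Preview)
Your proposal is correct and follows essentially the same route as the paper: convert the recurrences of Lemmas \ref{b_nlem}--\ref{gnlem} into the functional equations \eqref{eqt125tb}--\eqref{eqt125tg}, identify the double sum in \eqref{b_nrec} with $xq\bigl(\tfrac{x}{1-x},1-x\bigr)-xq(x,1)$, feed in the closed forms for $u(x)$ and $q$ from Lemma \ref{ungf} and \eqref{eqtqq}, solve for $B,D,E,G$, and assemble $A(x)=1+x+B+D+E+G+x(C(x)-1)$. The paper proceeds in exactly this order and likewise relegates the final simplification to computer algebra.
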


\subsection{Case 149: $\{1234,3412,4123\}$}

We enumerate directly $T$-avoiding permutations that have exactly two or three left-right maxima.

\subsubsection{The case of two l-r maxima}

We first count $T$-avoiding permutations that have two left-right maxima in which the first component is decreasing.  Let $\mathcal{E}_n$ denote the set of permutations of length $n$ avoiding $\{123,3412\}$ and let $e_n=|\mathcal{E}_n|$.

\begin{lemma}\label{2lrp1}
The number of $T$-avoiding permutations of length $n$ having two left-right maxima and of the form $\pi=a\pi'n\pi''$ where $\pi'$ is decreasing is given by
\begin{align}
g_n&=(3-n)2^{n-3}-1+\sum_{a=1}^{n-2}2^{a-1}e_{n-1-a}+\sum_{a=2}^{n-1}\left(2^{n-1-a}-1-\binom{n-a}{2}\right)(a-1)2^{a-2}\notag\\
&\quad+\sum_{i=0}^{n-2}\binom{n-1+i}{2i+1}+\sum_{a=2}^{n-3}\sum_{\ell=1}^{n-2-a}\sum_{i=0}^{a-1}2^{n-2-a-\ell}\binom{a-1}{i}\binom{\ell+i}{i}, \qquad n \geq 3.\label{2lrp1e1}
\end{align}
\end{lemma}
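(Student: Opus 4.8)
The plan is to enumerate the set $\mathcal{G}_n$ of $T$-avoiding permutations $\pi = a\pi'n\pi''$ with exactly two left-right maxima $a$ and $n$, where $\pi'$ is decreasing, by conditioning on the value $a$ of the first letter and on the structure that $T$-avoidance forces on $\pi''$. Since $\pi'$ is decreasing, it is determined by its \emph{set} of letters, so the first task is to understand which letters can appear in $\pi'$ versus $\pi''$. The key structural observation is that $\pi$ avoids $4123$, so among the letters less than $a$, those that appear after $n$ in $\pi''$ must occur in decreasing order (otherwise $a$, then two such letters, then $n$ is — wait, rather $n$, then the two increasing small letters — gives a $4123$ pattern using $n$ as the "4"); similarly $\pi$ avoids $3412$, which constrains how the letters exceeding $a$ interact with the small letters, and $\pi$ avoids $1234$, which limits increasing runs among the large letters once $a$ is used as a "1". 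I would set up the count so that $a$ ranges over $2,\dots,n-1$ (the case $a=1$ or $a=n-1$ handled separately or absorbed), and for each $a$ decompose $\pi''$ into its subsequence of "small" letters (those $<a$, which must be a decreasing sequence not already in $\pi'$) interleaved with its subsequence of "large" letters (those in $[a+1,n-1]$).

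First I would fix $a$ and describe $\pi''$ precisely. The large letters of $\pi''$ that lie to the left of every small letter of $\pi''$ form a $\{123,3412\}$-avoiding pattern (a "$\mathcal{E}$-type" block, contributing $e_{\bullet}$), because a "$1$" is already available as $a$ (ruling out an extra ascent-of-ascent from $1234$) and the $3412$-avoidance is intrinsic; this is exactly where the $e_{n-1-a}$ term and the convolution $\sum_a 2^{a-1}e_{n-1-a}$ come from, the $2^{a-1}$ counting the ways to distribute the small letters $<a$ into the decreasing slots. The remaining summands correspond to the other placements of the at-most-few large letters that can be "trapped" between or after small letters: a single large letter between the max and min of the small letters (giving a binomial-of-the-form-$\binom{n-1+i}{2i+1}$ contribution via a lattice-path / ballot-type count on interleavings), versus configurations where large letters follow all the small letters (the $(2^{n-1-a}-1-\binom{n-a}{2})(a-1)2^{a-2}$ term, the factor $(a-1)2^{a-2}$ counting choices of which small letters go where and $2^{n-1-a}-1-\binom{n-a}{2}$ counting the "non-interval non-trivial" subsets $S$ of large letters exactly as in the $b_n$ and $d_n$ arguments of Case 125). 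The last double/triple sum $\sum_{a,\ell,i} 2^{n-2-a-\ell}\binom{a-1}{i}\binom{\ell+i}{i}$ should be the count of the generic case: $\ell$ is the size of the trailing decreasing run of small letters (or the length of some distinguished block), $i$ indexes how many small letters interleave with a block of large letters, $\binom{\ell+i}{i}$ counts the interleaving (a monotone lattice path), $\binom{a-1}{i}$ chooses which small letters participate, and $2^{n-2-a-\ell}$ handles the free binary choice for the remaining large letters (each either before or after the small block, consistent with $4123$-avoidance forcing at most one "crossing").

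The key steps in order: (1) prove $\pi'$ decreasing $\Rightarrow$ it is its letter-set, and derive the three local constraints on $\pi''$ from avoidance of $1234$, $3412$, $4123$ (each a two-line argument exhibiting the forbidden pattern); (2) split $\pi''$ at the position of its largest small letter and its smallest small letter, isolating the "pre-small large block", the "inter-small large letters" (shown to be at most one), and the "post-small large letters"; (3) count each regime: the pre-small block gives the $\mathcal{E}_n$ convolution, the zero-inter-small-letter regime gives the $S$-subset count mirroring Case 125 (yielding the $(2^{n-1-a}-1-\binom{n-a}{2})(a-1)2^{a-2}$ and $\binom{n-1+i}{2i+1}$ terms), and the one-inter-small-letter regime plus the general interleaving gives the final triple sum; (4) include the boundary contributions ($a=1$, $a=n-1$, $\pi''$ empty, etc.) to produce the lone additive $(3-n)2^{n-3}-1$; (5) sum everything to get \eqref{2lrp1e1}. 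Throughout I would lean on the identities $\sum_j\binom{r}{j}\binom{\ell+j}{j}$-type sums and the Catalan/ballot identity $\sum_i\binom{n-1+i}{2i+1}$ being recognized as a Fibonacci-like or convolution quantity only at the \emph{very end}, when assembling the closed form — the lemma as stated leaves the answer as a sum, so no heavy simplification is needed here.

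The main obstacle I expect is step (2)–(3): carefully proving that at most one large letter can sit strictly between the greatest and least small letters of $\pi''$, and then correctly enumerating the interleavings in the "one trapped large letter" case — this is where the somewhat unusual binomial $\binom{n-1+i}{2i+1}$ arises, and getting the index ranges and the bijection with lattice paths exactly right (so that the sum is over $i=0,\dots,n-2$ with that precise upper argument) is delicate. A secondary difficulty is bookkeeping: ensuring the five regimes are genuinely disjoint and jointly exhaustive, so that no $T$-avoider is double-counted or missed, which requires a clean case split on (i) whether $\pi''$ contains any small letter, (ii) whether the set $S$ of large letters in $\pi'$ together with the pre-small large block forms an interval, and (iii) whether a large letter is trapped among the small letters — mirroring, but not identical to, the case analysis already carried out for $b_n$, $d_n$, $g_n$ in Case 125.
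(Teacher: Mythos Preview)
Your plan imports the ``large letters / small letters, at most one trapped large letter'' decomposition from Case~125 (Lemma~\ref{gnlem}), but this lemma belongs to Case~149, where $T=\{1234,3412,4123\}$, and the structure is genuinely different. The claim that at most one large letter can lie strictly between the greatest and least small letters of $\pi''$ is false here: in the paper's analysis the small letters of $\pi''$ (which must decrease, by $3412$) are freely interleaved with the \emph{entire} final decreasing run of the large-letter subsequence, and that run can have any length $\ell$. This is precisely what the factor $\binom{\ell+i}{i}$ counts.

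The correct skeleton, which your plan misses, is: (i) the full subsequence $L$ of large letters $[a+1,n-1]$ is a $\{123,3412\}$-avoider (use $a$ as the ``1'' for $1234$, and $3412$ is intrinsic); (ii) avoidance of $4123$ forces every small letter in $\pi''$ to lie to the right of the rightmost ascent bottom of $L$; (iii) the case split is on whether the final letter of $L$ participates in a $231$ inside $L$ --- equivalently, whether $L$ lies in the subclass $\mathcal{E}_{n-1-a}^*$. Step (iii) requires a separate preliminary showing $|\mathcal{E}_{n,\ell}^*|=2^{n-\ell-1}$ via a bijection $\mathcal{E}_{n,\ell}^*\to\mathcal{E}_{n-\ell}^*$; this is where the factor $2^{n-2-a-\ell}$ in the triple sum actually comes from, not from a ``free binary choice for the remaining large letters''. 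The term $\sum_a 2^{a-1}e_{n-1-a}$ arises from the complementary case $L\notin\mathcal{E}^*$ (small letters forced to the end of $\pi''$), and the Fibonacci-type sum $\sum_i\binom{n-1+i}{2i+1}$ is a Vandermonde collapse of $\sum_a\binom{a-1}{i}\binom{n-1-a+i}{i}$ from the boundary subcase $\ell=n-1-a$, not a separate ``one trapped letter'' regime. Without the $\mathcal{E}^*$ analysis and the correct placement constraint from $4123$, your proposed case split is neither exhaustive nor disjoint.
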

\begin{proof}
We first determine the number $e_n^*$ of members of $\mathcal{E}_n$ whose final letter is \emph{not} part of an occurrence of 231.  Let $\mathcal{E}_n^*$ denote the subset of $\mathcal{E}_n$ enumerated by $e_n^*$ and let $\mathcal{E}_{n,\ell}^*\subseteq\mathcal{E}_n^*$ consist of those members whose final decreasing run is of length $\ell$, with $e_{n,\ell}^*=|\mathcal{E}_{n,\ell}^*|$.  We define a bijection between $\mathcal{E}_{n,\ell}^*$ and $\mathcal{E}_{n-\ell}^*$ as follows where $1 \leq \ell \leq n-1$.  Suppose $\lambda \in \mathcal{E}_{n,\ell}^*$ and let $b$ denote the leftmost ascent bottom of $\lambda$.  First assume $b \geq 2$.  Then letters in $[b-1]$ decrease so as to avoid 3412, while letters greater than $b$ to the right of it must also decrease to avoid 123.  Since $b \geq 2$ and the final letter of $\lambda$ is not part of a 231, it follows that the final $\ell+1$ letters of $\lambda$ are $1,b+r+\ell,b+r+\ell-1,\ldots,b+r+1$ for some $r \geq 0$.  In this case, we delete from $\lambda$ the letters in $[b+r+1,b+r+\ell-1]$, together with $1$, and standardize the resulting permutation which is seen to result in a member $\lambda' \in \mathcal{E}_{n-\ell}^*$.  Note that all members of $\mathcal{E}_{n-\ell}^*$ in which the leftmost ascent top is not the final letter arise uniquely in this manner.  If $b=1$, then $\lambda$ consists of two decreasing runs, the second of which is of length $\ell$.  To obtain $\lambda'$ in this case, we delete the final $\ell-1$ letters of $\lambda$, together with the first letter of $\lambda$ if it exceeds the first letter of the second decreasing run; otherwise, we simply delete the final $\ell$ letters of $\lambda$ prior to standardizing.  Note that one obtains in this manner all members of $\mathcal{E}_{n-\ell}^*$ that consist of two decreasing runs in which the second run is of length one, together with the decreasing permutation $(n-\ell)\cdots 21$.  One may verify in each case that the mapping $\lambda \mapsto \lambda'$ is a bijection, as desired.  Since $e_{n,n}^*=1$ and $e_{n}^*=\sum_{\ell=1}^n e_{n,\ell}^*$, we have $e_n^*=1+\sum_{\ell=1}^{n-1}e_{n-\ell}^*$ for $n\geq 2$, with $e_1^*=1$, which implies $e_n^*=2^{n-1}$ and $e_{n,\ell}^*=2^{n-\ell-1}$ for $1 \leq \ell \leq n-1$.

We now count members of $S_n(T)$ that have two left-right maxima and are of the form $\pi=a\pi'n\pi''$ for some $a \geq 1$ where $\pi'$ is decreasing.  There are clearly $e_{n-2}$ possibilities if $a=1$ and $2^{n-2}$ possibilities if $a=n-1$, so assume $n \geq 4$ and $2 \leq a \leq n-2$. Let $L$ denote the subsequence of $\pi$ comprising the elements of $[a+1,n-1]$.  Then $L$ is $\{123,3412\}$-avoiding, with no letters in $[a-1]$ occurring between $n$ and the rightmost ascent bottom of $L$, for otherwise there is a 4123 of the form $na'\ell_1\ell_2$, where $a' \in [a-1]$ and $\ell_1,\ell_2$ comprise the rightmost ascent of $L$ (viewed as a permutation in its own respect when discussing ascents).  We consider cases on $L$.  First suppose $L$ (when standardized) belongs to $\mathcal{E}_{n-1-a,\ell}^*$ for some $\ell$.  Let $i$ denote the number of elements of $[a-1]$ in $\pi''$.  Since $\pi'$ is decreasing, there is no restriction on the choice of these elements.  There are $e_{n-1-a,\ell}^*$ possibilities for $L$ and $\binom{\ell+i}{i}$ ways in which to arrange the elements of $[a-1]$ occurring in $\pi''$ once $\pi'$ and $L$ are specified, since they must decrease (due to 3412) and occur to the right of the rightmost ascent bottom of $L$.  Considering all possible $a$, $\ell$ and $i$, and treating separately the case when $\ell=n-1-a$, gives
\begin{align*}
&\sum_{a=2}^{n-2}\sum_{\ell=1}^{n-1-a}\sum_{i=0}^{a-1}\binom{a-1}{i}\binom{\ell+i}{i}e_{n-1-a,\ell}^*\\
&=n-3+\sum_{i=1}^{n-3}\sum_{a=i+1}^{n-2}\binom{a-1}{i}\binom{n-1-a+i}{i}+\sum_{a=2}^{n-3}\sum_{\ell=1}^{n-2-a}\sum_{i=0}^{a-1}2^{n-2-a-\ell}\binom{a-1}{i}\binom{\ell+i}{i}\\
&=\sum_{i=0}^{n-2}\binom{n-1+i}{2i+1}-2^{n-2}-1+\sum_{a=2}^{n-3}\sum_{\ell=1}^{n-2-a}\sum_{i=0}^{a-1}2^{n-2-a-\ell}\binom{a-1}{i}\binom{\ell+i}{i}
\end{align*}
possible permutations and each is seen to be $T$-avoiding.  On the other hand, if $L$ does not belong to $\mathcal{E}_{n-1-a}^*$, then the letters from $[a-1]$ in $\pi''$ can always occur at the end (in descending order), which yields
$$\sum_{a=2}^{n-2}2^{a-1}(e_{n-1-a}-e_{n-1-a}^*) = \sum_{a=2}^{n-2}2^{a-1}e_{n-1-a}-(n-3)2^{n-3}$$
permutations in this case.

We now entertain the possibility of $x \in [a-1]$ occurring in a position of $\pi''$ other than beyond the rightmost letter of $L$ when $L$ does not belong to $\mathcal{E}_{n-1-a}^*$.    Note first that if the final letter $q$ of a $\{123,3412\}$-avoiding permutation $\rho$ is part of an occurrence of 231, then one can always take the letter corresponding to the ``3'' within the occurrence to be the leftmost ascent top $r$ of $\rho$.  This is clear if the leftmost ascent bottom $s$ of $\rho$ exceeds $q$.  On the other hand, if $s<q$, then since all letters greater than $s$ and to the right of $r$ decrease, the only possibility for the ``2'' in this case is some letter to the left of $r$, whence one may take $r$ to be the ``3'' (since all letters to the left of $r$ decrease).  Thus if $L$ contains more than one ascent, inserting $x$ anywhere to the right of the rightmost ascent bottom of $L$ other than beyond the rightmost letter of $L$  is seen to introduce an occurrence of 3412, where the ``4'' and the ``2'' correspond to the leftmost ascent top and rightmost letter of $L$, respectively.  Therefore, $L$ must contain exactly one ascent in order for $x$ to occur in a position prior to some letter in $L$.  In this case, $x$ can go either between the two letters comprising the ascent of $L$ or beyond the last letter of $L$.  Since at least one letter in $[a-1]$ must be  positioned prior to some letter in $L$ (in order to distinguish permutations arising in this case from those of previous cases), there are $\binom{a-1}{i}i$ ways in which to select and arrange the letters from $[a-1]$ in $\pi''$ where $i \geq 1$.  Furthermore, there are $\sum_{j=2}^{n-2-a}\left(\binom{n-1-a}{j}-(n-a-j)\right)$ possibilities for $L$, since the second (and final) decreasing run of $L$ cannot comprise an interval (for otherwise, $L$ would belong to $\mathcal{E}_{n-1-a}^*$).  Considering all possible $a$, $i$ and $j$ yields
$$\sum_{a=2}^{n-2}\sum_{i=1}^{a-1}\binom{a-1}{i}i\sum_{j=2}^{n-2-a}\left(\binom{n-1-a}{j}-(n-a-j)\right)=\sum_{a=2}^{n-1}\left(2^{n-1-a}-1-\binom{n-a}{2}\right)(a-1)2^{a-2}$$
additional permutations.  Combining this case with the other cases gives \eqref{2lrp1e1}.
\end{proof}

Let $ \mathcal{E}_{n,\ell}\subseteq\mathcal{E}_n$ consist of those members whose final decreasing run (i.e., the letters to the right of and including the rightmost ascent top) is of length $\ell$ for $1 \leq \ell \leq n-1$, with $\mathcal{E}_{n,n}=\{n(n-1)\cdots 1\}$.  For example, we have $\mathcal{E}_{4,2}=\{2143,3142,3241,4132,4231\}$ and $\mathcal{E}_{4,3}=\{1432,2431,3421\}$.   We will need the following explicit formula for $e_{n,\ell}=|\mathcal{E}_{n,\ell}|$.

\begin{lemma}\label{2lrp2}
If $n \geq 2$, then
\begin{equation}\label{2lrp2e1}
e_{n,\ell}=\sum_{m=0}^{n-1-\ell}e_{n-m,\ell}', \qquad 1 \leq \ell \leq n-1,
\end{equation}
where
$$e_{n,\ell}'=\binom{n-1}{\ell-1}+\sum_{b=2}^\ell\sum_{i=1}^{b-1}\sum_{r=\ell+1-i}^{n-1-b}\binom{r-2-\ell+b}{b-1-i}+\sum_{b=\ell+1}^{n-1}\sum_{i=1}^\ell\sum_{r=\ell+1-i}^{n-1-b}\binom{r-2-\ell+b}{b-1-i}$$
for $1 \leq \ell \leq n-2$, with $e_{n,n-1}'=n-1$ if $n \geq 2$.
\end{lemma}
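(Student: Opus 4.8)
The plan is to introduce the refinement $e_{n,\ell}'$, defined as the number of members of $\mathcal{E}_{n,\ell}$ whose leftmost ascent top is the largest letter $n$, and to deduce \eqref{2lrp2e1} from it first. The key point is that in a $123$-avoider every letter exceeding the leftmost ascent top must lie to the left of the leftmost ascent bottom, and the letters preceding the leftmost ascent bottom form a decreasing run; hence a member $\pi$ of $\mathcal{E}_{n,\ell}$ whose leftmost ascent top equals $n-m$ must begin with the staircase $n,n-1,\dots,n-m+1$. These first $m$ letters, being larger than everything else and sitting at the very front, cannot complete an occurrence of $123$ or of $3412$, so deleting them leaves a member of $\mathcal{E}_{n-m,\ell}$ whose leftmost ascent top is $n-m$, that is, a permutation enumerated by $e_{n-m,\ell}'$. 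This map is reversible (prepend the staircase again), and $m$ ranges over $0\le m\le n-1-\ell$ since we need a surviving ascent; summing over $m$ gives \eqref{2lrp2e1}. The boundary value $e_{\ell+1,\ell}'=\ell$ (equivalently $e_{n,n-1}'=n-1$) is clear because every member of $\mathcal{E}_{\ell+1,\ell}$ has a single ascent, at index $1$, and is determined by its first letter.

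To evaluate $e_{n,\ell}'$ I would fix the leftmost ascent bottom $b$, occurring at an index $k$, and write $\pi=\pi_1\cdots\pi_{k-1}\,b\,n\,\pi_{k+2}\cdots\pi_n$ with $\pi_1>\cdots>\pi_{k-1}$ a decreasing sequence of ``large'' letters from $[b+1,n-1]$. Avoidance of $123$ forces the remaining large letters to occur in decreasing order in the suffix $Q:=\pi_{k+2}\cdots\pi_n$, and avoidance of $3412$ (using $b$ and $n$ as the ``3'' and ``4'') forces the ``small'' letters $[b-1]$ to occur in decreasing order in $Q$ as well, so $Q$ is a shuffle of two decreasing sequences. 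The first term $\binom{n-1}{\ell-1}$ will collect the configurations in which $Q$ is itself decreasing, so that the maximal decreasing suffix of $\pi$ is precisely $\{n\}\cup Q$; here $1\le b\le\ell$ and the configuration amounts to choosing which $\ell-b$ large letters lie in $Q$, whence a hockey-stick summation gives $\sum_{b=1}^{\ell}\binom{n-1-b}{\ell-b}=\binom{n-1}{\ell-1}$. The remaining two sums will treat the configurations in which $Q$ is a proper shuffle, i.e.\ some small letter follows some large letter in $Q$, split according to whether $b\le\ell$ or $b\ge\ell+1$. In each such configuration one records by $i$ the number of small letters lying in the final decreasing run, and by $r$ (essentially) the number of large letters occurring after $n$; counting the admissible interleavings of the two decreasing sequences then produces the binomial $\binom{r-2-\ell+b}{b-1-i}$, and the indicated ranges ($1\le i\le b-1$ in the second sum, $1\le i\le\ell$ in the third, and $\ell+1-i\le r\le n-1-b$ in both) encode exactly that the maximal decreasing suffix of $\pi$ has length $\ell$.

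The main obstacle will be the combinatorics of the proper-shuffle cases, namely determining precisely which interleavings of the large and small decreasing sequences avoid $3412$. The only dangerous occurrences have their ``3'' among the initial block $\pi_1\cdots\pi_{k-1}$: either (i) a block letter $\pi_j$ together with $n$ caps a ``small then large'' ascent lying later in $Q$, with that large letter smaller than $\pi_j$, or (ii) a block letter $\pi_j$ together with a large letter of $Q$ caps a ``small then large'' ascent lying still later in $Q$, with $\pi_j$ strictly between the two large letters in value. Ruling these out shows, among other things, that $n-1$ cannot lie in the initial block in the proper-shuffle cases, and it pins down which large letters may lie in $Q$ and how they may be interleaved with the small letters; intersecting all of this with the requirement that the final decreasing run have length exactly $\ell$ is what produces the bounds $\ell+1-i\le r\le n-1-b$ and the index $b-1-i$. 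Once the admissible family is correctly identified, the three nested summations and their simplification to the displayed formula for $e_{n,\ell}'$ are routine, if computer-assisted, manipulations.
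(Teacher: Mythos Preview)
Your approach is essentially the same as the paper's: define $e_{n,\ell}'$ via the condition ``leftmost ascent top $=n$'', strip the initial staircase to obtain \eqref{2lrp2e1}, then analyse $\pi=\pi' b\, n\, \pi''$ by splitting into the case where $\pi''$ is decreasing (yielding $\binom{n-1}{\ell-1}$) and the case where $\pi''$ contains an ascent, with the further split $b\le\ell$ versus $b>\ell$. Two small remarks. First, your description of $i$ is off by one: in the paper's parametrisation $i$ is the least index with $\rho_i\neq\emptyset$ in the decomposition $\pi''=\rho_b(b-1)\rho_{b-1}\cdots\rho_2 1\,\rho_1$, so the final decreasing run is $\rho_i(i-1)\cdots 1$ and contains $i-1$ small letters, not $i$; this is what produces $|\rho_i|=\ell+1-i$ and the lower bound $r\ge\ell+1-i$. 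Second, the ``main obstacle'' is simpler than you anticipate: the only $3412$ constraint that survives is that if some large letter in $\pi'$ exceeded some large letter in $Q$, then (letting $y$ be the smallest element of $K$, necessarily lying in $\rho_i$ and hence after $b-1$) the subsequence $x\,n\,(b-1)\,y$ would be a $3412$. This single observation forces $K$ to be exactly the top interval $[n-r,n-1]$, after which \emph{every} distribution of the $r-(\ell+1-i)$ largest elements of $K$ among $\rho_b,\ldots,\rho_{i+1}$ is automatically admissible, giving the binomial $\binom{r-2-\ell+b}{b-1-i}$ directly by stars-and-bars. Your more elaborate case analysis of dangerous $3412$'s is not needed.
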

\begin{proof}
Let $\mathcal{E}_{n,\ell}'\subseteq \mathcal{E}_{n,\ell}$ consist of those members whose leftmost ascent top is $n$, with $e_{n,\ell}'=|\mathcal{E}_{n,\ell}'|$.  Since the leftmost ascent top $q$ of any $\pi\in\mathcal{E}_n$ is the largest letter yet to be used, all elements of $[q+1,n]$ must appear as part of the initial decreasing run of $\pi$.  Since these letters are seen to be extraneous concerning the avoidance of 123 and 3412, they may be deleted which implies $e_{n,\ell}=\sum_{m=0}^{n-\ell-1}e_{n-m,\ell}'$.  To complete the proof, we establish the formula stated above for $e_{n,\ell}'$ where $1 \leq \ell \leq n-2$, the $\ell=n-1$ case being clear from the definitions.

Let $\pi \in \mathcal{E}_{n,\ell}'$ and $b$ be the leftmost ascent bottom of $\pi$.  If the letters to the right of $n$ within $\pi$ are decreasing, then $\pi=\pi'bn\pi''(b-1)\cdots 21$, where $\pi'$ and $\pi''$ are decreasing and $\pi''$ has length $\ell-b$, assuming $b \leq \ell$.  Considering all possible $b$ gives $\sum_{b=1}^\ell\binom{n-1-b}{\ell-b}=\binom{n-1}{\ell-1}$ permutations $\pi$.  So assume that there is at least one ascent to the right of $n$ within $\pi$.  Then $\pi=\pi'bn\pi''$, where $\pi'$ is decreasing and $\pi''$ can be expressed as
$$\pi''=\rho_b(b-1)\rho_{b-1}(b-2)\cdots\rho_21\rho_1,$$
where $\rho_i$ denotes a possibly empty sequence of letters.  Let $K$ denote the subsequence of $\pi$ comprising the letters in $\rho_b\cup\rho_{b-1}\cup\cdots\cup\rho_1$.  Then $K$ is decreasing so as to avoid 123 and nonempty, by the assumption on $\pi$.  First suppose $2 \leq b \leq \ell$ and let $i$ be the smallest index $j$ such that $\rho_j\neq \emptyset$.  Note that $1 \leq i \leq b-1$ since $i=b$ is disallowed, for otherwise $\pi''$ would be decreasing contrary to our assumption. Also, $\rho_i$ contains exactly $\ell+1-i$ letters since the final decreasing run of $\pi$ is to be of length $\ell$.  If $r$ denotes the number of letters in $K$, then $\ell+1-i \leq r \leq n-1-b$, with $K$ comprising the interval $[n-r,n-1]$ and the remaining elements of $[b+1,n-1]$ going in $\pi'$.  For if not, then there would be would an occurrence of 3412 of the form $xn(b-1)y$ for some $x,y \in [b+1,n-1]$ with $x>y$.  Note that the largest $r-(\ell+1-i)$ letters of $K$ may be distributed amongst $\rho_{b},\rho_{b-1},\ldots,\rho_{i+1}$ in $\binom{r-2-\ell+b}{b-1-i}$ ways.  Considering all possible $b$, $i$ and $r$ yields
 $$\sum_{b=2}^\ell\sum_{i=1}^{b-1}\sum_{r=\ell+1-i}^{n-1-b}\binom{r-2-\ell+b}{b-1-i}$$
 permutations $\pi$ and each is seen to belong to $\mathcal{E}_{n,\ell}'$.  If $\ell+1 \leq b \leq n-1$, then the index $i$ satisfies $1 \leq i \leq \ell$, for otherwise $\pi$ would have a final decreasing run of length strictly greater than $\ell$.  Then $r$ satisfies the same conditions as before and $K$ again comprises $[n-r,n-1]$.  Considering all $b$, $i$ and $r$ gives the second triple sum in the expression for $e_{n,\ell}'$ above and combining the previous cases yields the desired expression for the cardinality of $\mathcal{E}_{n,\ell}'$.
\end{proof}

Let $b_n$ denote the number of $T$-avoiding permutations of length $n$ having exactly two left-right maxima.  It is given explicitly as follows.

\begin{lemma}\label{2lrp3}
We have
\begin{align}
b_n&=(7-n)2^{n-3}-2+\sum_{a=1}^{n-2}2^{a-1}e_{n-1-a}+\sum_{a=2}^{n-3}\sum_{\ell=1}^{n-2-a}\sum_{i=0}^{a-1}2^{n-2-a-\ell}\binom{a-1}{i}\binom{\ell+i}{i}\notag\\
&\quad+\sum_{a=2}^{n-1}\left(2^{n-1-a}-1-\binom{n-a}{2}\right)(a-1)2^{a-2}+\sum_{a=3}^{n-1}\sum_{\ell=1}^{a-2}\sum_{j=0}^{\ell}\binom{n-1-a+j}{j}e_{a-1,\ell}, \quad n \geq 3,\label{2lrp3e1}
\end{align}
where $e_{n-1-a}$ and $e_{a-1,\ell}$ are as defined above.
\end{lemma}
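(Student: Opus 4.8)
The plan is to revisit the proof of Lemma \ref{2lrp1}, this time dropping the hypothesis that the first component $\pi'$ is decreasing. Write a $T$-avoider $\pi$ with exactly two left-right maxima as $\pi = a\pi' n\pi''$, so that $a = \pi_1$, the larger left-right maximum is $n$, every letter of $\pi'$ lies in $[a-1]$, and $\pi''$ holds the remaining letters of $[a-1]$ together with all of $[a+1,n-1]$. The cases $a=1$ (which forces $\pi = 1n\pi''$ with $\pi'' \in \mathcal{E}_{n-2}$, since $4123$ read with $n$ as the ``4'' makes $\pi''$ avoid $123$ while $1234$ and $3412$ add nothing) and $a = n-1$ are handled exactly as in Lemma \ref{2lrp1}, and are absorbed into the shared summand $\sum_{a=1}^{n-2}2^{a-1}e_{n-1-a}$. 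For $2 \le a \le n-2$ the subfamily with $\pi'$ decreasing is, by definition, enumerated by $g_n$, so the task is to count the $\pi$ of this shape with $\pi'$ \emph{not} decreasing and add the result to $g_n$.

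To do so, I would first record the constraints holding for every $\pi$ of this shape: $\pi'$ avoids $123$ (otherwise a $1234$ with $n$ as the ``4'') and avoids $3412$, so $\pi' \in \mathcal{E}_{|\pi'|}$; the letters of $[a-1]$ lying in $\pi''$ are decreasing (two of them in ascending order give a $3412$ with $a$ and $n$); and the letters of $[a+1,n-1]$ lying in $\pi''$ avoid $123$ (otherwise a $1234$ with $a$ as the ``1''). If $\pi'$ also contains an ascent $c < d$, then reading $4123$ with $a$ as the ``4'' shows every letter of $[a-1]$ in $\pi''$ is smaller than $d$, hence below the least ascent top of $\pi'$. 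From these facts one checks that the permutation of $[a-1]$ formed by writing $\pi'$ followed by the decreasing block of small letters of $\pi''$ is a member of $\mathcal{E}_{a-1}$ whose final decreasing run is exactly that block; writing $\ell$ for its length, one gets $a \ge 3$ and $1 \le \ell \le a-2$, and the count of such permutations is $e_{a-1,\ell}$ from Lemma \ref{2lrp2}. The large letters $[a+1,n-1]$, which are decreasing apart from at most one ascent and are further constrained by $1234$ (no ascending pair among them prolonging the $\pi'$-ascent), by $3412$, and by $4123$ with $n$ as the ``4'', can then be inserted into $\pi''$ only within at most $j$ of the gaps of this final run adjacent to $n$, for some $0 \le j \le \ell$, with $\binom{n-1-a+j}{j}$ ways to spread the $n-1-a$ of them over the $j+1$ resulting gaps. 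Verifying that each permutation so built avoids $T$ and that the associated data recover $\pi$ uniquely, and then separating off the cases where $\pi'$ is in fact decreasing (which belong to $g_n$), is the heart of the argument and is what brings in the summand $\sum_{a=3}^{n-1}\sum_{\ell=1}^{a-2}\sum_{j=0}^{\ell}\binom{n-1-a+j}{j}e_{a-1,\ell}$.

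It remains to assemble everything and match the stated formula. The summands $\sum_{a}2^{a-1}e_{n-1-a}$ and $\sum_{a}\bigl(2^{n-1-a}-1-\binom{n-a}{2}\bigr)(a-1)2^{a-2}$ carry over from $g_n$ from essentially the same sub-cases, and the triple sum $\sum_{a,\ell,i}2^{n-2-a-\ell}\binom{a-1}{i}\binom{\ell+i}{i}$ reappears once the new $e_{a-1,\ell}$-count and the boundary terms are expanded, whereas the summand $\sum_{i=0}^{n-2}\binom{n-1+i}{2i+1}$ of $g_n$ — which in Lemma \ref{2lrp1} came exactly from configurations resting on ``$\pi'$ decreasing'' — disappears; reconciling the ascending-$\pi'$ contribution against the pieces already present in $g_n$ leaves the residual $2^{n-1}-1$ and turns the constant prefix $(3-n)2^{n-3}-1$ of $g_n$ into $(7-n)2^{n-3}-2$. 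For the non-$e_{a-1,\ell}$ arithmetic I would reuse the closed forms from inside the proof of Lemma \ref{2lrp1}, in particular $e_n^* = 2^{n-1}$, $e_{n,\ell}^* = 2^{n-\ell-1}$, and $\sum_{a,\ell,i}\binom{a-1}{i}\binom{\ell+i}{i}e_{n-1-a,\ell}^* = \sum_{i}\binom{n-1+i}{2i+1}-2^{n-2}-1+\sum_{a,\ell,i}2^{n-2-a-\ell}\binom{a-1}{i}\binom{\ell+i}{i}$, so that this part is routine. I expect the main obstacle to be the middle paragraph: determining precisely which insertions of the large block $[a+1,n-1]$ into $\pi''$ simultaneously escape all three forbidden patterns once $\pi'$ has an ascent, confirming that they are faithfully encoded by ``at most $j \le \ell$ gaps adjacent to $n$'', and carefully fixing the ranges of $a$, $\ell$, $j$ and the boundary situations (e.g. $\ell = a-1$, the final run touching the end of $\pi$, or the position of the single ascent among the large letters) so that nothing is double counted against $g_n$ or omitted.
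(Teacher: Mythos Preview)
Your plan is the paper's: peel off the $\pi'$-decreasing case as $g_n$, then for $\pi'$ not decreasing parametrize by the $\{123,3412\}$-avoider $\alpha$ on $[a-1]$ together with its final-run length $\ell$, obtain the $e_{a-1,\ell}$ triple sum, and cancel the $\sum_i\binom{n-1+i}{2i+1}$ term of $g_n$ against the overcount correction to turn $(3-n)2^{n-3}-1$ into $(7-n)2^{n-3}-2$.

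Two places in the middle paragraph need tightening before this goes through. First, once $\pi'$ contains an ascent $c<d$, the large letters $[a+1,n-1]$ in $\pi''$ are \emph{fully} decreasing, not ``decreasing apart from at most one ascent'': any ascending pair $x<y$ among them gives the $1234$ pattern $c\,d\,x\,y$ (your own parenthetical ``no ascending pair among them prolonging the $\pi'$-ascent'' already says this, so drop the contradictory clause). With both the large block and the $j$ small letters in $\pi''$ decreasing, $\pi''$ is simply a shuffle of two decreasing strings of lengths $n-1-a$ and $j$, which is where $\binom{n-1-a+j}{j}$ comes from; there is no further ``gaps adjacent to $n$'' restriction to check. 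Second, the boundary $j=\ell$ is exactly where $\pi'$ can collapse to a decreasing word, and this happens iff $\alpha$ has a \emph{single} ascent; there are $\binom{a-1}{\ell}-1$ such $\alpha$ in $\mathcal{E}_{a-1,\ell}$. Subtracting these at $j=\ell$ and summing over $a,\ell$ yields $\sum_{\ell=0}^{n-2}\binom{n-1+\ell}{2\ell+1}-(2^{n-1}-1)$, which is precisely the piece that cancels against $g_n$ and produces the constant shift you anticipate.
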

\begin{proof}
We count permutations $\pi=a\pi'n\pi''$ having two left-right maxima where $\pi'$ is not decreasing, whence $a \geq 3$.  Then letters in $[a+1,n-1]$ must decrease, so as to avoid an occurrence of 1234.  Let $\alpha$ denote the subsequence of $\pi$ comprising the letters in $[a-1]$.  Then $\alpha$ avoids $\{123,3412\}$, and if $\alpha \in \mathcal{E}_{a-1,\ell}$ for some $\ell$ where $1 \leq \ell \leq a-2$, then at most $\ell$ letters of $\alpha$ can lie within $\pi''$ (or else there would be an occurrence of 3412 starting with $a,n$).  Suppose exactly $j$ letters of $\alpha$ lie within $\pi''$.  If $0 \leq j \leq \ell-1$, then there are $e_{a-1,\ell}$ possibilities for $\pi'$  and $\binom{n-1-a+j}{j}$ ways to arrange the letters in $\pi''$ once $\pi'$ is known since letters from both $[a-1]$ and $[a+1,n-1]$ are decreasing in $\pi''$, which uniquely determines $\pi$.  This yields
$$\sum_{a=3}^{n-1}\sum_{\ell=1}^{a-2}\sum_{j=0}^{\ell-1}\binom{n-1-a+j}{j}e_{a-1,\ell}$$
possible $\pi$ and one may verify that each such $\pi$ is $T$-avoiding.  If $j=\ell$, then the same reasoning applies except that now $\alpha$ must contain at least two ascents, for otherwise $\pi'$ would be decreasing contrary to our assumption.  Thus, there are $e_{a-1,\ell}-\binom{a-1}{\ell}+1$ possibilities for $\alpha$ in this case, which yields
$$\sum_{a=3}^{n-1}\sum_{\ell=1}^{a-2}\binom{n-1-a+\ell}{\ell}\left[e_{a-1,\ell}-\binom{a-1}{\ell}+1\right]$$
additional permutations.  Note that
\begin{align*}
&\sum_{a=3}^{n-1}\sum_{\ell=1}^{a-2}\binom{n-1-a+\ell}{\ell}\left[\binom{a-1}{\ell}-1\right]=\sum_{\ell=1}^{n-3}\sum_{a=\ell+1}^{n-1}\binom{n-1-a+\ell}{\ell}\left[\binom{a-1}{\ell}-1\right]\\
&=\sum_{\ell=1}^{n-3}\left[\binom{n-1+\ell}{2\ell+1}-\binom{n-1}{\ell+1}\right]=\sum_{\ell=0}^{n-2}\binom{n-1+\ell}{2\ell+1}-(2^{n-1}-1).
\end{align*}
The two previous cases taken together then imply that there are
$$2^{n-1}-1-\sum_{\ell=0}^{n-2}\binom{n-1+\ell}{2\ell+1}+\sum_{a=3}^{n-1}\sum_{\ell=1}^{a-2}\sum_{j=0}^{\ell}\binom{n-1-a+j}{j}e_{a-1,\ell}$$
permutations $\pi$ of the stated form above where $\pi'$ is not decreasing.  Combining this expression with the one for $g_n$ in Lemma \ref{2lrp1} above gives \eqref{2lrp3e1}.
\end{proof}

\subsubsection{Three l-r maxima}

Let $d_n$ denote the number of $T$-avoiding permutations of length $n$ having three left-right maxima.  The sequence $d_n$ may be expressed explicitly as follows.

\begin{lemma}\label{3lrp1}
We have
\begin{equation}\label{3lrp1e1}
d_n=\sum_{a=1}^{n-2}\sum_{b=a+1}^{n-1}d_n(a,b), \qquad n \geq 3,
\end{equation}
where
\begin{align*}
d_n(a,b)&=\left(2^{a-1}+\binom{a}{2}\right)\left(2^{b-1-a}-b+a\right)+\left((a-1)2^{a-2}-\binom{a}{2}\right)\left(2^{b-1-a}-1\right)\\
&\quad+\sum_{j=0}^{a-1}\left(\binom{a-1}{j}-1\right)\binom{n-b+j}{j}+\sum_{j=0}^{a-1}\sum_{r=0}^{b-2-a}\left(\binom{a-1}{j}-1\right)\binom{n-3-r-j}{n-1-b}\\
&\quad+\sum_{j=0}^{a-1}\sum_{i=0}^j\sum_{r=0}^{b-1-a}\binom{i+r}{i}\binom{n-2+j-i-a-r}{n-1-b}, \qquad 1 \leq a <b \leq n-1.
\end{align*}
\end{lemma}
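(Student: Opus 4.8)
The approach is to fix the first letter $a$ and the second left-right maximum $b$ and to count the avoiders $\pi=a\pi'b\pi''n\pi'''\in S_n(T)$ having exactly three left-right maxima $a<b<n$; the number of admissible $\pi$ with these data is $d_n(a,b)$, and summing over $1\le a<b\le n-1$ yields \eqref{3lrp1e1}. The first step is to pin down the coarse structure forced by the definition of left-right maximum together with avoidance of $1234$: any letter of $\pi'$ exceeding $a$ would be a new left-right maximum, so $\pi'\subseteq[a-1]$; likewise $\pi''$ uses only letters smaller than $b$; and a letter $c\in[b+1,n-1]$ occurring before $n$ would, being the maximum of some prefix, be a left-right maximum distinct from $a,b,n$, so \emph{all} of $[b+1,n-1]$ lies in $\pi'''$. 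These large letters must appear in $\pi'''$ in decreasing order, since an increasing pair $c_1<c_2$ among them gives the $1234$ pattern $a\,b\,c_1\,c_2$; similarly $\pi'$ is decreasing (an ascent $x<y$ there produces $x\,y\,b\,n$), and each of the three blocks avoids $123$ (a $123$ in $\pi'$ or $\pi''$ extends to a $1234$ with $n$, and a $123$ in $\pi'''$ forms a $4123$ with the preceding $n$). In particular the only freedom left in $\pi'$ is the choice of which subset of $[a-1]$ it consists of, in contrast with the two-left-right-maxima case (Lemma \ref{2lrp3}), where the analogous block ranged over all $\{123,3412\}$-avoiders; this is the source of the powers of $2$, rather than the $e_{n,\ell}$ of Lemma \ref{2lrp2}, in $d_n(a,b)$.

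Next I would split into cases according to (i) the number $j$ of letters of $[a-1]$ lying in $\pi'''$ (equivalently, to the right of $n$), (ii) how the remaining small letters of $[a-1]$ and the medium letters $[a+1,b-1]$ are distributed between $\pi''$ and $\pi'''$, and (iii) the number and location of ascents inside $\pi''$. In each case the relevant blocks decompose into independent decreasing runs, so that the count is a product, once one has determined which insertions of a small letter among the medium and large letters are legal --- such an insertion is forbidden exactly when it creates a $3412$ (with $b$ or a medium letter as the ``$3$'' and $n$ as the ``$4$'') or a $4123$ (with $n$ or $b$ as the ``$4$''). The five summands of $d_n(a,b)$ correspond to these families. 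Roughly: the terms carrying $2^{a-1}+\binom a2$ and $(a-1)2^{a-2}-\binom a2$ collect the cases $j=0$, the two shapes being distinguished by whether $\pi''$ has at most one ascent, while $2^{b-1-a}-b+a$ and $2^{b-1-a}-1$ count the admissible placements of $[a+1,b-1]$ relative to the decreasing block $[b+1,n-1]$; the remaining three terms handle $j\ge1$, where one first chooses which $j$ of the $a-1$ small letters sit in $\pi'''$ (the corrections $\binom{a-1}{j}-1$ excising the single choice that conflicts with a case hypothesis) and the binomials $\binom{n-b+j}{j}$, $\binom{n-3-r-j}{n-1-b}$ and $\binom{i+r}{i}\binom{n-2+j-i-a-r}{n-1-b}$ then count the interleavings in $\pi'''$ of those $j$ letters with the block $[b+1,n-1]$, the auxiliary indices $r$ and $i$ recording how many medium letters lie to the right of $n$ and how many small letters of $\pi''$ are trapped between medium letters. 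In every case one checks both directions: each listed condition is necessary (a violation exhibits an explicit $1234$, $3412$ or $4123$) and the conditions are sufficient (any permutation assembled to the specifications avoids all of $T$).

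The main obstacle is the bookkeeping of this case analysis: keeping the families mutually exclusive and jointly exhaustive while the three forbidden patterns interact across the four segments $a,\pi',b,\pi'',n,\pi'''$, and, most delicately, deciding exactly when a letter of $[a-1]$ may be slipped in among the larger letters without producing a $3412$ or a $4123$. A secondary difficulty is collapsing the several nested sums that arise within each family down to the five closed terms displayed for $d_n(a,b)$ --- this is what forces the small ``$-1$'', ``$-b+a$'' and ``$-\binom a2$'' corrections --- after which the outer double sum over $1\le a<b\le n-1$ gives \eqref{3lrp1e1} directly, under the stated convention that empty sums vanish.
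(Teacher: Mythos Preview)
Your structural setup is correct and matches the paper: fix $a<b$, write $\pi=a\pi'b\pi''n\pi'''$, observe that $\pi'$, the $[b+1,n-1]$-letters in $\pi'''$, and the $[a+1,b-1]$-letters in $\pi''$ are each decreasing, and that the $[a-1]$-letters to the right of $b$ are decreasing. But your attribution of the five summands to cases is wrong, and a case analysis organised as you describe would not produce the stated $d_n(a,b)$.

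Two concrete misidentifications. First, the index $j$ in the formula is the number of letters of $[a-1]$ lying to the right of $b$ (that is, in $\pi''\cup\pi'''$), not the number in $\pi'''$; the auxiliary index $i$ in the triple sum is then the number of those $j$ letters that fall in $\pi''$. Second, the two product terms do \emph{not} collect the $j=0$ cases: the triple sum itself contributes at $j=0$, and the product terms arise from configurations in which $j$ can be anything. The paper's primary split is not on $j$ but on whether the $[a-1]$-subsequence $\alpha$ is decreasing, and secondarily on whether the $[a+1,b-1]$-subsequence $\beta$ is decreasing. When $\alpha$ is decreasing and $\beta$ has an ascent, the positions of the $[a-1]$- and $[b-1]$-letters in $\pi''$ and $\pi'''$ are forced to block ends, giving $\binom{a+1}{2}(2^{b-1-a}-b+a)$; when both $\alpha,\beta$ are decreasing one gets the triple sum. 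When $\alpha$ has an ascent (so $\binom{a-1}{j}-1$ counts choices of $\pi'$), one further splits on whether $\pi'''\cap[a+1,b-1]=\emptyset$, whether $\pi''\cap[a-1]=\emptyset$, and whether $\beta$ has an ascent; these four subcases yield $\sum_j(\binom{a-1}{j}-1)\binom{n-b+j}{j}$, $((a-1)2^{a-2}-\binom a2)(2^{b-1-a}-1)$, $(2^{a-1}-a)(2^{b-1-a}-b+a)$ and $\sum_{j,r}(\binom{a-1}{j}-1)\binom{n-3-r-j}{n-1-b}$ respectively, the third combining with the $\binom{a+1}{2}$ term to give the first displayed product. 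Your parameter $r$ should count the $[a+1,b-1]$-letters in $\pi''$, not in $\pi'''$.
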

\begin{proof}
Let $\mathcal{D}_n(a,b)$ denote the set of $T$-avoiding permutations whose left-right maxima are $a,b,n$, where $1 \leq a <b \leq n-1$.  We will show that $|\mathcal{D}_n(a,b)|$ is given by $d_n(a,b)$ above, which implies \eqref{3lrp1e1}.  Let $\pi=a\pi'b\pi''n\pi''' \in \mathcal{D}_n(a,b)$.  Then all letters in $\pi'$ and $[b+1,n-1]$ decrease so as to avoid 1234 as well as any letters from $[a+1,b-1]$ in $\pi''$.  Also, any letters of $[b-1]$ within $\pi'''$ and any letters of $[a-1]$ occurring to the right of $b$ must decrease, for otherwise there would be an occurrence of 3412 in either case.  We now consider cases based off of whether or not the subsequence $\alpha$ of $\pi$ comprising $[a-1]$ is decreasing.  First assume that it is.  Suppose for now that the subsequence $\beta$ of $\pi$ comprising $[a+1,b-1]$ is not decreasing and thus contains exactly one ascent.  Then all elements of $[b-1]$ within $\pi'''$ in this case must occur at the end of $\pi'''$ (necessarily in decreasing order), for otherwise there is a 1234 of the form $axy(b+1)$ for some $x,y \in [a+1,b-1]$ with $x<y$.  Also, any elements of $[a-1]$ in $\pi''$ must occur at the end of $\pi''$, for otherwise there is a 4123 of the form $bzxy$ where $z \in [a-1]$.

Let $j$ denote the number of elements of $[a-1]$ occurring to the right of $b$ within $\pi$ and $i$, the number of elements of $[a-1]$ in $\pi''$, where $0 \leq j \leq a-1$ and $0 \leq i \leq j$.  Let $r$ denote the number of elements of $[a+1,b-1]$ in $\pi''$.  Then there are $\binom{n-1-a}{r}-1$ ways in which to select these elements as $\beta$ is to contain an ascent.  Since the positions of the elements of $[a-1]$ within $\pi$ are determined by the prior observations once $i$ and $j$ are specified in this case, we get
\begin{align*}
\sum_{j=0}^{a-1}\sum_{i=0}^j\sum_{r=0}^{b-1-a}\left[\binom{b-1-a}{r}-1\right]&=(2^{b-1-a}-b+a)\sum_{j=0}^{a-1}\sum_{i=0}^j1=\binom{a+1}{2}(2^{b-1-a}-b+a)
\end{align*}
possible permutations.  One may verify (and also in the subsequent cases) that the permutations so obtained are $T$-avoiding.  On the other hand, if $\beta$ is decreasing, then we get $\binom{i+r}{i}$ possibilities for $\pi''$ since letters from $[a-1]$ and $[a+1,b-1]$ within $\pi''$ are decreasing, and
$$\binom{(n-1-b)+(j-i)+(b-1-a-r)}{n-1-b}=\binom{n-2+j-i-a-r}{n-1-b}$$
possibilities for $\pi'''$ since letters from $[b-1]$ and $[b+1,n-1]$ within $\pi'''$ are decreasing, where $i$, $j$ and $r$ are as before.  Considering all $i$, $j$ and $r$ gives the triple sum found in the expression for $d_n(a,b)$ above.

Now assume that $\alpha$ contains an ascent. Note that since the letters of $[a-1]$ within $\pi$ occurring to the left and to the right of $b$ form decreasing subsequences, there is exactly one ascent in $\alpha$ and it involves the last letter of $\pi'$ and either the leftmost letter of $[a-1]$ within $\pi''$ if $\pi''$ is nonempty or the leftmost letter of $[a-1]$ within $\pi'''$ if $\pi''$ is empty.  Any elements of $[a-1]$ within $\pi''$ must occur at the end of $\pi''$ in this case so as to avoid 1234.  Also, if $\pi''$ contains at least one member of $[a-1]$ and $\pi'''$ at least one member of $[a+1,b-1]$, then all elements of $[b-1]$ in $\pi'''$ must come at the end, for otherwise there is an occurrence of 1234 of the form $uvw(b+1)$, where $u,v \in [a-1]$ with $u<v$ and $w \in [a+1,b-1]$.  We now consider cases on $[a+1,b-1]$.  If $[a+1,b-1]$ is confined to $\pi''$, then there are $\binom{a-1}{j}-1$ possibilities for the letters of $\alpha$ in $\pi'$ and $\sum_{i=0}^j\binom{n-1-b+j-i}{n-1-b}=\binom{n-b+j}{n-b}$ possibilities for $\pi'''$ once $\pi'$ is determined.  Considering all $j$
gives $\sum_{j=0}^{a-1}\left(\binom{a-1}{j}-1\right)\binom{n-b+j}{j}$ permutations.

So assume $\pi'''\cap[a+1,b-1]\neq\emptyset$.  If $\pi''\cap[a-1]\neq \emptyset$, then members of $[a-1]$ in $\pi''$ must occur at the end and the same holds for members of $[b-1]$ in $\pi'''$.  Thus, we get
\begin{align*}
\sum_{j=0}^{a-1}\left(\binom{a-1}{j}-1\right)\sum_{i=1}^j\sum_{r=0}^{b-2-a}\binom{b-1-a}{r}&=\left(2^{b-1-a}-1\right)\sum_{j=0}^{a-1}\left(\binom{a-1}{j}-1\right)j\\
&=\left(2^{b-1-a}-1\right)\left((a-1)2^{a-2}-\binom{a}{2}\right)
\end{align*}
members of $\mathcal{D}_n(a,b)$ in this case. If $\pi''\cap [a-1]=\emptyset$, then consider whether or not $\beta$ contains an ascent. If it does, then all members of $[b-1]$ in $\pi'''$ must occur at the end (due to 1234), which gives
$$\sum_{j=0}^{a-1}\left(\binom{a-1}{j}-1\right)\sum_{r=0}^{b-1-a}\left(\binom{b-1-a}{r}-1\right)=\left(2^{a-1}-a\right)\left(2^{b-1-a}-b+a\right)$$
possibilities.  If it does not, then there are
$$\binom{(n-1-b)+(b-1-a-r)+(a-1-j)}{n-1-b}=\binom{n-3-r-j}{n-1-b}$$
possibilities for $\pi'''$ once $r$ and $\pi'$ are specified.  Note that $r \leq b-2-a$ since $\pi'''\cap[a+1,b-1]\neq\emptyset$.  Thus, we get $\sum_{j=0}^{a-1}\sum_{r=0}^{b-2-a}\left(\binom{a-1}{j}-1\right)\binom{n-3-r-j}{n-1-b}$ additional members of $\mathcal{D}_n(a,b)$.  Combining all of the previous cases implies that the cardinality of $\mathcal{D}_n(a,b)$ is given by $d_n(a,b)$, as desired.
\end{proof}

Upon including permutations that start with $n$, we have $a_n=b_n+d_n+e_{n-1}$ for $n \geq 2$, with $a_0=a_1=1$.

To obtain an explicit formula for the generating function for the number of $T$-avoiders of length $n$, we define $A(x)=\sum_{n\geq0}a_nx^n$, $B(x)=\sum_{n\geq2}b_nx^n$, $D(x)=\sum_{n\geq2}d_nx^n$ and $E(x)=\sum_{n\geq0}e_nx^n$. It is well known that the generating function for the number of $\{123,3412\}$-avoiders of length $n$ is given by
$$E(x)=1+\frac{x(1-4x+7x^2-5x^3+2x^4)}{(1-x)^4(1-2x)}.$$
Let $E(x,y)=1+\sum_{n\geq 1}\sum_{\ell=1}^ne_{n,\ell}x^ny^\ell$.  Using Lemma \ref{2lrp2}, one can show
\small\begin{align*}
E&(x,y)=1+\sum_{n\geq1}\sum_{\ell=1}^ne_{n,\ell}x^ny^\ell\\
&=1+\frac{xy((2x^5-5x^4+4x^3-x^2)y^2+(2x^5-6x^4+11x^3-8x^2+2x)y-x^4+4x^3-6x^2+4x-1)}{(1-x)^2(1-2x)(1-xy)^2(xy+x-1)}.
\end{align*}\normalsize
Note that $E(x,1)=E(x)$. Multiplying both sides of \eqref{2lrp3e1} by $x^n$, and summing over all $n\geq3$, yields
\begin{align*}
B(x)&=x^2+\sum_{n\geq3}((7-n)2^{n-3}-2)x^n+\sum_{n\geq3}\sum_{a=1}^{n-2}2^{a-1}e_{n-1-a}x^n\\
&\quad+\sum_{n\geq3}\sum_{a=2}^{n-3}\sum_{\ell=1}^{n-2-a}\sum_{i=0}^{a-1}2^{n-2-a-\ell}\binom{a-1}{i}\binom{\ell+i}{i}x^n\\
&\quad+\sum_{n\geq3}\sum_{a=2}^{n-1}\left(2^{n-1-a}-1-\binom{n-a}{2}\right)(a-1)2^{a-2}x^n\\
&\quad+\sum_{n\geq3}\sum_{a=3}^{n-1}\sum_{\ell=1}^{a-2}\sum_{j=0}^{\ell}\binom{n-1-a+j}{j}e_{a-1,\ell}x^n\\
&=x^2+\frac{2x^3(1-3x+x^2)}{(1-x)(1-2x)^2}+\frac{x^3(1-4x+7x^2-5x^3+2x^4)}{(1-x)^4(1-2x)^2}\\
&\quad+\frac{x^5(3-6x+2x^2)}{(1-x)(1-2x)^2(1-3x+x^2)}+\frac{x^6}{(1-x)^3(1-2x)^3}\\
&\quad+\frac{x^4(2-13x+34x^2-46x^3+31x^4-7x^5)}{(1-x)^4(1-2x)^3(1-3x+x^2)}\\
&=\frac{x^2(1-10x+44x^2-108x^3+159x^4-144x^5+74x^6-14x^7)}{(1-x)^4(1-2x)^3(1-3x+x^2)}.
\end{align*}
Multiplying both sides of \eqref{3lrp1e1} by $x^n$, and summing over all $n\geq3$, yields
\begin{align*}
D(x)&=\sum_{n\geq3}\sum_{a=1}^{n-2}\sum_{b=a+1}^{n-1}\left(2^{a-1}+\binom{a}{2}\right)\left(2^{b-1-a}-b+a\right)x^n\\
&\quad+\sum_{n\geq3}\sum_{a=1}^{n-2}\sum_{b=a+1}^{n-1}\left((a-1)2^{a-2}-\binom{a}{2}\right)\left(2^{b-1-a}-1\right)x^n\\
&\quad+\sum_{n\geq3}\sum_{a=1}^{n-2}\sum_{b=a+1}^{n-1}\sum_{j=0}^{a-1}\left(\binom{a-1}{j}-1\right)\binom{n-b+j}{j}x^n\\
&\quad+\sum_{n\geq3}\sum_{a=1}^{n-2}\sum_{b=a+1}^{n-1}\sum_{j=0}^{a-1}\sum_{r=0}^{b-2-a}\left(\binom{a-1}{j}-1\right)\binom{n-3-r-j}{n-1-b}x^n\\
&\quad+\sum_{n\geq3}\sum_{a=1}^{n-2}\sum_{b=a+1}^{n-1}\sum_{j=0}^{a-1}\sum_{i=0}^j\sum_{r=0}^{b-1-a}\binom{i+r}{i}\binom{n-2+j-i-a-r}{n-1-b}x^n\quad\qquad\qquad\qquad
\end{align*}
\begin{align*}
&=\frac{x^5(1-2x+x^2-x^3)}{(1-x)^6(1-2x)^2}+\frac{x^6(1-x-x^2)}{(1-x)^5(1-2x)^3}+\frac{x^5(2-x)}{(1-x)^3(1-2x)(1-3x+x^2)}\\
&\quad+\frac{x^6}{(1-x)^2(1-2x)^2(1-3x+x^2)}+\frac{x^3}{(1-2x)^3}\\
&=\frac{x^3(1-9x+37x^2-91x^3+142x^4-141x^5+90x^6-36x^7+6x^8)}{(1-x)^6(1-2x)^3(1-3x+x^2)}.
\end{align*}
By the fact $a_n=b_n+d_n+e_{n-1}$ for $n \geq 2$, with $a_0=a_1=1$, we have
$$A(x)=1+B(x)+D(x)+xE(x),$$
which leads to the following result.

\begin{theorem}\label{th149a}
Let $T=\{1234,3412,4123\}$. Then
$$F_T(x)=\frac{1-14x+87x^2-315x^3+736x^4-1161x^5+1253x^6-918x^7+446x^8-134x^9+18x^{10}}{(1-x)^6(1-2x)^3(1-3x+x^2)}.$$
\end{theorem}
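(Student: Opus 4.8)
\textbf{Proof proposal for Theorem~\ref{th149a}.}
The plan is to assemble $F_T(x)$ from three disjoint classes of $T$-avoiders sorted by their number of left-right maxima. First I would note that a $T$-avoider has at most three left-right maxima: four left-right maxima $i_1<i_2<i_3<i_4$, read in order of position, form an occurrence of $1234$. A permutation with a single left-right maximum has the form $\pi=n\pi'$; since $n$ standing first can only serve as a largest letter at the front of a pattern, $\pi$ avoids $T$ exactly when $\pi'$ avoids $\{1234,3412,123\}$, where the $123$ restriction comes from $4123$ (and $123$-avoidance subsumes $1234$-avoidance). Hence there are $e_{n-1}$ such permutations, so, together with Lemmas~\ref{2lrp3} and \ref{3lrp1} counting the cases of two and three left-right maxima, one has $a_n=b_n+d_n+e_{n-1}$ for $n\geq2$, with $a_0=a_1=1$.

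Next I would pass to generating functions. Recall the classical fact that the generating function for $\{123,3412\}$-avoiders is $E(x)=1+\frac{x(1-4x+7x^2-5x^3+2x^4)}{(1-x)^4(1-2x)}$. Because the formula for $b_n$ in Lemma~\ref{2lrp3} involves the refined numbers $e_{a-1,\ell}$, I would first derive, from the recursion and explicit formula of Lemma~\ref{2lrp2}, the bivariate generating function $E(x,y)=1+\sum_{n\geq1}\sum_{\ell=1}^{n}e_{n,\ell}x^ny^\ell$, checking that $E(x,1)=E(x)$. Multiplying the identity of Lemma~\ref{2lrp3} by $x^n$ and summing over $n\geq3$, and doing the analogous (five-fold) summation for Lemma~\ref{3lrp1}, produces closed rational forms
$$B(x)=\frac{x^2(1-10x+44x^2-108x^3+159x^4-144x^5+74x^6-14x^7)}{(1-x)^4(1-2x)^3(1-3x+x^2)}$$
and
$$D(x)=\frac{x^3(1-9x+37x^2-91x^3+142x^4-141x^5+90x^6-36x^7+6x^8)}{(1-x)^6(1-2x)^3(1-3x+x^2)}.$$

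Finally, from $a_n=b_n+d_n+e_{n-1}$ one gets $A(x)=1+B(x)+D(x)+xE(x)$; substituting the three expressions, putting everything over the common denominator $(1-x)^6(1-2x)^3(1-3x+x^2)$, and collecting the numerator yields the stated formula for $F_T(x)=A(x)$. The main obstacle is the purely computational one of evaluating the nested sums, above all the quintuple sum defining $d_n(a,b)$ in Lemma~\ref{3lrp1} and the four-fold sums inside $b_n$: each inner summation reduces to a routine identity (geometric series, Vandermonde-type convolutions, and the hockey-stick identity $\sum_k\binom{m}{k}=\binom{m+1}{k+1}$), but there are many of them and the bookkeeping is heavy, so this step is best carried out with computer algebra, displaying the intermediate partial-fraction pieces for verification.
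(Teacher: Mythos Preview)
Your proposal is correct and follows essentially the same route as the paper: you partition $T$-avoiders according to their number of left-right maxima (at most three, as you observe), identify the one-maximum case with $\{123,3412\}$-avoiders of length $n-1$, invoke Lemmas~\ref{2lrp3} and~\ref{3lrp1} for the two- and three-maximum cases, and then sum the generating functions $B(x)$, $D(x)$ and $xE(x)$ to obtain $F_T(x)$. The intermediate rational expressions you quote for $B(x)$ and $D(x)$, as well as the final assembly $A(x)=1+B(x)+D(x)+xE(x)$, match the paper exactly; the heavy summation work is indeed delegated to computer algebra in the paper just as you suggest.
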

\vspace*{3mm}

\subsection{Case 185: $\{1234,2341,4123\}$}

We follow a similar pattern of proof as in Case 149 above and use the same notation, letting $u_n$, $b_n$, $d_n$, $e_n$ and $g_n$ denote the same subsets as in that case, but with $1234$ in place of $1243$.  From the proof of Lemma \ref{unlem} above, we see that the sequence $u_n$ is the same as before since avoiding 1234 is logically equivalent to avoiding 1243 in this case.  We now proceed with the various cases.

\subsubsection{Case I}

Let $b_n$ denote the number of $T$-avoiding permutations of length $n$ whose leftmost ascent is of the form $a,n$ for some $2 \leq a \leq n-1$.  We have the following recurrence relation for $b_n$.

\begin{lemma}\label{bnl}
If $n \geq 3$, then
\begin{equation}\label{bnle1}
b_n=b_{n-1}+u_n+(n-3)2^{n-4}+\binom{n-2}{5}-\binom{n-2}{2}+\sum_{j=0}^{n-4}\sum_{i=1}^{n-3-j}\binom{n-i-2}{j+1}C_{n-2-j,i},
\end{equation}
with $b_2=0$, where $u_n$ given by \eqref{unleme1} above.
\end{lemma}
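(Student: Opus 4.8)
The plan is to mirror the proof of Lemma~\ref{b_nlem} (Case 125), using that Case 185 differs from Case 125 only in that $1243$ is replaced by $1234$. Let $\mathcal{B}_n$ denote the set of $T$-avoiders of length $n$ whose leftmost ascent has the form $a,n$ with $2\le a\le n-1$, and take $\pi\in\mathcal{B}_n$. If the first letter of $\pi$ is $n-1$, then $\pi$ is counted by $u_n$, which (by \eqref{unleme1}, which carries over to $T$ as already observed) gives that term; otherwise write $\pi=a_1a_2\cdots a_jan\pi'$ with $j\ge0$ and $2\le a<a_j<\cdots<a_1\le n-2$, put $S=\{a_1,\dots,a_j\}$, and let $x$ be the first letter of the nonempty string $\pi'$. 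I would then split into the same three cases as in Case 125.

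When $x=n-1$, the letter $n-1$ lies immediately after the ascent top $n$; I would show that deleting it (and, conversely, inserting $n-1$ right after the maximum) is a bijection onto $\mathcal{B}_{n-1}$, the verification of $T$-avoidance using only that the second largest letter, sitting just after the largest, cannot complete a $1234$, $2341$, or $4123$ beyond what is already present. This yields the summand $b_{n-1}$. When $x\in[a-1]$, avoidance of $4123$ (with $n$ as the ``$4$'') forces $\pi'$ to avoid $123$, and I would argue that this is the only constraint on $\pi'$ and that $S\cup\{a\}$ is extraneous: since $x<a$ makes the letters of $[a+1,n-1]$ inside $\pi'$ decrease, no letter left of $n$ can play the role of a ``$2$'' of a $2341$, a ``$4$'' of a $4123$, or---the point that changes from Case 125---a ``$1$'' of a $1234$, as the last would require three increasing letters exceeding $a$ to the right of $a$, contradicting the $123$-avoidance of $\pi'$. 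Deleting $n$ together with $S\cup\{a\}$ leaves a $123$-avoider counted by $C_{n-2-j,i}$ with first letter $i$; summing $\binom{n-2-a}{j}C_{n-2-j,i}$ over the relevant ranges of $a$, $j$, $i$ and applying the hockey-stick identity reproduces the double sum $\sum_{j=0}^{n-4}\sum_{i=1}^{n-3-j}\binom{n-i-2}{j+1}C_{n-2-j,i}$, exactly as in Case 125.

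The real work is the case $a+1\le x\le n-2$, and this is where the count departs from Case 125. Here $\pi'$ still avoids $123$, so $\pi'=b_1\cdots b_p(n-1)d_1\cdots d_q$ with $b_1=x>b_2>\cdots>b_p$; avoidance of $2341$ (via $a\,b_1(n-1)z$) keeps $[a-1]$ out of the $d$'s, and since the letters between $n$ and $n-1$ decrease, the values $a-1,\dots,1$ must appear as the final $a-1$ of the $b$'s, so $b_1,\dots,b_{p-a+1}$ and $d_1,\dots,d_q$ all lie in $[a+1,n-2]$. The difference is that, without a $1243$ in $T$, we no longer force every $d_i$ to lie below every ``large'' $b_i$; the operative restrictions become the $123$-avoidance of $\pi'$ (controlling how an ascent among the $d$'s may sit relative to the large $b$'s) together with the impossibility of completing a $1234$ or $2341$ using $a$ or a larger prefix letter as its first entry. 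I would partition according to whether $S$ is an interval of $[a+1,n-2]$ of the relevant type and, within that, according to the block structure of the $d$'s; verify case by case that each resulting permutation avoids $T$; and then collapse the resulting nested sums via hockey-stick-type identities to $(n-3)2^{n-4}+\binom{n-2}{5}-\binom{n-2}{2}$. Summing the four contributions and checking $n=3,4$ against the definition ($b_3=1$, $b_4=5$) gives \eqref{bnle1}. The main obstacle is exactly this last case: dropping the $1243$ restriction enlarges the family of admissible tails after $n$, so the bookkeeping requires a finer partition than in Case 125 and the resulting multiple sum must be evaluated with care.
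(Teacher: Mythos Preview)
Your outline matches the paper's structure exactly for the first two sub-cases: first letter $n-1$ gives $u_n$; $x=n-1$ gives $b_{n-1}$; $x\in[a-1]$ gives the double sum, and your justification that the prefix letters cannot start a $1234$ (because that would force a $123$ inside the $123$-avoiding $\pi'$) is the right replacement for the $1243$ argument in Case 125.

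The gap is in the sub-case $a+1\le x\le n-2$. You stop short of the structural reduction that makes the count tractable. The paper first observes that the letters to the \emph{right} of $n-1$ must also decrease: since $a\ge2$, the letter $1$ sits between $n$ and $n-1$, so any ascent $d_i<d_{i+1}$ right of $n-1$ would make $n,1,d_i,d_{i+1}$ a $4123$. This forces the clean form
\[
\pi=\alpha\,a\,n\,\beta\,(a-1)\cdots 1\,(n-1)\,\gamma,
\]
with $\alpha,\beta,\gamma$ three decreasing words partitioning a subset of $[a+1,n-2]$ (and $\beta\ne\emptyset$). Your phrase about ``an ascent among the $d$'s'' suggests you did not pin this down, and without it the problem does not reduce to a finite interval-placement question.

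More importantly, your proposed partition---by whether $S$ is an interval, in the style of Lemma~\ref{b_nlem}---is not the one that works here. The paper's decisive split is instead by the relation between $\gamma$ and $d:=\max(\alpha)$: (i) $\alpha=\emptyset$; (ii) $\alpha\ne\emptyset$ and some element of $\gamma$ is below $d$; (iii) $\alpha\ne\emptyset$ and $\gamma>d$. Case (ii) is where the avoidance of $1234$ and $2341$ pins $\alpha,\beta,\gamma$ down to specific interval blocks determined by four parameters, yielding $\binom{n-2}{5}$ after summation; cases (i) and (iii) give $2^{n-3}-n+2$ and $(n-5)2^{n-4}+1-\binom{n-3}{2}$ respectively, and these three pieces combine to $(n-3)2^{n-4}+\binom{n-2}{5}-\binom{n-2}{2}$. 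Your plan does not identify this trichotomy, so the promised ``collapse via hockey-stick identities'' is not yet an argument.
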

\begin{proof}
The recurrence is clear for $n=3$, so assume $n \geq 4$.  Let $\pi \in S_n(T)$ be of the form enumerated by $b_n$ have leftmost ascent bottom $a \geq 2$.  If the first letter of $\pi$ is $n-1$, then there are $u_n$ possibilities, by definition, so assume henceforth that the first letter is $\leq$ $n-2$.
Let $x$ denote the letter that directly follows $n$ within $\pi$.  If $x=n-1$, then there are $b_{n-1}$ possibilities.  If $x \in [a-1]$, then reasoning in this case as in the proof of Lemma \ref{b_nlem} above again gives $\sum_{j=0}^{n-4}\sum_{i=1}^{n-3-j}\binom{n-i-2}{j+1}C_{n-2-j,i}$ possibilities.  Now assume $a+1\leq x \leq n-2$.  Note first that all letters between $n$ and $n-1$ must decrease in order to avoid 4123.  Also, no letter in $[a-1]$ can occur to the right of $n-1$, for otherwise there is a 2341 of the form $ax(n-1)a'$ for some $a'\in[a-1]$, whence letters in $[a-1]$ form a decreasing subsequence.  Thus $a \geq 2$ implies all letters to the right of $n-1$ must decrease in order to avoid 4123.  Hence, we have
$$\pi=\alpha an\beta (a-1)\cdots 1(n-1)\gamma,$$
where $\alpha$, $\beta$ and $\gamma$ are decreasing with $\alpha$ and $\gamma$ possibly empty.  We now consider cases on $\alpha$.  If $\alpha=\emptyset$, then it is seen that there are no further restrictions on $\beta$ and $\gamma$ and we get $\sum_{a=2}^{n-2}(2^{n-2-a}-1)=2^{n-3}-n+2$ possible permutations in this case.

So assume $\alpha\neq\emptyset$ and we consider first the case when some letter of $\gamma$ is less than $d=\max(\alpha)$.  Let $c$ denote the largest such letter of $\gamma$.  First observe that all letters in $[a+1,c-1]$ must belong to $\alpha$ or $\gamma$, for if not, then there exists a 4123 of the form $dauc$ for some $u \in [a+1,c-1]$ in $\beta$.  Also, all letters in $[d+1,n-2]$ must belong to $\gamma$, for otherwise there is a 2341 of the form $dd'(n-1)c$ for some $d' \in [d+1,n-2]$ in $\beta$.  Since $\beta$ is nonempty but does not contain members of either $[a+1,c]$ or $[d,n-2]$, then it must be the case that $d \geq c+2$, whence $a+1\leq c \leq n-4$ and $c+2\leq d \leq n-2$.  Note further that $\beta$ comprises $[c+1,s]$ for some $s \in [c+1,d-1]$.  For if not, and $s_1,s_2 \in [c+1,d-1]$ with $s_1<s_2$, $s_1 \in \alpha$ and $s_2 \in \beta$, then $s_1s_2(n-1)c$ is a 2341.  Finally, the elements of $[a+1,c-1]$ belonging to $\gamma$ must comprise $[t,c-1]$ for some $t \in [a+1,c]$.  For if not, and $t_1,t_2 \in [a+1,c-1]$ with $t_1>t_2$, $t_1 \in \alpha$ and $t_2 \in \gamma$, then $t_1(c+1)(n-1)t_2$ is a 2341.  Thus, we have shown in this case that $\alpha$, $\beta$ and $\gamma$ comprise the sets $[a+1,t-1]\cup[s+1,d]$, $[c+1,s]$ and $[t,c]\cup[d+1,n-2]$, respectively.  One may verify that the corresponding $\pi$ indeed avoids $T$.  Given $a$, $c$ and $d$, note that such $\pi$ are uniquely determined by $s$ and $t$ and that there are $d-c-1$ choices for $s$ and $c-a$ choices for $t$.  Summing over all $a$, $c$ and $d$ then gives
\begin{align*}
&\sum_{a=2}^{n-5}\sum_{c=a+1}^{n-4}\sum_{d=c+2}^{n-2}(c-a)(d-c-1)=\sum_{a=2}^{n-5}\sum_{c=a+1}^{n-4}(c-a)\binom{n-2-c}{2}=\sum_{c=3}^{n-4}\binom{n-2-c}{2}\sum_{a=2}^{c-1}(c-a)\\
&=\sum_{c=3}^{n-4}\binom{n-2-c}{2}\binom{c-1}{2}=\binom{n-2}{5}
\end{align*}
possible permutations.

Finally, assume $\alpha \neq \emptyset$ and that all letters in $\gamma$ are greater than $d$.  Then all elements of $[a+1,d-1]$ must belong to $\alpha$ or $\beta$ and all elements of $[d+1,n-2]$ belong to $\beta$ or $\gamma$.  Since $\beta \neq \emptyset$, it follows that there are $2^{n-3-a}-1$ ways in which to arrange the members of $[a+1,n-2]$ once $a$ and $d$ are specified.  Furthermore, each is seen to give rise to a permutation that avoids $T$.  Considering all possible $a$ and $d$ then yields
\begin{align*}
&\sum_{a=2}^{n-3}\sum_{d=a+1}^{n-2}(2^{n-3-a}-1)=\sum_{a=2}^{n-3}(2^{n-3-a}-1)(n-2-a)=\sum_{a=1}^{n-4}a2^{a-1}-\binom{n-3}{2}\\
&=(n-5)2^{n-4}+1-\binom{n-3}{2}
\end{align*}
additional permutations.  Combining this case with the prior gives \eqref{bnle1}.
\end{proof}

\subsubsection{Case II}

We have the following formula for $d_n$ in terms of $b_n$.

\begin{lemma}\label{dnl}
If $n \geq 5$, then
\begin{equation}\label{dnle1}
d_n=d_{n-1}+b_{n-1}-b_{n-2}+\binom{n-3}{2}-\binom{n-3}{5}+\sum_{a=3}^{n-3}\sum_{\ell=0}^{n-3-a}\sum_{m=1}^{n-2-a-\ell}\binom{n-5-\ell-m}{a-3}m,\\
\end{equation}
with $d_4=1$, where $b_n$ is given by \eqref{bnle1} above.
\end{lemma}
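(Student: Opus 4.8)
The plan is to follow the proof of Lemma~\ref{dnlem} for Case~125, keeping track of the changes forced by the presence of $1234$ in $T$ in place of $1243$. Write $\mathcal{B}_m,\mathcal{D}_m\subseteq S_m(T)$ for the sets enumerated by $b_m,d_m$, and build each $\pi\in\mathcal{D}_n$ by inserting the letter $n$ somewhere to the right of $n-1$ in $\sigma:=\pi\setminus\{n\}\in\mathcal{B}_{n-1}$, so that $|\mathcal{D}_n|$ counts the legal pairs $(\sigma,\text{slot})$. First I would dispose of the $\sigma$ in which $n-2$ directly follows $n-1$: deleting $n-1$ from $\pi$ identifies these $\pi$ with $\mathcal{D}_{n-1}$ (placing $n$ between $n-1$ and $n-2$ produces the forbidden $2341$ given by $a,(n-1),n,a'$ for any $a'\in[a-1]$, so $n$ must land to the right of $n-2$; conversely one checks the resulting length-$n$ permutation avoids $T$), which contributes $d_{n-1}$.

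Next I would treat $\mathcal{B}'_{n-1}$, the members in which $n-2$ does not directly follow $n-1$, fixing $\sigma\in\mathcal{B}'_{n-1}$. The one genuinely new phenomenon is that, unlike in Case~125, appending $n$ at the right end of $\sigma$ is illegal precisely when $\sigma$ contains an increasing subsequence of length three, the appended $n$ completing it to a $1234$. When $\sigma$ contains such a subsequence the natural fallback is to insert $n$ immediately before $n-2$; running through the subcases in the proof of Lemma~\ref{bnl}, this fallback is legal for every such $\sigma$ except those built in its ``$\alpha\neq\emptyset$ with some letter of $\gamma$ below $\max(\alpha)$'' subcase, where the existence of a letter $e\in\alpha$ exceeding a letter $f\in\gamma$ forces a $2341$ of the form $e,(n-1),n,f$. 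One verifies that these exceptional $\sigma$ — numbering $\binom{(n-1)-2}{5}=\binom{n-3}{5}$ by that subcase of Lemma~\ref{bnl} — admit no legal slot for $n$ at all, while every other member of $\mathcal{B}'_{n-1}$ contributes exactly one member of $\mathcal{D}_n$ via its base slot (the right end, or the position just before $n-2$). Hence the base slots account for $b_{n-1}-b_{n-2}-\binom{n-3}{5}$ members of $\mathcal{D}_n$.

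It remains to count the $\pi\in\mathcal{D}_n$ obtained by placing $n$ into some $\sigma\in\mathcal{B}'_{n-1}$ at a slot other than the base one. Writing $\sigma=\pi'a(n-1)\pi''$ with leftmost ascent bottom $a\ge2$ and letting $y$ be the first letter of $\pi''$, the case analysis in the proof of \eqref{dnrec} carries over: a further slot exists only when $y=a-1$, the last letter of $\pi''$ exceeds $a$, and $\pi'$ comprises an interval $[a+1,a+\ell]$. Splitting this configuration according to $a=2$, which contributes $\sum_{\ell=0}^{n-5}(n-4-\ell)=\binom{n-3}{2}$ further members, and $a\ge3$, which contributes $\sum_{a=3}^{n-3}\sum_{\ell=0}^{n-3-a}\sum_{m=1}^{n-2-a-\ell}\binom{n-5-\ell-m}{a-3}m$, and adding all contributions gives \eqref{dnle1}; the base case $d_4=1$ is immediate, the sole permutation being $2314$. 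The step I expect to be the main obstacle is the second paragraph: one must re-examine each subcase in the proof of Lemma~\ref{bnl} with $1234$ rather than $1243$ in force and verify both that every $\sigma\in\mathcal{B}'_{n-1}$ outside the exceptional $\binom{n-3}{5}$-family has exactly one base slot for $n$ and that each member of that family has none — delicate because passing from $1243$ to $1234$ simultaneously relaxes some restrictions (a trailing $1243$ is now harmless, so a few insertions illegal in Case~125 become legal, conveniently only as base slots for $\sigma$'s containing a $123$) and imposes the new one. By contrast, the additional-slot analysis of the third paragraph never interacts with the $1243/1234$ distinction and should transcribe from Case~125 essentially verbatim.
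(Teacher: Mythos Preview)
Your proposal is correct and follows essentially the same approach as the paper's proof. The paper also builds $\mathcal{D}_n$ by inserting $n$ into members of $\mathcal{B}_{n-1}$, separates off the $d_{n-1}$ coming from $\sigma$ with $n-2$ directly after $n-1$, identifies a single ``base slot'' for each remaining $\sigma\in\mathcal{B}'_{n-1}$ (the right end when $\sigma$ starts with $n-2$ or has $y\in[a-1]$, the position between $1$ and $n-2$ when $a+1\le y\le n-3$), subtracts the $\binom{n-3}{5}$ exceptional $\sigma$ that admit no legal slot, and then imports the extra-slot count from Lemma~\ref{dnlem} verbatim. Your observation that the base slot is the right end precisely when $\sigma$ contains no $123$ is a clean unification of the paper's case split (and is equivalent to it, since the cases $y\in[a-1]$ or first letter $n-2$ force $\sigma$ to be $123$-free via a $4123$ argument, while $a+1\le y\le n-3$ gives the $123$ pattern $a,y,n-2$), but otherwise the two arguments coincide.
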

\begin{proof}
We proceed as in the proof of Lemma \ref{dnlem} above and let $\mathcal{B}_m$, $\mathcal{B}_m'$ and $\mathcal{D}_m$ have the same meaning as before but with the pattern 1234 in place of 1243.  We form members of $\mathcal{D}_n$ by inserting $n$ somewhere to the right of $n-1$ within a member of $\mathcal{B}_{n-1}$, where $n \geq 5$.  First note that there are $d_{n-1}$ possibilities if the letter $n-2$ is to directly follow $n-1$ within a member of $\mathcal{B}_{n-1}$.  So consider inserting $n$ into members $\pi \in \mathcal{B}_{n-1}'$ without introducing an occurrence of a pattern in $T$.  We show in every case except one that this is possible.  Note that if $n-2$ is the first letter of $\pi$, then $n$ can only be added as the final letter due to 2341, so assume henceforth that $\pi$ does not start with $n-2$.  Write $\pi=\pi'a(n-1)\pi''$, where $a \geq 2$ is the leftmost ascent bottom and let $y$ be the first letter of $\pi''$.  If $1\leq y \leq a-1$, then it is seen that $n$ can always be added to the end of $\pi$.

Now assume $a+1 \leq y \leq n-3$.  In this case, then it is only possible to insert $n$ in the position between $1$ and $n-2$.  For if $n$ is inserted somewhere between $n-1$ and $1$, then there is a 2341 as seen with $a(n-1)n1$, while if $n$ is inserted to the right of $n-1$ (including at the very end), then $ay(n-2)n$ is a 1234.  If $\pi'$ is empty or if $\pi'$ is nonempty and there are no letters in $[a+1,d-1]$ to the right of $n-2$ where $d=\max(\pi')$, then it is seen from the proof of Lemma \ref{bnl} that inserting $n$ between $1$ and $n-2$ within $\pi$ does not introduce an occurrence of a pattern in $T$.  On the other hand, if $\pi'$ is nonempty and some member of $[a+1,d-1]$ occurs to the right of $n-2$, then one cannot insert $n$ between $1$ and $n-2$ in this case as doing so introduces a 2341 of the form $d(n-1)nd'$ for some $d'\in[a+1,d-1]$.  Upon subtracting this last case, which pertains to $\binom{n-3}{5}$ members of $\mathcal{B}_{n-1}'$, one sees that there are $b_{n-1}-b_{n-2}-\binom{n-3}{5}$ members of $\mathcal{D}_n$ that can be obtained  either by adding $n$ to the end of $\pi$ (if $\pi$ starts with $n-2$ or does not but has $y \in [a-1]$) or by inserting $n$ between $1$ and $n-2$ (if $\pi$ does not start with $n-2$ and has $a+1 \leq y \leq n-3$, where either $\pi'=\emptyset$ or $\pi'\neq \emptyset$ and no member of $[a+1,d-1]$ occurs to the right of $n-2$).  Finally, from the proof of Lemma \ref{dnlem}, one gets
$$\binom{n-3}{2}+\sum_{a=3}^{n-3}\sum_{\ell=0}^{n-3-a}\sum_{m=1}^{n-2-a-\ell}\binom{n-5-\ell-m}{a-3}m$$
additional members of $\mathcal{D}_n$ in the case when $y=a-1$ and the last letter of $\pi$ is greater than $a$, upon inserting $n$ somewhere to the right of $1$, but not at the very end.  Combining all of the prior cases gives \eqref{dnle1}.
\end{proof}

\subsubsection{Case III} We now seek a formula for $e_n$.  Note that $e_4=0$ and $e_5=2$, the enumerated permutations being $23154$ and $42315$.
To determine $e_n$, we refine it as follows.  Let $\mathcal{E}_n$ denote the subset of $S_n(T)$ enumerated by $e_n$.  Given $\pi \in \mathcal{E}_n$ with leftmost ascent $a,b$ where $2 \leq a < b \leq n-2$, let $V$ denote the subset of $[a+1,b-1]$ occurring to the right of $b$ within $\pi$.  At times $V$ will also refer to the corresponding subsequence of $\pi$ comprising the elements of this set.  Let $v_n$ denote the number of $\pi\in \mathcal{E}_n$ in which members of $V$ form a decreasing subsequence with at least one member of $V$ to the right of $n$.  For example, if $n=6$, then $v_n=3$, the enumerated permutations being $241635$, $241653$ and $524163$. Note that $v_n=0$ if $n \leq 5$ since $a<b-1$ is required.  We have the following explicit formula for $v_n$.

\begin{lemma}\label{vnform}
If $n \geq 6$, then
\begin{equation}\label{vne1}
v_n=2^{n-2}-\binom{n-1}{3}-n+1+\sum_{a=2}^{n-4}\sum_{\ell=1}^{n-3-a}\left[\binom{n}{a+\ell+2}-\binom{n-1-\ell}{a+1}\right].
\end{equation}
\end{lemma}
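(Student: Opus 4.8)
The plan is to classify the permutations counted by $v_n$ by the leftmost ascent bottom $a$ and top $b$, and then to use the hypotheses on $V$ to reduce each permutation to a product of independent choices, much as in the proofs of Lemmas~\ref{bnl} and \ref{dnl}. Fix $\pi\in\mathcal{E}_n$ counted by $v_n$ and write $\pi=\rho\,a\,b\,\pi''$, where $\rho$ is the (possibly empty) strictly decreasing run preceding the leftmost ascent; since $\pi$ does not begin with $n$ and $\rho\,a$ is decreasing, every letter of $[a-1]$ lies in $\pi''$ and so does $n$, while $V\neq\emptyset$ (forced by the requirement that some letter of $V$ follow $n$) gives $b\geq a+2$, so $2\leq a\leq n-4$ and $a+2\leq b\leq n-2$. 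First I would establish the canonical form of $\pi''$ forced by $T$-avoidance, with $a,b$ playing the role of a ``$12$'': a large letter (one exceeding $b$) followed in $\pi''$ by a small letter (one below $a$) would complete a $2341$, so in $\pi''$ every small letter precedes every large letter; two ascending large letters in $\pi''$ would complete a $1234$, so the large letters of $\pi''$ decrease; and $2341$, $1234$ and $4123$ together constrain how the letters of $V$ (which decrease by hypothesis) may be threaded through the small block and the decreasing large block. The upshot should be that $\pi''$ consists of a block of the small letters interleaved with some of $V$, followed by the decreasing large letters of $\pi''$ — among them $n$ — interleaved with the rest of $V$, at least one letter of $V$ occurring after $n$.

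Once the canonical form is in hand, I would convert the count into a sum with outer parameters $a$ and $\ell:=b-a-1$ (so $1\leq\ell\leq n-3-a$), the inner choices being: which letters of $[b+1,n-1]$ join $n$ in the large block (the remaining ones going into $\rho$); where $n$ sits inside that block; how $V$ is split by the position of $n$, with the part after $n$ nonempty; and how the small letters are distributed around the remaining letters of $V$. Each inner choice is a binomial coefficient, and after the inner summations collapse I expect each pair $(a,\ell)$ to contribute $\binom{n}{a+\ell+2}-\binom{n-1-\ell}{a+1}$, where the subtracted term removes exactly the arrangements in which every letter of $V$ precedes $n$. A handful of boundary configurations — for instance those in which the large block of $\pi''$ reduces to $n$ alone, or in which $\rho$ is empty — fall outside the generic count and must be enumerated separately; after simplification these should sum to the closed expression $2^{n-2}-\binom{n-1}{3}-n+1$, and adding everything yields \eqref{vne1}.

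The main obstacle will be the first step: pinning down precisely which interleavings of $V$ with the small block and the large block survive all three forbidden patterns. It is easy to overcount by placing a letter of $V$ where it creates a concealed $4123$ (for instance $b$ together with two ascending letters below $a$ and a later letter of $V$) or a concealed $2341$ or $1234$, and one must also verify that every surviving interleaving genuinely avoids $T$. Once the canonical description is secured, the rest is a routine, if laborious, manipulation of binomial sums of the kind already carried out in Lemmas~\ref{bnl} and \ref{dnl}.
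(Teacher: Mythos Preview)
Your overall plan---fix $a,b$, determine a canonical form for $\pi''$, then count---is exactly the paper's approach, but the structural description you have sketched is off in several places, and those errors would derail the count.

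First, there is no freedom in ``where $n$ sits inside that block'': any letter of $[b+1,n-1]$ lying between $b$ and $n$ would make $a\,b\,(\cdot)\,n$ a $1234$, so $n$ is forced to be the \emph{first} large letter in $\pi''$. Second, ``which letters of $[b+1,n-1]$ join $n$'' is not a free subset choice: the large letters to the right of $n$ must form an interval $[n-\ell,n-1]$, since otherwise some $b_1>b_2$ in $[b+1,n-1]$ with $b_1$ in $\rho$ and $b_2$ after $n$ would give $b_1\,a\,b\,b_2$ a $4123$. Third, the part of $[a+1,b-1]$ lying in $\rho$ must likewise be an initial interval $[a+1,a+r]$, or else a $2341$ of the form $x\,b\,n\,y$ appears. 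Fourth, the small letters $[a-1]$ are not free to be ``distributed'': they all lie strictly between $b$ and $n$ and appear in the order $a-1,a-2,\ldots,1$, since two of them ascending would give a $4123$ starting $b\,a_1\,a_2\,v$ with $v\in V$ after $n$. Once these four constraints are in place, the only remaining freedom is the placement of the $m=b-1-a-r$ letters of $V$ among the $a$ slots after $b,a-1,\ldots,1$ and the $\ell+1$ slots after $n,n-1,\ldots,n-\ell$, with at least one after $n$; this gives $\binom{a+\ell+m}{m}-\binom{a-1+m}{m}$.

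Finally, your reading of the closed-form term $2^{n-2}-\binom{n-1}{3}-n+1$ as coming from ``boundary configurations'' is not how it arises. The $\ell$ in the stated formula is the paper's $\ell$ (the number of large letters after $n$), not $b-a-1$; after summing over $m$ and then over $b$, the $\ell=0$ case collapses to $\sum_{a=2}^{n-4}\binom{n-2}{a+2}$, which simplifies to the closed-form piece, while the $\ell\ge 1$ terms give the double sum.
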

\begin{proof}
Let $\mathcal{V}_n$ denote the subset of $\mathcal{E}_n$ enumerated by $v_n$.  Let $\pi \in \mathcal{V}_n$ have leftmost ascent $a,b$, where $2 \leq a \leq n-4$ and $a+2 \leq b \leq n-2$,  and let $R$ denote the subset of $[a+1,b-1]$ occurring to the left of $a$ within $\pi$.  Then $R$ must be of the form $[a+1,a+r]$ for some $0 \leq r \leq b-2-a$.  To see this, suppose it is not the case and let $x$ be the largest element of $R$ and $y$ be the smallest element of $[a+1,b-1]-R$.  By the assumption on $V$ that it be decreasing, $y$ is the rightmost element of $[a+1,b-1]$ within $\pi$ and hence occurs to the right of $n$.  But then $xbny$ is a 2341, which is impossible.  Furthermore, note that no element of $[a-1]$ can occur to the right of $n$ within $\pi$, for otherwise there is a 2341 of the form $abnz$ for some $z \in [a-1]$.  Thus, the subsequence of $\pi$ comprising $[a-1]$ must decrease, for otherwise there is a 4123 of the form $ba_1a_2v$ where $1 \leq a_1<a_2\leq a-1$ and $v \in V$.  Therefore, we have shown that $\pi$ contains a subsequence  of the form $a+r,\ldots,a+1,a,b,a-1,\ldots,1,n$.  Note that members of $[b+1,n-1]$ cannot occur between $b$ and $n$ (due to 1234), and thus must occur prior to $a+r$ or to the right of $n$, in either case as a decreasing subsequence.  Finally, members of $[b+1,n-1]$ occurring to the right of $n$ must comprise a set of the form $[n-\ell,n-1]$ for some $0 \leq \ell \leq n-1-b$, for otherwise there would be a 4123 as witnessed by $b_1abb_2$ where $b_1,b_2 \in [b+1,n-1]$.

Thus, we have the subsequence
$$n-\ell-1,\ldots,b+1,a+r,\ldots,a+1,a,b,a-1,\ldots,1,n,n-1,\ldots,n-\ell,$$
into which we insert the elements of $V=[a+r+1,b-1]$ to form members of $\mathcal{V}_n$.  One may verify that the letters of $V$ may be inserted in positions directly following any member of $[a-1]\cup\{b\}$ or $[n-\ell,n]$ without introducing a pattern in $T$. If $m$ denotes $|V|$, then $m=b-1-a-r$ and thus $1 \leq m \leq b-1-a$.  Since the subsequence consisting of elements of $V$ must decrease and contain at least one letter to the right of $n$, there are
$\binom{a+\ell+m}{m}-\binom{a-1+m}{m}$ ways in which to insert the elements of $V$, by subtraction.  Considering all possible $a$, $b$, $\ell$ and $m$ implies
$$v_n=\sum_{a=2}^{n-4}\sum_{b=a+2}^{n-2}\sum_{\ell=0}^{n-1-b}\sum_{m=1}^{b-1-a}\left[\binom{a+\ell+m}{m}-\binom{a-1+m}{m}\right], \qquad n \geq 6.$$
Interchanging summation, and treating separately the $\ell=0$ case, gives
\begin{align*}
v_n&=\sum_{a=2}^{n-4}\sum_{b=a+2}^{n-2}\sum_{\ell=0}^{n-1-b}\left[\binom{b+\ell}{a+\ell+1}-\binom{b-1}{a}\right]\\
&=\sum_{a=2}^{n-4}\sum_{b=a+2}^{n-2}\left[\binom{b}{a+1}-\binom{b-1}{a}\right]+\sum_{a=2}^{n-4}\sum_{\ell=1}^{n-3-a}\sum_{b=a+2}^{n-1-\ell}\left[\binom{b+\ell}{a+\ell+1}-\binom{b-1}{a}\right]\\
&=\sum_{a=2}^{n-4}\sum_{b=a+2}^{n-2}\binom{b-1}{a+1}+\sum_{a=2}^{n-4}\sum_{\ell=1}^{n-3-a}\left[\binom{n}{a+\ell+2}-\binom{n-1-\ell}{a+1}\right]\\
&=\sum_{a=2}^{n-4}\binom{n-2}{a+2}+\sum_{a=2}^{n-4}\sum_{\ell=1}^{n-3-a}\left[\binom{n}{a+\ell+2}-\binom{n-1-\ell}{a+1}\right],
\end{align*}
which implies \eqref{vne1}.
\end{proof}

We now count the remaining members of $\mathcal{E}_n$ for which some letter in $V$ lies to the right of $n$.  Note that if $V$ is not decreasing, then there must be at least one letter in $V$ to the right of $n$, for otherwise there is a 1234 of the form $ab_1b_2n$ for some $b_1, b_2 \in [a+1,b-1]$.
Let $w_n$ denote the number of members of $\mathcal{E}_n$ in which the subsequence $V$ is not decreasing.  For example, if $n=7$, then $w_7=1$, the enumerated permutation being 2531764.  Note that $w_n=0$ if $n<7$ since $|V|\geq 2$ implies $a<b-2$.  We have the following formula for $w_n$.

\begin{lemma}\label{wnle}
If $n \geq 7$, then
\begin{align}
w_n&=2^{n-2}-1-\binom{n-1}{2}-\binom{n-1}{4}+4\binom{n-6}{2}+\binom{n-5}{3}
+\sum_{a=2}^{n-6}\sum_{b=a+4}^{n-2}(b-5-a)2^{b-2-a}.
\end{align}
\end{lemma}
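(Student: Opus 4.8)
The plan is to follow the same combinatorial template already used for $v_n$ in Lemma~\ref{vnform}, refining the class $\mathcal{E}_n$ further. Recall a member $\pi\in\mathcal{E}_n$ has leftmost ascent $a,b$ with $2\le a<b\le n-2$, and $V$ denotes the set (or subsequence) of letters of $[a+1,b-1]$ occurring to the right of $b$. We are counting the $\pi$ for which $V$ is \emph{not} decreasing, so $|V|\ge 2$ and hence $a\le b-3$; as observed in the text, at least one letter of $V$ must then lie to the right of $n$. First I would fix $a$ and $b$ and describe the skeleton of such a $\pi$ exactly as in the proof of Lemma~\ref{vnform}: the set $R=[a+1,b-1]\setminus V$ of letters of $[a+1,b-1]$ left of $a$ forms an initial block $[a+1,a+r]$ (same $2341$ argument, using that the rightmost letter of $[a+1,b-1]$ sits right of $n$); no letter of $[a-1]$ lies right of $n$ (else $abnz$ is a $2341$); the letters of $[a-1]$ decrease (else $ba_1a_2v$ is a $4123$ with $v\in V$); letters of $[b+1,n-1]$ cannot sit between $b$ and $n$ (pattern $1234$), so they split into a decreasing prefix before $a+r$ and a decreasing block $[n-\ell,n-1]$ right of $n$ (the block structure again forced by a $4123$).

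Next, the new ingredient: the letters of $V=[a+r+1,b-1]$, with $m:=|V|=b-1-a-r\ge 2$, must be inserted into the skeleton
$$n-\ell-1,\ldots,b+1,\;a+r,\ldots,a+1,a,b,a-1,\ldots,1,n,n-1,\ldots,n-\ell$$
subject to $V$ \emph{not} being decreasing and having at least one letter right of $n$. The available insertion slots are, as before, directly after each member of $[a-1]\cup\{b\}$ (there are $a$ of these) and after each member of $[n-\ell,n]$ (there are $\ell+1$ of these). Within the left group the letters placed must decrease (they are ``2''s relative to $b$ and $n$, so a non-decreasing pair there would create $2341$ or $1234$), and within the right group they must also decrease; the only freedom creating a non-decreasing $V$ is that some letter placed in the right group exceeds a letter placed in the left group. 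So I would count the total placements of $m$ labelled-by-value letters into these $a+\ell+1$ slots with the two-group monotonicity, namely $\binom{a+\ell+m}{m}$, then subtract the placements that keep $V$ decreasing overall — these are exactly the placements already counted in Lemma~\ref{vnform}, i.e. $\binom{a+\ell+m}{m}-\binom{a-1+m}{m}$ of them lie with a letter right of $n$ — wait, more cleanly: among all $\binom{a+\ell+m}{m}$ two-group-monotone placements, the decreasing-$V$ ones number $\binom{a-1+m}{m}+\binom{\ell+m}{m}-1$ (all in the left $a$ slots plus all in the right $\ell+1$ slots, minus the empty overlap counted twice when $m=0$, which does not arise here). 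Hence the count with $V$ nondecreasing is $\binom{a+\ell+m}{m}-\binom{a-1+m}{m}-\binom{\ell+m}{m}+1$. One must then also discard placements putting every $V$-letter left of $n$; but a nondecreasing $V$ confined to the left group is impossible, so no further correction is needed. Summing,
$$w_n=\sum_{a=2}^{n-6}\sum_{b=a+4}^{n-2}\sum_{\ell=0}^{n-1-b}\sum_{m=2}^{b-1-a}\left[\binom{a+\ell+m}{m}-\binom{a-1+m}{m}-\binom{\ell+m}{m}+1\right],\qquad n\ge 7,$$
where the lower limit $b\ge a+4$ reflects $m\ge 2$, i.e. $r$ can be as small as $0$ with $b-1-a\ge 2$.

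Finally I would carry out the summation by the same interchange-of-order bookkeeping as in Lemma~\ref{vnform}: sum over $\ell$ first using the hockey-stick identity to collapse $\sum_\ell\binom{a+\ell+m}{m}$ and $\sum_\ell\binom{\ell+m}{m}$, then sum over $m$ (again hockey-stick), then over $b$, then over $a$, separating off the boundary terms ($\ell=0$, the extremal $m$, small $a$) that produce the isolated binomials $\binom{n-1}{2}$, $\binom{n-1}{4}$, $\binom{n-6}{2}$, $\binom{n-5}{3}$ and the geometric-polynomial residue $\sum_{a}\sum_{b}(b-5-a)2^{b-2-a}$, with the ``$+1$'' term integrating to $2^{n-2}-1-(\text{polynomial})$. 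The main obstacle is purely the correctness of the insertion count — in particular pinning down precisely which placements of $V$ force a forbidden pattern and confirming that no pattern other than $2341$, $1234$, $4123$ sneaks in among the interleavings of $V$ with the $[b+1,n-1]$ letters sitting before $a+r$; once the summand is validated, the remaining multi-sum is a routine (if lengthy) computation of the type already performed verbatim for $v_n$, and I would present it compressed, flagging it as computer-assisted as the authors do elsewhere.
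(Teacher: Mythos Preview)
Your proposal has a genuine structural error at the very first step: the skeleton you import from the $v_n$ argument does \emph{not} survive once $V$ fails to be decreasing. In the $v_n$ proof the letters of $[b+1,n-1]$ are allowed to split into a prefix sitting before $a+r$ and a block $[n-\ell,n-1]$ after $n$. But here there exist $u_1<u_2$ in $V$ with $u_1$ to the left of $u_2$, and any $c\in[b+1,n-1]$ placed before $a+r$ would yield the $4123$ pattern $c\,a\,u_1\,u_2$. Hence \emph{all} of $[b+1,n-1]$ must sit to the right of $n$; there is no free parameter $\ell$. The paper's proof begins with exactly this observation and obtains the rigid form
\[
\pi=\alpha\,a\,b\,\beta\,(a-1)\cdots 1\,n\,(n-1)\cdots(b+1)\,\gamma,
\]
where $\alpha,\beta,\gamma$ partition $[a+1,b-1]$, each is decreasing, and the non-monotonicity of $V=\beta\gamma$ is encoded by $\min(\beta)<\max(\gamma)$. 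The two summands in the statement then come from a case split on whether $\alpha$ is an interval $[a+1,b-1-m]$ (equivalently $V=[b-m,b-1]$) or not; in the second case one must also enforce that every $v\in V$ with $v<\max(\alpha)$ lies in $\beta$ (to avoid a $2341$ through $b+1$), which is what produces the factor $(2^{q-1-a}-1)(2^{b-1-q}-1)$ and ultimately the double sum $\sum_a\sum_b(b-5-a)2^{b-2-a}$.

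A second, independent problem is your insertion count. Even granting your skeleton, the quantity $\binom{a+\ell+m}{m}$ counts insertions of a \emph{decreasing} sequence of $m$ letters into $a+\ell+1$ slots; it does not count ``two-group-monotone'' placements where an arbitrary subset of the $m$ values goes left and the complement goes right. That count is $\sum_{k}\binom{m}{k}\binom{a+k-1}{k}\binom{\ell+m-k}{m-k}$, which does not collapse to your expression, so the summand you propose is off even before the structural issue above.
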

\begin{proof}
Let $\mathcal{W}_n$ denote the subset of $\mathcal{E}_n$ enumerated by $w_n$.  Suppose $\pi\in \mathcal{W}_n$ has leftmost ascent $a,b$, where $2 \leq a \leq n-5$ and $a+3 \leq b \leq n-2$.  First observe that since $V$ is not decreasing, all of the letters in $[b+1,n-1]$ must occur to the right of $n$ (and decrease), for otherwise there would be a 4123 of the form $cau_1u_2$, where $c \in [b+1,n-1]$ and $u_1,u_2\in V$ with $u_1<u_2$.  Denote the $V$ subsequence of $\pi=\pi_1\pi_2\cdots \pi_n$ by $\pi_{i_1}\pi_{i_2}\cdots \pi_{i_m}$, where $m=|V|$.  Let $j$ be the smallest index in $[m-1]$ such that $\pi_{i_j}<\pi_{i_{j+1}}$.  Then $y=\pi_{i_{j+1}}$ must occur to the right of the letter $b+1$ in $\pi$, for otherwise there is a 1234 as seen with $axy(b+1)$, where $x=\pi_{i_{j}}$. Also, letters in $[a-1]$ must occur between $b$ and $n$ (due to 2341).  Let $z$ be the leftmost letter in $\pi$ belonging to $[a-1]$.  Then all letters in $V$ to the right of $z$ must decrease, for otherwise there is a 4123 of the form $bzv_1v_2$ for some $v_1,v_2\in V$.  In particular, all letters in $V$ to the right of $b+1$ must decrease, which implies $y$ directly follows $b+1$.  Also, $x$ cannot occur anywhere to the right of $z$ since $x<y$, which means that $x$ must occur between $b$ and $z$.  Finally, the subsequence of $\pi$ comprising $[a-1]$ must itself decrease (and hence $z=a-1$), for otherwise there is a 4123 of the form $ba_1a_2y$ for some $a_1,a_2\in [a-1]$.  Thus, we have shown that $\pi$ must have the form
$$\pi=\alpha ab\beta(a-1)\cdots 1n(n-1)\cdots(b+1)\gamma,$$
where $\alpha$, $\beta$ and $\gamma$ denote subsequences whose union is $[a+1,b-1]$ and $\alpha$ is possibly empty.

Note that $\alpha$, $\beta$ and $\gamma$ must all decrease, $\alpha$ since it precedes $a$, $\beta$ since $\pi$ avoids 1234, and $\gamma$ since $\pi$ avoids 4123.  Furthermore, we have $\min(\beta)=x<y=\max(\gamma)$ and thus $V$ consists of two decreasing runs.  To enumerate members of $\mathcal{W}_n$, we now consider cases on $V$.  First assume $V=[b-m,b-1]$, where $2 \leq m \leq b-1-a$, and thus $\alpha=[a+1,b-1-m]$.  If $j$ denotes the number of elements in $\beta$, then $1 \leq j \leq m-1$ and there are $\binom{m}{j}-1$ possibilities for $\beta$ and $\gamma$ since they are both decreasing with $\min(\beta)<\max(\gamma)$ (the latter condition preventing $\beta$ from being the set $[b-j,b-1]$). Since the letters to the left of $b$ within $\pi$ comprise an interval in this case, one may verify that there are no further restrictions on $\beta$ and $\gamma$ and that each corresponding permutation $\pi$ obtained in this manner is $T$-avoiding.  Considering all possible $a$, $b$, $m$ and $j$ implies that the number of permutations in $\mathcal{W}_n$ for which $V$ is of the stated form is given by
\begin{align*}
&\sum_{a=2}^{n-5}\sum_{b=a+3}^{n-2}\sum_{m=2}^{b-1-a}\sum_{j=1}^{m-1}\left[\binom{m}{j}-1\right]=\sum_{a=2}^{n-5}\sum_{b=a+3}^{n-2}\sum_{m=0}^{b-1-a}(2^m-m-1)\\
&=\sum_{a=2}^{n-5}\sum_{b=a}^{n-2}\left[2^{b-a}-1-\binom{b-a+1}{2}\right]=\sum_{n=2}^{n-1}\left[2^{n-1-a}-(n-a)-\binom{n-a}{3}\right]\\
&=2^{n-2}-1-\binom{n-1}{2}-\binom{n-1}{4}.
\end{align*}

Now assume $V$ does not comprise an interval of the form $[b-m,b-1]$.  Let $q$ be the largest letter in $\alpha$.  Then all $v\in V$ with $v<q$ must occur in $\beta$, for otherwise there is a 2341 of the form $qb(b+1)v$.  In order for there to be an increase in the subsequence $V$, we must have $q \leq b-2$.  Then the elements of $[q+1,b-1]$ can go in either $\beta$ or $\gamma$, with at least one element going in $\gamma$, for otherwise $V$ would be decreasing.  Also, members of $[a+1,q-1]$ can go in either $\alpha$ or $\beta$ with at least one going in $\beta$ since $\alpha$ is not an interval of the form $[a+1,a+r]$ for some $r \geq 1$.  Note that at least one element of $[a+1,q-1]$ belonging to $\beta$ ensures that $V$ does not decrease.  Given $a$, $b$ and $q$, there are thus $2^{q-1-a}-1$ possibilities for the placement of elements of $[a+1,q-1]$ and $2^{b-1-q}-1$ possibilities for the placement of elements of $[q+1,b-1]$.  Since these positions may be chosen independently, there are $(2^{q-1-a}-1)(2^{b-1-q}-1)$ possibilities in all and one can check that all permutations $\pi$ so obtained are $T$-avoiding.  Note that $q \geq a+2$ implies $a+4 \leq b \leq n-2$ and $2 \leq a \leq n-6$.  Considering all possible $a$, $b$ and $q$ gives
\begin{align*}
&\sum_{a=2}^{n-6}\sum_{b=a+4}^{n-2}\sum_{q=a+2}^{b-2}(2^{q-1-a}-1)(2^{b-1-q}-1)\\
&=\sum_{a=2}^{n-6}\sum_{b=a+4}^{n-2}\left[(b-3-a)2^{b-2-a}-2(2^{b-2-a}-2)+(b-3-a)\right]\\
&=\sum_{a=2}^{n-6}\sum_{b=a+4}^{n-2}(b-5-a)2^{b-2-a}+\sum_{a=2}^{n-6}\left[4(n-5-a)+\binom{n-4-a}{2}\right]\\
&=\sum_{a=2}^{n-6}\sum_{b=a+4}^{n-2}(b-5-a)2^{b-2-a}+4\binom{n-6}{2}+\binom{n-5}{3}
\end{align*}
additional members of $\mathcal{W}_n$.  Combining this case with the previous completes the proof.
\end{proof}

We have the following formula for $e_n$ in terms of the prior sequences.

\begin{lemma}\label{len}
If $n \geq 6$, then
\begin{equation}\label{lene1}
e_n=v_n+w_n+C_{n-2}-3\cdot2^{n-3}+1+\sum_{\ell=1}^{n-2}C_\ell,
\end{equation}
with $e_5=2$, where $v_n$ and $w_n$ are as defined above.
\end{lemma}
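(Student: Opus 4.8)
The plan is to split $\mathcal{E}_n$ into three disjoint pieces and appeal to Lemmas~\ref{vnform} and~\ref{wnle} for two of them. Recall the observation made just before Lemma~\ref{wnle}: if $V$ is not decreasing, then $\pi$ necessarily has a letter of $V$ to the right of $n$. Consequently the set $\mathcal{V}_n$ counted by $v_n$ (where $V$ is decreasing and at least one member of $V$ lies right of $n$) and the set $\mathcal{W}_n$ counted by $w_n$ (where $V$ is not decreasing) are disjoint and together exhaust those members of $\mathcal{E}_n$ having some letter of $V$ to the right of $n$. Letting $\mathcal{C}_n$ denote the remaining members of $\mathcal{E}_n$---equivalently, those in which \emph{no} letter of $V$ lies to the right of $n$ (for which $V$ is then automatically decreasing, by the contrapositive of the above)---we get $e_n = v_n + w_n + |\mathcal{C}_n|$. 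So it remains to prove $|\mathcal{C}_n| = C_{n-2} - 3\cdot 2^{n-3} + 1 + \sum_{\ell=1}^{n-2} C_\ell$ for $n \ge 6$; the value $e_5 = 2$ follows from the two permutations listed when $e_n$ was introduced.

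Next I would pin down the shape of a generic $\pi \in \mathcal{C}_n$ with leftmost ascent $a, b$, where $2 \le a < b \le n-2$. Since no letter of $V$ sits to the right of $n$, the letters of $[a+1,b-1]$ occurring after $b$ all lie between $b$ and $n$ and form a decreasing run (else $a, v_1, v_2, n$ is a $1234$). No letter of $[b+1,n-1]$ can lie between $b$ and $n$ (else $a, b, c, n$ is a $1234$ for $c \in [b+1,n-1]$), so---as $b \le n-2$ forces $[b+1,n-1] \ne \emptyset$---every letter of $[b+1,n-1]$ lies to the right of $n$, and these letters are decreasing there (else $a, b, c, c'$ is a $1234$); in particular $\pi$ never ends in $n$. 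Thus $\pi = P\,a\,b\,Q\,n\,R$, where $P$ is the decreasing prefix (down to $a$), $Q$ consists of the decreasing run $V$ interleaved with some letters of $[a-1]$, and $R$ contains the decreasing run $[b+1,n-1]$ together with the remaining letters of $[a-1]$. I would then, by the same kind of pattern-forcing arguments used in the proofs of Lemmas~\ref{vnform} and~\ref{wnle} (which show that certain letters below $b$ must form intervals, that $P$ may also swallow an initial block $[b+1,s]$ of the large letters, and so on), determine precisely which interleavings in $Q$ and $R$ avoid $T$.

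I would then enumerate $\mathcal{C}_n$ by cases---principally on whether $V$ is empty, on the position of the block $[b+1,s]\subseteq P$ of large letters appearing before $a$, and on how $[b+1,n-1]$ is shuffled with the small letters in $R$. The Catalan terms $C_{n-2}$ and $\sum_{\ell=1}^{n-2} C_\ell$ should come from a residual sub-permutation that, after the forced letters are deleted and the rest standardized, is an unrestricted $123$-avoider, with $\ell$ a size parameter (e.g.\ the value of $b$, or the number of letters left of $b$); the $2^{n-3}$ contributions come from free subset choices governing how two decreasing runs interleave. Summing the case totals and simplifying with routine binomial/Catalan identities (telescoping, Vandermonde-type sums) should yield the claimed closed form.

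The main obstacle is the structural case analysis of $\mathcal{C}_n$: one must check carefully, for every configuration of the small letters $[a-1]$ (split across $Q$ and $R$), the large letters $[b+1,n-1]$ (split across $P$ and $R$), and the decreasing run $V$, exactly which shuffles introduce one of $1234$, $2341$, $4123$---delicate because $[b+1,n-1]$ and the leftover small letters both spill past $n$ and interact with $b$ and with $V$---and then to recognize the recursive Catalan substructure responsible for the $\sum_{\ell=1}^{n-2}C_\ell$ term rather than a bare binomial double sum. The subsequent algebraic simplification is comparatively routine.
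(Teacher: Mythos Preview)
Your three-way split $\mathcal{E}_n=\mathcal{V}_n\sqcup\mathcal{W}_n\sqcup\mathcal{C}_n$ is exactly the paper's approach, and reducing the lemma to the count $|\mathcal{C}_n|=C_{n-2}-3\cdot2^{n-3}+1+\sum_{\ell=1}^{n-2}C_\ell$ is correct. The problem is your structural analysis of $\mathcal{C}_n$.

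First, the sentence ``every letter of $[b+1,n-1]$ lies to the right of $n$'' is false: such letters can also sit in the initial decreasing prefix $P$ (e.g.\ $5243176\in\mathcal{C}_7$ with $a=2$, $b=4$). You implicitly correct this later when you mention a block $[b+1,s]\subseteq P$, but the inconsistency signals that the picture is not yet pinned down.

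Second---and this is the step that makes the whole count collapse---you miss that \emph{no} letter of $[a-1]$ can lie to the right of $n$: for any such $z$, the subsequence $a\,b\,n\,z$ is a $2341$. Hence your ``small letters in $R$'' do not exist, and there is no shuffling in $R$ at all. Combined with the $4123$-restriction (if $c_1\in P$ and $c_2\in R$ with $c_1>c_2$ in $[b+1,n-1]$, then $c_1\,a\,b\,c_2$ is a $4123$), the letters of $[b+1,n-1]$ split as an interval $[b+1,n-\ell-1]$ at the very front and $(n-1)\cdots(n-\ell)$ after $n$. Thus every $\pi\in\mathcal{C}_n$ has the rigid form
\[
\pi=(n-\ell-1)\cdots(b+1)\,\pi'\,n\,(n-1)\cdots(n-\ell),\qquad 0\le\ell\le n-1-b,
\]
where $\pi'$ is a permutation of $[b]$ whose leftmost ascent is $a,b$ with $a\ge2$. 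One then checks (using that $\pi'$ has letters $\le b$ while the flanking blocks are $>b$) that $\pi$ avoids $T$ iff $\pi'$ avoids $123$; so the number of choices for $\pi'$ is $C_b-C_{b-1}-2^{b-2}$ (subtracting those starting with $b$ and those with leftmost ascent $1,b$). Summing over $3\le b\le n-2$ and $0\le\ell\le n-1-b$ gives the stated expression by a short telescoping computation.

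So your outline would eventually get there, but only after discovering that the anticipated ``delicate'' shuffle cases are all empty; as written, the plan is headed into a case analysis that does not need to happen.
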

\begin{proof}
To show this, we need only enumerate the members of $\mathcal{E}_n$ in which no element of $[a+1,b-1]$ occurs to the right of $n$.  Let $\pi \in \mathcal{E}_n$ be of this form.  Then $\pi$ may be written as
$$\pi=(n-\ell-1)\cdots(b+1)\pi'n(n-1)\cdots(n-\ell), \qquad 0 \leq \ell \leq n-1-b,$$
where $\pi'$ is a permutation of length $b$ whose leftmost ascent is $a,b$ with $a \geq 2$.  Note that $\pi'$ is $123$-avoiding, but that there are no further restrictions on $\pi'$ imposed by the letters in $[b+1,n]$, which implies that they may be deleted.  Upon subtracting $123$-avoiding permutations starting with $b$ or having leftmost ascent $1,b$, we get $C_b-C_{b-1}-2^{b-2}$ possibilities for $\pi'$.  Considering all $b$ and $\ell$ gives
\begin{align*}
&\sum_{b=3}^{n-2}\sum_{\ell=0}^{n-1-b}(C_b-C_{b-1}-2^{b-2})=C_{n-2}-2^{n-3}+\sum_{\ell=1}^{n-4}\sum_{b=3}^{n-1-\ell}(C_b-C_{b-1}-2^{b-2})\\
&=C_{n-2}-2^{n-3}+\sum_{\ell=1}^{n-2}(C_{n-1-\ell}-2^{n-2-\ell})=C_{n-2}-3\cdot2^{n-3}+1+\sum_{\ell=1}^{n-2}C_\ell
\end{align*}
additional members of $\mathcal{E}_n$, which completes the proof.
\end{proof}

\subsubsection{Case IV}
In this section, we complete our enumeration of $S_n(T)$ by finding $g_n$, which is given as follows.

\begin{lemma}\label{gnl}
If $n \geq 3$, then
\begin{equation}\label{gnle1}
g_n=g_{n-1}+C_{n-1}+(n+1)2^{n-3}-\binom{n}{3}-n,
\end{equation}
where $g_2=1$.
\end{lemma}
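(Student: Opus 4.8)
The plan is to follow the case analysis used for Lemma~\ref{gnlem} (Case~125), making the modifications appropriate to $T=\{1234,2341,4123\}$, in which $1234$ now plays the role formerly played by $1243$. The identity for $n=3$ is a direct check, so take $n\ge 4$ and write a generic permutation counted by $g_n$ as $\pi=a_1a_2\cdots a_s1\pi'$, where $a_1>a_2>\cdots>a_s>1$ and $\pi'$ is nonempty with first letter exceeding $1$; put $S=\{a_1,\dots,a_s\}$.

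Two of the three subcases are quick. If $S=\emptyset$ then $\pi=1\pi'$; the leading $1$ is the global minimum and occurs first, and in each of $2341$ and $4123$ the minimum is not in the first slot, so the leading $1$ can only take part in a $1234$, which forces a $123$ inside $\pi'$. Since $2341$ and $4123$ each contain $123$ as a sub-pattern, $\pi\in S_n(T)$ if and only if $\pi'$ avoids $123$, and this subcase contributes $C_{n-1}$. If $S\ne\emptyset$ and $a_s=2$, then deleting the letter $2$ (it sits just before the leading $1$) and standardizing is a bijection with the permutations counted by $g_{n-1}$, contributing $g_{n-1}$.

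The substance is the subcase $a_s\ge 3$, which must therefore contribute $(n+1)2^{n-3}-\binom{n}{3}-n$. Call $[a_1+1,n]$ the \emph{large} letters and $[2,a_1]\setminus S$ the \emph{small} letters. I would first determine the structure forced on $\pi'$: the small letters descend in $\pi'$ (otherwise $a_1$, the leading $1$, and an ascending pair of small letters is a $4123$); $\pi'$ avoids $123$ (otherwise the leading $1$ prepended to a $123$ in $\pi'$ is a $1234$); and a further restriction, the delicate one, comes from $2341$-avoidance — a small letter $p$ preceded in $\pi'$ by an ascending pair $y_1<y_2$ whose smaller entry $y_1$ has some element of $S$ lying strictly between $p$ and $y_1$ in value would, together with that element of $S$, create a $2341$. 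After verifying these conditions are also sufficient for $T$-avoidance, I would parametrize the admissible $\pi$ by $a_1$, by $S$ (with $\binom{a_1-3}{|S|-1}$ choices of the non-maximal elements of $S$ once $|S|$ is fixed), and by how the large letters are interleaved among the small ones — splitting, along the lines of Case~125, according to how many large letters fall before the first small letter, strictly between the two extreme small letters, and after the last small letter — and then sum the resulting products of binomial coefficients.

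The main obstacle will be precisely this last subcase: getting the structural description of the admissible $\pi'$ exactly right (the $2341$ condition is subtle because, unlike in Case~125, the large letters preceding the first small letter need not descend), breaking the count into freely parametrized subclasses while checking in each that no member of $T$ is created across the boundary between the block $a_1\cdots a_s1$ and $\pi'$, and carrying out the (computer-assisted) reduction of the several nested sums to the closed form $(n+1)2^{n-3}-\binom{n}{3}-n$. Given this, \eqref{gnle1} follows by adding the contributions $C_{n-1}$, $g_{n-1}$, and $(n+1)2^{n-3}-\binom{n}{3}-n$ from the three subcases.
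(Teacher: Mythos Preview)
Your overall scaffold matches the paper: the same three-way split on $S=\emptyset$, $a_s=2$, and $a_s\ge 3$, with contributions $C_{n-1}$, $g_{n-1}$, and $(n+1)2^{n-3}-\binom{n}{3}-n$ respectively. The first two subcases are handled correctly.

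However, your structural analysis of the hard subcase contains an error that, if followed, would derail the count. You write that ``the large letters preceding the first small letter need not descend.'' This is false. If $y_1<y_2$ are two large letters (hence both $>a_1$) occurring in $\pi'$ before some small letter $p$, then $a_1\,y_1\,y_2\,p$ is a $2341$ (note $p<a_1$ since $p\in[2,a_1]\setminus S$ and $a_1\in S$). So the large letters to the left of every small letter \emph{do} decrease, and by a symmetric argument (using $1234$ with the prefix letter $a$ replaced by the leading~$1$) the large letters to the right of every small letter decrease as well. This is precisely what makes the enumeration tractable, and it is the insight you are missing.

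With that in hand, the paper's decomposition is cleaner than the one you propose: split according to whether or not some large letter lies strictly between the first and last small letters in $\pi'$. If yes, one checks that the entire large subsequence is decreasing and that $S$ must be an interval $[\ell,\ell+s-1]$; summing over $\ell,s$ and the positions of the large letters among the small ones gives $2^{n-1}-n-\binom{n}{3}$. If no, then $\pi$ consists of three decreasing runs (the prefix through $1$, a run containing all the small letters, and a possibly empty run of large letters), and a short double sum gives $(n-3)2^{n-3}$. Adding these yields $(n+1)2^{n-3}-\binom{n}{3}-n$. Your proposed parametrization ``along the lines of Case~125'' is workable in principle, but only after you fix the descent claim; once fixed, the paper's two-case split is both simpler and avoids the nested sums you anticipate.
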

\begin{proof}
We proceed as in the proof of Lemma \ref{gnlem} above and use the same terminology and notation, but with respect to the new pattern set $T$.  Let $\pi=a_1\cdots a_s1\pi'$ be of the form enumerated by $g_n$, where $n\geq 4$ and $1<a_s<\cdots<a_1 \leq n-1$ and let $S=\{a_1,\ldots,a_s\}$.  If $S=\emptyset$, then $\pi=1\pi'$, where $\pi'$ avoids 123, and thus there are $C_{n-1}$ possibilities.  So assume $S \neq \emptyset$.  If $a_s=2$, then deleting 2 gives $g_{n-1}$ possibilities, so assume $a_s \geq 3$ henceforth.  We consider the relative positions of large and small letter within $\pi$, where large and small are defined as before.

First observe that large letters occurring to the left of any small letters must decrease in order to avoid 2341, as do large letters occurring to the right due to 1234.  Small letters must decrease, for otherwise there would be a 4123 starting with $a_1,1$.  First suppose that at least one large letter occurs between the first and the last of the small letters.  If $x$ denotes the rightmost such large letter, then all large letters to the left of $x$ must decrease and are greater than $x$, while all large letters to the right of $x$ must decrease and are less than $x$.  Thus, the full subsequence comprising the large letters is decreasing.  Furthermore, the set $S$ in this case must comprise an interval of the form $[\ell,\ell+s-1]$ for some $4 \leq \ell \leq n-1$ and $1 \leq s \leq n-\ell$, for otherwise there would be a 2341.  Note that there is no restriction on the number of large letters lying between the first and the last small letters.  Since there are $\ell-2$ small letters and $n-\ell-s+1$ large letters, at least one of which occurs between small letters, there are $\binom{n-s-1}{\ell-2}-(n-\ell-s+2)$ possible arrangements of these letters, by subtraction.  It is seen that all permutations arising in this manner are $T$-avoiding.  Considering all $\ell$ and $s$ gives
\begin{align*}
&\sum_{\ell=4}^{n-1}\sum_{s=1}^{n-\ell}\left[\binom{n-s-1}{\ell-2}-(n-\ell-s+2)\right]=\sum_{\ell=4}^{n-1}\left[\binom{n-1}{\ell-1}-\binom{n-\ell+2}{2}\right]\\
&=\sum_{\ell=3}^{n}\left[\binom{n-1}{\ell-1}-\binom{n-\ell+2}{2}\right]=2^{n-1}-n-\binom{n}{3}
\end{align*}
permutations in this case.

Now suppose that no large letter occurs between the first and the last of the small letters.  Then $\pi$ in this case consists of three runs of descending letters: (i) the initial decreasing run of length $s+1$ (including $1$), (ii) a decreasing run which includes all of the small letters, and (iii) a possibly empty decreasing run of large letters.  From this, one need only select the elements comprising (i) and any large letters in (ii), which uniquely determines $\pi$.  If $m=a_1$, then there are $\binom{m-3}{s-1}$ choices for the additional letters in (i) and $2^{n-m}$ choices for the large letters in (ii).  Note that since 2 is a small letter, we have $m \geq s+2$.  Considering all possible $s$ and $m$ then yields
$$\sum_{s=1}^{n-3}\sum_{m=s+2}^{n-1}2^{n-m}\binom{m-3}{s-1}=\sum_{m=3}^{n-1}2^{n-m}\sum_{s=1}^{m-2}\binom{m-3}{s-1}=\sum_{m=3}^{n-1}2^{n-3}=(n-3)2^{n-3}$$
additional permutations.  Combining this case with the others gives \eqref{gnle1}.
\end{proof}

For any of the sequence above, put zero if $n$ is such that the corresponding set of permutations is empty.  For example, put $e_n=0$ if $n \leq 4$. Combining cases I through IV above, and including permutations that start with the letter $n$, implies that the number of $T$-avoiding permutations of length $n$ is given by
\begin{equation}\label{combre}
a_n=b_n+d_n+e_n+g_n+C_{n-1}, \qquad n \geq 2,
\end{equation}
with $a_0=a_1=1$.

Now we are ready to find a formula for the generating function $A(x)=\sum_{n\geq0}a_nx^n$. Define
$B(x)=\sum_{n\geq3}b_nx^n$, $D(x)=\sum_{n\geq4}d_nx^n$, $E(x)=\sum_{n\geq5}e_nx^n$  and $G(x)=\sum_{n\geq2}g_nx^n$. First we rewrite the recurrences from the preceding lemmas in terms of generating functions by multiplying both sides by $x^n$ and summing over all possible $n$. By Lemmas \ref{bnl}--\ref{gnl}, we obtain
\begin{align}
B(x)&=\frac{1}{1-x}\biggl(u(x)+\frac{x^4}{(1-2x)^2}+\frac{x^7}{(1-x)^6}-\frac{x^4}{(1-x)^3}\notag\\
&\quad+xq\left(\frac{x}{1-x},1-x\right)-xq(x,1)\biggr),\label{eqt185tb}\\
D(x)&=xB(x)+\frac{x^5(3x^4-5x^3+6x^2-4x+1)}{(1-x)^7(1-2x)},\label{eqt185td}\\
E(x)&=2x^5-\frac{x^6(2x-3)}{(1-x)^4(1-2x)^2}-\frac{x^7(x^2+x-1)}{(1-x)^5(1-2x)^2}+x^2(C(x)-5x^3-2x^2-x-1)\label{eqt185te}\\
&\quad-\frac{24x^6}{1-2x}+\frac{x^6}{1-x}+\frac{8x^6}{1-x}+\frac{x^2}{1-x}(C(x)-5x^3-2x^2-x-1),\notag\\
G(x)&=\frac{x}{1-x}(C(x)-1)+\frac{x^4(2x^4-8x^3+10x^2-7x+2)}{(1-x)^5(1-2x)^2}.\label{eqt185tg}
\end{align}
By solving \eqref{eqt185tb}--\eqref{eqt185tg}, along with use of \eqref{eqtqq} and Lemma \ref{ungf}, we obtain
\begin{align*}
B(x)&=x(1-x)C^2(x)\\
&\quad+\frac{x(3x^9-15x^8+59x^7-134x^6+192x^5-181x^4+112x^3-44x^2+10x-1)}{(1-x)^7(1-2x)^2},\\
D(x)&=x^2(1-x)C^2(x)-\frac{x^2(3x^6-12x^5+27x^4-31x^3+20x^2-7x+1)}{(1-x)^4(1-2x)^2},\\
E(x)&=\frac{x^3(2-x)}{1-x}C^2(x)-\frac{x^3(3x^6-14x^5+38x^4-49x^3+34x^2-13x+2)}{(1-x)^5(1-2x)^2},\\
G(x)&=\frac{x^2}{1-x}C^2(x)+\frac{x^4(2x^4-8x^3+10x^2-7x+2)}{(1-x)^5(1-2x)^2}.
\end{align*}
By \eqref{combre}, we have
$$A(x)=1+x+B(x)+D(x)+E(x)+G(x)+x(C(x)-1),$$
which, upon substituting the expressions just found for the generating functions $B(x)$, $D(x)$, $E(x)$ and $G(x)$, implies
the following result.
\begin{theorem}\label{th185a}
Let $T=\{1234,2341,4123\}$. Then
$$F_T(x)=\frac{1-11x+51x^2-130x^3+199x^4-183x^5+91x^6-15x^7-6x^8+4x^9}{(1-x)^7(1-2x)^2}
+\frac{1+x}{1-x}(C(x)-1)\,.$$
\end{theorem}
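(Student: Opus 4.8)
The plan is to pass from the recurrences of Lemmas~\ref{bnl}--\ref{gnl} to identities among generating functions, solve the resulting linear system for $B(x)$, $D(x)$, $E(x)$, $G(x)$, and then assemble $F_T(x)=A(x)$ through the decomposition \eqref{combre}. Recall from the four subsections above that every nonempty $T$-avoider of length $\ge 2$ either begins with $n$ (contributing $x(C(x)-1)$, since deleting $n$ leaves a $123$-avoider) or has a leftmost ascent of exactly one of the four shapes enumerated by $b_n$, $d_n$, $e_n$, $g_n$; this is precisely $a_n=b_n+d_n+e_n+g_n+C_{n-1}$ for $n\ge2$, with $a_0=a_1=1$. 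So once the four sub-generating-functions are known in closed form, the theorem follows by adding them, together with $1+x+x(C(x)-1)$.

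First I would convert each recurrence termwise. Multiplying the recurrence of Lemma~\ref{bnl} by $x^n$ and summing over $n\ge3$ sends the $b_{n-1}$ term to $xB(x)$, the polynomial-and-binomial terms $(n-3)2^{n-4}+\binom{n-2}{5}-\binom{n-2}{2}$ to explicit rational functions via the standard identities $\sum_n\binom{n-k}{j}x^n=\tfrac{x^{k+j}}{(1-x)^{j+1}}$ and $\sum_n n2^nx^n=\tfrac{2x}{(1-2x)^2}$, the $u_n$ term to $u(x)$, and the double sum $\sum_j\sum_i\binom{n-i-2}{j+1}C_{n-2-j,i}$ to $xq\!\left(\tfrac{x}{1-x},1-x\right)-xq(x,1)$, where $q$ is the two-variable generating function of \eqref{eqtqq} recording the first letter of a $123$-avoider; the outcome is \eqref{eqt185tb}. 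The same mechanism applied to Lemmas~\ref{dnl} and~\ref{gnl} yields \eqref{eqt185td} and \eqref{eqt185tg}, the first expressing $D(x)$ linearly through $B(x)$ and a rational function (the $b_{n-1}-b_{n-2}$ contribution collapsing to $x(1-x)B(x)/(1-x)=xB(x)$), the second giving $G(x)$ outright, with its $\sum_n C_{n-1}x^n$ term producing the $\tfrac{x}{1-x}(C(x)-1)$ piece. For $E(x)$ one must first turn the explicit formulas of Lemmas~\ref{vnform} and~\ref{wnle} for $v_n$ and $w_n$ into rational generating functions, and then feed these, together with the $\sum_{\ell=1}^{n-2}C_\ell$ term from Lemma~\ref{len} (which contributes $\tfrac{x^2}{1-x}(C(x)-1)$) and the elementary terms $C_{n-2}-3\cdot2^{n-3}+1$, into \eqref{eqt185te}.

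Next I would solve the system \eqref{eqt185tb}--\eqref{eqt185tg}. Since \eqref{eqt185td}, \eqref{eqt185te}, \eqref{eqt185tg} give $D$, $E$, $G$ in terms of $B$ and already-known functions, and \eqref{eqt185tb} determines $B$ after substituting Lemma~\ref{ungf} for $u(x)$ and \eqref{eqtqq} for the two $q$-values, the whole system reduces to one linear equation for $B(x)$. Carrying out that substitution and simplifying is the one genuinely delicate step: the composed arguments $\tfrac{x}{1-x},1-x$ in $q$ and $\tfrac{xy(1-x)}{1-x-xy},\tfrac{1-x-xy}{1-x}$ inside $u(x)$ introduce nested radicals that have to be matched against $C(x)=\tfrac{1-\sqrt{1-4x}}{2x}$, after which $B(x)$, $D(x)$, $E(x)$, $G(x)$ come out in the stated closed forms, each splitting as a multiple of $C(x)^2$ plus a rational function. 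Finally, forming $A(x)=1+x+B(x)+D(x)+E(x)+G(x)+x(C(x)-1)$ and collecting the $C(x)^2$-coefficients, the identity $xC(x)^2=C(x)-1$ converts the algebraic part into $\bigl[(1-x)+x(1-x)+\tfrac{x^2(2-x)}{1-x}+\tfrac{x}{1-x}+x\bigr](C(x)-1)$, which simplifies to $\tfrac{1+x}{1-x}(C(x)-1)$, while the rational parts combine to the displayed fraction with numerator of degree $9$ over $(1-x)^7(1-2x)^2$. The main obstacle is not conceptual but computational: the bulk of multi-sum-to-generating-function conversions (especially for $v_n$, $w_n$ in Case III) together with the radical reconciliation inside $u(x)$, which is why this last passage is best done with computer assistance.
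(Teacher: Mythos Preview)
Your proposal is correct and follows essentially the same route as the paper: convert the recurrences of Lemmas~\ref{bnl}--\ref{gnl} (together with the explicit formulas for $v_n,w_n$ feeding into $E$) into the generating-function identities \eqref{eqt185tb}--\eqref{eqt185tg}, solve for $B,D,E,G$ using Lemma~\ref{ungf} and \eqref{eqtqq}, and assemble $A(x)=1+x+B+D+E+G+x(C(x)-1)$ via \eqref{combre}. Your explicit check that the $C(x)^2$-coefficients, after the substitution $xC^2=C-1$, sum to $\tfrac{1+x}{1-x}$ is a nice addition that the paper leaves implicit.
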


\subsection{Case 209: $\{3142,1432,1243\}$} This is one of the shortest cases.
\begin{theorem}\label{th209a}
Let $T=\{3142,1432,1243\}$. Then the generating function
$F=F_T(x)$ satisfies
$$F=1-xF+x\left(2+\frac{x^2}{(1-x)^2}\right)F^2-x^2F^3\,.$$
\end{theorem}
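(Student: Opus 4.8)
The plan is to use the left-right maxima decomposition recalled in Section~2: write $F=\sum_{m\ge0}G_m(x)$, where $G_m$ enumerates the $T$-avoiders $\pi=i_1\pi^{(1)}i_2\pi^{(2)}\cdots i_m\pi^{(m)}$ having exactly $m$ left-right maxima. Since none of $3142,1432,1243$ begins with $4$, we get at once $G_0=1$ and $G_1=xF$. The substance of the proof is then (i) an expression for $G_2$ as a polynomial in $F$ (of degree at most three) with coefficients rational in $x$, and (ii) a recursion relating $G_m$ to $G_{m-1}$ for $m\ge3$. Substituting these into $F=\sum_{m\ge0}G_m$, summing the resulting geometric-type series over $m\ge3$, and using $F=\sum_mG_m$ once more to close things up, should produce the displayed cubic after clearing the common denominator $(1-x)^2$.

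For $G_2$ write $\pi=k\pi^{(1)}n\pi^{(2)}$; here $\pi^{(1)}\subseteq[1,k-1]$, while all of $[k+1,n-1]$ is forced into $\pi^{(2)}$ (any letter exceeding $k$ and preceding $n$ would be a new left-right maximum). A short forbidden-pattern argument shows that when $k<n-1$ the block $\pi^{(1)}$ must be decreasing — an ascent $uv$ in $\pi^{(1)}$ together with $n$ and any $w\in[k+1,n-1]$ gives the occurrence $uvnw=1243$ — and, likewise, that inside $\pi^{(2)}$ the letters of $[k+1,n-1]$ split into a decreasing run lying before the first ``small'' letter ($<k$) and a decreasing run lying after it, the interleaving of the small letters among them being tightly restricted by $1432$ and $3142$. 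Splitting $\pi^{(2)}$ by the position of its leftmost small letter then exhibits it as one or two genuinely free $T$-avoiding sub-blocks glued to a bounded number of monotone runs, each run contributing a factor $\tfrac1{1-x}$; enumerating the finitely many configurations (together with the degenerate cases $k=n-1$, $\pi^{(2)}=\varnothing$, and so on, which account for the constant term and for the corrective $-xF$) gives $G_2$ with the coefficient $2+\tfrac{x^2}{(1-x)^2}$ attached to its quadratic-in-$F$ part.

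For $m\ge3$ the third left-right maximum rigidifies the early blocks: $\pi^{(s)}$ is empty or a single forced-monotone run for the small indices $s$, only the last block staying free, and one obtains a clean recursion for $G_m$. Solving it and adding in $G_0+G_1+G_2$ yields a polynomial relation for $F$ — with the cubic term $-x^2F^3$ emerging either from the cubic-in-$F$ part of $G_2$ or from re-applying $F=\sum_mG_m$, depending on how the bookkeeping is organized — and a final rational-function simplification, routine but several pages long and conveniently cross-checked against the initial counts $1,1,2,6,21,\ldots$ of $|S_n(T)|$, collapses it to the stated cubic functional equation.

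The main obstacle is step~(i): the interleaving, inside $\pi^{(2)}$, of the small letters among the forced-descending large letters. All three patterns are active there simultaneously, so the case split must be fine enough to catch each; it is exactly the sub-case of a free monotone run of small letters threaded through a long descending run of large letters that gives rise to the unexpected $\tfrac{x^2}{(1-x)^2}$ coefficient. Getting the bookkeeping right — in particular which degenerate configurations are double counted, hence the precise constant term and the sign and coefficient of the $xF$ term — is the delicate part; once the structural decomposition is pinned down, what remains is bounded case analysis and power-series algebra.
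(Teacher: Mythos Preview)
Your plan has the right overall shape (left-right maxima decomposition, then sum), but the concrete structural claim you make about $\pi^{(2)}$ is wrong, and this is not a bookkeeping issue---it is the heart of the argument.

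You assert that inside $\pi^{(2)}$ the letters of $[k+1,n-1]$ ``split into a decreasing run lying before the first small letter and a decreasing run lying after it.'' In fact they must appear in \emph{increasing} order: if $a>b$ are both in $[k+1,n-1]$ and $a$ precedes $b$ in $\pi^{(2)}$, then $k,n,a,b$ is a $1432$. (Concretely, $21543$ contains $1432$ via $1,5,4,3$, while $21534$ is a genuine $T$-avoider.) So the correct picture is
\[
\pi^{(2)}=\alpha_1\,(k\!+\!1)\,\alpha_2\,(k\!+\!2)\,\cdots\,(n\!-\!1)\,\alpha_d,\qquad d=n-k\ge 2,
\]
with each $\alpha_j<k$; avoidance of $3142$ then forces $\pi^{(1)}>\alpha_1>\alpha_2>\cdots>\alpha_d$ as sets, each $\alpha_j$ an arbitrary $T$-avoider, and $\pi^{(1)}$ decreasing (your $1243$ argument for that part is fine). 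This yields
\[
G_2(x)=x^2F^2+\frac{x^2}{1-x}\sum_{d\ge 2}x^{d-1}F^{\,d}=x^2F^2+\frac{x^3F^2}{(1-x)(1-xF)},
\]
which is \emph{not} a polynomial in $F$; your statement that $G_2$ is ``a polynomial in $F$ of degree at most three'' cannot hold.

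The paper does not in fact separate $m=2$ from $m\ge 3$. The same two-case split (is there a letter of $\pi^{(2)}\cdots\pi^{(m)}$ strictly between $i_1$ and $i_2$?) works uniformly for all $m\ge 2$ and gives a closed form for $G_m$ directly; summing over $m$ then produces the cubic. Once you replace ``two decreasing runs of large letters'' by ``one increasing run of large letters interleaved with $T$-avoiding small blocks,'' the rest of your outline collapses to that same short computation.
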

\begin{proof}

To find $G_m(x)$ for $m\geq2$, let
$\pi=i_1\pi^{(1)}i_2\pi^{(2)}\cdots i_m\pi^{(m)}\in S_n(T)$ with
$m\ge 2$ left-right maxima. Since $\pi$ avoids $1243$, we see
that $\pi^{(2)}\cdots\pi^{(m)}<i_2$. We consider two cases:
\begin{itemize}
\item There is no letter in $\pi^{(2)}\cdots\pi^{(m)}$ between
$i_1$ and $i_2$, that is, $\pi^{(2)}\cdots\pi^{(m)}<i_1$. Since
$\pi$ avoids $3142$, we see that
$\pi^{(1)}>\pi^{(2)}>\cdots>\pi^{(m)}$. Therefore, we have a
contribution of $x^mF^m$.

\item There is some $p\in [\kern .1em 2,m\kern .1em]$ minimal such that $\pi^{(p)}$ has a letter
between $i_1$ and $i_2$. Then
$\pi^{(2)}\pi^{(3)}\cdots\pi^{(p-1)}<i_1$ by the minimality of $p$ and also  $\pi^{(p+1)}\cdots\pi^{(m)}<i_1$ since $\pi$ avoids $1243$ and $1432$. Furthermore, since $\pi$ avoids $3142$,
$\pi^{(2)},\pi^{(3)},\dots,\pi^{(p-1)}$ are all empty.
Hence, $i_1+1,i_1+2,\dots,i_2-1$ all lie in $\pi^{(p)}$  and furthermore occur in that order (or $i_1i_2$ is the $14$ of a $1432$).

So, with $d=i_2-i_1$, we have $d\ge 2$ and can write $\pi^{(p)}$ as
$$\al_1(i_1+1)\al_2(i_1+2)\cdots\al_{d-1}(i_2-1)\al_d\,.$$
We also have $\pi^{(1)}>\al_1>\al_2>\cdots>\al_{d}>\pi^{(p+1)}>\cdots>\pi^{(m)}$
(a violator of any of these inequalities would be the $12$ of a $3142$), and
$\pi^{(1)}$ is decreasing ($u<v$ in $\pi^{(1)}$ would make $uvi_2(i_1+1)$ a $1243$).
Hence, the contribution in this case is
$\sum_{p=2}^m\frac{x^m}{1-x}F^{m-p}\sum_{d\ge2}x^{d-1}F^d$.
\end{itemize}
Adding the two contributions, we have for $m\ge 2$,
$$G_m(x)= x^m F^m + \frac{x^{m+1}\sum_{p=2}^mF^{m+2-p}}{(1-x)(1-xF)}\,.$$
Summing over $m$ yields the stated expression for $F$.
\end{proof}

\subsection{Case 216: $\{2143,3142,3412\}$}
Let $H_m=H_m(x)$ be the generating function for permutations $\pi=\pi_1\pi_2\cdots\pi_n\in S_n(T)$ such that $\pi_1<\pi_2<\cdots<\pi_m=n$.
Clearly, $H_1=xF_T(x)$.

\begin{lemma}\label{lem216a}
For $m\geq2$,
$$(1-2x)H_m=x(1-x)H_{m-1}+x\sum_{j\geq m}\big((1-x)H_j-xH_{j-1}\big)\,.$$
\end{lemma}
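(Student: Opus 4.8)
The plan is to prove a cleaner recurrence equivalent to the stated one by elementary power‑series algebra, and to establish that cleaner recurrence combinatorially. I claim that for $m\ge2$,
\[
H_m=x\Bigl(H_{m-1}+\sum_{j\ge m}H_j\Bigr).
\]
Granting this, multiply by $1-2x$, write $x(1-2x)=x(1-x)-x^2$, and use $x^2\sum_{j\ge m}H_{j-1}=x^2H_{m-1}+x^2\sum_{j\ge m}H_j$ to absorb the two $x^2$‑terms; one gets $(1-2x)H_m=x(1-x)H_{m-1}+x(1-x)\sum_{j\ge m}H_j-x^2\sum_{j\ge m}H_{j-1}$, which is exactly the identity of the lemma. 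This rewriting is routine.

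To prove the displayed identity, fix $m\ge2$ and decompose a $T$‑avoider $\pi=\pi_1\cdots\pi_n$ counted by $H_m$ as $\pi=\pi_1\cdots\pi_{m-1}\,n\,\beta$, where $\pi_1<\cdots<\pi_{m-1}<n$ and $\beta$ is the (possibly empty) suffix after the letter $n$. First I would dispose of the case $\pi_1=1$: the smallest letter of each of $2143$, $3142$, $3412$ occupies position $2$, $2$, $3$, never position $1$, so a leading $1$ cannot take part in any occurrence of a pattern of $T$; hence deleting it and standardizing is a bijection between $\{\pi\in H_m:\pi_1=1\}$ and the set of $H_{m-1}$‑avoiders, contributing $xH_{m-1}$. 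It remains to show that the $H_m$‑avoiders with $\pi_1\ge2$ contribute $x\sum_{j\ge m}H_j$. Now an $H_j$‑avoider is exactly a $T$‑avoider whose initial increasing run ends at the maximum $n$, which then sits in position $j$; so $\sum_{j\ge m}H_j$ is the generating function for $T$‑avoiders whose initial run ends at the maximum and whose maximum lies in a position $\ge m$. Thus it suffices to biject $\{\pi\in H_m:\pi_1\ge2,\ |\pi|=n\}$ with that set of length‑$(n-1)$ avoiders.

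For this last step I would first extract the shape of $\beta$. Since $\pi$ avoids $3412$ with the ascent $\pi_{m-1}<n$ playing the role of "$34$", the letters of $\beta$ below $\pi_{m-1}$ form a decreasing subsequence (it necessarily contains the letter $1$, as $\pi_1\ge2$); combining this with avoidance of $2143$ and $3142$ — whose occurrences that use a letter of the increasing prefix as the "$2$" or "$3$" are ruled out pattern by pattern — one finds that $\beta$ splits into this decreasing "low" block and a "high" block (the letters above $\pi_{m-1}$) whose interleaving with the low letters is tightly constrained. One then reads off $j$, the eventual position of the maximum of the target object, from where the high block first rises, transports the decomposition to the length‑$(n-1)$ permutation obtained by an explicit deletion and relabelling, and verifies that the map is reversible and preserves $T$‑avoidance. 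Summing over $n$ (and over the residual parameters) yields the generating‑function identity.

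I expect the structural analysis of $\beta$ — pinning down exactly how its "high" letters interleave with its decreasing "low" letters, and checking that the resulting map hits every length‑$(n-1)$ $T$‑avoider with maximum in position $\ge m$ exactly once — to be the main obstacle; everything else (the $\pi_1=1$ bijection and the algebra converting the clean recurrence to the stated one) is routine. As an alternative, one can phrase this count in the generating‑forest language of \cite{W}: with $\pi\mapsto(\pi$ with $n$ deleted$)$ as the parent map and the position of the maximum as the label, the identity of the lemma is the transfer equation for that statistic, after which the kernel method referenced in the introduction finishes the computation.
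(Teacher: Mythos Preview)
Your algebraic reduction is correct and worth noting: dividing the stated identity by $(1-2x)$ collapses it to the much cleaner
\[
H_m=x\Bigl(H_{m-1}+\sum_{j\ge m}H_j\Bigr),
\]
and your treatment of the $\pi_1=1$ case is fine. The difficulty is that your handling of the $\pi_1\ge2$ case is not a proof: you assert a bijection between $\{\pi\in H_m:\pi_1\ge2,\ |\pi|=n\}$ and $\bigcup_{j\ge m}\{\sigma\in H_j:|\sigma|=n-1\}$, but you never say which letter is deleted or how $j$ is read off. The two obvious candidates fail immediately (for $m=2$, $n=4$: deleting $1$ sends both $2431$ and $2413$ to $132$, so is not injective; deleting $n$ sends $3421$ to $321\in H_1$ with $j=1<m$, so does not land in the target). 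Your sketch of the ``high/low'' structure of the suffix is too coarse to pin down the map, and you acknowledge as much.

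The paper avoids this obstacle by \emph{not} seeking a single bijection. It first peels off $\pi_1=1$ and the case $\beta=\emptyset$ (in the paper's notation, $\pi=\pi_1\cdots\pi_{m}\alpha\,1\,\beta$), and only then analyses the residual generating function $J_m$ by a second case split on whether $n-1$ lies in $\beta$ or in $\alpha$: in the first case one deletes $n-1$, in the second one deletes $n$ (after observing that the letters between $n$ and $n-1$ above $\pi_{m-1}$ are increasing), and this yields $J_m=x(H_m-xH_{m-1})+x\sum_{t\ge0}J_{m+t}$. Substituting $J_m=(1-x)H_m-xH_{m-1}$ gives the lemma. If you try to unroll this into a direct bijection for your cleaner recurrence you will find you need exactly the same structural analysis (position of $n-1$, increasing run $\alpha_1$), so the simplification is only cosmetic: the lemma's extra factor of $(1-2x)$ is a byproduct of the two-level recursion, not an indication that the paper is doing unnecessary work.
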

\begin{proof}
Consider a $T$-avoider $\pi=\pi_1\pi_2\cdots\pi_n\in S_n(T)$ counted by $H_m$, where $m \geq 2$. If $\pi_1=1$, the contribution is $xH_{m-1}$.
Otherwise, $\pi$ has the form $\pi=\pi_1\pi_2\cdots\pi_m\al1\beta$, where $\beta>\pi_{m-1}$ (or $\pi_{m-1}\pi_m 1$ is the $341$ of a $3412$) and $\beta$ is increasing (or $\pi_{m-1}1$ is the $21$ of a $2143$). If $\beta=\emptyset$, the contribution is $xH_m$, and so $H_m=xH_{m-1}+xH_m+J_m$ where $J_m$ is the contribution for the case $\beta\neq\emptyset$.

Now let us write an equation for $J_m$. If $n-1$ is in $\beta$, then $n-1$ is the last letter in $\pi$ and, deleting $n-1$, we have a contribution of $x(H_m-xH_{m-1})$.
Otherwise, $n-1$ is in $\al$, and $\pi$ has the form shown in Figure \ref{fig2i6J} below,
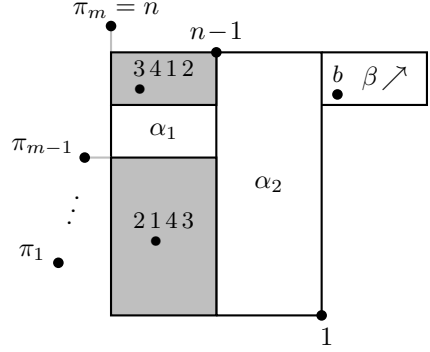
\begin{figure}[H]
\begin{center}
\begin{pspicture}(-5,0)(10,4)
\psset{xunit=.7cm}\psset{yunit=.7cm}
\psline(1,3)(1,4)\psline(3,0)(5,0)(5,5)(3,5)(3,0)\psline(5,4)(7,4)(7,5)(5,5)
\psline[linecolor=lightgray](1,5)(1,5.5)\psline[linecolor=lightgray](.5,3)(1,3)
\pspolygon[fillstyle=solid,fillcolor=lightgray](1,0)(3,0)(3,3)(1,3)(1,0)
\pspolygon[fillstyle=solid,fillcolor=lightgray](3,4)(3,5)(1,5)(1,4)(3,4)
\rput(2,3.5){\textrm{$\al_1$}}
\rput(4,2.5){\textrm{$\al_2$}}\rput(6.2,4.5){\textrm{$\be\!\nearrow$}}
\rput(5.1,-.4){\textrm{$1$}}\rput(-.5,1.2){\textrm{$\pi_1$}}
\rput(-.3,3.2){\textrm{$\pi_{m-1}$}}\rput(1.1,5.8){\textrm{$\pi_m=n$}}
\rput(5.3,4.6){\textrm{\small $b$}}\rput(3,5.4){\textrm{$n\!-\!1$}}
\rput[b]{35}(.4,1.8){$\iddots$}\qdisk(0,1){2pt}\qdisk(.5,3){2pt}
\qdisk(1,5.5){2pt}\qdisk(3,5){2pt}\qdisk(5,0){2pt}\qdisk(5.3,4.2){2pt}
\rput(2,1.8){\textrm{\small $2\,1\,4\,3$}}\rput(1.85,1.4){\textrm{\small $\bullet$}}
\rput(2,4.7){\textrm{\small $3\,4\,1\,2$}}\rput(1.55,4.3){\textrm{\small $\bullet$}}
\end{pspicture}
\caption{A $T$-avoider counted by $J_m$ with $n-1$ before 1}\label{fig2i6J}
\end{center}
\end{figure}
where $\be$ is increasing and nonempty (bullet marked $b$), $\al_1$ is increasing (or $(n-1)b$ is the $43$ of a $2143$), and shaded regions are empty for the indicated reason. Deleting $\pi_m=n$ leaves avoiders counted by $J_{m+t}$ where $t$ is the number of letters in $\al_1$.

Adding the contributions in the two cases for $n-1$ gives
$$J_m=x(H_m-xH_{m-1})+x\sum_{t\ge 0}J_{m+t}\,.$$
Since $J_m=(1-x)H_m-xH_{m-1}$, this implies
$$(1-x)H_m-xH_{m-1}=x(H_m-xH_{m-1})+x\sum_{j\geq m}\big((1-x)H_j-xH_{j-1}\big),$$
and the result follows.
\end{proof}

\begin{lemma}\label{lem216b}
We have
$$\sum_{m\ge 1}H_m(x) = xC(x)F_T(x)\,.$$
\end{lemma}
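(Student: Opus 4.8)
The plan is to bootstrap from the functional equation of Lemma~\ref{lem216a} together with the single known value $H_1=xF_T(x)$.

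\textbf{Step 1: eliminate the tail sum.} Denote the relation of Lemma~\ref{lem216a} by $(\star_m)$. Subtracting $(\star_{m+1})$ from $(\star_m)$ telescopes the infinite sum $\sum_{j\ge m}\big((1-x)H_j-xH_{j-1}\big)$ down to its single leading term $(1-x)H_m-xH_{m-1}$; collecting terms, the coefficient multiplying $H_{m-1}$ simplifies to $x(1-x)-x^2=x(1-2x)$, and cancelling the common unit $1-2x$ leaves the three-term recursion
$$H_{m+1}=H_m-xH_{m-1},\qquad m\ge 2.$$

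\textbf{Step 2: solve the recursion and discard the spurious root.} The characteristic equation $t^2-t+x=0$ has roots $t_\pm=\tfrac12\big(1\pm\sqrt{1-4x}\big)$; crucially $t_-=xC(x)$, while $t_+=1-xC(x)=1/C(x)$ by the Catalan identity $C(x)=1/(1-xC(x))$. Hence $H_m=A\,t_+^{\,m}+B\,t_-^{\,m}$ for all $m\ge1$, where $A$ and $B$ are the formal power series determined by matching $H_1$ and $H_2$ (e.g.\ $A=(H_2-t_-H_1)/(t_+\sqrt{1-4x})$, a power series since the denominator is a unit). The essential point is that $A=0$. Indeed each $H_m$ is the generating function for certain permutations of length $\ge m$ (its $m$-th letter equals $n$), so $[x^k]H_m=0$ for $k<m$; since $B\,t_-^{\,m}$ already has order $\ge m$ whereas $t_+^{\,m}$ has constant term $1$, reading off the coefficients of $x^0,x^1,x^2,\dots$ of $H_m$ for suitably large $m$ forces, by an immediate induction on the coefficient index, that every Taylor coefficient of $A$ vanishes.

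\textbf{Step 3: sum a geometric series.} With $A=0$ we have $H_{m+1}=t_-H_m=xC(x)H_m$ for every $m\ge1$, hence $H_m=(xC(x))^{m-1}H_1$, and therefore
$$\sum_{m\ge1}H_m(x)=\frac{H_1(x)}{1-xC(x)}=\frac{xF_T(x)}{1-xC(x)}=xC(x)F_T(x),$$
once more using $1/(1-xC(x))=C(x)$.

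I expect the one real obstacle to be the vanishing of $A$ in Step~2. Note that merely summing the three-term recursion over $m\ge2$ reproduces only the tautology $H_2=x\sum_{m\ge1}H_m$ (which itself drops straight out of $(\star_2)$), and so does not pin down the sum; one genuinely needs the structural fact that the ``wrong'' root $t_+=1/C(x)$ contributes nothing. Reassuringly, this is not a computation but a short order argument, using only that each $H_m$ is a power series vanishing to order $m$. The remaining ingredients --- the telescoping of Step~1, the root extraction, and the geometric summation --- are entirely routine.
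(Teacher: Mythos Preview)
Your proof is correct and takes a genuinely different route from the paper.

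The paper introduces the bivariate series $H(x,v)=\sum_{m\ge1}H_m(x)v^{m-1}$, converts Lemma~\ref{lem216a} into a single functional equation in $v$, and applies the kernel method: setting $v=C(x)$ annihilates the coefficient of $H(x,v)$ and yields $H(x,1)=xC(x)F_T(x)$ directly. Your approach instead telescopes the relations of Lemma~\ref{lem216a} to the constant-coefficient recursion $H_{m+1}=H_m-xH_{m-1}$, solves it explicitly, and uses the valuation constraint $\mathrm{ord}(H_m)\ge m$ to discard the root $t_+=1/C(x)$; this gives the sharper conclusion $H_m=(xC(x))^{m-1}H_1$ before summing. The kernel method is more mechanical and scales to situations where no clean finite-order recursion emerges, while your argument is more elementary (no auxiliary variable, no kernel cancellation) and makes transparent why the Catalan generating function appears: it is precisely the root of the characteristic polynomial compatible with the growth of $\mathrm{ord}(H_m)$. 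The two are, of course, the same phenomenon viewed differently --- your order argument is the power-series shadow of the paper's kernel substitution.
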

\begin{proof}
Define $H(x,v)=\sum_{m\geq1}H_m(x)v^{m-1}$.
By Lemma \ref{lem216a}, we have
\begin{align*}
&(1-2x)H(x,v)-x(1-2x)F_T(x)\\
&=xv(1-x)H(x,v)+\frac{xv(1-x)}{1-v}\big(H(x,1)-H(x,v)\big)-\frac{x^2v}{1-v}\big(H(x,1)-vH(x,v)\big),
\end{align*}
which implies
\begin{align*}
\left(1-2x-xv(1-x)+\frac{xv(1-x-xv)}{1-v}\right)H(x,v)=x(1-2x)F_T(x)+\frac{xv(1-2x)}{1-v}H(x,1).
\end{align*}
By taking $v=C(x)$, we find that
$H(x,1)=xC(x)F_T(x)$.
\end{proof}

\begin{theorem}\label{th216a}
Let $T=\{2143,3142,3412\}$. Then
$$F_T(x)=\frac{(1-3x)C(x)}{1-2x-x(1-x)C(x)}\,.$$
\end{theorem}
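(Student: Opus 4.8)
The plan is to refine $F_T(x)$ by the number of left-to-right maxima and then feed in Lemma~\ref{lem216b}. Write $F_T(x)=\sum_{m\ge 0}G_m(x)$, where $G_m(x)$ is the \gf for $T$-avoiders with exactly $m$ left-to-right maxima; since no pattern of $T$ starts with $4$, we have $G_0=1$ and $G_1(x)=xF_T(x)$. A $T$-avoider $\pi$ with $m$ left-to-right maxima $i_1<i_2<\cdots<i_m=n$ has the canonical form $\pi=i_1\pi^{(1)}i_2\pi^{(2)}\cdots i_m\pi^{(m)}$, and I would split $G_m$ according to whether the \emph{middle} blocks $\pi^{(1)},\dots,\pi^{(m-1)}$ are all empty. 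If they are, the left-to-right maxima occupy positions $1,\dots,m$ and $\pi_m=n$, so $\pi$ is exactly a permutation counted by $H_m$ as defined above; otherwise $\pi$ contributes to a new generating function $K_m(x)$. Thus $G_m=H_m+K_m$ for $m\ge 1$ (with $K_1=0$), and Lemma~\ref{lem216b} gives
$$F_T(x)=1+\sum_{m\ge 1}H_m(x)+\sum_{m\ge 2}K_m(x)=1+xC(x)F_T(x)+K(x),\qquad K(x):=\sum_{m\ge 2}K_m(x).$$

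The heart of the proof is to evaluate $K(x)$ directly, i.e.\ to count the $T$-avoiders having at least one nonempty middle block. First I would record the restrictions forced by $T$-avoidance: the letters of each block $\pi^{(k)}$ decrease; if $\pi^{(k)}\neq\emptyset$ for some $k\le m-1$ then $i_{k+1}=i_k+1$ (any letter of $[i_k+1,i_{k+1}-1]$ is necessarily pushed to the right of $i_{k+1}$, and together with $i_k$, a letter of $\pi^{(k)}$, and $i_{k+1}$ it would form a $2143$); and the possible interactions of two nonempty middle blocks with each other, and of a middle block with the tail $\pi^{(m)}$, are severely limited by the patterns $3142$ and $3412$. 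Unwinding this case analysis should show that such a permutation decomposes into three independent Catalan-counted parts together with three distinguished letters, yielding
$$K(x)=\bigl(xC(x)^2\bigr)^3=(C(x)-1)^3,$$
the last equality being the identity $xC(x)^2=C(x)-1$.

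It then remains to solve $F_T(x)\bigl(1-xC(x)\bigr)=1+(C(x)-1)^3$. Using $1-xC(x)=1/C(x)$, this is $F_T(x)=C(x)\bigl(1+(C(x)-1)^3\bigr)$, and a short manipulation with $xC(x)^2=C(x)-1$ (equivalently, the substitution $C(x)=1/(1-xC(x))$) rewrites the right-hand side as $\dfrac{(1-3x)C(x)}{1-2x-x(1-x)C(x)}$, which is the claimed formula. The one genuinely laborious step is the evaluation of $K(x)$: establishing $K(x)=(C(x)-1)^3$ requires a careful, multi-case examination of how nonempty middle blocks may be interleaved among the left-to-right maxima and how they interact with one another and with the tail, and of pinpointing exactly which three Catalan factors (and which three extra letters) arise. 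Once that identity is in hand, the rest is routine generating-function algebra; in particular, the bivariate equation obtained in the proof of Lemma~\ref{lem216b} is not by itself enough to pin down $F_T(x)$, so the decomposition giving $K(x)$ is exactly the missing ingredient.
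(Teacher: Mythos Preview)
Your overall plan---split $G_m=H_m+K_m$ where $K_m$ counts avoiders with a nonempty block $\pi^{(k)}$ for some $k\le m-1$, invoke Lemma~\ref{lem216b} for $\sum H_m$, and then evaluate $K(x)=\sum_{m\ge 2}K_m(x)$---is a legitimate route, and indeed different from the paper's. The paper instead writes a recurrence for $G_m$ by classifying according to the \emph{last} block $\pi^{(m)}$ (empty or not, and if not, by the maximal $j$ with a letter of $\pi^{(m)}$ in $(i_{j-1},i_j)$); summing that recurrence and plugging in Lemma~\ref{lem216b} yields a linear equation for $F_T(x)$ with no further combinatorics required. Your target identity $K(x)=(C(x)-1)^3$ is numerically correct (one checks $1-2x-x(1-x)C(x)=(1-xC(x))^3$, whence $F_T/C=1+(C-1)^3$), so the endpoint of your argument is right.

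The gap is that you have not proved $K(x)=(C(x)-1)^3$, and the structural observations you list as the first step are partly wrong. In particular, ``the letters of each block $\pi^{(k)}$ decrease'' is false: $3124\in S_4(T)$ has two left-to-right maxima $3,4$ with $\pi^{(1)}=12$ increasing. (Your second observation, that a nonempty $\pi^{(k)}$ forces $i_{k+1}=i_k+1$, \emph{is} correct, via $2143$.) So the ``three independent Catalan pieces plus three distinguished letters'' picture cannot be read off from the constraints you wrote down, and no decomposition yielding $(xC(x)^2)^3$ is actually exhibited. Since that identity is, as you acknowledge, the entire content of the argument, what you have is a correct conjecture with the proof still to be supplied. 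If you want to salvage this route, you will need a genuinely different structural analysis of the permutations counted by $K_m$; alternatively, the paper's recurrence for $G_m$ reaches the answer without ever needing $K(x)$ in closed form.
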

\begin{proof}
To write an equation for $G_m(x)$ where $m\geq2$, suppose $\pi=i_1\pi^{(1)}\cdots i_m\pi^{(m)}\in S_n(T)$ has $m\ge 2$ left-right maxima. If $\pi^{(m)}=\emptyset$, the contribution is $xG_{m-1}(x)$. Otherwise, there exists $j$ maximal, $1\leq j\leq m$, such that $\pi^{(m)}$ has a letter between $i_{j-1}$ and $i_j$ (where $i_0=0$). Since $\pi$ avoids $2143$, $\pi^{(1)}\cdots\pi^{(j-1)}=\emptyset$ and $\pi^{(j)}\cdots\pi^{(m-1)}>i_{j-1}$. Since $\pi$ avoids $3142$ and $j$ is maximal, we have $\pi^{(j)}\cdots\pi^{(m-1)}>\pi^{(m)}$.  There are two cases:
\begin{itemize}
\item $j=m$. If $\pi^{(m)}<i_{m-1}$, then $\pi^{(m)}$ is decreasing because we avoid $3412$. So the contribution is $H_m-\frac{x^m}{(1-x)^{m-1}}$, where $\frac{x^m}{(1-x)^{m-1}}$ counts all the permutations $\pi$ with $i_{m-1}=n-1$.

\item $1\leq j\leq m-1$. Again, $\pi^{(m)}$ is decreasing, and we have a contribution of $\frac{x^{j+1}}{(1-x)^{j}}G_{m-j}(x)$.
\end{itemize}
Adding all the contributions gives
$$G_m(x)=xG_{m-1}(x)+H_m-\frac{x^m}{(1-x)^{m-1}}+\sum_{j=1}^{m-1}\frac{x^{j+1}}{(1-x)^{j}}G_{m-j}(x)\,.$$
Sum over $m\geq2$ using Lemma \ref{lem216b} to obtain
$$F_T(x)-xF_T(x)-1=x\big(F_T(x)-1\big)+xC(x)F_T(x)-xF_T(x)-\frac{x^2}{1-2x}+\frac{x^2}{1-2x}\big(F_T(x)-1\big).$$
Solving for $F_T(x)$ completes the proof.
\end{proof}

\subsection{Case 225: $\{1243,2413,3142\}$}
\begin{theorem}\label{th225a}
Let $T=\{1243,2413,3142\}$. Then
$$F_T(x)=\frac{1-x(1-x)C(x)-\sqrt{1-5x+10x^2-5x^3-x(1-x)(1+x)C(x)}}{2x(1-x)}.$$
\end{theorem}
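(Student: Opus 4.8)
The plan is to condition on the number $m$ of left-right maxima, exactly as in the other cases. Write $G_m=G_m(x)$ for the generating function of $T$-avoiders with exactly $m$ left-right maxima, so that $G_0=1$, and, since none of $1243,2413,3142$ begins with its largest letter, $G_1=xF_T(x)$; thus $F_T=\sum_{m\ge0}G_m$. For $m\ge2$ I would decompose a $T$-avoider as $\pi=i_1\pi^{(1)}i_2\pi^{(2)}\cdots i_m\pi^{(m)}$ with $i_1<i_2<\cdots<i_m=n$ and read off the restrictions the three patterns impose on the blocks $\pi^{(j)}$, with the aim of producing a functional equation for the $G_m$ (or for a suitable refinement of them) that can be summed over $m$ with a catalytic variable and solved by the kernel method.

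The structural analysis is the heart of the argument. Using $1243$ with $i_1i_2$ in the role of ``$12$'', no block $\pi^{(j)}$ with $j\ge3$ can contain a letter exceeding $i_2$, since such a letter together with $i_1,i_2,i_j$ forms a $1243$; a parallel examination of $2413$ and $3142$ should force the blocks $\pi^{(3)},\dots,\pi^{(m)}$, together with the ``high'' parts of $\pi^{(1)}$ and $\pi^{(2)}$, to be empty, a single decreasing run, or a short concatenation of value-ranges, each contributing an explicit rational factor such as $x/(1-x)$. The one genuinely free sub-structure that survives is a part of the permutation — roughly, the letters below $i_1$, or caught between two consecutive left-right maxima — where the three $4$-letter patterns, with the left-right maxima acting only as spectators, collapse to a single $3$-letter pattern; that part is therefore counted by $C(x)$. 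I would encode all of this by refining $G_m$ to a suitable auxiliary generating function $H_m(x)$ (for instance, as in Case~216, for avoiders whose first $m$ letters increase to $n$) and introducing a catalytic variable $v$, $H(x,v)=\sum_{m\ge1}H_m(x)v^{m-1}$.

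Carrying the decomposition through should yield an equation of the shape $K(x,v)H(x,v)=A(x,v)F_T(x)+B(x,v)H(x,1)$, whose kernel $K(x,v)$ is quadratic in $v$ with discriminant a multiple of $1-4x$. Setting $K(x,v)=0$ and picking the power-series root $v=v_0(x)$ gives $v_0$ expressible through $C(x)$ (the identity $\frac{1}{1-xC(x)}=C(x)$ makes this clean), and substituting $v=v_0$ annihilates the left side and leaves a linear relation between $F_T$ and $H(x,1)$. Combined with one further relation — coming from the $m=1$ term or from another evaluation of $H$ — this eliminates $H(x,1)$ and yields the quadratic
\[
x(1-x)F_T(x)^2-\bigl(1-x(1-x)C(x)\bigr)F_T(x)+(1-x)=0,
\]
which I would first check for consistency with the claimed formula. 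Solving it and taking the branch with $F_T(0)=1$ produces exactly the stated expression, the radicand $1-5x+10x^2-5x^3-x(1-x)(1+x)C(x)$ being what one gets from $\bigl(1-x(1-x)C(x)\bigr)^2-4x(1-x)^2$ after reducing with $xC(x)^2=C(x)-1$.

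The main obstacle will be the combinatorial case analysis: pinning down, for a letter lying below $i_1$, between $i_1$ and $i_2$, or between $i_j$ and $i_{j+1}$, exactly which positions it may occupy without creating one of $1243,2413,3142$, and locating precisely where the Catalan factor enters. By comparison, the kernel-method computation and the final extraction of $F_T$ are routine, though bookkeeping-heavy — rationalizing the kernel and repeatedly simplifying with $xC(x)^2=C(x)-1$.
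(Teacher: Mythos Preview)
Your opening move---decompose by left-right maxima and study the blocks $\pi^{(j)}$---is exactly what the paper does, and your target quadratic
\[
x(1-x)F_T^2-\bigl(1-x(1-x)C(x)\bigr)F_T+(1-x)=0
\]
is indeed equivalent to the paper's final equation (the radicand check you do is correct). But your route from the decomposition to that quadratic is both heavier than needed and essentially missing its central step.

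The paper never introduces a catalytic variable or a kernel; it simply computes $G_m$ in closed form. The key structural fact you did not pin down is this: avoidance of $2413$ forces each $\pi^{(j)}$ (for $j\ge2$) to split as $\alpha_j\beta_j$ with $\alpha_j>i_1>\beta_j$; avoidance of $1243$ (with $i_1$ as the ``1'') forces each $\alpha_j$ to avoid $132$; and avoidance of $3142$ forces $\alpha_2>\alpha_3>\cdots$ and $\beta_1>\beta_2>\cdots$ as value-ranges. One then conditions on the \emph{last} index $j$ with $\alpha_j\ne\emptyset$: this forces $\beta_2=\cdots=\beta_{j-1}=\emptyset$ and $\beta_1$ decreasing, and yields directly
\[
G_m(x)=x^mF_T(x)^m+\sum_{j=2}^m\frac{x^m}{1-x}\,C(x)^{j-2}\bigl(C(x)-1\bigr)F_T(x)^{\,m-j+1}.
\]
Summing over $m\ge2$ gives a single equation in $F_T$ that one solves algebraically---no auxiliary $H_m$, no kernel. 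Your plan would presumably arrive at the same place, but the passage ``a parallel examination of $2413$ and $3142$ should force the blocks $\ldots$'' is precisely where the proof lives, and you have not supplied it; you yourself flag this as ``the main obstacle.'' Until that case analysis is actually done, what you have is the correct answer reverse-engineered into a plausible quadratic, not a proof.
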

\begin{proof}
To find $G_m(x)$ for $m\geq2$, let $\pi=i_1\pi^{(1)}i_2\pi^{(2)}\cdots i_m\pi^{(m)}\in S_n(T)$ with $m\ge 2$ left-right maxima. Then
$\pi^{(j)}<i_2$ for $j=1,2,\ldots,m$ (a violator $v$ makes $i_1 i_2 i_j v$ a 1243) and $\pi$ has the form
shown in Figure \ref{fig225} below,
\begin{figure}[H]
\begin{center}
\begin{pspicture}(-.3,0)(8,5.5)
\psset{xunit=.8cm}\psset{yunit=.5cm}
\psline[linecolor=lightgray](1,5)(8,5)\psline[linecolor=lightgray](2,8)(3,8)
\psline[linecolor=lightgray](4,7)(5,7)\psline[linecolor=lightgray](1,4)(2,4)
\psline[linecolor=lightgray](3,3)(4,3)\psline[linecolor=lightgray](5,2)(6,2)
\psline[linecolor=lightgray](1,5)(1,8)\psline[linecolor=lightgray](2,4)(2,8)
\psline[linecolor=lightgray](3,4)(3,9.4)\psline[linecolor=lightgray](4,3)(4,7)
\psline[linecolor=lightgray](5,3)(5,9.8)\psline[linecolor=lightgray](6,2)(6,6)
\psline[linecolor=lightgray](9,1)(9,5)\psline[linecolor=lightgray](8,6)(8,10.3)
\psline(0,4)(1,4)(1,5)(0,5)(0,4)\psline(2,3)(3,3)(3,4)(2,4)(2,3)
\psline(4,2)(5,2)(5,3)(4,3)(4,2)\psline(6,1)(7,1)(7,2)(6,2)(6,1)
\psline(9,0)(10,0)(10,1)(9,1)(9,0)\psline(1,8)(2,8)(2,9)(1,9)(1,8)
\psline(3,7)(4,7)(4,8)(3,8)(3,7)\psline(5,6)(6,6)(6,7)(5,7)(5,6)
\psline(8,5)(9,5)(9,6)(8,6)(8,5)
\rput(0.5,4.5){\textrm{$\be_1$}}\rput(2.5,3.5){\textrm{$\be_2$}}
\rput(4.5,2.5){\textrm{$\be_3$}}\rput(6.5,1.5){\textrm{$\be_4$}}
\rput(9.5,0.5){\textrm{$\be_m$}}\rput(1.5,8.5){\textrm{$\al_2$}}
\rput(3.5,7.5){\textrm{$\al_3$}}\rput(5.5,6.5){\textrm{$\al_4$}}
\rput(8.5,5.5){\textrm{$\al_m$}}
\rput(-.3,5.2){\textrm{$i_1$}}\rput(.7,9.2){\textrm{$i_2$}}
\rput(2.7,9.6){\textrm{$i_3$}}\rput(4.7,10){\textrm{$i_4$}}
\rput(7.7,10.5){\textrm{$i_m$}}
\rput[b]{10}(6.3,10.1){$\dots$}\rput[b]{25}(7,5.9){$\ddots$}\rput[b]{25}(8,.9){$\ddots$}
\qdisk(0,5){2pt}\qdisk(1,9){2pt}\qdisk(3,9.4){2pt}\qdisk(5,9.8){2pt}\qdisk(8,10.3){2pt}
\end{pspicture}
\caption{A $T$-avoider with $m\ge 2$ left-right maxima}\label{fig225}
\end{center}
\end{figure}
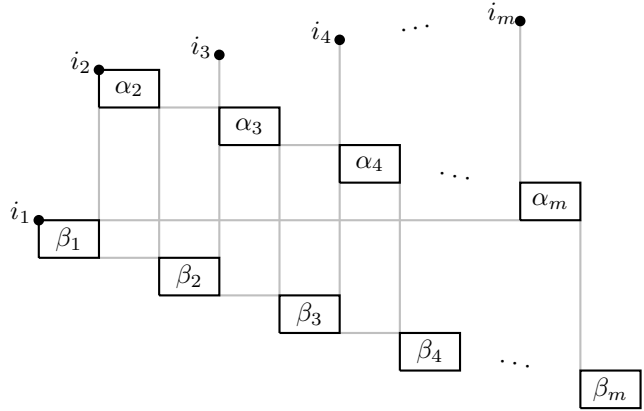
\vspace*{-5mm}
where, for each $j\in[\kern .1em 2,m\kern .1em]$, one has $\al_j>i_1>\be_j$ as shown (or $i_{1}i_j$ is the 24 of a 2413), with $\al_j$ avoiding 132 (or $i_1$ is the 1 of a 1243). Also, for each
$j\in[\kern .1em 2,m-1\kern .1em],\ \al_j>\al_{j+1}$ (or $i_j i_{j+1}$ is the 34 of a 3142) and the same holds for the $\beta_j$.

There are two cases:
\begin{itemize}
\item $\al_1\cdots\al_m=\emptyset$. In this case, $\pi$ avoids $T$ if and only if $\be_i$ avoids $T$ for all $i=1,2,\ldots,m$, giving a contribution of $x^mF_T(x)^m$.

\item $\al_1\cdots\al_m\neq\emptyset$. Let $j$ be the maximal index such that $\al_j\neq\emptyset$ and say $a$ is a letter in $\al_j$. Then $\be_2=\cdots=\beta_{j-1}=\emptyset$ (or $b$ in $\be_i$ with $2\le i \le j-1$ makes $i_1 i_2ba$ a 2413) and $\be_1$ is decreasing (or $i_2 a$ is the 43 of a 1243). So $\pi$ avoids $T$ if and only if $\be_i$ avoids $T$ for $i=j,j+1,\ldots,m$ and $\al_i$ avoids $132$ for  $i=2,3,\ldots,j$, giving a contribution of
$$\sum_{j=2}^m\frac{x^m}{1-x}C(x)^{j-2}\big(C(x)-1\big)F_T(x)^{m-j+1}.$$
\end{itemize}
Hence,
\begin{align*}
G_m(x)&=x^mF_T(x)^m+\sum_{j=2}^m\frac{x^m}{1-x}C(x)^{j-2}\big(C(x)-1\big)F_T(x)^{m-j+1}.
\end{align*}
Summing  over $m\geq2$, we obtain
$$F_T(x)=1+xF_T(x)+\frac{\Big((1-x)\big(1-xC(x)\big)F_T(x)+C(x)-1\Big)x^2F_T(x)}{(1-x)\big(1-xC(x)\big)\big(1-xF_T(x)\big)}\,,$$
and solving for $F_T(x)$ completes the proof.
\end{proof}

\subsection{Case 228: $\{2341,2413,3412\}$} In this case, we find a cubic equation for $F_T$.
\begin{theorem}\label{th228a}
Let $T=\{2341,2413,3412\}$. Then the generating function $F=F_T(x)$ satisfies
$$F=(1-x)^2+xF+x(2-3x+2x^2)F^2-x^2(1-x)F^3.$$
\end{theorem}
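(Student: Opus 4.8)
The plan is to use the left-right-maxima decomposition, exactly as in Cases 209, 216 and 225. Write a $T$-avoider with $m$ left-right maxima as $\pi=i_1\pi^{(1)}i_2\pi^{(2)}\cdots i_m\pi^{(m)}\in S_n(T)$, where $i_1<i_2<\cdots<i_m=n$. Since none of $2341$, $2413$, $3412$ begins with $4$, we have $G_0(x)=1$ and $G_1(x)=xF_T(x)$, so the task reduces to finding $G_m(x)$ for $m\ge 2$ and then invoking $F_T(x)=1+xF_T(x)+\sum_{m\ge 2}G_m(x)$.

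Next I would pin down the shape of such a $\pi$. Each $\pi^{(j)}$ consists of letters below $i_j$; among those in $\pi^{(2)}\cdots\pi^{(m)}$, call a letter \emph{low} if it is below $i_1$ and \emph{medium} otherwise. Avoidance of $3412$ (with $i_1i_2$ as the ``$34$'') forces the low letters into decreasing order; avoidance of $2413$ (with $i_1$ and a later $i_j$ as ``$2$'' and ``$4$'') forces, within each block, every medium letter to precede every low letter; and avoidance of $2341$ (with $i_1i_2i_3$ as ``$234$'') forces all low letters into $\pi^{(1)}\pi^{(2)}$ and restricts how the medium letters may ascend relative to $i_2$. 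From these constraints I expect the class to split into a small number of subclasses: either the tail $\pi^{(2)}\cdots\pi^{(m)}$ is empty, so that $\pi$ is determined by $m$ together with the free $T$-avoider $\pi^{(1)}$; or one further block is a free $T$-avoider, one or two blocks are monotone (each contributing a simple rational factor such as $1/(1-x)$), and the remaining blocks are forced. I would treat $m=2$, and perhaps $m=3$, separately, since the polynomial $(1-x)^2$ appearing in the claimed identity signals a genuine correction already at two left-right maxima.

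With each $G_m(x)$ written as a finite sum of terms of the form $x^m R(x)F_T(x)^a$ and $x^m R(x)\sum_p F_T(x)^{b(p)}$ with $R$ rational in $x$, I would sum the geometric-type series over $m\ge 2$ (using $\sum_{m\ge 2}x^mF_T(x)^m=x^2F_T(x)^2/(1-xF_T(x))$ and its variants), obtaining a single algebraic relation for $F_T(x)$ whose only denominators are $1-x$ and $1-xF_T(x)$. Clearing denominators and simplifying should then yield exactly the stated cubic $F=(1-x)^2+xF+x(2-3x+2x^2)F^2-x^2(1-x)F^3$.

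The main obstacle I anticipate is the structural case analysis: the three patterns interact, and correctly determining, in every configuration, which blocks must be empty, which must be monotone, which is the free $T$-avoider, and how the ascending medium letters may sit relative to $i_2$, is delicate and error-prone. As a safeguard I would check the resulting cubic against $|S_n(T)|$ for small $n$, which would catch any miscount hidden inside the $F^2$ or $F^3$ terms.
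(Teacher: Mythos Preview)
Your overall approach---decompose by left-right maxima and sum over $m$---is exactly the paper's, and your structural observations (low letters confined to $\pi^{(1)}\pi^{(2)}$, low letters decreasing, medium-before-low within each block) are correct as far as they go. But what you have written is a plan, not a proof: the case analysis you flag as ``the main obstacle'' is precisely where all the content lies, and the paper's argument hinges on two concrete moves that your sketch does not reach.

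For $m\ge 3$, the paper does not seek an explicit formula for each $G_m$ and then sum geometric series as you propose. Instead it notes the sharper constraint $\pi^{(j)}>i_{j-2}$ for all $j$ (your ``low letters only in $\pi^{(1)}\pi^{(2)}$'' is the special case $j\ge 3$) and then splits on a single question: does $\pi^{(2)}$ contain a letter below $i_1$? If not, $i_1\pi^{(1)}$ detaches and the remainder is counted by $G_{m-1}$; if so, avoidance of $2413$ forces $\pi^{(3)}>i_2$, so $i_1\pi^{(1)}i_2\pi^{(2)}$ detaches and the remainder is counted by $G_{m-2}$. This yields the clean two-term recursion $G_m=xF\,G_{m-1}+(G_2-x^2F^2)G_{m-2}$, which sums at once. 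The $G_2$ computation is also more delicate than ``one free block, one or two monotone'': when $\pi=i\pi'n\pi''$ has $k\ge 1$ low letters $i_1>\cdots>i_k$ in $\pi''$, these are forced to the very end, the letters $n-1,\dots,i+1$ are forced into decreasing order in $\pi''$, and the letters of $[i_k,i]\setminus\{i_1,\dots,i_k\}$ must appear decreasingly inside $\pi'$, cutting $\pi'$ into a staircase of independent $T$-avoiding segments. That staircase is what produces the factor $\frac{xF}{(1-xF)^k}$ and hence $G_2=x^2F^2+\frac{x^3F}{(1-x)(1-x-xF)}$, after which the stated cubic follows. Your outline does not anticipate either the recursive split for $m\ge 3$ or the segment structure inside $\pi'$ for $m=2$, and without them the computation will not close.
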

\begin{proof}
First, we write an equation for $G_m(x)$ with $m\geq3$. Suppose $\pi=i_1\pi^{(1)}i_2\pi^{(2)}\cdots i_m\pi^{(m)}\in S_n(T)$ with $m\ge 3$ left-right maxima. Since $\pi$ avoids $2341$, we see that $\pi^{(j)}>i_{j-2}$ for all $j=2,3,\ldots,m$ (with $i_0=0$). If $\pi^{(2)}>i_1$, then $\pi$ avoids $T$ if and only if $\pi^{(1)}$ avoids $T$ and $i_2\pi^{(2)}\cdots i_m\pi^{(m)}$ avoids $T$, which gives a contribution of $x\F G_{m-1}(x)$. Otherwise, $\pi^{(2)}$ has a letter smaller than $i_1$, which implies $\pi^{(3)}>i_2$ ($\pi$ avoids $2413$), and $\pi$ avoids $T$ if and only if $i_1\pi^{(1)}i_2\pi^{(2)}$ and $i_3\pi^{(3)}\cdots i_m\pi^{(m)}$ both avoid $T$, which gives a contribution of $\big(G_2(x)-x^2\F^2\big)G_{m-2}(x)$. Hence, for $m\ge 3$,
\begin{equation}\label{eq228m3}
G_m(x)=x\F G_{m-1}(x)+\big(G_2(x)-x^2\F^2\big)G_{m-2}(x)\,.
\end{equation}

Next, to find $G_2(x)$, suppose $\pi=i\pi'n\pi''\in S_n(T)$ has $2$ left-right maxima. Let $k$ be
the number of letters smaller than $i$ in $\pi''$.
If $k=0$, the contribution is $x^2\F^2$.
Otherwise, $k\ge 1$ and these $k$ letters, say $i_1,i_2,\dots,i_k$, are decreasing (to avoid  $3412$)
and occur at the end of $\pi''$ (or $in$ is the 24 of a 2413). Also, the letters $n-1,n-2,\dots,i+1$
in $\pi''$ occur in that order (to avoid 2341). So $\pi$ has the form
\[
i\pi'n(n-1)\cdots(i+1)i_1 i_2 \cdots i_k\,.
\]
Set $i_0=i$. Then the letters in $[\kern .1em i_k,i_0\kern .1em]\backslash\{i_1,i_2,\dots,i_k\}$ are decreasing
(or $ni_k$ is the 41 of a 2341) and they split $i\pi'=i_0\pi'$ into segments as follows:
\begin{align*}
i_0\pi'=\ &i_0\al_{0,0}(i_0-1)\al_{0,1}(i_0-2)\al_{0,2}\cdots(i_1+1)\al_{0,i_0-i_1-1}\\
&\hspace*{9mm}(i_1-1)\al_{1,1}(i_1-2)\al_{1,2}\cdots(i_2+1)\al_{1,i_1-i_2-1}\\
&\hspace*{15mm}\vdots\\
&\hspace*{9mm}(i_{k-1}-1)\al_{k-1,1}(i_{k-1}-2)\al_{k-1,2}\cdots(i_k+1)\al_{k-1,i_{k-1}-i_k-1}\,,
\end{align*}
where $\al_{0,0}<\al_{0,1} < \cdots < \al_{1,1}< \al_{1,2} <\cdots$, i.e., each $\al$ is less than its successor (to avoid 2413).
Now $\pi$ avoids $T$ if and only if each $\al_{i,j}$ avoids $T$.
Hence, for each $k\geq1$, we have a contribution of $$\frac{x^{k+1}}{1-x}\times \frac{x\F}{(1-x\F)^k}\,.$$
Adding the contributions gives
$$G_2(x)=x^2\F^2+\sum_{k\geq1}\frac{x^{k+2}\F}{(1-x)(1-x\F)^k}=x^2\F^2+\frac{x^3\F}{(1-x)(1-x-x\F)}\,.$$

Summing (\ref{eq228m3}) over $m\geq3$, we obtain
$$\F=1+x\F+G_2(x)+x\F(\F-1-x\F)+(\F-1)\big(G_2(x)-x^2\F^2\big)\,.$$
Now substitute for $G_2(x)$ and solve for $\F$ to complete the proof.
\end{proof}

\subsection{Case 230: $\{2341,1243,1234\}$} Note that all three patterns in $T$ contain $123$.
We count by initial letters and define $a(n;i_1,i_2,\dots, i_m)$ to be the number of $T$-avoiders in $S_n$ whose first $m$ letters are $i_1,i_2,\dots, i_m$, with $a(n):=|S_n(T)|$.
Clearly, $a(n;n)=a(n;n-1)=a(n-1)$. So we need to consider $T$-avoiders with first letter $\le n-2$.
Accordingly, for $m\ge 1$, denote the \gf for $T$-avoiders $\pi=\pi_1\pi_2 \cdots \pi_n$ such that
$n-2 \ge \pi_1>\pi_2> \cdots >\pi_m$ and $\pi_{m+1}$ is arbitrary (resp. $\pi_{m+1}=n-1$) by $A_m$
(resp. $B_m$). Also for $m\ge 1$, denote the \gf for $T$-avoiders $\pi=\pi_1\pi_2 \cdots \pi_n$
such that $n-1 \ge \pi_1>\pi_2> \cdots >\pi_m$ by $D_m$ (which will be needed to define a recursion).
\begin{lemma}\label{lem230D} We have
\[
D_m=\begin{cases}
A_m+x D_{m-1} & \textrm{ if $m\ge 2$,} \\
A_1+x(F_T-1)  & \textrm{ if $m=1.$}
\end{cases}
\]
\end{lemma}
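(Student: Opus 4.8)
The plan is to read off the recursion for $D_m$ directly from the definitions of $A_m$, $B_m$, $D_m$ by a case split on whether the first letter $\pi_1$ of a $T$-avoider equals $n-1$ or is $\le n-2$. Recall $D_m$ counts $T$-avoiders $\pi=\pi_1\pi_2\cdots\pi_n$ with $n-1\ge\pi_1>\pi_2>\cdots>\pi_m$, while $A_m$ is the same but with the stronger constraint $\pi_1\le n-2$ (and $\pi_{m+1}$ arbitrary). So the natural move is to partition the class counted by $D_m$ according to whether $\pi_1=n-1$ or $\pi_1\le n-2$; the second sub-case is exactly the class counted by $A_m$, contributing $A_m$ to the generating function.

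For the first sub-case, $\pi_1=n-1$. First I would handle $m\ge 2$: since we also require $\pi_1>\pi_2>\cdots>\pi_m$, the letter $\pi_1=n-1$ is the largest available letter and imposes no restriction relative to any of the three patterns in $T$ (each pattern in $T$ contains $123$, and a leading letter that is the maximum cannot serve as the ``1'' of a $123$, nor as any earlier position of a would-be occurrence to its right, since everything after it is smaller — one checks directly that deleting $n-1$ and standardizing is a bijection onto $T$-avoiders whose first $m-1$ letters are strictly decreasing and bounded by the new maximum, i.e.\ the class counted by $D_{m-1}$). This deletion removes one letter, so it contributes $xD_{m-1}$. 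Combining the two sub-cases gives $D_m=A_m+xD_{m-1}$ for $m\ge 2$.

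For $m=1$ the first sub-case is just ``$\pi_1=n-1$ with no further constraint'': deleting the leading $n-1$ and standardizing is again a bijection (same extraneity argument, and with $m=1$ there is no decreasing-prefix condition to maintain) onto all $T$-avoiders of length $n-1$, of which there is at least one letter removed, so the contribution is $x\,(F_T-1)$; the ``$-1$'' accounts for the fact that the empty permutation is not obtained this way (a nonempty $\pi$ with $\pi_1=n-1$ has length $\ge 1$, so after deletion we get any $T$-avoider of length $\ge 0$, but the length-$0$ one would require $\pi$ of length $1$ equal to $n-1=0$, which is vacuous — more simply, $x(F_T-1)=\sum_{n\ge 2}a(n-1)x^n$ plus the $n=1$ term, matching that $a(1;1)$ counts the single permutation $1$). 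Together with the $A_1$ from the $\pi_1\le n-2$ sub-case this yields $D_1=A_1+x(F_T-1)$, completing the proof.

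The only delicate point — the ``main obstacle'', such as it is — is verifying the extraneity claim that a leading maximal letter may be deleted without affecting $T$-avoidance in both directions of the bijection; this is where one uses crucially that every pattern in $T$ starts with the smallest or second-smallest of its entries reading left to right, i.e.\ that each of $2341$, $1243$, $1234$ has its first entry among the two smallest, so a front letter equal to $n-1$ (or $n$) can only ever play the role of a ``2'', ``3'', or ``4'' at the very front, and since all subsequent letters are smaller it cannot be extended to an occurrence. This is routine but should be spelled out; everything else is bookkeeping on the generating-function level.
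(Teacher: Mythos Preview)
Your proposal is correct and follows exactly the same approach as the paper: split the permutations counted by $D_m$ according to whether $\pi_1\le n-2$ (contributing $A_m$) or $\pi_1=n-1$, and in the latter case delete the leading $n-1$. The paper's proof is a two-sentence version of what you wrote; your additional verification that a leading $n-1$ is extraneous for $T=\{2341,1243,1234\}$ (because each pattern's first entry is a ``1'' or ``2'' and only $n$ exceeds $n-1$) is a reasonable detail to include, though your phrasing of it and of the ``$-1$'' in the $m=1$ case is a bit tangled and could be streamlined.
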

\begin{proof}
Split $T$-avoiders counted by $D_m$ into those with $\pi_1\le n-2$, counted by $A_m$, and those for which $\pi_1=n-1$. Deleting $n-1$ from the latter avoiders gives the result.
\end{proof}

\begin{corollary}For $m\ge 1$,
\begin{equation} \label{eq230a2}
A_m=A_{m+1}+B_m+xA_m+x^2A_{m-1}+\cdots+x^{m}A_1+x^{m+1}A_0\,,
\end{equation}
where $A_0:=F_T-1$.
\end{corollary}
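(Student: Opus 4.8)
The plan is to derive the recurrence in two stages: first prove the cleaner identity $A_m=A_{m+1}+B_m+xD_m$ for every $m\ge 1$, and then use Lemma~\ref{lem230D} to replace $xD_m$ by a polynomial in $x$ applied to $A_m,A_{m-1},\dots,A_0$.

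For the first stage, I would take a $T$-avoider $\pi=\pi_1\pi_2\cdots\pi_n$ contributing to $A_m$, so that $n-2\ge\pi_1>\pi_2>\cdots>\pi_m$ while $\pi_{m+1}$ is unconstrained, and split on the value of $\pi_{m+1}$ (recall $\pi_m$ is the smallest of $\pi_1,\dots,\pi_m$). If $\pi_{m+1}<\pi_m$, then $\pi$ is exactly an avoider enumerated by $A_{m+1}$; if $\pi_{m+1}=n-1$, the contribution is $B_m$ by definition; and if $\pi_{m+1}=n$, deleting the letter $n$ is a bijection onto $T$-avoiders of length $n-1$ whose first $m$ letters still decrease and whose first letter is at most $n-2$, one less than the new length, i.e.\ onto the class counted by $D_m$. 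The only point needing an argument here is that reinserting $n$ into slot $m+1$ never creates a forbidden pattern: in each of $2341$, $1243$, $1234$ the maximal entry occupies position $3$ or $4$, so it would require two increasing letters to its left, whereas the letters to the left of the reinserted $n$ form the decreasing run $\pi_1>\cdots>\pi_m$. Hence this case contributes $xD_m$.

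The step I expect to require the most care is ruling out the remaining possibility $\pi_m<\pi_{m+1}\le n-2$. Because $\pi_1\le n-2$, the letters $n-1$ and $n$ both occur among $\pi_{m+2},\dots,\pi_n$; if $n-1$ comes before $n$ then $\pi_m\,\pi_{m+1}\,(n-1)\,n$ is a $1234$, and if $n$ comes before $n-1$ then $\pi_m\,\pi_{m+1}\,n\,(n-1)$ is a $1243$, either way contradicting $\pi\in S_n(T)$. So this case is empty, the three surviving cases are exhaustive, and $A_m=A_{m+1}+B_m+xD_m$.

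Finally I would iterate Lemma~\ref{lem230D}: for $m\ge2$ it gives $xD_m=xA_m+x^2D_{m-1}$, and unrolling down to $D_1=A_1+x(F_T-1)$ yields $xD_m=xA_m+x^2A_{m-1}+\cdots+x^mA_1+x^{m+1}(F_T-1)$; since $A_0=F_T-1$ this is precisely $xA_m+x^2A_{m-1}+\cdots+x^mA_1+x^{m+1}A_0$, with the convention that for $m=1$ the sum reduces to $xD_1=xA_1+x^2A_0$. Substituting into $A_m=A_{m+1}+B_m+xD_m$ gives \eqref{eq230a2}. No deeper difficulty is anticipated: the work is entirely in confirming the case split on $\pi_{m+1}$ is exhaustive and that the middle-value case cannot occur.
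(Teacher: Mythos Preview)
Your proof is correct and follows essentially the same approach as the paper: a case split on the value of $\pi_{m+1}$ (ruling out $\pi_m<\pi_{m+1}\le n-2$ via $1234$ or $1243$ with $n-1,n$), yielding $A_m=A_{m+1}+B_m+xD_m$, followed by repeated application of Lemma~\ref{lem230D} to expand $xD_m$. You provide somewhat more detail than the paper---in particular the explicit check that reinserting $n$ creates no forbidden pattern and the explicit unrolling of the recursion---but there is no substantive difference.
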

\begin{proof}
For $\pi$ counted by $A_m$, consider $\pi_{m+1}$. Note that $\pi_{m+1}$ cannot lie in the interval $[\pi_m +1,n-2]$ for then $\pi_m\pi_{m+1}$ would be the 12 of either a 1234 or a 1243 involving $\{n-1,n\}$. So, if $\pi_{m+1}<\pi_m$, the contribution is $A_{m+1}$, if $\pi_{m+1}=n-1$, the contribution is $B_m$, and if
$\pi_{m+1}=n$, then by deleting $n$, the contribution is $xD_m$. The result follows by applying Lemma \ref{lem230D} repeatedly to $D_m$.
\end{proof}

Since $a(n;n)=a(n;n-1)=a(n-1)$ for $n \geq 2$, we have $a(n)=\sum_{i=1}^n a(n;i)=a(n;n)+a(n;n-1)+\sum_{i=1}^{n-2} a(n;i)=2a(n-1)+[x^n]A_1$, which implies
\begin{align}\label{eq130a1}
F_T-1-x=2x(F_T-1)+A_1.
\end{align}

To obtain a recurrence for $B_m$, define $\mathcal{B}_{n,m;j}=\{\pi_1\pi_2\cdots\pi_n\in S_n(T):\,
n-2\ge \pi_1>\pi_2>\cdots>\pi_m,\ \pi_{m+1}=n-1,\ \pi_{m+2}=n-j\}$.

\begin{lemma}\label{lem230D1} For all $3\leq j\leq m+1$,
$$\sum_{n\geq0}|\mathcal{B}_{n,m;j}|x^n=x^{m+3}C^{m+3-j}(x).$$
\end{lemma}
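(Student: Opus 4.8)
The plan is to first determine the exact shape of a permutation $\pi=\pi_1\cdots\pi_n\in\mathcal{B}_{n,m;j}$ and then to read off the generating function. Let $p$ be the position of $n$; since $\pi_1,\dots,\pi_{m+2}<n$ we have $p\ge m+3$, and since the $m$ distinct values $\pi_1>\cdots>\pi_m$ all lie in $[1,n-2]$ we have $\pi_m\le n-m-1\le n-j$, hence $\pi_m<n-j$. First I would prove, by induction on $k$, that $\pi_k=n-1-k$ for $1\le k\le j-2$; that is, the decreasing prefix begins with the block $(n-2)(n-3)\cdots(n-j+1)$. Indeed, if the claim holds below $k$ but $\pi_k\ne n-k-1$, then the value $n-k-1$ occupies some position $q$ with $m+3\le q\le n$ and $q\ne p$, and a forbidden pattern appears: the letters $\pi_m,\,n-j,\,n-k-1,\,n$ (at positions $m<m+2<q<p$) form a $1234$ if $q<p$, while $\pi_m,\,n-j,\,n,\,n-k-1$ (at positions $m<m+2<p<q$) form a $1243$ if $q>p$ (using $n-j<n-k-1$, valid as $k\le j-2$); the base case $k=1$ is the same argument. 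Next, with $\pi_1=n-2$ known, taking $\pi_1$ as the ``$2$'', $\pi_{m+1}=n-1$ as the ``$3$'', and $n$ as the ``$4$'', any letter to the right of $n$ is an as-yet-unplaced letter, hence at most $n-j-1<n-2$, and would complete a $2341$. Thus $n=\pi_n$.

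Therefore $\pi=(n-2)(n-3)\cdots(n-j+1)\,s\,(n-1)(n-j)\,t\,n$, where $s=\pi_{j-1}\cdots\pi_m$ is a decreasing word of $\mu:=m-j+2$ letters and $s$ and $t$ together use all of $[1,n-j-1]$. Checking $1234$, $1243$, $2341$ against this template shows: (i) the block $(n-2)\cdots(n-j+1)$ is extraneous for $T$-avoidance, since two of its letters, being decreasing, can never jointly occur in a forbidden pattern, and a single one of them cannot either; (ii) the letters $n-1,n-j,n$ impose no further constraint; (iii) the only remaining requirement is that the concatenation $st$ avoid $123$. Deleting the extraneous block, standardizing, and removing the fixed letters $n-1,n-j,n$ then gives a bijection between $\mathcal{B}_{n,m;j}$ and the set of $123$-avoiding permutations of length $n-j-1$ whose first $\mu=m-j+2$ entries are decreasing. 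Since such a $\pi$ contributes $x^n=x^{\,j+1}x^{\,n-j-1}$, this yields $\sum_{n\ge0}|\mathcal{B}_{n,m;j}|\,x^n=x^{\,j+1}f_\mu(x)$, where $f_\mu(x)$ is the length generating function for $123$-avoiders whose first $\mu$ entries decrease.

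It then remains to prove $f_\mu(x)=x^{\mu}C(x)^{\mu+1}$. Let $N(\mu,\ell)$ count such avoiders of length $\mu+\ell$, so $N(\mu,0)=1$. For $\ell\ge1$ and $\mu\ge1$ I claim $N(\mu,\ell)=N(\mu+1,\ell-1)+N(\mu-1,\ell)$. Given an avoider $\sigma$ of length $\mu+\ell$ with $\sigma_1>\cdots>\sigma_\mu$, either $\sigma_\mu>\sigma_{\mu+1}$, and such $\sigma$ are exactly the avoiders with first $\mu+1$ entries decreasing, numbering $N(\mu+1,\ell-1)$; or $\sigma_\mu<\sigma_{\mu+1}$, and then no letter to the right of $\sigma_{\mu+1}$ exceeds it (as $\sigma$ avoids $123$), so the global maximum is $\sigma_{\mu+1}$ or $\sigma_1$, and deleting it (from position $\mu+1$, respectively position $1$) and standardizing is a bijection onto the $123$-avoiders of length $\mu+\ell-1$ that have first $\mu$ entries decreasing, respectively have first $\mu-1$ entries decreasing but not first $\mu$; these number $N(\mu,\ell-1)$ and $N(\mu-1,\ell)-N(\mu,\ell-1)$, so adding and cancelling gives the claim. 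Writing $g_\mu=f_\mu/x^\mu$, the recurrence reads $g_\mu=xg_{\mu+1}+g_{\mu-1}$, and with $g_0=C(x)$, $g_1=C(x)^2$, and the identity $xC(x)^2=C(x)-1$, a short induction gives $g_\mu=C(x)^{\mu+1}$. Combining, $\sum_{n\ge0}|\mathcal{B}_{n,m;j}|\,x^n=x^{\,j+1}\cdot x^{\,m-j+2}C(x)^{\,m-j+3}=x^{\,m+3}C(x)^{\,m+3-j}$.

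The step I expect to be the main obstacle is the structural one: carrying out the pattern bookkeeping in the first two paragraphs carefully enough to be certain that the canonical form, and in particular the condition that $st$ avoid $123$, is both necessary and sufficient. Once the template is pinned down, the rest---including the identity $f_\mu(x)=x^\mu C(x)^{\mu+1}$---is routine.
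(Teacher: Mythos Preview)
Your proof is correct and follows essentially the same route as the paper: you pin down the prefix $(n-2)\cdots(n-j+1)$, force $\pi_n=n$ via $2341$, and reduce to counting $123$-avoiders of length $n-j-1$ whose first $m-j+2$ entries decrease. The only difference is in the last step: the paper appeals to Lemma~\ref{refingf} to obtain the generating function $x^{\mu}C(x)^{\mu+1}$ for $123$-avoiders with initial descent sequence of length at least $\mu$, whereas you derive it directly via the recurrence $N(\mu,\ell)=N(\mu+1,\ell-1)+N(\mu-1,\ell)$ and the Catalan identity $xC(x)^2=C(x)-1$; your argument is self-contained and equally valid.
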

\begin{proof}
Let $\pi=\pi_1\pi_2\cdots\pi_n\in S_n(T)$ such that  $n-2\ge \pi_1>\pi_2>\cdots>\pi_m$, $\pi_{m+1}=n-1$ and $\pi_{m+2}=n-j$. Since $\pi_m<\pi_{m+2}$ and $n$ lies to the right of $\pi_{m+2}$, the letters $n-2,n-3,\ldots, n-j+1$ all lie to the left of $\pi_{m+2}$ (otherwise $\pi$ contains $1234$ or $1243$). Thus, $\pi=(n-2)(n-3)\cdots(n-j+1)\pi_{j-1}\cdots\pi_{n-1}\pi_n\in S_n(T)$ with $\pi_{j-1}>\pi_j>\cdots>\pi_m$, $\pi_{m+1}=n-1$, and $\pi_{m+2}=n-j$. Since $\pi$ avoids $2341$ and $\pi_1=n-2<\pi_{m+1}=n-1$, we have that $\pi_n=n$. Therefore, $\pi$ avoids $T$ if and only if $\pi'=\pi_{j-1}\cdots\pi_m\pi_{m+3}\cdots\pi_{n-1}$ avoids $123$ and $\pi_{j-1}>\cdots>\pi_m$. Hence, $\mathcal{B}_{n,m;j}$ equals the number of permutations in $S_{n-1-j}(123)$ with initial descent sequence (IDS) of length at least $m+2-j$.  Using Lemma \ref{refingf}, one can show that the generating for permutations in $S_n(123)$ having IDS of length at least $m'$ is given by $x^{m'}C^{m'+1}(x)$. Thus, for $j=3,4,\ldots,m+1$,
$$\sum_{n\geq0}|\mathcal{B}_{n,m;j}|x^n=x^{m+3}C^{m+3-j}(x),$$
as required.
\end{proof}

\begin{lemma}\label{lem230D2} For all $m\geq1$,
$$B_m(x)=B_{m+1}(x)+x^{m+3}\sum_{i=2}^mC^i(x)+xB_m(x)+x^{m+2}\frac{1-x}{1-2x}.$$
\end{lemma}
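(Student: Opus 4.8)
The plan is to refine the family of $T$-avoiders counted by $B_m$ according to the value of the letter $\pi_{m+2}$. This letter always exists, since $n-2\ge\pi_1>\pi_2>\cdots>\pi_m\ge1$ forces $\pi_1\ge m$ and hence $n\ge m+2$. As $\pi_{m+2}\notin\{\pi_1,\dots,\pi_m,n-1\}$, either $\pi_{m+2}=n$ or $\pi_{m+2}\le n-2$; and in the latter situation, when $\pi_{m+2}>\pi_m$ one shows, arguing exactly as in the proof of Lemma~\ref{lem230D1} (using that $n$ must lie to the right of $\pi_{m+2}$, else $\pi$ contains $1234$ or $1243$), that every letter strictly between $\pi_{m+2}$ and $\pi_{m+1}=n-1$ occurs among $\pi_1,\dots,\pi_m$; this forces $\pi_{m+2}=n-j$ with $2\le j\le m+1$. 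Thus there are four cases: (a) $\pi_{m+2}=n$; (b) $\pi_{m+2}<\pi_m$; (c) $\pi_{m+2}=n-2$; and (d) $\pi_{m+2}=n-j$ with $3\le j\le m+1$.

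Case (d) is immediate from Lemma~\ref{lem230D1}: its contribution is $\sum_{j=3}^{m+1}x^{m+3}C^{m+3-j}(x)=x^{m+3}\sum_{i=2}^{m}C^{i}(x)$ after reindexing $i=m+3-j$. In case (c) I would show that deleting the large letter $n-1$ (which occupies position $m+1$) gives a bijection onto the $T$-avoiders of length $n-1$ counted by $B_m$: the letter $n-2$ in position $m+1$ of the image becomes its second-largest letter and the decreasing prefix $\pi_1>\cdots>\pi_m$ survives, while in the reverse direction one relabels the current largest letter as $n$ and reinserts $n-1$ at position $m+1$. The contribution is $xB_m(x)$. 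In case (b) I would show that transposing the letters in positions $m+1$ and $m+2$ gives a bijection onto the $T$-avoiders $\sigma$ of length $n$ with $n-2\ge\sigma_1>\cdots>\sigma_{m+1}$ and $\sigma_{m+2}=n-1$, that is, those counted by $B_{m+1}$; here $\sigma_{m+1}=\pi_{m+2}<\pi_m=\sigma_m$ supplies the needed extra descent. The contribution is $B_{m+1}(x)$.

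In case (a), since $n$ lies to the right of $\pi_{m+1}=n-1$, avoidance of $2341$ forces $\pi_1\cdots\pi_m=m(m-1)\cdots1$ (automatic when $n=m+2$), so $\pi=m(m-1)\cdots1\,(n-1)\,n\,\tau$ for a permutation $\tau$ of $\{m+1,\dots,n-2\}$, and a short check of which roles the blocks $m\cdots1$, $n-1$, $n$ and $\tau$ can play shows that $\pi$ avoids $T$ if and only if $\tau$ avoids $\{123,132\}$. Since there is one $\{123,132\}$-avoider of length $0$ and $2^{\ell-1}$ of each length $\ell\ge1$, this case contributes $x^{m+2}\bigl(1+\frac{x}{1-2x}\bigr)=x^{m+2}\frac{1-x}{1-2x}$. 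Adding the contributions from (a)--(d) then yields the stated identity (for $m=1$, case (d) is vacuous and the displayed sum is empty).

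The main obstacle will be verifying that the local moves in cases (b) and (c) are $T$-avoidance preserving in both directions. Occurrences of $1234$ and $1243$ are unproblematic, because position $m+1$ is preceded only by the strictly decreasing block $\pi_1>\cdots>\pi_m$, which cannot host the ascending pair such an occurrence would require on that side. The real point is that transposing the large letter $n-1$ with the small letter $\pi_{m+2}$ (respectively reinserting $n-1$) could a priori create a new occurrence of $2341$; the resolution is that any such occurrence, after replacing the moved large letter by the adjacent large letter and/or the small letter by $\pi_m$, collapses to an occurrence of $2341$ already present in the untransposed (respectively undeleted) permutation, which is excluded. Combined with the straightforward surjectivity checks, this pins down all four contributions.
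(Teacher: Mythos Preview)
Your proof is correct and follows essentially the same decomposition as the paper: split by the value of $\pi_{m+2}$ into the four cases $\pi_{m+2}=n$, $\pi_{m+2}<\pi_m$, $\pi_{m+2}=n-2$, and $\pi_{m+2}=n-j$ with $3\le j\le m+1$, yielding respectively $x^{m+2}\frac{1-x}{1-2x}$, $B_{m+1}(x)$, $xB_m(x)$, and $x^{m+3}\sum_{i=2}^m C^i(x)$. The paper simply asserts these four contributions, while you supply the bijections (the swap in case (b), the deletion of $n-1$ in case (c)) and the $\{123,132\}$-avoidance characterization of $\tau$ in case (a); your sketch of why the swap and deletion preserve $T$-avoidance in both directions---reducing any potential new $2341$ to one already present by replacing $n-1$ with $n-2$ or the small letter with $\pi_m$---is exactly what is needed to justify the paper's unproved equalities.
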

\begin{proof}
By Lemma \ref{lem230D1},
\begin{align*}
B_m(x)&=\sum_{n\geq0}\left(\sum_{n-1>i_1>i_2>\cdots>i_{m+1}\geq1}a(n;i_1i_2\cdots i_m(n-1)i_{m+1})\right)x^n\\
&\quad+\sum_{n\geq0}\left(\sum_{n-1>i_1>i_2>\cdots>i_{m}\geq1}\sum_{j=3}^{m+1}a(n;i_1i_2\cdots i_m(n-1)(n-j))\right)x^n\\
&=B_{m+1}(x)+\sum_{j=3}^{m+1}\sum_{n\geq0}|\mathcal{B}_{n,m;j}|x^n\\
&\quad+\sum_{n\geq0}\left(\sum_{n-1>i_1>i_2>\cdots>i_{m}\geq1}a(n;i_1i_2\cdots i_m(n-1)(n-2))\right)x^n
\end{align*}
\begin{align*}
&\quad+\sum_{n\geq0}\left(\sum_{n-1>i_1>i_2>\cdots>i_{m}\geq1}a(n;i_1i_2\cdots i_m(n-1)n)\right)x^n\qquad\\
&=B_{m+1}(x)+x^{m+3}\sum_{i=2}^mC^i(x)+xB_m(x)+x^{m+2}\frac{1-x}{1-2x},
\end{align*}
which completes the proof.
\end{proof}

Define $B(x,u)=\sum_{m\geq1}B_m(x)u^m$ and $A(x,u)=\sum_{m\geq1}A_m(x)u^m$. Rewriting the recurrence relations from Lemmas \ref{lem230D1} and \ref{lem230D2} in terms of $A(x,y)$ and $B(x,u)$ using \eqref{eq130a1}, we obtain
\begin{align*}
(1-x)B(x,u)&=\frac{1}{u}\left(B(x,u)-B_1(x)u\right)+\frac{x^5u^2C^2(x)}{(1-xu)(1-xuC(x))}
+\frac{x^3(1-x)u}{(1-2x)(1-xu)},\\
A(x,u)&=\frac{1}{u(1-xu)}A(x,u)+\frac{x^2u}{1-xu}(F_T(x)-1)-((1-2x)F_T(x)-1+x)+B(x,u).
\end{align*}
Taking $u=1/(1-x)$, we obtain
\begin{align*}
B_1(x)&=\frac{x^5C^4(x)}{1-2x}+\frac{x^3(1-x)}{(1-2x)^2},
\end{align*}
which leads to
\begin{align*}
(1-x-1/u)B(x,u)&=\frac{x^5u^2C^2(x)}{(1-xu)(1-xuC(x))}-\frac{x^5C^4(x)}{1-2x}\\
&\quad+\frac{x^3(1-x)u}{(1-2x)(1-xu)}-\frac{x^3(1-x)}{(1-2x)^2}.
\end{align*}
Thus, letting $u=C(x)$, we have
$$B(x,C(x))=\frac{x^3(4x^2-3x+1-2x^2C(x))}{(2x^2C(x)-xC(x)-3x+1)(1-2x)^2}.$$
Hence, substituting $u=C(x)$ into the equation for $A(x,u)$, we obtain
$$x^2C^2(x)(F_T(x)-1)-((1-2x)F_T(x)-1+x)+B(x,C(x))=0,$$
which implies the following result.
\begin{theorem}\label{th230a}
Let $T=\{2341,1243,1234\}$. Then
$$F_T(x)=\frac{x(4x^4+3x^3-11x^2+6x-1)C(x)+2x^4-12x^3+16x^2-7x+1}{(1-2x)^2\big(1-4x+2x^2-x(1-3x)C(x)\big)}.$$
\end{theorem}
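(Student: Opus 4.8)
The plan is to derive Theorem~\ref{th230a} from the single relation
$$x^2C^2(x)\big(F_T(x)-1\big)-\big((1-2x)F_T(x)-1+x\big)+B(x,C(x))=0$$
obtained just above, together with the stated closed form for $B(x,C(x))$. All of the structural input has already been assembled: the initial‑letter decomposition of $S_n(T)$, the recurrences of Lemma~\ref{lem230D} and of the corollary (equations \eqref{eq230a2} and \eqref{eq130a1}), the recurrence of Lemma~\ref{lem230D2} (which rests on the evaluation of $\mathcal{B}_{n,m;j}$ in Lemma~\ref{lem230D1}, and hence on Lemma~\ref{refingf}), the passage to the bivariate generating functions $A(x,u)$ and $B(x,u)$, and two applications of the kernel method: first at $u=1/(1-x)$, the root of the coefficient $1-x-1/u$ of $B(x,u)$, which pins down $B_1(x)$; and then at the power‑series root $u=C(x)$ of $xu^2-u+1=0$, which kills the coefficient $1-\frac1{u(1-xu)}$ of $A(x,u)$ and closes the system. (That $C(x)$ is this root is just the identity $xC(x)^2=C(x)-1$ recalled in Section~2.) The remaining task for the theorem is to solve the displayed relation for $F_T(x)$.

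Concretely, I would collect the $F_T(x)$‑terms in that relation, which gives
$$F_T(x)=\frac{x^2C^2(x)+x-1-B(x,C(x))}{x^2C^2(x)+2x-1}\,,$$
and then use $xC(x)^2=C(x)-1$ to replace $x^2C^2(x)$ by $xC(x)-x$, so that the numerator and denominator are each affine in $C(x)$. Substituting the formula for $B(x,C(x))$, clearing its sub‑denominator $(2x^2C(x)-xC(x)-3x+1)(1-2x)^2$, reducing the higher powers of $C(x)$ that then reappear via the same quadratic, and cancelling the common factor of numerator and denominator yields the asserted expression
$$F_T(x)=\frac{x(4x^4+3x^3-11x^2+6x-1)C(x)+2x^4-12x^3+16x^2-7x+1}{(1-2x)^2\big(1-4x+2x^2-x(1-3x)C(x)\big)}\,.$$

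This last computation is purely mechanical — the only non‑elementary fact it needs is the Catalan quadratic — and is most safely done with computer algebra, so I do not expect a genuine obstacle at the level of the theorem itself: the hard part is upstream, namely checking that simultaneous avoidance of $\{2341,1243,1234\}$ really does force the normal forms underlying Lemmas~\ref{lem230D}--\ref{lem230D2}, and that the two kernel specializations are admissible (both $1/(1-x)$ and $C(x)$ are formal power series in $x$ for which the relevant products have zero constant term, so the substitutions into $B(x,u)$ and $A(x,u)$ are legitimate), all of which the preceding development has already handled. As a consistency check I would expand both sides of the claimed formula as power series and match the first several coefficients against a direct enumeration of $|S_n(T)|$.
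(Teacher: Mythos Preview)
Your proposal is correct and follows exactly the paper's approach: the paper derives the same relation $x^2C^2(x)(F_T(x)-1)-((1-2x)F_T(x)-1+x)+B(x,C(x))=0$ via the initial-letter decomposition, Lemmas~\ref{lem230D}--\ref{lem230D2}, and the two kernel specializations $u=1/(1-x)$ and $u=C(x)$, and then simply asserts that this ``implies the following result.'' Your write-up just makes explicit the routine algebra (using $xC(x)^2=C(x)-1$) that the paper leaves to the reader.
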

\vspace*{-2mm}
\qed


For the final three cases, denoting the set of triples involved by $\mathcal{T}$, we recall the generating trees method.
The notion of generating tree to enumerate pattern avoiders was introduced by West \cite{W}.
To enumerate $S_n(T)$  for each $T\in \mathcal{T}$,
we consider the generating forest whose vertices are identified
with $S:=\bigcup_{n\ge 2}S_n(T)$ where
12 and 21 are the roots and each non-root $\pi \in S$ is a child of the permutation obtained from $\pi$ by deleting its largest element. We will show that it is possible to label the vertices
so that if $v_1$ and $v_2$ are any two vertices with the same label and $\ell$ is any label,
then $v_1$ and $v_2$ have the same number of children with label $\ell$.
Indeed, we will specify (i) the labels of the roots, and (ii) a set of succession
rules explaining how to derive from the label of a parent the labels of all of its children.
This will determine a labelled generating forest depending on $T$.

A permutation $\pi=\pi_1\pi_2\cdots\pi_n\in S_n$ determines $n+1$ positions, called \emph{sites}, between its entries. The sites are denoted $1,2,\dots,n+1$ left to right.
In particular, site $i$ is the space between $\pi_{i-1}$ and $\pi_{i}$ if $2\le i \le n$.
Site $i$ in $\pi$ is said to be {\em active} (with respect to $T$) if, by inserting $n+1$ into $\pi$ in
site $i$, we get a permutation in $S_{n+1}(T)$, otherwise \emph{inactive}.
For the next two triples $T$ under consideration and
$\pi \in S_n(T)$, sites 1 and $n+1$ are always active, and if $\pi_n=n$, then site $n$ is active.

Given $T \in \mathcal{T}$ and $\pi\in S_n(T)$,
define $A(\pi)$ to be the set of all active sites for $\pi$, and
define $L(\pi)$ to be the set of active sites lying to the left of $n$. For example, for $T=\{3412,3421,2341\}$, $L(23154)=\{1,2,4\}$ since there are 4 possible sites to insert 6 to the left of $n=5$
and, of these insertions, only $236154 $ is not in $S_6(T)$.

\begin{lemma}\label{lemmaActiven}
If $T$ denotes one of the following two cases and $\pi \in S_n(T)$, then $A(\pi)=L(\pi)\cup\{n+1\}$ unless $\pi_n<n$ and site $n$ is active, in which case $A(\pi)=L(\pi)\cup\{n,n+1\}$.
\end{lemma}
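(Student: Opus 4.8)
The plan is a short direct argument at the level of pattern occurrences, resting on the fact that in both of the two cases at hand one has $\{3412,3421\}\subseteq T$. Fix $\pi=\pi_1\cdots\pi_n\in S_n(T)$ and let $k$ be the position of its largest entry, so $\pi_k=n$. The sites $1,\dots,k$ are precisely those lying to the left of $n$, so by the definition of $L(\pi)$ we have $A(\pi)\cap\{1,\dots,k\}=L(\pi)$; hence everything reduces to identifying $A(\pi)\cap\{k+1,\dots,n+1\}$, the active sites to the right of $n$. I would first dispose of the degenerate case $k=n$, i.e.\ $\pi_n=n$: then the only site to the right of $n$ is site $n+1$, which is active by the remark preceding the lemma, so $A(\pi)=L(\pi)\cup\{n+1\}$, matching the stated formula (the ``unless'' clause is vacuous here, since it requires $\pi_n<n$). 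So I may assume $k\le n-1$, hence $\pi_n<n$.

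The crux will be the claim that every site $i$ with $k+1\le i\le n-1$ is inactive. To prove it, insert $n+1$ at site $i$, obtaining $\sigma=\pi_1\cdots\pi_{i-1}\,(n+1)\,\pi_i\cdots\pi_n$. Since $i>k$, the entry $n$ precedes $n+1$ in $\sigma$, and the entries following $n+1$ are exactly $\pi_i,\pi_{i+1},\dots,\pi_n$, of which there are $n-i+1\ge 2$ because $i\le n-1$; moreover each of them is smaller than $n$, as $n$ lies to their left. Then the four entries $n,\ n+1,\ \pi_i,\ \pi_{i+1}$ of $\sigma$, read in this positional order, form the pattern $3412$ if $\pi_i<\pi_{i+1}$ and the pattern $3421$ if $\pi_i>\pi_{i+1}$ (one of these occurs since $\pi_i\ne\pi_{i+1}$). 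Either way $\sigma\notin S_{n+1}(T)$, so site $i$ is inactive.

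Granting the claim, $A(\pi)\cap\{k+1,\dots,n+1\}$ is contained in $\{n,n+1\}$ and contains $n+1$ (always active), so it equals $\{n+1\}$ when site $n$ is inactive and $\{n,n+1\}$ when site $n$ is active; combining with $A(\pi)\cap\{1,\dots,k\}=L(\pi)$ gives exactly the two alternatives in the statement. I do not expect a genuine obstacle: the entire content is the one-step insertion above, and the only points needing care are the bookkeeping of which sites lie to the left versus the right of $n$ and the separate treatment of the case $\pi_n=n$.
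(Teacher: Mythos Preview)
Your argument is correct and is essentially the same as the paper's: both observe that inserting $n+1$ at any site $i$ with $k+1\le i\le n-1$ creates, together with $n$ and two later entries, a $3412$ or $3421$ pattern. The only cosmetic difference is that the paper uses the final two entries $\pi_{n-1},\pi_n$ as the ``$12$'' or ``$21$'', whereas you use the two entries $\pi_i,\pi_{i+1}$ immediately following the inserted $n+1$; either choice works.
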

\begin{proof}
No site to the right of $n$ is active except (possibly) site $n$ and (definitely) site $n+1$ for if $n+1$ is inserted after $n$ in a site $\le n-1$, then $n\,(n+1)\,\pi_{n-1}\,\pi_n$ is a 3412 or a 3421, both forbidden.
\end{proof}

\subsection{Case 240: $\{3412,3421,2341\}$}
Here, if site $n$ is inactive, then 1 and $n+1$ are the only active sites iff $\pi_1=n$. In particular, there are at least 3 active sites unless site $n$ is inactive and $\pi_1=n$.

To construct the generating forest, we first specify the labels.
\begin{itemize}
\item Suppose $\pi=\pi_1\pi_2\cdots\pi_n\in S_n(T)$ has  $k$ active sites and $n\geq2$. If $\pi_n=n$, then label $\pi$ by $k$. Otherwise, if the site $n$ is active, then label $\pi$ by $\overline{k}$, and if the site $n$ is inactive, then label it by $\overline{\overline{k}}$.
\end{itemize}

For instance, all 3 sites for $\pi=12$ are active and $\pi_n=n$, so the label for 12 is 3. Also, 12
has three children $312$, $132$
and $123$ with active sites $\{1,3,4\},\,\{1,2,3,4\}$ and $\{1,2,3,4\}$, respectively, hence labels $\bar{3}$, $\bar{4}$ and $4$. All sites for $21$ are active, so its label is $\bar{3}$, and it has three children $321$, $231$ and $213$ with active sites $\{1,3,4\},\,\{1,2,4\}$ and $\{1,2,3,4\}$, respectively, hence labels $\bar{3}$, $\bar{\bar{3}}$ and $4$.

To establish the succession rules, we need to know how the active sites of a child are related to those of its parent. This relationship is given by the following proposition. The proof makes frequent use of Lemma \ref{lemmaActiven} and is left to the reader.
\begin{proposition}
Suppose $\pi \in S_n(T)$ and let $\pi^j$ denote the result of inserting $n+1$ into an active site $j$.

If $j\le n-1$, then for $1\le i \le j$, site $i$ is active in $\pi^j$ iff site $i$ is active in $\pi$;
site $n+1$ is active in $\pi^j$ iff site $n$ is active in $\pi$; site $n+2$ is active in $\pi^j$; all other sites in $\pi^j$ are inactive.

Next, suppose $n$ is an active site in $\pi$.
Then for $1\le i \le n-1$, site $i$ is active in $\pi^n$ iff site $i$ is active in $\pi$; sites $n$ and $n+2$ are active in $\pi^n$; all other sites in $\pi^n$ are inactive. Also, site $n+1$ is active in $\pi$ (as always) and for $1\le i \le n-1$, site $i$ is active in $\pi^{n+1}$ iff site $i$ is active in $\pi$; sites $n,n+1,n+2$ are active in $\pi^{n+1}$; all other sites in $\pi^{n+1}$ are inactive.

Lastly, suppose $n$ is not an active site in $\pi$. Then site $n+1$ is active in $\pi$ and for $1\le i \le n-1$, site $i$ is active in $\pi^{n+1}$ iff site $i$ is active in $\pi$; sites $n+1,n+2$ are active in $\pi^{n+1}$; all other sites in $\pi^{n+1}$ are inactive. \qed
\end{proposition}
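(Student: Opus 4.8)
The plan is to reduce the whole proposition to a single local criterion for a site to be active and then read off each clause from it, using Lemma~\ref{lemmaActiven} to dispose of sites to the right of the maximum. First I would prove the following criterion: for $\tau=\tau_1\cdots\tau_m\in S_m(T)$, inserting $m+1$ into site $i$ of $\tau$ produces a permutation \emph{not} in $S_{m+1}(T)$ exactly when one of the following holds: (I) there is an index $k<i$ such that at least two of $\tau_i,\dots,\tau_m$ are smaller than $\tau_k$; or (II) there are indices $k<l<i$ with $\tau_k<\tau_l$, together with an index $r\ge i$ with $\tau_r<\tau_k$. The reason is that $\tau$ already avoids $T$, so any new occurrence of a pattern in $T$ must involve the inserted letter $m+1$, and since $m+1$ is the largest entry it must play the role of the ``$4$''; in $3412$ and $3421$ the ``$4$'' is in second position, with one entry before it and two smaller entries after it in either relative order (which is why (I) is insensitive to the order of $\tau_i,\dots,\tau_m$), while in $2341$ the ``$4$'' is in third position, preceded by an ascent and followed by a smaller entry. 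Together with Lemma~\ref{lemmaActiven}, this determines $A(\sigma)$ for every $\sigma\in S(T)$; in particular site $n$ of $\pi$ is active iff there is no ascent within $\pi_1\cdots\pi_{n-1}$ whose bottom exceeds $\pi_n$.

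Next I would run through the cases of the proposition, in each one comparing the criterion applied to $\pi^j$ (inserting $n+2$) with the criterion applied to $\pi$ (inserting $n+1$). The key observation is that the freshly inserted letter $n+1$ of $\pi^j$ is larger than every $\pi_t$, so it can never be one of the ``smaller'' entries in (I) nor the entry $\tau_r$ in (II), though it can be the top $\tau_l$ of an ascent. For $j\le n-1$: when $1\le i\le j$ the entries before site $i$ are unchanged and the entries from site $i$ onward differ only by the inert letter $n+1$, so activeness is preserved; when $j+1\le i\le n$ the letter $n+1$ now lies before site $i$ while at least two entries lie after site $i$, so (I) holds with $k$ the position of $n+1$ and site $i$ is inactive; site $n+2$ is active because none of $3412$, $3421$, $2341$ ends in its maximum. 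The remaining three configurations — $\pi^n$ and $\pi^{n+1}$ when site $n$ of $\pi$ is active, and $\pi^{n+1}$ when site $n$ of $\pi$ is inactive — are handled the same way, now using the criterion's description of site $n$ above together with the fact that $\pi_n<n$ forces some $\pi_k>\pi_n$ with $k\le n-1$; this is exactly what makes site $n+1$ of $\pi^n$ inactive, site $n$ of $\pi^{n+1}$ active when site $n$ of $\pi$ is, and site $n$ of $\pi^{n+1}$ inactive when site $n$ of $\pi$ is not.

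The one clause requiring real work, and the step I expect to be the main obstacle, is that for $j\le n-1$ site $n+1$ of $\pi^j$ is active iff site $n$ of $\pi$ is active. The only entry lying to the right of site $n+1$ in $\pi^j$ is $\pi_n$, so (I) is vacuous there, while (II) is triggered either by an ascent already present in $\pi_1\cdots\pi_{n-1}$ with bottom exceeding $\pi_n$ — precisely the condition for site $n$ of $\pi$ to be inactive — or by the \emph{new} ascent $\pi_k\,(n+1)$ with $k\le j-1$, which needs $\pi_k>\pi_n$. So I must show that this new possibility never produces an inactivity not already visible for site $n$ of $\pi$: if site $j$ is active in $\pi$ and $\pi_k>\pi_n$ for some $k\le j-1$, then $\pi$ already has an ascent within $\pi_1\cdots\pi_{n-1}$ whose bottom exceeds $\pi_n$. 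Here activeness of $j$ is used: since site $j$ is active in $\pi$, clause (I) for the insertion of $n+1$ at site $j$ must fail, and taking the ``before'' entry to be $\pi_k$ shows that at most one of $\pi_j,\dots,\pi_n$ is smaller than $\pi_k$; as $\pi_n<\pi_k$, that one entry is $\pi_n$, hence $\pi_j,\dots,\pi_{n-1}$ all exceed $\pi_k$, and $\pi_k\,\pi_j$ (legitimate because $j\le n-1$) is the required ascent with bottom $\pi_k>\pi_n$. Once this is settled, everything else is the routine case-by-case bookkeeping sketched above, which I would either write out or, like the authors, leave to the reader.
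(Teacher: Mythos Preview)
The paper does not actually prove this proposition: it states that the proof ``makes frequent use of Lemma~\ref{lemmaActiven} and is left to the reader.'' Your proposal supplies precisely the kind of verification the authors intended the reader to carry out, and it is correct. Your two-clause criterion (I)/(II) is exactly the characterisation of an inactive site that one gets by noting that a new occurrence of $3412$, $3421$, or $2341$ must use the inserted letter as its ``$4$''; your identification of the clause ``site $n+1$ of $\pi^j$ is active iff site $n$ of $\pi$ is active'' (for $j\le n-1$) as the only nontrivial step is spot on, and your argument for it --- using the failure of (I) at the active site $j$ to force $\pi_j>\pi_k$ and hence an existing non-inversion $\pi_k\pi_j$ with bottom $>\pi_n$ --- is clean. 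One small remark: in the ``Next'' clause you correctly invoke $\pi_n<n$, which is the intended reading (consistent with Lemma~\ref{lemmaActiven} and the label definitions); when $\pi_n=n$, site $n$ lies in $L(\pi)$ and the proposition's statement about site $n+1$ of $\pi^n$ would need adjusting, but this does not affect the Corollary or your argument.
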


\begin{corollary}\label{cor240a1}
The labelled generating forest $\mathcal{F}$ is given by
$$\begin{array}{lll}
\mbox{\bf Roots: }&3,\bar{3}\\
\mbox{\bf Rules: }&k\rightsquigarrow \bar{3},\bar{4},\ldots,\overline{k+1},k+1 & \textrm{\quad for $k\ge 3$,}\\
&\bar{k}\rightsquigarrow\bar{3},\bar{4},\ldots,\bar{k},\bar{\bar{k}},k+1 &  \textrm{\quad for $k\ge 3$,}\\
&\bar{\bar{k}}\rightsquigarrow\bar{\bar{2}},\bar{\bar{3}},\bar{\bar{4}},\ldots,\bar{\bar{k}},k+1  & \textrm{\quad for $k\ge 2$.}
\end{array}$$
\end{corollary}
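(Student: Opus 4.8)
The plan is to read the three succession rules off the preceding Proposition, one label-type at a time, after first checking the two roots directly. For the roots, a short computation gives $A(12)=\{1,2,3\}$ with $\pi_2=2=n$, so $12$ is labelled $3$, and $A(21)=\{1,2,3\}$ with $\pi_2=1<2$ and site $2$ active, so $21$ is labelled $\bar{3}$; these are the claimed root labels.

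Next, fix $\pi\in S_n(T)$ with label $k$, $\bar{k}$, or $\bar{\bar{k}}$, so that $\pi$ has exactly $k$ active sites and hence exactly $k$ children. By Lemma \ref{lemmaActiven} the set of active sites of $\pi$ is $L(\pi)\cup\{n+1\}$ with $n\in L(\pi)$ in the unbarred case (using that $\pi_n=n$ forces site $n$ to be active), $L(\pi)\cup\{n,n+1\}$ in the single-bar case, and $L(\pi)\cup\{n+1\}$ with site $n$ inactive in the double-bar case. Writing the elements of $L(\pi)\cap[1,n-1]$ in increasing order as $j_1<j_2<\cdots$, I would apply the Proposition to each active site $j$ of $\pi$ in turn. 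Inserting the new maximum at $j=j_s\le n-1$ leaves active exactly the sites $j_1,\dots,j_s$, together with site $n+2$, plus site $n+1$ precisely when site $n$ of $\pi$ was active; since the resulting child does not end in its largest entry it is singly barred with label $\overline{s+2}$ when site $n$ of $\pi$ was active, and doubly barred with label $\bar{\bar{s+1}}$ otherwise. Inserting at site $n+1$ appends the new maximum, giving the unique unbarred child, which gets label $k+1$. The remaining case, inserting at site $n$ (which arises only when site $n$ of $\pi$ is active), is the delicate one: here one must read off from Lemma \ref{lemmaActiven} and the Proposition exactly which of the three rightmost sites $n,n+1,n+2$ of the child remain active, and this depends on whether $\pi_n=n$. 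Collecting the child labels over all $k$ active sites of $\pi$ then reproduces precisely the three rules, and since every label that occurs as a child is again of one of the three forms, this determines the whole forest $\mathcal{F}$.

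I expect the real obstacle to be exactly this borderline analysis in the unbarred case $\pi_n=n$: for the child obtained by inserting the new maximum immediately before the old maximum, one must verify that sites $n,n+1,n+2$ all remain active, so that this child is labelled $\overline{k+1}$ and not $\bar{\bar{k}}$; only then does the unbarred rule come out as $k\rightsquigarrow\bar{3},\bar{4},\dots,\overline{k+1},k+1$ rather than with a doubly-barred child. Once those sites are pinned down, the remaining step — checking that the multiset of child labels depends only on the parent's label, so that $\mathcal{F}$ is a bona fide labelled generating forest with the stated succession rules — is routine bookkeeping, which can be cross-checked against the small worked examples $3\rightsquigarrow\bar{3},\bar{4},4$ (for $12$) and $\bar{3}\rightsquigarrow\bar{3},\bar{\bar{3}},4$ (for $21$) recorded above.
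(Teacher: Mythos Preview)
Your proposal is correct and follows the paper's approach exactly --- reading the succession rules off the preceding Proposition, one label-type at a time --- though you go further by outlining all three cases where the paper only treats the double-bar case and declares the others ``similar.'' You also rightly flag the one genuine subtlety: for an unbarred parent ($\pi_n=n$), the Proposition's clause for $\pi^n$ (which asserts site $n{+}1$ is inactive) is tacitly written for $\pi_n<n$, and one must verify directly that site $n{+}1$ of $\pi^n$ is active in the $\pi_n=n$ case so that this child receives label $\overline{k+1}$ rather than $\bar{\bar{k}}$.
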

\begin{proof}
We treat the double-bar case (the others are similar).
So suppose $\pi \in S_n(T)$ has label $\bar{\bar{k}},\ k\ge 2$.
This means that site $n$ is not active, $L(\pi)$ has $k-1$ entries, all $\le n-1$, and
$A(\pi) \backslash L(\pi)=\{n+1\}$. Say $L(\pi)=\{L_1=1<L_2<\dots<L_{k-1}\le n-1\}$. Then for $1\le i \le k-1$, $\pi^{L_i}$ does not end with its largest letter and its active sites are $\{L_1,\dots,L_i,n+2\}$. In particular,
its penultimate site, namely $n+1$, is not active. Hence, its label is $\overline{\overline{i+1}}$. Also,
$\pi^{n+1}$ \emph{does} end with its largest letter and its active sites are $\{L_1,\dots,L_{k-1},n+1,n+2\}$ and so its label is $k+1$. All told, the labels of the children of $\pi$ are $\bar{\bar{2}},\bar{\bar{3}},\bar{\bar{4}},\ldots,\bar{\bar{k}},k+1$.
\end{proof}

\begin{theorem}\label{th240a}
Let $T=\{3412,3421,2341\}$. Then
\[
F_T(x)=1 + \frac{ 1 + r t}{ 1 + 1/r + 1/t - 1/x}\, ,
\]
where
\[
r=r(x)= \frac{\left(\sqrt{5}-1\right) \left(1-\sqrt{1-\left(3+\sqrt{5}\right) x+\frac{1}{2} \left(3-\sqrt{5}\right)
   x^2}\right)-\left(1+\sqrt{5}\right) x}{4 x}
\] and $t$ is the same as $r$ but with $\sqrt{5}$ replaced by $-\sqrt{5}$.
\end{theorem}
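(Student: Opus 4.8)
The plan is to translate the labelled generating forest of Corollary~\ref{cor240a1} into a finite linear system of functional equations with one catalytic variable, and then extract $F_T$ by the kernel method. For a vertex $\pi$ of the forest $\mathcal F$, let $k(\pi)$ denote the numerical part of its label. I would set
\[
P(x,v)=\sum_{\pi}x^{|\pi|}v^{k(\pi)},\qquad Q(x,v)=\sum_{\pi}x^{|\pi|}v^{k(\pi)},\qquad W(x,v)=\sum_{\pi}x^{|\pi|}v^{k(\pi)},
\]
where the three sums range over the vertices carrying, respectively, an unbarred, a single-barred, and a double-barred label. Since every $\pi\in S_n(T)$ with $n\ge 2$ occurs exactly once in $\mathcal F$, one has $F_T(x)=1+x+P(x,1)+Q(x,1)+W(x,1)$, with the roots $12$ and $21$ contributing the monomials $x^2v^3$ to $P$ and to $Q$.

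Next I would read off the three succession rules: a rule of type $k\rightsquigarrow\cdots,k+1$ contributes a term $xv(\cdots)$ to $P$, while a ``tail'' of labels produced by a parent of label $k$ (such as $\bar3,\bar4,\dots,\overline{k+1}$) contributes, after the geometric summation $\sum_{j=3}^{k+1}v^{j}=\tfrac{v^{3}-v^{k+2}}{1-v}$, a term with a factor $(1-v)^{-1}$. Carrying this out for all three rules and clearing the denominator $1-v$ gives, writing $P=P(x,v)$, $Q=Q(x,v)$, $W=W(x,v)$, the system
\begin{align*}
(1-xv)P-xvQ-xvW&=x^2v^3,\\
xv^2P+(1-v+xv)Q&=(1-v)x^2v^3+xv^3\big(P(x,1)+Q(x,1)\big),\\
-x(1-v)Q+(1-v+xv)W&=xv^2W(x,1).
\end{align*}
Putting $v=1$ in the first equation yields $(1-x)P(x,1)-x\big(Q(x,1)+W(x,1)\big)=x^2$, so $P(x,1)+Q(x,1)+W(x,1)=\big(P(x,1)-x^2\big)/x$ and hence $F_T(x)=1+P(x,1)/x$; thus it is enough to find $P(x,1)$.

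Solving the system by Cramer's rule gives $P(x,v)=N(x,v)/\Delta(x,v)$, where the kernel is the $3\times3$ determinant
\[
\Delta(x,v)=(1-xv)(1-v+xv)^2+x^2v^3(1-v+xv)+x^3v^3(1-v),
\]
a quartic in $v$, and $N(x,v)$ depends linearly on the three unknown series $P(x,1),Q(x,1),W(x,1)$. The quartic $\Delta$ factors, over the quadratic extension obtained by adjoining $\sqrt5$, into two quadratics swapped by $\sqrt5\mapsto-\sqrt5$; since $\Delta(0,v)=(1-v)^2$, exactly two of its four roots are formal power series in $x$, both tending to $1$ as $x\to0$, and a direct check identifies them as $v=1+r(x)$ and $v=1+t(x)$ with $r,t$ as in the statement (the radicand $1-(3+\sqrt5)x+\tfrac12(3-\sqrt5)x^2$ equals $1-2\phi^2x+\phi^{-2}x^2$ for $\phi=\tfrac12(1+\sqrt5)$, which is precisely the discriminant of one of the two quadratic factors, the shift by $1$ accounting for $r(0)=t(0)=0$). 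Since $P(x,v)$ — and likewise $Q(x,v)$ and $W(x,v)$ — is a power series in $x$ with polynomial coefficients in $v$, its numerator must vanish at $v=1+r$ and at $v=1+t$, giving two linear equations in $P(x,1),Q(x,1),W(x,1)$ which, together with the relation obtained at $v=1$, form a determined $3\times3$ system. Solving it for $P(x,1)$, substituting into $F_T(x)=1+P(x,1)/x$, and simplifying by means of the defining quadratic $\phi xr^2-(1-\phi^2x)r+x=0$ of $r$ and its $\sqrt5$-conjugate for $t$, should yield the asserted closed form.

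The step I expect to be the main obstacle is the bookkeeping that produces the three functional equations exactly: one must attribute each branch of each of the three succession rules of Corollary~\ref{cor240a1} to the correct one of $P,Q,W$ with the correct range of indices, which is error-prone. After that, the elimination collapsing the solution to the compact shape $1+(1+rt)/(1+1/r+1/t-1/x)$ is routine but lengthy, and best done with computer algebra; here one can use the fact that $r$ and $t$ are interchanged by $\sqrt5\mapsto-\sqrt5$, so that the final expression must be symmetric in $r,t$ and rational in $x$ — a convenient built-in consistency check.
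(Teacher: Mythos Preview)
Your approach is correct and is essentially the paper's own proof: the paper likewise translates the succession rules of Corollary~\ref{cor240a1} into functional equations in a catalytic variable $v$, obtains the same quartic kernel (your $\Delta(x,v)$ equals $-K(x,v)$ in the paper's notation), and kills it at the two power-series roots $v'=1+r$ and $v''=1+t$ to solve for the specializations at $v=1$. The only cosmetic difference is that the paper first eliminates the double-bar series to reduce to a single kernel equation in $A(x,v)$ before substituting the two roots, whereas you keep the $3\times3$ system and invoke Cramer's rule; the linear algebra is the same either way.
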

\begin{proof}
Let $a_k(x)$, $b_k(x)$ and $c_k(x)$ be the generating functions for the number of permutations in the $n$th level of the labelled generating forest $\mathcal{F}$ with label $k$, $\bar{k}$ and $\bar{\bar{k}}$, respectively.
By Corollary \ref{cor240a1}, we have
\begin{align*}
a_k(x)&=x^2\delta_{k=3}+xa_{k-1}(v)+xb_{k-1}(v)+xc_{k-1}(v),\\
b_k(x)&=x^2\delta_{k=3}+v^2b_2(x)+x(b_k(x)+b_{k+1}(x)+\cdots)+x(a_{k-1}(x)+a_k(x)+\cdots),\\
c_k(x)&=xb_k(x)+x(c_k(x)+c_{k+1}(x)+\cdots),\\
b_2(x)&=xb_2(x)+x(c_3(x)+c_4(x)+\cdots),
\end{align*}
for all $k\geq3$.

Now let $A(x,v)=\sum_{k\geq3}a_k(x)v^k$, $B(x,v)=\sum_{k\geq2}b_k(x)v^k$ and $C(x,v)=\sum_{k\geq3}c_k(x)v^k$. Hence, the above recurrences can be written as
\begin{align}
A(x,v)&=x^2v^3+xvA(x,v)+xvB(x,v)+xvC(x,v),\label{eq240ax1}\\
B(x,v)&=x^2v^3+v^2b_2(x)+\frac{x}{1-v}(v^3B(x,1)-vB(x,v))+\frac{x}{1-v}(v^3A(x,1)-v^2A(x,v)),\label{eq240ax2}\\
C(x,v)&=x(B(x,v)-v^2b_2(x))+\frac{x}{1-v}(v^3C(x,1)-vC(x,v)),\label{eq240ax3}
\end{align}
where $b_2(x)=\frac{x}{1-x}C(x,1)$.

By finding $C(x,v)$ from \eqref{eq240ax1}, and then substituting it in \eqref{eq240ax2}-\eqref{eq240ax3}, we obtain
\begin{align*}
&\left(1+\frac{xv}{1-v}\right)B(x,v)+\frac{xv^2}{1-v}A(x,v)\\
&\qquad\qquad\qquad=\frac{(1-v+xv)v^2}{1-v}A(x,1)-\frac{xv^2(1-2v+xv)}{(1-x)(1-v)}B(x,1)
-\frac{x^2v^2(1-v+xv)}{1-x},\\
&-\left(1+x+\frac{xv}{1-v}\right)B(x,v)+\frac{(1-xv)(1-v+xv)}{xv(1-v)}A(x,v)\\
&\qquad\qquad\qquad=\frac{(v-x)v^2}{1-v}A(x,1)+\frac{xv^2(x-v)}{(1-x)(1-v)}B(x,1)
+\frac{xv^2(1-x+x^2)}{1-x}.
\end{align*}
Multiplying the first equation by $1+x+\frac{xv}{1-v}$ and the second by $1+\frac{xv}{1-v}$, then summing the results, we have
\begin{align}\label{eq240K}
\begin{split}
K(x,v)A(x,v)=&\:-(1-v+xv)xv^3A(x,1)+\frac{x^2v^3(1-v)^2}{1-x}B(x,1)\\
&-\frac{x^2v^3(1-v)(1-v+xv)(xv-2x+1)}{1-x} ,
\end{split}
\end{align}
where $K(x,v)=x^2v^4+x(1-3x)v^3+(x^2-1)v^2+(2-x)v-1$.

There are two power series $v'$ and $v''$ such that $K(x,v')=K(x,v'')=0$ given by $v'=r+1$ and $v''=t+1$, where $r$ and $t$ are as stated above. For example, the expansion of $v'$ begins
$$v'=1+x+\frac{1}{2}(3+\sqrt{5})x^2+(4+2\sqrt{5})x^3+(15+7\sqrt{5})x^4+\frac{1}{2}(119+53\sqrt{5})x^5+\cdots\, .$$
Substituting $v'$ and $v''$ for $v$ in (\ref{eq240K}) gives a pair of equations for $A(x,1)$ and $B(x,1)$ with solution
\begin{align*}
A(x,1)&=\frac{(v''-1)(v'-1)(v'v''-v'-v''+2)x^2}{(x-1)v'v''+v'+v''-x-1},\\
B(x,1)&=\frac{-(xv''-v''+1)(xv'-v'+1)(xv'+xv''-3x+1)}{(x-1)v'v''+v'+v''-x-1},
\end{align*}
which, by \eqref{eq240ax1}, implies
$$C(x,1)=\frac{(1-x)((v''-1)(v'-1)+(v'^2v''^2-v'^2v''-v''^2v'+1)x-v'v''(v'+v''-3)x^2)}{(x-1)v'v''+v'+v''-x-1}.$$
Since $F_T(x) = 1+x+A(x,1)+B(x,1)+C(x,1)$, the result follows.
\end{proof}

\subsection{Case 109: $\{3412,3421,2143\}$}
In this case,
if site $n$ is inactive, then 1 and $n+1$ are the only active sites iff $\pi_1\ge 3$.
If site $i$ is active for $i\le n-1$, then all sites $\le i$ are active. Hence, for $n\ge 2$,
the active sites $\le n-1$ form a nonempty initial segment of the positive integers.

For $n\ge 2$, say $\pi \in S_n$ is \emph{mostly increasing} if it has exactly one descent and
the descent bottom is the last entry, that is, if $\pi$
has the form $\pi=12 \cdots(j-1)(j+1) \cdots nj$ for some $j$ with $1\le j \le n-1$.

We now assign labels. So suppose $n\ge 2$ and $\pi \in S_n(T)$ has $k$ active sites.

If $\pi$ is increasing ($\pi= 12\cdots n$), then all sites are active, so $k=n+1$ and label $\pi$ by $k$.
If $\pi$ is mostly increasing ($\pi= 12\cdots(j-1)(j+1)\cdots nj$), then once again all sites are active,
so $k=n+1$ and label $\pi$ by $\bar{k}$.

Now suppose $\pi$ is neither increasing nor mostly increasing.
If site $n$ is active, label $\pi$ by $\overline{\overline{k}}$.
If site $n$ is inactive, label $\pi$ by $\overline{\overline{\overline{k}}}$.

For instance, all 3 sites are active for both $12$ and $21$ and $12$ is increasing while $21$ is mostly increasing, so their labels are 3 and $\bar{3}$ respectively;
$12$ has three children $312$, $132$ and $123$ with active sites $\{1,3,4\},\,\{1,2,3,4\}$ and $\{1,2,3,4\}$, respectively,
hence labels $\bar{\bar{3}}$, $\bar{4}$ and 4 because the second is mostly increasing and the third is increasing;
$21$ has three children $321$, $231$ and $213$ with active sites $\{1,3,4\},\,\{1,2,3,4\}$ and $\{1,2,4\}$, respectively, hence labels  $\bar{\bar{3}}$, $\bar{4}$ and $\overline{\overline{\overline{3}}}$.

To establish the succession rules, we have the following proposition. The proof is left to the reader.
\begin{proposition}
Suppose $n\ge 2$ and $\pi \in S_n(T)$ has $k$ active sites.

If $\pi$ is increasing, then all sites $\{1,2,\dots,n+1\}$ are active $($hence, $k=n+1)$ and
\[
A(\pi^i)=
\begin{cases}
\{1,2,\dots,i,n+1,n+2\} & \textrm{ if $1\le i \le n$,} \\
\{1,2,\dots,n,n+1,n+2\} & \textrm{ if $i=n+1$.}
\end{cases}
\]
If $\pi$ is mostly increasing, then all sites $\{1,2,\dots,n+1\}$ are active $($hence, $k=n+1)$ and
\[
A(\pi^i)=
\begin{cases}
\{1,2,\dots,i,n+1,n+2\} & \textrm{ if $1\le i \le n$,} \\
\{1,2,\dots,n,n+2\} & \textrm{ if $i=n+1$.}
\end{cases}
\]
Now suppose $\pi$ is neither increasing nor mostly increasing.

If site $n$ is active so that the active sites for $\pi$ are $\{1,2,\dots,k-2,n,n+1\}$, then
\[
A(\pi^i)=
\begin{cases}
\{1,2,\dots,i,n+1,n+2\} & \textrm{ if $1\le i \le k-2$,} \\
\{1,2,\dots,k-2,n+1,n+2\} & \textrm{ if $i=n$,} \\
\{1,2,\dots,k-2,n+2\} & \textrm{ if $i=n+1$.}
\end{cases}
\]
If site $n$ is inactive so that the active sites for $\pi$ are $\{1,2,\dots,k-1,n+1\}$, then
\[
A(\pi^i)=
\begin{cases}
\{1,2,\dots,i,n+2\} & \textrm{ if $1\le i \le k-1$,} \\
\{1,2,\dots,k-1,n+2\} & \textrm{ if $i=n+1$.}
\end{cases}
\]
\qed
\end{proposition}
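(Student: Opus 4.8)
The plan is to verify each of the four cases by the same procedure, applied case by case (and, in the last two, within each displayed range of $i$). Write $\sigma:=\pi^i\in S_{n+1}(T)$, whose maximal entry $n+1$ occupies position $i$; the entries to its left are $\pi_1\cdots\pi_{i-1}$ and those to its right are $\pi_i\cdots\pi_n$. Recall that inserting a letter into a $T$-avoider fails precisely when the new letter completes an occurrence of some pattern of $T$, and that if the new letter is $n+2$ --- the global maximum --- it can only be the largest letter of such a pattern, namely the letter in position $2$ of a $3412$ or $3421$, or the letter in position $3$ of a $2143$.

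The first and most important step is a reduction via Lemma~\ref{lemmaActiven}. Applied to $\sigma$, it gives $A(\sigma)=L(\sigma)\cup\{n+2\}$ when $i=n+1$ (so that $\sigma$ ends with its maximum), and otherwise $A(\sigma)=L(\sigma)\cup\{n+2\}$ or $L(\sigma)\cup\{n+1,n+2\}$ according as the penultimate site of $\sigma$ is inactive or active. Here $L(\sigma)\subseteq\{1,\dots,i\}$ is the set of active sites of $\sigma$ lying to the left of $n+1$, and by the remark recorded just before the proposition it is an initial segment $\{1,\dots,\ell\}$. So in each case only two data must be found: the value of $\ell$, and the status of the penultimate site.

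The second step determines $\ell$. Inserting $n+2$ into a site $j\le i$ of $\sigma$ can only introduce a pattern in which $n+2$ is the top letter, and working through the three patterns one finds that the possible obstructions are of exactly two kinds: a $2143$ formed from an inversion of $\pi$ lying to the left of the inserted letter together with the adjacent-or-later entry $n+1$ as its ``$3$''; and a $3412$ or $3421$ formed from an entry of $\pi$ serving as ``$3$'' together with two smaller entries lying to the right of the inserted $n+2$. Because $\pi$ has the explicit shape dictated by the case hypothesis, one reads off the smallest $j$ at which one of these first appears, and hence $\ell$: for $\pi$ increasing neither obstruction ever occurs, so $\ell=i$; for $\pi$ mostly increasing neither occurs when $i\le n$, while when $i=n+1$ the inversion of $\pi$ at its final entry combines with $(n+2)(n+1)$ to produce a $2143$, which is exactly why the penultimate site then dies; and in the remaining two cases the first obstruction sits just beyond the active prefix of $\pi$, giving $\ell=k-2$ when site $n$ of $\pi$ is active and $\ell=k-1$ when it is not. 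Throughout this step one must be careful that $\sigma_1=n+1$ when $i=1$ but $\sigma_1=\pi_1$ when $i\ge2$, and keep track of the shift in site indices caused by the insertion of $n+1$.

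The third step settles the status of the penultimate site of $\sigma$, which is only needed when $i\le n$. Inserting $n+2$ there places it second-to-last, so the sole pattern it can complete is a $2143$ whose ``$3$'' is the last entry of $\sigma$; using the explicit form of $\sigma$ in the relevant case, one checks directly whether an inversion of small enough top precedes $n+2$. Feeding $L(\sigma)=\{1,\dots,\ell\}$, this verdict, and the always-active site $n+2$ into the reduction of the first step yields the stated formula for $A(\pi^i)$ in every case. The main obstacle is organizational rather than conceptual: besides the index bookkeeping under the successive insertions, one must run the pattern search in the two genuinely distinct regimes that $T$ forces --- the top letter in position $2$ (for $3412$ and $3421$, where what matters is a small pair to the \emph{right} of $n+2$) versus position $3$ (for $2143$, where a single ``$3$'' to the right of $n+2$ is required instead) --- and it is the interaction of these with the position of $n+1$ in $\sigma$ and with whether site $n$ of $\pi$ is active that accounts for the split into the four cases.
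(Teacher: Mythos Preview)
The paper does not actually prove this proposition: it is stated with a bare \qed, and the text immediately preceding it says ``The proof is left to the reader.'' So there is nothing to compare against; your outline \emph{is} the missing proof, and it is the natural one.

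Your reduction is exactly right. Applying Lemma~\ref{lemmaActiven} to $\sigma=\pi^i$ reduces everything to (a) determining $L(\sigma)$, which by the initial-segment remark in the paper is $\{1,\dots,\ell\}$ for some $\ell\le i$, and (b) deciding the status of the penultimate site of $\sigma$. Your dichotomy for the obstructions is also correct: with $n+2$ inserted at a site $j\le i$, a $2143$ arises precisely when $\sigma_1\cdots\sigma_{j-1}$ contains an inversion (since $n+1$, sitting at position $i\ge j$, always supplies the needed ``$3$''), while a $3412$ or $3421$ arises precisely when some entry before site $j$ dominates two entries after it. One useful observation you leave implicit but which makes the last two cases clean: for $j\le n-1$ the site $j$ is active in $\sigma$ \emph{iff} it is active in $\pi$. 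Indeed, if there is an inversion before $j$ but no $\pi$-entry after $j$ exceeds its top (so $\pi$ escapes $2143$), then every $\pi$-entry after $j$ lies below that top, and since $j\le n-1$ there are at least two of them, forcing a $3412$/$3421$ in $\pi$ as well. This is what pins the boundary of $L(\sigma)$ to the boundary of the active prefix of $\pi$.

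One small point of phrasing: when you write ``$\ell=k-2$ when site $n$ of $\pi$ is active and $\ell=k-1$ when it is not,'' this is literally true only for $i\in\{n,n+1\}$; for $i$ in the active prefix of $\pi$ one has $\ell=i$ (as your earlier remark $L(\sigma)\subseteq\{1,\dots,i\}$ already forces, together with ``no obstruction before site $k-1$''). It would be clearer to say the first obstruction sits at site $k-1$ (resp.\ $k$), so that $\ell=\min(i,k-2)$ (resp.\ $\min(i,k-1)$). With that adjustment, your three-step plan goes through in every subcase.
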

\vspace*{-4mm}
An immediate consequence is
\begin{corollary}\label{cor109a1}
The labelled generating forest $\mathcal{F}$ is given by
$$\begin{array}{lll}
\mbox{\bf Roots: }&3,\overline{3}\\
\mbox{\bf Rules: }&k\rightsquigarrow \overline{\overline{3}},\overline{\overline{4}},\ldots,\overline{\overline{k}},\overline{k+1},k+1, & \textrm{\quad for $k\ge 3$,}\\
&\overline{k}\rightsquigarrow\overline{\overline{3}},\overline{\overline{4}},\ldots,\overline{\overline{k}},\overline{k+1},\overline{\overline{\overline{k}}} & \textrm{\quad for $k\ge 3$,}\\
&\overline{\overline{k}}\rightsquigarrow\overline{\overline{3}},\overline{\overline{4}},\ldots,\overline{\overline{k-1}},\overline{\overline{k}},\overline{\overline{k}},\overline{\overline{\overline{k-1}}} & \textrm{\quad for $k\ge 3$,}\\
&\overline{\overline{\overline{k}}}\rightsquigarrow\overline{\overline{\overline{2}}},\overline{\overline{\overline{3}}},\ldots,\overline{\overline{\overline{k-1}}},\overline{\overline{\overline{k}}},\overline{\overline{\overline{k}}} & \textrm{\quad for $k\ge 2$.}
\end{array}$$
\end{corollary}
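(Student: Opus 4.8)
The plan is to deduce Corollary~\ref{cor109a1} from the labelling convention together with the preceding Proposition, in the same mechanical fashion used for Case~240. Fix a label type and let $\pi\in S_n(T)$ carry that label; list its active sites; and, for each active site $i$, read off from the Proposition the set $A(\pi^i)$ of active sites of the child $\pi^i$ obtained by inserting $n+1$ into site~$i$. From $A(\pi^i)$ one immediately recovers two things: the cardinality $|A(\pi^i)|$, which is the integer decorating the label of $\pi^i$, and whether the penultimate site of $\pi^i$ (that is, site $n+1$ of a permutation of length $n+1$) lies in $A(\pi^i)$. The only additional input needed is to decide, in each branch, whether $\pi^i$ is increasing, mostly increasing, or neither, since this selects the number of bars in the label.

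To that end I would first record three elementary facts about this trichotomy that render every branch automatic: (a) if $\pi=12\cdots n$ is increasing, then $\pi^i$ is increasing iff $i=n+1$, is mostly increasing iff $i=n$, and is neither for $1\le i\le n-1$; (b) if $\pi$ is mostly increasing, then $\pi^i$ is mostly increasing iff $i$ is the penultimate site of $\pi$ (insertion of $n+1$ just before the descent bottom), and is neither for every other site --- in particular appending $n+1$ destroys the mostly-increasing property, since the former descent bottom is no longer the last entry; (c) if $\pi$ is neither increasing nor mostly increasing, then every $\pi^i$ is again neither, because inserting the new maximum $n+1$ cannot erase an existing descent of $\pi$, and if $\pi$ has a single descent whose bottom is not last then that descent, together with the fresh descent created by $n+1$ at any non-terminal insertion, gives $\pi^i$ at least two descents, while a terminal insertion leaves the old descent still with a non-last bottom. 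Each of (a)--(c) is a short check on where descents can occur.

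Given (a)--(c), one walks through the four label types. For a label-$k$ vertex $\pi$ (so $\pi$ is increasing and $k=n+1$), the Proposition gives $A(\pi^i)=\{1,\dots,i,n+1,n+2\}$ for $1\le i\le n$ and $A(\pi^{n+1})=\{1,\dots,n+2\}$; by (a), the children at $i=1,\dots,n-1$ are neither, each with $i+2$ active sites and penultimate site active, hence labels $\overline{\overline{3}},\dots,\overline{\overline{k}}$; the child at $i=n$ is mostly increasing with $k+1$ active sites, hence label $\overline{k+1}$; and the child at $i=n+1$ is increasing with $k+1$ active sites, hence label $k+1$ --- precisely the first succession rule. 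The three remaining types are treated identically using the ``mostly increasing'', ``site $n$ active, neither'', and ``site $n$ inactive, neither'' clauses of the Proposition together with (b) or (c); in the $\overline{\overline{k}}$ case the insertion sites $i=k-2$ and $i=n$ both yield a ``neither'' child with $k$ active sites whose penultimate site is active, which accounts for the repeated $\overline{\overline{k}}$ in that rule, and likewise sites $i=k-1$ and $i=n+1$ in the $\overline{\overline{\overline{k}}}$ case produce the repeated $\overline{\overline{\overline{k}}}$. A useful sanity check throughout is that a label with integer $k$ has exactly $k$ active sites, hence $k$ children, matching the length of the right-hand side of each rule. Finally one verifies the roots: all three sites of $12$ and of $21$ are active, and $12$ is increasing while $21$ is mostly increasing, so their labels are $3$ and $\overline{3}$.

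The genuine obstacle here is bookkeeping rather than anything conceptual: one must be scrupulous about the index shift between ``site $n$'' of a parent of length $n$ and ``site $n+1$'' of a child of length $n+1$ when translating the Proposition's absolute site indices into bar-decorations, and one must pin down (a)--(c) --- especially the subtle point in (b) that appending the new maximum to a mostly increasing permutation produces a ``neither'' permutation --- so that no branch is misclassified. With those in place, each of the four rules follows by inspection, and the corollary is proved.
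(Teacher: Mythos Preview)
Your proposal is correct and follows precisely the approach the paper intends: the paper states the corollary as ``an immediate consequence'' of the preceding Proposition without writing out any details, and your case-by-case verification via facts (a)--(c) is exactly the routine check that justifies this. Your explicit isolation of the trichotomy lemmas (a)--(c), particularly the observation in (b) that appending $n+1$ to a mostly increasing permutation yields a ``neither'' permutation, is more careful than anything the paper records, but it is the same argument in spirit.
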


\begin{theorem}\label{th109a}
Let $T=\{3412,3421,2143\}$. Then
$$F_T(x)=\frac{1-8x+23x^2-27x^3+12x^4-5x^5}{(1-3x+x^2)^3}.$$
\end{theorem}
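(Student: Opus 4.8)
The plan is to turn the labelled generating forest of Corollary~\ref{cor109a1} into a system of functional equations and solve it by the kernel method, following the same scheme used for Case~240. For the appropriate ranges of $k$, let $a_k=a_k(x)$, $b_k=b_k(x)$, $c_k=c_k(x)$ and $d_k=d_k(x)$ be the generating functions in $x$ counting the vertices of $\mathcal F$ carrying the labels $k$, $\overline k$, $\overline{\overline k}$ and $\overline{\overline{\overline k}}$, respectively, the roots contributing $x^2$ to $a_3$ and $x^2$ to $b_3$. Reading the succession rules off Corollary~\ref{cor109a1} (each edge contributing a factor of $x$): the unbarred label $k$ arises only from a parent labelled $k-1$ and $\overline k$ only from a parent labelled $k-1$ or $\overline{k-1}$, so
\[
a_k=xa_{k-1}\ (k\ge4),\quad a_3=x^2,\qquad b_k=x(a_{k-1}+b_{k-1})\ (k\ge4),\quad b_3=x^2,
\]
while, collecting the $\overline{\overline k}$-children and $\overline{\overline{\overline k}}$-children of all label types,
\[
c_k=x\sum_{j\ge k}(a_j+b_j)+2xc_k+x\sum_{j\ge k+1}c_j\qquad(k\ge3),
\]
and $d_k=xb_k[k\ge3]+xc_{k+1}+2xd_k+x\sum_{j\ge k+1}d_j$ for $k\ge2$. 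Introducing $A(x,v)=\sum_{k\ge3}a_kv^k$, $B(x,v)=\sum_{k\ge3}b_kv^k$, $C(x,v)=\sum_{k\ge3}c_kv^k$ and $D(x,v)=\sum_{k\ge2}d_kv^k$, and interchanging the order of summation in the double sums, these recurrences become functional equations of the usual $\frac{1}{1-v}\big(v^rF(x,1)-v^sF(x,v)\big)$ shape.

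First I would extract $A$ and $B$ in closed form from their triangular recursions, namely $A(x,v)=\frac{x^2v^3}{1-xv}$ and $B(x,v)=\frac{x^2v^3}{1-xv}+\frac{x^3v^4}{(1-xv)^2}$. The equation for $C$ then takes the form
\[
\Big(1-2x+\tfrac{x}{1-v}\Big)C(x,v)=\tfrac{x}{1-v}\Big(v^3\big(A(x,1)+B(x,1)+C(x,1)\big)-v\big(A(x,v)+B(x,v)\big)\Big),
\]
whose kernel $1-2x+\frac{x}{1-v}$ vanishes at $v=v_0:=\frac{1-x}{1-2x}$ (equivalently $1-x-(1-2x)v=0$). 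Substituting $v=v_0$ kills the left side, so the right side must vanish as well, which pins down $A(x,1)+B(x,1)+C(x,1)$ and hence $C(x,1)$; with this value in hand the apparent pole of $C(x,v)$ at $v=v_0$ becomes removable, so $C(x,v)$ — and in particular $C(x,v_0)$ — is an explicit rational function. The equation for $D$ carries the same kernel, with right side $xB(x,v)+\frac{x}{v}C(x,v)+\frac{xv^2}{1-v}D(x,1)$; substituting $v=v_0$ and using the already-computed $B(x,v_0)$ and $C(x,v_0)$ yields $D(x,1)$. Finally, since the forest is rooted at level $2$ and its vertex set is $\bigcup_{n\ge2}S_n(T)$, one has $F_T(x)=1+x+A(x,1)+B(x,1)+C(x,1)+D(x,1)$, and simplifying gives the stated formula.

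The hard part is not conceptual but organizational: one must be careful with the endpoints of the succession rules, since the unbarred and singly/doubly barred labels begin at $3$ while the triply barred ones begin at $2$, and the rule $\overline{\overline k}\rightsquigarrow\overline{\overline{3}},\dots,\overline{\overline{k-1}},\overline{\overline k},\overline{\overline k},\overline{\overline{\overline{k-1}}}$ both shifts an index and repeats a label, so the boundary terms produced in the associated double sums have to be checked by hand. Once that is done, the final simplification is a routine (if lengthy) rational-function computation; the denominator $(1-3x+x^2)^3$ is explained by the identity $1-xv_0=\frac{1-3x+x^2}{1-2x}$, the factor $1-3x+x^2$ entering to first power through $A$, to second power through $B$ and $C$, and being compounded once more in the passage to $D(x,1)$.
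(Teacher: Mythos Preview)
Your approach is correct and essentially the same as the paper's: both start from Corollary~\ref{cor109a1}, derive the identical recurrences for $a_k,b_k,c_k,d_k$, solve $A$ and $B$ in closed form, and apply the kernel method with $v_0=\frac{1-x}{1-2x}$ to determine $C(x,1)$ and then $D(x,1)$. The only cosmetic difference is the double-kernel step for $D$: you compute $C(x,v_0)$ as a limit at the removable singularity and substitute $v=v_0$ directly into the $D$-equation, while the paper multiplies that equation through by the kernel factor (replacing $K(v)C(x,v)$ by its known right-hand side) and differentiates in $v$ before setting $v=v_0$; by L'H\^opital these are the same computation, so the two routes coincide.
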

\begin{proof}
Let $a_k(x)$, $b_k(x)$, $c_k(x)$ and $d_k(x)$ be the generating functions for the number of permutations in the $n$th level of the labelled generating forest $\mathcal{F}$ with label $k$, $\overline{k}$, $\overline{\overline{k}}$ and $\overline{\overline{\overline{k}}}$, respectively.
By Corollary \ref{cor109a1}, we have
\begin{align*}
a_k(x)&=xa_{k-1}(x),\quad k\geq4,\\
b_k(x)&=x(a_{k-1}(x)+b_{k-1}(x)),\quad k\geq4,\\
c_k(x)&=xc_k(x)+x\sum_{j\geq k}(c_j(x)+b_j(x)+a_j(x)),\quad k\geq3,\\
d_k(x)&=xd_k(x)+xb_k(x)+xc_{k+1}(x)+x\sum_{j\geq k}d_j(x),\quad k\geq2,
\end{align*}
with $a_3(x)=b_3(x)=x^2$. Clearly, $a_k(x)=x^{k-1}$ and $b_k(x)=(k-2)x^{k-2}$ for all $k\geq3$.

Now let $A(x,v)=\sum_{k\geq3}a_k(x)v^k$, $B(x,v)=\sum_{k\geq3}b_k(x)v^k$, $C(x,v)=\sum_{k\geq3}c_k(x)v^k$ and  $D(x,v)=\sum_{k\geq2}d_k(x)v^k$. Thus, $A(x,v)=\frac{x^2v^3}{1-xv}$ and $B(x,v)=\frac{x^3v^3}{(1-xv)^2}$.
Hence, the above recurrences can be written as
\begin{align}
\left(1-x+\frac{xv}{1-v}\right)C(x,v)&=\frac{xv^3}{1-v}C(x,1)+\frac{x^3v^3(2-x-xv)}{(1-x)^2(1-xv)^2},\label{eq109ax1}\\
\left(1-x+\frac{xv}{1-v}\right)D(x,v)&=xB(x,v)+\frac{x}{v}C(x,v)+\frac{xv^2}{1-v}D(x,1).\label{eq109ax2}
\end{align}
To solve the first functional equation, we apply the kernel method and take $v=\frac{1-x}{1-2x}$. This gives
$$C(x,1)=\frac{x^3(2-6x+3x^2)}{(1-x)^2(1-3x+x^2)^2}.$$
Multiplying \eqref{eq109ax2} by $1-x+\frac{xv}{1-v}$, and using \eqref{eq109ax1}, yields
\begin{align*}
&\left(1-x+\frac{xv}{1-v}\right)^2D(x,v)=x\left(1-x+\frac{xv}{1-v}\right)B(x,v)\\
&\qquad+\frac{x}{v}\left(\frac{xv^3}{1-v}C(x,1)+\frac{x^3v^3(2-x-xv)}{(1-x)^2(1-xv)^2}\right)+\frac{xv^2}{1-v}\left(1-x+\frac{xv}{1-v}\right)D(x,1).
\end{align*}
Differentiating this functional equation with respect to $v$ and then substituting $v=\frac{1-x}{1-2x}$, we have $$D(x,1)=\frac{x^3(1-2x-x^3)}{(1-3x+x^2)^3}.$$
The result now follows from the fact $F_T(x)=1+x+A(x,1)+B(x,1)+C(x,1)+D(x,1)$.
\end{proof}

\subsection{Case 188: $\{1432,2143,3214\}$}
To find $F_T(x)$, we modify the generating trees of the last two cases as follows.

For any set of patterns $R$, to enumerate $S_n(R)$ one may consider the generating forest whose vertices are identified with $S:=\bigcup_{n\ge 2}S_n(R)$ where 12 and 21 are the roots and each non-root $\pi \in S$ is a child of the permutation obtained from $\pi$ by deleting its largest element. We will show that it is possible to label the vertices by irreducible permutations (as defined below) so that if $c_1$ and $c_2$ are any two vertices, then $c_1$ and $c_2$ have the same number of children. Indeed, we will specify (i) the irreducible permutations of the roots, and (ii) a set of succession rules explaining how to derive from the irreducible permutation of a parent the labels (presented by irreducible permutations) of all of its children. This will determine a generating forest depending on $R$.

For $\sigma=\sigma_1\cdots\sigma_k\in S_k(R)$, we say that $\sigma$ is reducible if there is a subsequence $\sigma'$ of length $k-1$ such that by inserting $k$ in some position in the set $A(\sigma')$ of active sites we obtain $\sigma$. Otherwise, $\sigma$ is irreducible. Thus, to specify the generating
forest $\mathcal{F}$, we need to know the set of irreducible permutations and the succession rules
(what irreducible permutations are obtained by inserting a letter in a given irreducible permutation).

For instance, if $R=\{132\}$, then $12$ is reducible because if $\pi=\pi'1\pi''2\pi'''$ avoids $132$ then
$\pi''=\emptyset$, which leads to the conclusion that $\pi$ avoids $132$ if and only if $\pi'1\pi''$ avoids $132$, thus $2$ is a letter verifying the reducibility of $12$. In general, it is not hard to see that there is exactly one irreducible permutation in $S_k(132)$, namely, $k\cdots21$. Moreover, by inserting $k+1$ in $k\cdots 21$, we obtain the following irreducible permutations $(k+1)k\cdots21, k\cdots21, \ldots,1$, thus we have the following succession rule
$$k\cdots21\rightsquigarrow (k+1)k\cdots21,k\cdots21,\ldots,1,$$
for all $k\geq1$ (as expected, see \cite{W}).

Now, we are ready to consider our case, $T=\{1432,2143,3214\}$. The irreducible permutations and the succession rules are given in the next proposition; the proof is left to the reader.

\begin{proposition}\label{pro188a}
The irreducible permutations of $S_k(T)$ are given by $\alpha_{i,k}=i(i+1)\cdots k12\cdots(i-1)$, $\alpha'_{i,k}=i(i+1)\cdots(k-2)1(k-1)2k34\cdots(i-1)$, $\alpha''_{i,k}=i(i+1)\cdots(k-1)1k23\cdots(i-1)$, $\beta_k=1k23\cdots(k-1)$, $\beta'_k=1(k-1)2k34\cdots(k-2)$, $\gamma_k=23\cdots(k-1)1k$, $\gamma'_k=2k34\cdots(k-2)1(k-1)$, and $\delta_k=67\cdots k 14253$.

The generating forest $\mathcal{F}$ is given by
$$\begin{array}{lll}
\mbox{\bf Roots: }&\alpha_{1,2},\alpha_{2,2}\\
\mbox{\bf Rules: }
&\alpha_{1,k}\rightsquigarrow\alpha_{1,k+1}\beta_3\cdots\beta_{k+1}\alpha_{k+1,k+1},&\textrm{for $k\geq2$},\\
&\alpha_{i,k}\rightsquigarrow\gamma_{k+3-i}\alpha''_{3,k+4-i}\cdots\alpha''_{i,k+1}
\alpha_{i,k+1}\beta_3\cdots\beta_{k+2-i}\alpha_{k+2-i,k+2-i},&\textrm{for $2\leq i\leq k$},\\
&\alpha'_{i,k}\rightsquigarrow\alpha'_{3,k+3-i}\cdots\alpha'_{i,k}(361425)\gamma'_5\cdots\gamma'_{k+2-i}\alpha_{k-i,k-i},&\textrm{for $3\leq i\leq k-2$},\\
&\alpha''_{i,k}\rightsquigarrow\alpha'_{3,k+4-i}\cdots\alpha'_{i,k+1}\alpha''_{i,k}\beta_3\gamma'_5\cdots\gamma'_{k+3-i}\alpha_{k+1-i,k+1-i},&\textrm{for $3\leq i\leq k-1$},\\
&\beta_k\rightsquigarrow\gamma_{3}\beta'_5\cdots\beta'_{k+1}\beta_k\alpha_{2,2},&\textrm{for $k\geq3$},\\
&\beta'_k\rightsquigarrow(142536)\beta'_5\cdots\beta'_k\delta_6,&\textrm{for $k\geq5$},\\
&\gamma_k\rightsquigarrow\gamma_k\gamma_3\gamma'_5\cdots\gamma'_{k+1}\alpha_{k-1,k-1},&\textrm{for $k\geq3$},\\
&\gamma'_k\rightsquigarrow(253614)\beta'_5\cdots\beta'_{k-1}\gamma'_k\alpha_{2,2},&\textrm{for $k\geq5$},\\
&\delta_k\rightsquigarrow\delta_{k+1}\beta_3\cdots\beta_{k-4}\alpha_{k-4,k-4},&\textrm{for $k\geq6$},\\
&(361425)\rightsquigarrow(361425)\alpha_{2,2},\\
&(142536)\rightsquigarrow(142536)\delta_6,\\
&(253614)\rightsquigarrow(253614)\delta_6.
\end{array}$$
\end{proposition}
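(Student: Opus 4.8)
The plan is to prove Proposition~\ref{pro188a} by the site analysis already used for Cases~240 and~109, modified so that the labels are permutations rather than integers. The first step is to determine, for an arbitrary $\pi=\pi_1\cdots\pi_n\in S_n(T)$, which of its $n+1$ sites are active. Since the largest letter of each pattern in $T=\{1432,2143,3214\}$ occupies a fixed position (second, third, and last, respectively), the new maximum $n+1$ can complete a forbidden pattern in only a few ways, and one checks that site $j$ is inactive exactly when one of the following holds: $\pi_1\cdots\pi_{j-1}$ contains a decreasing subsequence of length three (completing a $3214$); the suffix $\pi_j\cdots\pi_n$ contains an inversion $\pi_b>\pi_c$ with $b<c$ together with an earlier letter $\pi_a$ $(a\le j-1)$ satisfying $\pi_a<\pi_c$ (completing a $1432$); or $\pi_1\cdots\pi_{j-1}$ contains an inversion whose larger element is $\pi_a$ and some letter in a site $\ge j$ exceeds $\pi_a$ (completing a $2143$). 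This yields a transparent description of $A(\pi)$, hence of $L(\pi)$, in terms of the descent structure of $\pi$, in the same spirit as the descriptions preceding Cases~240 and~109.

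The second step is to make precise which letters of a $T$-avoider are \emph{removable} in the sense of the reducibility notion above: a letter whose deletion and standardisation yields a $T$-avoider $\tau$ for which reinsertion of the new maximum into the vacated site both recovers $\pi$ and leaves the local site behaviour unchanged, so that $\pi$ and $\tau$ have the same multiset of children-labels. Using the criterion from the first step, one shows that for this $T$ the removable letters are exactly the interior letters of certain monotone runs, so that iterated deletion of removable letters sends each $\pi\in S_n(T)$ to a well-defined irreducible permutation. The substantive part is the converse: one must show that the irreducibles so obtained are exactly the families $\alpha_{i,k}$, $\alpha'_{i,k}$, $\alpha''_{i,k}$, $\beta_k$, $\beta'_k$, $\gamma_k$, $\gamma'_k$, $\delta_k$ together with the three exceptional permutations $(361425)$, $(142536)$, $(253614)$. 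I would argue this by induction on length: locate the maximum of an irreducible $\pi$, analyse the two flanking segments under the non-removability constraint, and deduce that $\pi$ has one of the listed shapes; the four length-$6$ permutations $\delta_6$, $(361425)$, $(142536)$, $(253614)$ must be exhibited by hand, as they are precisely the seeds that the succession rules keep regenerating.

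The third step is to read off the succession rules. For each irreducible $\sigma$ of length $k$, list its active sites via the first step, insert $k+1$ into each of them, reduce the resulting length-$(k+1)$ permutation via the second step, and record which irreducible it becomes; the collection of these records over all active sites of $\sigma$ is exactly the displayed rule, and the roots $12=\alpha_{1,2}$, $21=\alpha_{2,2}$ match the first two rules. This is mechanical given the first two steps but lengthy: the eight families and three exceptional labels each require a short computation, and care is needed for the rules feeding into or out of $\delta_k$, $(142536)$, $(253614)$, $(361425)$, where a single misplaced letter changes the target label (for instance distinguishing $(142536)$ from $\beta'_6$, or $\delta_6$ from an $\alpha''$-type permutation). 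The main obstacle is the completeness claim inside the second step --- ruling out any irreducible permutation outside the listed set --- since this is the one place that requires a genuine structural understanding of $T$-avoiders rather than a bounded verification, and it is where the sporadic length-$6$ permutations have to be discovered; overlooking such a configuration would break the generating forest. Once completeness is established, the first and third steps fit together routinely to give the stated rules.
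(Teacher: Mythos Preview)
The paper itself leaves this proof to the reader, so there is no detailed argument to compare against; your outline is consistent with the method the paper intends (the same site/label analysis used in Cases~240 and~109, with permutation labels in place of integer labels), and your three-step plan---active sites, reduction to irreducibles, then verification of the rules---is the natural way to carry this out.

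One point to tighten: your paraphrase of ``reducible'' in the second step is slightly stronger than the paper's definition. The paper calls $\sigma\in S_k(T)$ reducible when some (standardised) length-$(k-1)$ subsequence $\sigma'$ has the property that inserting $k$ into one of its active sites returns $\sigma$; equivalence of children-label multisets is the \emph{goal} of the labelling, not part of the definition of reducibility itself. In practice the two coincide here, but for the completeness argument you should prove directly that each of the eight listed families (and the three sporadic length-$6$ permutations) is irreducible in the paper's sense, and that any $T$-avoider outside those families admits a deletable letter in that sense. As you say, the sporadic permutations $(361425)$, $(142536)$, $(253614)$ and $\delta_6$ are where the case analysis is most delicate, and that is indeed where a routine induction is most likely to miss something; working through all irreducibles of length $\le 6$ by brute force, and then doing the inductive step only for length $\ge 7$, is a safe way to avoid that pitfall.
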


Let $A_{i,k}=A_{i,k}(x)$ be the generating function for the number of nodes $\alpha_{i,k}$ in $\mathcal{F}$ at level $n$ (the roots are at level $2$). Similarly, we define $A'_{i,k}$, $A''_{i,k}$, $B_k$, $B'_k$, $G_k$, $G'_k$, $D_k$, $L$, $L'$ and $L''$, for the number of nodes $\alpha'_{i,k}$, $\alpha''_{i,k}$, $\beta_k$, $\beta'_k$, $\gamma_k$, $\gamma'_k$, $\delta_k$, $361425$, $142536$ and $253614$ in $\mathcal{F}$ at level $n$, respectively.

Define $A_k(v)=\sum_{i=1}^kA_{i,k}v^{k-i}$, $A'_k(v)=\sum_{i=3}^{k-2}A'_{i,k}v^{k-2-i}$ and
$A''_k(v)=\sum_{i=3}^{k-1}A_{i,k}v^{k-1-i}$. Also, define
$A(v,w)=\sum_{k\geq2}A_k(v)w^{k-2}$, $A'(v,w)=\sum_{k\geq5}A'_k(v)w^{k-5}$, $A''(v,w)=\sum_{k\geq4}A''_k(v)w^{k-4}$, $B(w)=\sum_{k\geq3}B_kw^{k-3}$, $B'(w)=\sum_{k\geq5}B'_kw^{k-5}$, $G(w)=\sum_{k\geq3}G_kw^{k-3}$, $G'(w)=\sum_{k\geq5}G'_kw^{k-5}$, $D(w)=\sum_{k\geq6}D_kw^{k-6}$, $L(w)=\sum_{k\geq6}L_kw^{k-6}$, $L'(w)=\sum_{k\geq6}L'_kw^{k-6}$, $L''(w)=\sum_{k\geq6}L''_kw^{k-6}$.

Proposition \ref{pro188a} leads to the following system of equations:
\begin{align*}
A(v,w)=&\:vx^2+vwxA(v,w)+A(0,w),\\
A(0,w)=&\:\frac{x^2}{1-wx}+xL+x(B(1)+G'(1))+xG(w)+xD(w)\\
&+x(A(w,1)-wx^2/(1-wx))+x(A'(w,1)+A''(w,1)),\\
A'(v,w)=&\:\frac{x}{1-w}(A'(vw,1)-wA'(v,w))+\frac{x}{1-w}(A''(vw,1)-wA''(v,w)),\\
(1-x)A''(v,w)=&\:\frac{x}{(1-w)(1-vw)}(A(0,1)-A(0,w))-\frac{vx}{(1-v)(1-vw)}(A(0,w)-A(0,vw)),\\
B(w)=&\:\frac{x}{1-w}(A(1,1)-A(w,1))+xA''(1,1)+xB(w)+\frac{x}{1-w}(D(1)-D(w)),\\
B'(w)=&\:\frac{x}{1-w}(B(1)-B(w))+\frac{x}{1-w}(B'(1)-wB'(w))+\frac{x}{1-w}(G'(1)-G'(w)),\\
G(w)=&\:xG(w)+xG(1)+xB(1)+x(A(w,1)-wx^2/(1-wx)),\\
(1-x)G'(w)=&\:\frac{x}{1-w}(A'(1,1)-A'(1/w,w))+\frac{x}{1-w}(A''(1,1)-A''(1/w,w))\\
&+\frac{x}{1-w}(G(1)-G(w)),\\
(1-x)D(w)=&\:\frac{x}{1-wx}(B'(1)+xG'(1)),\\
(1-x)L=&\:xA'(1,1),\\
(1-x)L'=&\:xB'(1),\\
(1-x)L''=&\:xG'(1).
\end{align*}
Let $K=5x^8-44x^7+128x^6-208x^5+209x^4-132x^3+51x^2-11x+1$. Then,
by computer algebra, one can show the solution of the above system is given by
\begin{align*}
A(v,w)&=\frac{x^2(x-1)^5(wx^3+4x^3-9x^2+5x-1)+vx^2K(1-wx)}{(1-wx)(1-vwx)K},\\
A'(v,w)&=\frac{x^5(1-2x)(1-x)^4}{(1-wx)(1-vwx)K},\\
A''(v,w)&=\frac{x^4(1-2x)^2(1-x)^4}{(1-wx)(1-vwx)K},\\
B(w)&=\frac{x^3(1-x)(1-7x+19x^2-(w+25)x^3+2(2w+7)x^4-(4w+1)x^5)}{(1-wx)K},\\
B'(w)&=\frac{x^5(1-2x)(1-x)^4}{(1-wx)K},\\
G(w)&=\frac{x^3(1-x)^2(1-4x-2(w-4)x^2+3(2w-3)x^3+5(1-w)x^4)}{(1-wx)K},\\
G'(w)&=\frac{x^5(1-3x+3x^2)(1-x)^2}{(1-wx)K},\phantom{h\hspace*{50mm}h}\\
D(w)&=\frac{x^6(1-3x+2x^2+x^3)}{(1-wx)K},
\end{align*}
\begin{align*}
L&=\frac{x^6(1-2x)(1-x)}{K},\\
L'&=\frac{x^6(1-2x)(1-x)^2}{K},\\
L''&=\frac{x^6(1-3x+3x^2)}{K}.
\end{align*}
Hence, by Proposition \ref{pro188a}, we have
$$F_T(x)=1+x+L+L'+L''+D(1)+G'(1)+G(1)+B'(1)+B(1)+A''(1,1)+A'(1,1)+A(1,1),$$
which leads to the following result.
\begin{theorem}\label{th188a}
Let $T=\{1432,2143,3214\}$. Then
$$F_T(x)=\frac{(1-x)^4(1-6x+12x^2-9x^3+x^4)}{1-11x+51x^2-132x^3+209x^4-208x^5+128x^6-44x^7+5x^8}.$$
\end{theorem}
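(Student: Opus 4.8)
The plan is to follow the modified generating-trees strategy set up just above, in which the vertices of the generating forest for $S_n(T)$ are labelled not by integers but by the \emph{irreducible} permutations from which they descend. The first task is to prove Proposition~\ref{pro188a}: to show that every permutation in $S_k(T)$ reduces (by repeatedly deleting the largest letter, as long as the result still recovers the original by a legal insertion into an active site) to one of the listed families $\alpha_{i,k},\alpha'_{i,k},\alpha''_{i,k},\beta_k,\beta'_k,\gamma_k,\gamma'_k,\delta_k$, together with the three sporadic labels $361425$, $142536$, $253614$; and to determine, for each such irreducible $\sigma$, exactly which irreducible permutations arise as the reduced forms of the children of $\sigma$ — these are the succession rules displayed in the proposition. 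This is a finite but delicate case analysis: for each family one inserts the new maximum into each active site, standardises, and checks which of the patterns $1432$, $2143$, $3214$ are created and what the reduced form of the survivor is. The bulk of the work, and what I expect to be the main obstacle, lies here, in keeping the bookkeeping of active sites and of the three pattern constraints straight across all of these families simultaneously.

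Once the succession rules are in hand, the second step is purely formal. For each label family one introduces the generating function (weighted by $x^n$, with the roots at level $2$) counting the nodes carrying that label at level $n$, refines each doubly-indexed family by one of its indices into a polynomial in a catalytic variable, and packages these into the functions $A(v,w)$, $A'(v,w)$, $A''(v,w)$, $B(w)$, $B'(w)$, $G(w)$, $G'(w)$, $D(w)$ and the scalars $L,L',L''$ as defined in the text. Reading off the succession rules term by term then produces, mechanically, the displayed linear system of functional equations relating these functions to their specialisations at $v=1$ or $w=1$ and to shifted arguments such as $vw$, $1/w$, and $wx$.

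The third step is to solve that system. The equations are linear in the unknown functions, but each carries one or two catalytic variables, so I would apply the kernel method: since $v$ enters only through substitutions, after eliminating $A(0,w)$, $A''$, $B'$, $D$, $L$, $L'$, $L''$ one is left with a handful of equations in $w$ whose kernel factor vanishes at an algebraic value of $w$ (of the shape $w=(1-x)/(1-2x)$ or a similar rational/algebraic function of $x$); substituting that value, and where needed differentiating in $w$ first, yields enough scalar equations to pin down the boundary unknowns $A(1,1)$, $B(1)$, $G(1)$, and the rest. Carrying this out — computer-algebra assisted, exactly as indicated, with $K=5x^8-44x^7+128x^6-208x^5+209x^4-132x^3+51x^2-11x+1$ emerging as the common denominator — gives closed forms for all the component generating functions.

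Finally, assemble $F_T(x)=1+x+L+L'+L''+D(1)+G'(1)+G(1)+B'(1)+B(1)+A''(1,1)+A'(1,1)+A(1,1)$, substitute the expressions found, and simplify; the numerator $(1-x)^4(1-6x+12x^2-9x^3+x^4)$ and denominator $1-11x+51x^2-132x^3+209x^4-208x^5+128x^6-44x^7+5x^8$ fall out after cancellation. I expect the genuinely hard part to be the combinatorial verification underlying Proposition~\ref{pro188a}; once that is granted, the remainder is a lengthy but routine linear-algebra computation over $\mathbb{Q}(x)$ together with one kernel-root substitution.
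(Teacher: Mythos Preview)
Your proposal is correct and follows essentially the same approach as the paper: establish Proposition~\ref{pro188a} (whose proof the paper also leaves to the reader), translate the succession rules into the displayed linear system of functional equations in the catalytic variables, solve that system, and assemble $F_T(x)$ from the components. The only minor difference is that the paper simply presents the closed-form solutions and states that they can be verified ``by computer algebra,'' rather than describing the kernel-method derivation you outline; either route leads to the same answer.
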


\textbf{Acknowledgement:}  We wish to thank Vince Vatter for pointing out to us the reference \cite{AA20160600}.

\section{Appendix}\label{App}
{\footnotesize\begin{longtable}[c]{|l||l|}
\caption{Small Wilf classes of three 4-letter patterns counted by INSENC.\label{longinsenc}}\\ \hline
\multicolumn{2}{| c |}{Begin of Table}\\ \hline
No. &$F_T(x)$ \\ \hline
\endfirsthead  \hline
\multicolumn{2}{|c|}{Continuation of Table \ref{longinsenc}}\\ \hline
No. & $F_T(x)$ \\ \hline
\endhead \hline
\endfoot \hline
\multicolumn{2}{| c |}{End of Table}\\ \hline\hline
\endlastfoot
1&$F_{\{4321,3412,1234\}}(x)=73x^9+ 199x^8+ 240x^7+ 162x^6+ 69x^5+ 21x^4+ 6x^3+ 2x^2+ x + 1$\\\hline
2&$F_{\{4321,3142,1234\}}(x)=85x^9+ 221x^8+ 252x^7+ 164x^6+ 69x^5+ 21x^4+ 6x^3+ 2x^2+ x + 1$\\\hline
3&$F_{\{2143,4312,1234\}}(x)=\frac{18x^7+31x^6+22x^5+8x^4+2x^3+2x^2-2x+1}{(1-x)^{3}}$\\\hline
4&$F_{\{4231,2143,1234\}}(x)=\frac{2x^{10}-6x^9+6x^8+4x^7+4x^6+8x^5+6x^4-4x^3+7x^2-4x+1}{(1-x)^{5}}$\\\hline
5&$F_{\{2143,3412,1234\}}(x)=\frac{2x^5+10x^4-11x^3+11x^2-5x+1}{(1-x)^6}$\\\hline
7&$F_{\{3421,4312,1234\}}(x)=\frac{-9x^7+24x^6+23x^5+8x^4+2x^3+2x^2-2x+1}{(1-x)^3}$\\\hline 8&$F_{\{2431,4213,1234\}}(x)=\frac{26+21x+15x^2}{2(1-x-x^2-x^3)}+\frac{4x^{10}(1+3x)-62x^9-28x^8+66x^7+27x^6-15x^5-53x^4+41x^3+51x^2-75x+24}{(x-1)^3(1-x-x^2)^2}$\\\hline
9&$F_{\{2134,4312,1243\}}(x)=\frac{-3x^7-5x^6+3x^5+10x^4-11x^3+11x^2-5x+1}{(1-x)^6}$\\\hline 10&$F_{\{4213,1432,1234\}}(x)=\frac{2x^{11}+4x^{10}+10x^9+12x^8+6x^7-19x^6-19x^5-7x^4-x^3+2x-1}{(x-1)(x^5+3x^4+2x^3+x^2+x-1)(x^3+x^2+x-1)}$\\\hline
11&$F_{\{4231,1432,1234\}}(x)=\frac{5x^9-2x^8-x^7+9x^6+9x^5+6x^4-4x^3+7x^2-4x+1}{(1-x)^5}$\\\hline
12&$F_{\{2341,4312,1324\}}(x)=\frac {x^{10}-4x^9+3x^8+ 5x^7-7x^5+ 21x^4- 22x^3+ 16x^2- 6x+ 1}{(1-x)^{7}}$\\\hline
13&$F_{\{3214,1432,1234\}}(x)=-\frac{x^5+x^3+x^2+x-1}{x^{12}+16x^{11}+10x^{10}+17x^9+25x^8+25x^7-7x^6-14x^5-5x^4-2x^3-x^2-2x+1}$\\\hline
14&$F_{\{4231,2134,1243\}}(x)=\frac{4x^9-11x^8+10x^7+2x^6-7x^5+21x^4-22x^3+16x^2-6x+1}{(1-x)^7}$\\\hline 16&$F_{\{2314,1432,4123\}}(x)=\frac{2x^9-x^8+x^7+3x^6+6x^5-6x^4+11x^3-13x^2+6x-1}{(x^2-3x+1)(x^3+x^2+x-1)(1-x)^3}$\\\hline
17&$F_{\{2341,2143,4123\}}(x)=\frac{x^7- 13x^5+ 25x^4 - 29x^3+ 20x^2- 7x + 1}{(x^2- 3x + 1)(1-x)^5}$\\\hline
18&$F_{\{2341,1432,4123\}}(x)=\frac{x^{10}-7x^9+19x^8-25x^7+12x^6+10x^5-20x^4+25x^3-19x^2+7x-1}{(x^2+1)(x^2-3x+1)(x^3-x^{2}-2x+1)(x-1)^3}$\\\hline
19&$F_{\{2431,4312,1234\}}(x)=\frac{6x^9- 7x^8- 7x^7 + 4x^6+ 10x^5+ 6x^4- 4x^3+ 7x^2- 4x+ 1}{(1-x)^5}$\\\hline
20&$F_{\{4312,1432,1234\}}(x)=\frac{(x + 1)(2x^9- 18x^8+ 33x^7- 20x^6+ 12x^5- 22x^4+ 16x^3- 12x^2+ 5x -1)}{(x-1)^5}$\\\hline
21&$F_{\{4312,3142,1234\}}(x)=\frac{2x^9- 3x^8- 2x^{6} - 6x^5+ 21x^4- 22x^3+ 16x^2- 6x + 1}{(1-x)^7}$\\\hline
22&$F_{\{2134,4312,1432\}}(x)=\frac {x^6+ 6x^5- 21x^4+22x^3- 16x^2+ 6x - 1}{(x - 1)^7}$\\\hline
23&$F_{\{2431,4132,1234\}}(x)=\frac{1-2x}{x^2-3x+1}+\frac{(24x^9-116x^8+213x^7-158x^6+9x^5+37x^4-9x^3+x^2-3x+1)x^3}{(x-1)^5(2x-1)^3}$\\\hline
24&$F_{\{4231,3412,1234\}}(x)=\frac{2x^6- 6x^5+ 21x^4- 22x^3+ 16x^2- 6x + 1}{(1-x)^7}$\\\hline
25&$F_{\{3412,4132,1234\}}(x)=\frac{3x^6- 6x^5+ 21x^4- 22x^3+ 16x^2- 6x + 1}{(1-x)^7}$\\\hline
26&$F_{\{2134,4312,1342\}}(x)=-\frac {x^8- 9x^6+ 27x^5- 43x^4+ 38x^3- 22x^2+ 7x - 1}{(1-x)^8}$\\\hline
27&$F_{\{2314,4312,1432\}}(x)=-\frac{3x^9-x^8-18x^7+17x^6+15x^5-44x^4+47x^3-27x^2+8x-1}{(2x-1)(x^2+x-1)(x-1)^6}$\\\hline
28&$F_{\{4231,3142,1234\}}(x)=\frac{2x^8- 10x^7+ 40x^6- 70x^5+ 81x^4- 60x^3+ 29x^2- 8x + 1}{(1-x)^9}$\\\hline
31&$F_{\{2314,4312,1342\}}(x)=\frac{5x^{10}-22x^9+12x^8+89x^7-249x^6+354x^5-316x^4+179x^3-62x^2+12x-1}{(x^2-3x+1)(2x-1)^3(x-1)^4}$\\\hline
32&$F_{\{2134,1432,4123\}}(x)=\frac {x^{10} - 4x^9+ 4x^8-x^6- 5x^5+ 6x^4- 11x^3+ 13x^2- 6x + 1}{(x^2- 3x + 1)(x^3+ x^2+x- 1)(x - 1)^3}$\\\hline
33&$F_{\{2134,3412,4132\}}(x)=\frac {2x^7- 16x^5+ 36x^4 - 42x^3+ 26x^2- 8x + 1}{(2x - 1)^{3}(x - 1)^3}$\\\hline
34&$F_{\{2143,4132,1234\}}(x)=-\frac {x^9- 2x^8- x^7+ 4x^6- x^5- 2x^4+ 3x^3- 8x^2+ 5x - 1}{(x^2 - 3x + 1)(x -1)(2x-1)}$\\\hline
35&$F_{\{1324,2143,3412\}}(x)=\frac{1-9x+33x^2-62x^3+64x^4-38x^5+10x^6}{(1-3x+x^2)(1-2x)^2(1-x)^3}$\\\hline
36&$F_{\{3412,3124,1432\}}(x)=-\frac {x^8+ 2x^7- 26x^6+ 62x^5- 83x^4+ 69x^3- 34x^2+ 9x - 1}{(x-1)^{5}(x^2- 3x + 1)(2x-1)}$\\\hline
37&$F_{\{3142,1432,1234\}}(x)=\frac {(x^3- 2x^2+ 3x - 1)^2}{x^8- x^7+ 4x^6- 7x^5+ 19x^4- 24x^3 + 18x^2- 7x+1}$\\\hline
38&$F_{\{4321,1423,1234\}}(x)=147x^9+ 359x^8+ 367x^7+ 198x^6+72x^5+ 21x^4+ 6x^3+ 2x^2+ x + 1$\\\hline
39&$F_{\{4321,4123,1234\}}(x)=185x^9+ 400x^8+ 396x^7+ 205x^6+72x^5+ 21x^4+ 6x^3+ 2x^2+ x + 1$\\\hline
40&$F_{\{2341,4312,1234\}}(x)=\frac{x^9- 5x^8+ 6x^7+ x^6+ 5x^5- 21x^4+ 22x^3- 16x^2+ 6x - 1}{(x - 1)^7}$\\\hline
41&$F_{\{4312,1342,1234\}}(x)=-\frac {2x^7- 8x^6+ 26x^5- 43x^4+ 38x^3- 22x^2+ 7x - 1}{(x - 1)^8}$\\\hline
42&$F_{\{2341,4132,1234\}}(x)=\frac{4x^8- 5x^7- 7x^{6} - 7x^5+ 22x^4- 28x^3+ 20x^2- 7x + 1}{(2x-1)^2(x-1)^4}$\\\hline
43&$F_{\{2314,4213,1432\}}(x)=-\frac {9x^6- 35x^5+ 54x^4- 49x^3+ 27x^2- 8x + 1}{(3x^3- 5x^2+ 4x - 1)(2x -1)(x-1)^3}$\\\hline
44&$F_{\{4213,1342,1234\}}(x)=\frac {x^{10} - 6x^9+ 9x^8+9x^7- 54x^6+ 94x^5- 104x^4+ 76x^3- 35x^2+ 9x - 1}{(x^3- 2x^2+3x- 1)(2x - 1)(x -1)^5}$\\\hline
45&$F_{\{4213,2134,1432\}}(x)=\frac{x^{10} - 2x^9- x^8- 13x^7+ 54x^6- 99x^5+ 108x^4- 77x^3+ 35x^2- 9x + 1}{(x-1)^{2}(3x^3- 5x^2+ 4x - 1)^2}$\\\hline
46&$F_{\{2341,4132,1324\}}(x)=\frac{2x^7+ 5x^6- 3x^{5}+ 3x^4+ 6x^3- 12x^2+ 6x - 1}{(x - 1)(2x - 1)(x^2- 3x + 1)(x^2+x-1)}$\\\hline
47&$F_{\{2413,4132,1234\}}(x)=-\frac {3x^6- 21x^5+ 40x^4- 43x^3+ 26x^2- 8x + 1}{(2x - 1)(x - 1)^{4}(x^2-3x+1)}$\\\hline
48&$F_{\{4312,3124,1342\}}(x)=-\frac {x^9- 15x^8+ 73x^7 - 175x^6+ 247x^5- 228x^4+ 138x^3- 52x^2+ 11x -1}{(x^2-3x+1)^{2}(x - 1)^6} $\\\hline
51&$F_{\{4213,3124,1432\}}(x)=\frac{x^6- 7x^4+ 12x^3-13x^2+ 6x - 1}{(x^2- 3x + 1)(3x^3- 5x^2+4x - 1)}$\\\hline
52&$F_{\{1432,4123,1234\}}(x)=-\frac{x^8- 4x^7+ 3x^6+ 4x^5- 11x^4+ 20x^3- 18x^2+ 7x - 1}{(x-1)^2(x^2-3x+1)^2}$\\\hline
53&$F_{\{2134,4132,1243\}}(x)=\frac{x^{10}-4x^9-6x^8+68x^7-186x^6+291x^5-283x^4+170x^3-61x^2+12x-1}{(2x-1)^2(x^2-3x+1)^2(x-1)^3}$\\\hline
54&$F_{\{3124,1432,1234\}}(x)=\frac{(1-x)^3(2x^3-2x^2+3x-1)}{2x^9-7x^8+7x^7-10x^6+16x^5-27x^4+29x^3-19x^2+7x-1}$\\\hline
57&$F_{\{2143,1432,1234\}}(x)=\frac {x^7+ x^6- x^5+ 3x^3+ 2x^2+ 2x - 1}{x^7+ x^6- x^5- x^4+ 2x^3+x^2+3x-1}$\\\hline
58&$F_{\{4321,1243,1234\}}(x)=144x^9+ 396x^8+ 382x^7+ 202x^6+73x^5+ 21x^4+ 6x^3+ 2x^2+ x + 1$\\\hline
59&$F_{\{4321,1324,1234\}}(x)=334x^9+ 669x^8+ 484x^7+ 215x^6+73x^5+ 21x^4+ 6x^3+ 2x^2+ x + 1$\\\hline
60&$F_{\{4312,4132,1234\}}(x)=\frac{x^7+16x^6+12x^5+6x^4-4x^3+7x^2-4x+1}{(1-x)^5}$\\\hline
61&$F_{\{4312,1243,1234\}}(x)=\frac{x^{10} -4x^9+ 3x^8+2x^7+ x^6+ 4x^5- 21x^4+ 22x^3- 16x^{2}+ 6x - 1}{(x-1)^7}$\\\hline
62&$F_{\{4231,4312,1234\}}(x)=\frac{3x^8- 8x^7+ 4x^6 - 4x^5+ 21x^4- 22x^3+ 16x^2- 6x + 1}{(1-x)^7}$\\\hline
63&$F_{\{4312,1324,1234\}}(x)=\frac{x^{10}-5x^9+ 6x^8+2x^7- 5x^6+ 4x^5- 21x^4+ 22x^3- 16x^2 + 6x -1}{(x-1)^7}$\\\hline
64&$F_{\{4312,3412,1234\}}(x)=\frac {3x^7+ 5x^6- 4x^5 + 21x^4- 22x^3+ 16x^2- 6x + 1}{(1-x)^7}$\\\hline
65&$F_{\{4213,4132,1234\}}(x)=-\frac{3x^8+ 5x^7+ 13x^6+ 7x^5+ 2x^4+ x^3+ 5x^2- 4x+1}{(x^2+x-1)(x^3+x^2+x-1)(x-1)^3}$\\\hline
66&$F_{\{4231,4132,1234\}}(x)=\frac{2x^7+ 8x^6- 4x^5 + 21x^4- 22x^3+ 16x^2- 6x + 1}{(1-x)^7}$\\\hline
67&$F_{\{4312,1324,1243\}}(x)=\frac{2x^{10}-7x^8+65x^7-187x^6+274x^5-248x^4+145x^3-53x^2+11x-1}{(x-1)^6(2x-1)^3}$\\\hline
68&$F_{\{4312,1342,1243\}}(x)=\frac{3x^7-4x^6-14x^5+36x^4- 42x^3+ 26x^2- 8x + 1}{(x - 1)^3(2x-1)^3}$\\\hline
70&$F_{\{4312,3124,1243\}}(x)=-\frac{11x^7- 62x^6+ 128x^5- 146x^4+ 102x^3- 43x^2+ 10x -1}{(2x-1)^3(x-1)^5}$\\\hline
71&$F_{\{4231,1243,1234\}}(x)=-\frac{4x^8- 2x^7- 17x^6+ 25x^5- 43x^4+ 38x^3- 22x^2+ 7x-1}{(x-1)^8}$\\\hline
73&$F_{\{4231,1324,1234\}}(x)=-\frac{x^{10}-15x^8+55x^7-111x^6+ 149x^5- 141x^4+ 89x^3-37x^2+9x-1}{(x-1)^{10}}$\\\hline
79&$F_{\{2134,4132,1234\}}(x)=\frac{2x^{11}+x^{10}-10x^9-9x^8+12x^7+17x^6-30x^5+2x^4+28x^3-24x^2+8x-1}{(x^2+2x-1)(2x-1)(x-1)^3(x^2+x-1)^2}$\\\hline
81&$F_{\{2431,4312,1324\}}(x)=\frac{145x^3+11x-1-248x^4-193x^6+274x^5-53x^2-x^9-13x^8+80x^7)}{(2x-1)^3(x-1)^6}$\\\hline
82&$F_{\{4312,3142,1243\}}(x)=\frac{x^7+ 2x^6- 27x^5+59x^4- 61x^3+ 33x^2- 9x + 1}{(x - 1)^{2}(2x-1)^4}$\\\hline
83&$F_{\{4312,3412,1243\}}(x)=\frac{x^7+ 2x^6+ 4x^5-23x^4+ 36x^3- 25x^2+ 8x - 1}{(x - 1)(2x - 1)^4}$\\\hline
85&$F_{\{2314,4132,1432\}}(x)=-\frac{x^5+ 5x^4- 11x^3+13x^2- 6x + 1}{(2x - 1)(x^2- 3x + 1)(x - 1)^2}$\\\hline
87&$F_{\{4312,3124,1432\}}(x)=-\frac {2x^9- 46x^7+ 143x^6- 226x^5+ 221x^4- 137x^3+ 52x^2- 11x+1}{(x-1)^5(x^2- 3x + 1)(2x- 1)^2}$\\\hline
89&$F_{\{3142,4132,1234\}}(x)=-\frac {4x^5- 16x^4+ 24x^3- 19x^2+ 7x - 1}{(2x - 1)(x^2- 3x + 1)(x-1)^3}$\\\hline
91&$F_{\{4213,1342,1243\}}(x)=-\frac {4x^5- 14x^4+ 17x^3- 14x^2+ 6x - 1}{(3x - 1)(x^2- x + 1)(x - 1)^3}$\\\hline
92&$F_{\{2314,3124,1432\}}(x)=\frac {(x^3- 2x^2+ 3x - 1)(x^2+ x - 1)(1-x)^3}{x^9- 2x^8+ 6x^7- 4x^6- 7x^5+ 32x^4- 40x^3+25x^2- 8x + 1}$\\\hline
95&$F_{\{2314,4132,1342\}}(x)=-\frac{4x^6- 25x^5+ 51x^4-56x^3+ 32x^2- 9x + 1}{(2x - 1)(x - 1)^2(x^2- 3x +1)^2}$\\\hline
96&$F_{\{2134,4132,1342\}}(x)=-\frac{4x^8+ 6x^7- 45x^6+ 100x^5- 126x^4+ 95x^3- 42x^2+ 10x -1}{(2x - 1)^2(x^2- 3x + 1)(x -1)^4}$\\\hline
97&$F_{\{2341,4312,4123\}}(x)=-\frac{(x - 1)^4(x^3- 2x^2 + 3x - 1)}{x^8- 4x^7+ 18x^6- 35x^5+ 51x^4- 47x^3+ 26x^2- 8x +1}$\\\hline
98&$F_{\{2134,3124,1432\}}(x)=\frac{(x - 1)^{3}(x^3+ 2x - 1)}{4x^6- 7x^5+ 9x^4- 15x^3+ 13x^2- 6x+ 1}$\\\hline
100&$F_{\{4312,1342,4123\}}(x)=-\frac{4x^6- 16x^5+ 30x^4- 31x^3+ 20x^2- 7x + 1}{(x - 1)^3(2x^4-7x^3+ 8x^2- 5x +1)}$\\\hline
101&$F_{\{3124,4132,1342\}}(x)=-\frac{4x^6- 16x^5+ 30x^4- 31x^3+ 20x^2- 7x + 1}{(x - 1)^3(2x^4- 7x^3+ 8x^2- 5x +1)}$\\\hline
102&$F_{\{2413,3142,1234\}}(x)=-\frac {(x - 1)^{3}(x^3- 2x^2+ 3x - 1)}{x^7- 4x^6+ 12x^5- 23x^4+ 28x^3- 19x^2+ 7x -1}$\\\hline
104&$F_{\{2134,4132,1423\}}(x)=-\frac{3x^7- 4x^6+ 17x^5- 46x^4+ 55x^3- 32x^2+ 9x - 1}{(x - 1)(x^2- 3x + 1)(2x - 1)^3}$\\\hline
105&$F_{\{4213,2134,1342\}}(x)=-\frac{7x^6- 25x^5+ 51x^4- 56x^3+ 32x^2- 9x + 1}{(2x - 1)(x - 1)^2(x^2- 3x + 1)^2}$\\\hline
107&$F_{\{4213,3412,1342\}}(x)=-\frac{(x^2- x + 1)(2x - 1)^3}{(4x^3- 7x^2+ 5x - 1)(x - 1)^3}$\\\hline
110&$F_{\{2134,3142,1432\}}(x)=-\frac{(x - 1)(3x^3- 5x^2 + 4x - 1)^2}{x^9+ 2x^8- 27x^7+ 86x^6-144x^5+ 150x^4- 100x^3+ 42x^2- 10x + 1}$\\\hline
111&$F_{\{2143,3142,1234\}}(x)=\frac{(x^3- 2x^2+ 3x - 1)^2}{(2x^3- 3x^2+ 4x - 1)(x - 1)^3}$\\\hline
113&$F_{\{2134,1432,1234\}}(x)=\frac{2x^5- x^4- 3x^3-2x^2- 2x + 1}{2x^5- 2x^3- x^2- 3x + 1}$\\\hline
114&$F_{\{4312,1423,1234\}}(x)=-\frac{x^{10} - 3x^9+ 2x^8 + 4x^7- 9x^6+ 24x^5- 43x^4+ 38x^3- 22x^2+ 7x - 1}{(x - 1)^8}$\\\hline
115&$F_{\{4231,1423,1234\}}(x)=-\frac{2x^{10} - 17x^9+ 66x^8- 158x^7+ 256x^6- 289x^5+ 230x^4- 126x^3+46x^2- 10x + 1}{(x - 1)^{11}}$\\\hline
116&$F_{\{4312,4123,1243\}}(x)=\frac {x^9- 23x^8+ 133x^7-315x^6+ 419x^5- 350x^4+ 188x^3- 63x^2+12x - 1}{(2x - 1)^4(x - 1)^5}$\\\hline
117&$F_{\{3124,4132,1234\}}(x)=\frac{x^9- 6x^8+ 22x^7-53x^6+ 92x^5- 104x^4+ 76x^3- 35x^2+ 9x - 1}{(x^3- 2x^2+ 3x - 1)(2x - 1)(x - 1)^5}$\\\hline
119&$F_{\{4312,1432,1324\}}(x)=\frac{-350x^4-63x^2+419x^5-26x^8+138x^7-317x^6+188x^3+x^9-1+12x}{(2x-1)^4(x-1)^5}$\\\hline
120&$F_{\{4132,1423,1234\}}(x)=-\frac{x^8+ 4x^7- 41x^6+99x^5- 126x^4+ 95x^3- 42x^2+ 10x - 1}{(x^2- 3x + 1)(2x - 1)^2(x - 1)^4}$\\\hline
122&$F_{\{4213,1432,1324\}}(x)=-\frac{5{x}^{5}-19{x}^{4}+25{x}^{3}-19{x}^{2}+7x-1}{(x-1)({x}^{2}-3x+1)(3{x}^{3}-5{x}^{2}+4x-1)}$\\\hline
123&$F_{\{4132,1342,1234\}}(x)=-\frac{3x^8- 46x^7+ 141x^6- 225x^5+ 221x^4- 137x^3+ 52x^2- 11x + 1}{(x^2- 3x + 1)(2x - 1)^{2}(x - 1)^5}$\\\hline
124&$F_{\{2341,4132,4123\}}(x)=-\frac{(2x - 1)(x - 1)^4}{2x^6- 8x^5+ 19x^4- 27x^3+ 19x^2- 7x + 1}$\\\hline
128&$F_{\{2341,3142,4123\}}(x)=\frac{\left({x}^{2}-3x+1\right)\left(x-1\right)^{5}}{4{x}^{7}-23{x}^{6}+55{x}^{5}-78{x}^{4}+66{x}^{3}-33{x}^{2}+9x-1}$\\\hline
135&$F_{\{1432,4123,1243\}}(x)=-\frac{5x^5- 14x^4+ 22x^3-18x^2+ 7x - 1}{(x - 1)^4(2x^2- 4x + 1)}$\\\hline
136&$F_{\{4213,1342,4123\}}(x)=\frac{x^5- 3x^3+ 4x^2-4x + 1}{x^5+ x^4- 6x^3+ 7x^2- 5x + 1}$\\\hline
137&$F_{\{3124,1432,1342\}}(x)=-\frac{(x^2- 3x + 1)(x^2+2x - 1)}{(x - 1)(x^4- 2x^3- 5x^2+ 5x - 1)}$\\\hline
138&$F_{\{2134,3142,1243\}}(x)=-\frac {(x^2- 3x + 1)(x^2 + 2x - 1)}{(x - 1)(x^4- 2x^3- 5x^2+ 5x - 1)}$\\\hline
139&$F_{\{2143,3124,1342\}}(x)=-\frac {(x^2- 3x + 1)(x^2+ 2x - 1)}{(x - 1)(x^4- 2x^3- 5x^2+ 5x - 1)}$\\\hline
140&$F_{\{3124,1432,1243\}}(x)=\frac{(3x - 1)(x - 1)^3}{9x^4- 19x^3+ 17x^2- 7x + 1}$\\\hline
141&$F_{\{2143,1423,1234\}}(x)=\frac{2x^4- 4x^3+ 7x^2- 5x + 1}{4x^4- 9x^3+ 11x^2- 6x + 1}$\\\hline
142&$F_{\{1432,1342,4123\}}(x)=\frac{x^3+ 3x - 1}{x^3- 2x^2+ 4x - 1}$\\\hline
143&$F_{\{4312,4123,1234\}}(x)=-\frac{x^8- 3x^7- 12x^6+ 23x^5- 43x^4+ 38x^3- 22x^2+ 7x - 1}{(x-1)^8}$\\\hline
144&$F_{\{4231,4123,1234\}}(x)=-\frac{3x^8- 15x^7+ 40x^6- 66x^5+ 81x^4- 60x^3+ 29x^2- 8x + 1}{(x - 1)^9}$\\\hline
145&$F_{\{4312,1423,1243\}}(x)=\frac{2x^7- 2x^6- 25x^5+ 59x^4- 61x^3+ 33x^2- 9x + 1}{(x - 1)^2(2x - 1)^4}$\\\hline
146&$F_{\{4132,1243,1234\}}(x)=\frac{x^9- 4x^8+ 20x^6- 58x^5+ 83x^4- 69x^3+ 34x^2- 9x + 1}{(2x - 1)(x^2- 3x + 1)(x - 1)^5}$\\\hline
147&$F_{\{4132,1324,1234\}}(x)=-\frac{13x^{10} - 45x^9+ 83x^8- 38x^7- 141x^6+ 308x^5- 306x^4+ 178x^3- 62x^2+ 12x - 1}{(x^2- 3x + 1)(x^2+ x - 1)(2x - 1)^2(x - 1)^5}$\\\hline
148&$F_{\{2134,4132,1324\}}(x)=-\frac{5x^8- 51x^7+ 172x^6- 288x^5+ 283x^4- 170x^3+ 61x^2- 12x+1}{(2x - 1)^2(x^2- 3x + 1)^2(x - 1)^3}$\\\hline
152&$F_{\{4231,2341,4123\}}(x)=\frac{(x - 1)^{6}(x^2- 3x + 1)}{5x^8- 31x^7+ 83x^6- 134x^5+ 144x^4-99x^3+ 42x^2- 10x + 1}$\\\hline
154&$F_{\{4312,1342,1423\}}(x)=-\frac{3x^5- 14x^4+ 21x^3- 18x^2+ 7x - 1}{(x - 1)(2x^3- 4x^2+ 4x - 1)(x^2- 3x + 1)}$\\\hline
155&$F_{\{3124,4132,1243\}}(x)=-\frac{3x^5- 14x^4+ 21x^3- 18x^2+ 7x - 1}{(x - 1)(2x^3- 4x^2+ 4x - 1)(x^2- 3x + 1)} $\\\hline
160&$F_{\{4312,1432,1342\}}(x)=\frac{2x^5- 4x^4- 10x^3+ 16x^2- 7x + 1}{(x - 1)(3x - 1)(2x - 1)(x^2 + 2x - 1)}$\\\hline
161&$F_{\{4312,4132,1342\}}(x)=-\frac{7x^5- 22x^4+ 33x^3- 24x^2+ 8x - 1}{(x^3- 3x^2+ 4x - 1)(x - 1)(2x - 1)^2}$\\\hline
167&$F_{\{3142,3124,1432\}}(x)=-\frac {(2x - 1)(x - 1)(x^2- 3x + 1)}{x^5- 7x^4+ 18x^3- 17x^2+ 7x - 1}$\\\hline
168&$F_{\{3124,1432,1423\}}(x)=-\frac{(x^2- 3x + 1)(2x- 1)^{2}}{(x - 1)(x^4- 13x^3+ 16x^2- 7x + 1)}$\\\hline
169&$F_{\{3142,1423,1234\}}(x)=-\frac{(x^2- 3x + 1)(2x- 1)^2}{(x - 1)(x^4- 13x^3+ 16x^2- 7x + 1)}$\\\hline
179&$F_{\{2134,1432,1423\}}(x)=-\frac{2x^5- 8x^4+ 12x^3- 12x^2+ 6x - 1}{(x^4- 5x^3+ 10x^2- 6x+ 1)(x^2- x + 1)}$\\\hline
181&$F_{\{2143,1324,1234\}}(x)=-\frac {2x^3+ 3x - 1}{x^4- 2x^3+ 2x^2- 4x + 1}$\\\hline
183&$F_{\{4132,4123,1234\}}(x)=\frac {x^8- 8x^7+ 31x^{6}- 75x^5+ 98x^4- 75x^3+ 35x^2- 9x + 1}{(2x - 1)^2(x - 1)^6}$\\\hline
186&$F_{\{4132,4123,1243\}}(x)=-\frac {-27{x}^{5}+55{x}^{4}-57{x}^{3}+32{x}^{2}-9x+1+4{x}^{6}}{ \left( 3x-1 \right) \left( {x}^{2}-3x+1 \right)  \left( x-1 \right) ^{4}}$\\\hline
189&$F_{\{2143,2134,1432\}}(x)=-\frac{x^4- 7x^3+ 8x^2- 5x + 1}{x^5- 5x^4+ 13x^3- 12x^2+ 6x - 1}$\\\hline
200&$F_{\{2143,3124,1243\}}(x)=\frac {{x}^{3}-6{x}^{2}+5x-1}{ \left( x-1 \right)  \left( 5{x}^{2}-5x+1 \right) }$\\\hline
202&$F_{\{1432,1423,1234\}}(x)=\frac {x^4- 4x^3+ 10x^{2}- 6x + 1}{3x^4- 11x^3+ 15x^2- 7x + 1}$\\\hline
205&$F_{\{1432,1324,1234\}}(x)=-\frac{x^7- 2x^6+ 4x^5-17x^4+ 24x^3- 18x^2+ 7x - 1}{2x^6- 14x^5+ 34x^4- 38x^3+ 24x^2- 8x + 1}$\\\hline
206&$F_{\{1432,1243,1234\}}(x)=-\frac{x^6+ 5x^4- 12x^3+ 12x^2- 6x + 1}{2x^5- 13x^4+ 21x^3- 17x^2+ 7x - 1}$\\\hline
\end{longtable}}

\begin{thebibliography}{11pt}

\bibitem{AA20160600}
M. H. Albert, C. Homberger, J. Pantone, N. Shar and V. Vatter, Generating permutations with restricted containers, http://arxiv.org/abs/1510.00269.

\bibitem{AA20160607}
D. Callan and T. Mansour, On permutations avoiding 1324, 2143, and another 4-letter pattern, {\em Pure Math. Appl. (PU.M.A.)} {\bf 26:1} (2017), 1--10.

\bibitem{AA20160601}
D. Callan and T. Mansour, On permutations avoiding 1243, 2134, and another 4-letter pattern, {\em Pure Math. Appl. (PU.M.A.)} {\bf 26:1} (2017), 11--21.

\bibitem{AA20170501}
D. Callan and T. Mansour, Enumeration of small Wilf classes avoiding 1324 and two other $4$-letter patterns, {\em Pure Math. Appl. (PU.M.A.)}, to appear.

\bibitem{AA1342}
D. Callan and T. Mansour, Enumeration of small Wilf classes avoiding 1342 and two other $4$-letter patterns, submitted.

\bibitem{CMS3patI}
D. Callan, T. Mansour and M. Shattuck, Wilf classification of triples of 4-letter patterns I, {\em Discrete Math. Theor. Comput. Sci.} {\bf19:1} (2017), \#5.

\bibitem{CMS3patII}
D. Callan, T. Mansour and M. Shattuck, Wilf classification of triples of 4-letter patterns II, {\em Discrete Math. Theor. Comput. Sci.} {\bf19:1} (2017), \#6.

\bibitem{HYL} D. Callan, T. Mansour and M. Shattuck, Wilf classification of triples of 4-letter patterns,  http://arxiv.org/abs/1605.04969.

\bibitem{FM}
G. Firro and T. Mansour, Three-letter-pattern avoiding permutations and functional equations, {\em Electron. J. Combin.} {\bf13} (2006), \#R51.

\bibitem{HM}
Q. Hou and T. Mansour, Kernel method and linear recurrence system, {\em J. Computat. Appl. Math.} {\bf261:1} (2008), 227--242.

\bibitem{K} D. E. Knuth, \emph{The Art of Computer Programming}, 3rd edition, Addison Wesley, Reading, MA, 1997.

\bibitem{KS}
D. Kremer and W. C. Shiu, Finite transition matrices for permutations avoiding
pairs of length four patterns, {\em Discrete Math.} {\bf268:1-3} (2003), 171--183.

\bibitem{L}
I. Le, Wilf classes of pairs of permutations of length 4, {\em Electron. J. Combin.} {\bf12} (2005), \#R25.


\bibitem{SiS}
R. Simion and F. W. Schmidt, Restricted permutations, \emph{European J. Combin.} {\bf 6} (1985), 383--406.

\bibitem{Sl}
N. J. A. Sloane, The On-Line Encyclopedia of Integer Sequences, http://oeis.org.

\bibitem{St0}
Z. E. Stankova, Forbidden subsequences, \emph{Discrete Math.} {\bf132} (1994), 291--316.

\bibitem{St}
Z. Stankova, Classification of forbidden subsequences of length four, \emph{European J. Combin.} {\bf17} (1996), 501--517.

\bibitem{V}
V. Vatter, Finding regular insertion encodings for permutation
classes, \emph{J. Symbolic Comput.} {\bf47:3} (2012), 259--265.

\bibitem{W}J. West, Generating trees and the Catalan and Schr\"{o}der numbers, \emph{Discrete Math.} {\bf146} (1995), 247--262.

\bibitem{wikipermpatt} Wikipedia, Permutation pattern,
https://en.wikipedia.org/wiki/Permutation\_pattern

\bibitem{wikipermpatt2} Wikipedia, Enumerations of specific permutation classes,\newline
https://en.wikipedia.org/wiki/Enumerations\_of\_specific\_permutation\_classes

\end{thebibliography}
\end{document}